\pgfplotsset{compat=newest} 
\pgfplotsset{plot coordinates/math parser=false}
\newlength\fwidth
\definecolor{myBlue}{rgb}{0.0,0.0,0.55}
  \newcounter{mnote}
  \let\oldmarginpar\marginpar
    \renewcommand\marginpar[1]{\-\oldmarginpar[\raggedleft\footnotesize #1]%
    {\raggedright\footnotesize #1}}
\newtheorem{theorem}{Theorem}[section]
\newtheorem{lemma}[theorem]{Lemma}
\newtheorem{corollary}[theorem]{Corollary}
\newtheorem{proposition}[theorem]{Proposition}
\newtheorem{remark}[theorem]{Remark}
\newcommand{\dd}{\,{\rm d}}
\newcommand{\bs}{\boldsymbol}
\newcommand{\curl}{{\rm curl\,}}
\renewcommand{\div}{\operatorname{div}}
\newcommand{\grad}{{\rm grad\,}}
\newcommand{\tr}{\operatorname{tr}}
\newcommand{\dev}{\operatorname{dev}}
\newcommand{\sym}{\operatorname{sym}}
\newcommand{\defm}{\operatorname{def}}
\newcommand{\Oplus}{\ensuremath{\vcenter{\hbox{\scalebox{1.5}{$\oplus$}}}}}
\newcommand{\step}[1]{\noindent\raisebox{1.5pt}[10pt][0pt]{\tiny\framebox{$#1$}}\xspace}
\newcommand{\vertiii}[1]{{\left\vert\kern-0.25ex\left\vert\kern-0.25ex\left\vert #1 
    \right\vert\kern-0.25ex\right\vert\kern-0.25ex\right\vert}}
\begin{document}
\title[A New Div-Div-Conforming Finite Element Space]{A New Div-Div-Conforming Symmetric Tensor Finite Element Space with Applications to the Biharmonic Equation}

\author{Long Chen}%
 \address{Department of Mathematics, University of California at Irvine, Irvine, CA 92697, USA}%
 \email{chenlong@math.uci.edu}%
 \author{Xuehai Huang}%
 \address{Corresponding author. School of Mathematics, Shanghai University of Finance and Economics, Shanghai 200433, China}%
 \email{huang.xuehai@sufe.edu.cn}%

 \thanks{The first author was supported by NSF DMS-2012465, and DMS-2309785. The second author was supported by the National Natural Science Foundation of China Project 12171300, and the Natural Science Foundation of Shanghai 21ZR1480500.}

\makeatletter
\@namedef{subjclassname@2020}{\textup{2020} Mathematics Subject Classification}
\makeatother
\subjclass[2020]{
65N30;   
58J10;   
65N12;   
65N15;   
}

\begin{abstract}
A new $H(\div\div)$-conforming finite element is presented, which avoids the need for super-smoothness by redistributing the degrees of freedom to edges and faces. This leads to a hybridizable mixed method with superconvergence for the biharmonic equation. Moreover, new finite element divdiv complexes are established. Finally, new weak Galerkin and $C^0$ discontinuous Galerkin methods for the biharmonic equation are derived.
 \end{abstract}
\maketitle


\section{Introduction}
In recent years, there has been a series of developments in constructing $H(\div\div)$-conforming finite elements~\cite{ChenHuang2020,ChenHuangDivRn2022,Chen;Huang:2020Finite,Hu;Liang;Ma:2021Finite,Hu;Liang;Ma;Zhang:2022conforming,Hu;Ma;Zhang:2020family}. However, all these elements possess vertex degree of freedom (DoF), which makes them non-hybridizable. In this paper, we present a novel $H(\div\div)$-conforming finite element that is hybridizable, enabling its efficient use in the numerical solutions of the biharmonic equation.

Let $\Omega\subset \mathbb R^d, d\geq 2$, be a Lipschitz domain. With the space $\mathbb S$ of symmetric tensors, the Sobolev space
\[
H(\div{\div },\Omega; \mathbb{S}):=\{\boldsymbol{\tau}\in L^{2}(\Omega; \mathbb{S}): \div {\div}\boldsymbol{\tau}\in L^{2}(\Omega)\}
\]
with the inner $\div$ applied row-wisely to $\boldsymbol \tau$ resulting in a column vector for which the outer $\div$ operator is applied. The $H(\div\div)$-conforming finite elements constructed in~\cite{ChenHuang2020,ChenHuangDivRn2022,Chen;Huang:2020Finite,Hu;Liang;Ma:2021Finite,Hu;Liang;Ma;Zhang:2022conforming,Hu;Ma;Zhang:2020family} include the following DoFs:
\begin{align}
\boldsymbol \tau (\texttt{v}), & \quad \texttt{v}\in \Delta_0(T), \bs \tau\in \mathbb S,\label{intro:HdivdivSfemdof1}\\
(\boldsymbol n_i^{\intercal}\boldsymbol \tau\boldsymbol n_j, q)_f, & \quad q\in\mathbb P_{k-r-1}(f), f\in\Delta_{r}(T), r=1,\ldots, d-1,\label{intro:HdivdivSfemdof2}\\
&\quad \textrm{ and } i,j=1,\ldots, d-r, i\leq j. \notag
\end{align}
Here, $\Delta_r(T)$ denotes the set of $r$-dimensional faces of the simplex $T$. Furthermore, $\boldsymbol{n}_i$ denotes the $i$th normal vector to the face $f$, and $(\cdot, \cdot)_f$ denotes the $L^2$-inner product over the face $f$.
The new element will be constructed by redistributing the vertex and normal plane DoFs~\eqref{intro:HdivdivSfemdof1}-\eqref{intro:HdivdivSfemdof2}. 

We provide a brief explanation of the redistribution process by examining DoFs of vertex $\texttt{v}_0$. Face-normal vectors $\{\bs n_{F_i}, i=1,\ldots, d\}$ form a basis of the ambient Euclidean space $\mathbb R^d,$ $d\geq 2$, where $F_i$ denotes the $(d-1)$-dimensional face containing $\texttt{v}_0$ and opposite to $\texttt{v}_i$ for $i=1,\ldots, d$. We may then determine DoF $\boldsymbol \tau (\texttt{v}_0)\in \mathbb S$ by considering the symmetric matrix $( \bs n_{F_i}^{\intercal}\boldsymbol \tau (\texttt{v}_0)\bs n_{F_j})_{i,j=1,\ldots, d}$.
We redistribute the diagonal entry $\bs n_{F_i}^{\intercal}\boldsymbol \tau (\texttt{v}_0)\bs n_{F_i}$ to face $F_i$ for $i=1,\ldots, d$, while the off-diagonal entries $\bs n_{F_i}^{\intercal}\boldsymbol \tau (\texttt{v}_0)\bs n_{F_j}$ with $1\leq i< j\leq d$ to the $(d-2)$-dimensional face $e_{ij} = F_i\cap F_j$. This process can be extended to DoFs~\eqref{intro:HdivdivSfemdof2} as well by setting $\bs n_i = \bs n_{F_i}$.

In three dimensions, where $d=3$, the faces $F_i$ correspond to two-dimensional faces (i.e., ``faces'') and the $e_{ij}$ correspond to one-dimensional faces (i.e., ``edges''). We refer to this entire process as the redistribution of vertex DoFs to faces and edges. See Fig. \ref{fig:3Dredistribution} for an illustration of the redistribution.
\begin{figure}[htbp]
\begin{center}
\includegraphics[width=4.5cm]{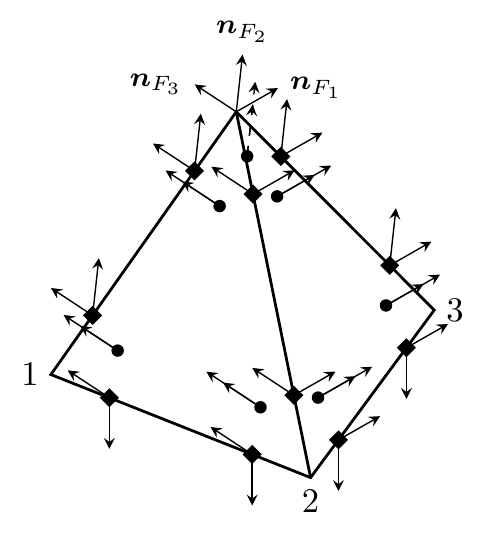}
\caption{Redistribution of vertex degrees of freedom to faces and edges. $\boldsymbol \tau (\texttt{v}_0)\in \mathbb S$ is a symmetric tensor containing $6$ components. Three diagonal entries $\bs n_{F_i}^{\intercal}\boldsymbol \tau (\texttt{v}_0)\bs n_{F_i}$ will be distributed to faces $F_i$ for $i=1,2,3$ and three off-diagonal entries $\bs n_{F_i}^{\intercal}\boldsymbol \tau (\texttt{v}_0)\bs n_{F_j}$ to the edges $e_{ij} = F_i\cap F_j$  with $1\leq i< j\leq 3$.}
\label{fig:3Dredistribution}
\end{center}
\end{figure}

Upon redistribution, we use the geometric decomposition of the Lagrange element to merge facewise DoFs into normal-normal components as shown below:
\begin{equation*}
(\bs n_F^{\intercal}\bs \tau \bs n_F, q )_F, \quad q\in \mathbb P_k(F), F\in \Delta_{d-1}(T), 
\end{equation*}
and merge the off-diagonal DoFs as shown below:
\begin{equation}\label{intro:nnedge}
(\boldsymbol n_{F_1}^{\intercal}\boldsymbol \tau\boldsymbol n_{F_2}, q)_e,\quad q\in \mathbb P_k(e), e\in \Delta_{d-2}(T),
\end{equation}
where  $F_1$ and $F_2$ are the two faces of the element $T$ that share the edge $e$. 

To ensure the $H(\div\div)$-conformity, we modify DoF~\eqref{intro:nnedge} on $\boldsymbol n_{F_1}^{\intercal}\boldsymbol \tau\boldsymbol n_{F_2}$ to an edge jump term given by
\begin{align*}
\tr_e(\bs \tau) = \tr_e^T(\bs \tau) &= \bs n_{F_1,e}^{\intercal}\bs \tau \bs n_{F_1,\partial T}+\bs n_{F_2,e}^{\intercal}\bs \tau \bs n_{F_2,\partial T},
\end{align*}
where $\bs n_{F,e}$ denotes the  normal direction of $e$ on $F$ induced by the orientation of $F$, and $\bs n_{F_i,\partial T}$ is the outward normal direction of face $F_i$ with respect to $\partial T$. Here $T$ represents a simplex and $\intercal$ is the transpose operator. 

We provide DoFs in~\eqref{eq:newdivdivS} and prove the unisolvence to the shape function space $\mathbb P_k(T;\mathbb S)$ for $k\geq 3$. Afterwards, we define the global space $\Sigma_{k}^{\div\div-}$:
\begin{align*}
\Sigma_{k}^{\div\div-} := \{\boldsymbol{\tau}\in L^2(\Omega;\mathbb S): &\, \boldsymbol{\tau}|_T\in \mathbb P_{k}(T;\mathbb S)\textrm{ for each } T\in\mathcal T_h, \\
&\; \textrm{  DoFs on } \tr_1(\bs \tau) \text{ and } \tr_2(\bs \tau) \text{ are single-valued} \},
\end{align*}
where the traces $\tr_1(\bs \tau) = \bs n^{\intercal}\bs \tau \bs n$ and $\tr_2(\bs \tau) =  \boldsymbol n_{\partial T}^{\intercal}\div \boldsymbol \tau +  \div_F(\boldsymbol\tau \boldsymbol n_{\partial T})$ are continuous for $\bs \tau\in \Sigma_{k}^{\div\div-}$. However, the edge jump $\sum_{T\in \omega_e} \tr_e(\bs \tau)|_e$ may not vanish which prevents $\Sigma_{k}^{\div\div-}$ being $H(\div\div)$-conforming, where $\omega_e = \{T\in \mathcal T_h: e\subset T\}$ is the set of all simplices containing $e$.
To obtain an $H(\div\div)$-conforming subspace, we further define the subspace $\Sigma_{k,{\rm new}}^{\div\div}$ as the subspace of $\Sigma_{k}^{\div\div-}$ satisfying the constraint:
\begin{equation*}
\Sigma_{k,{\rm new}}^{\div\div} := \{\bs \tau\in \Sigma_{k}^{\div\div-}:  \sum_{T\in \omega_e} \tr_e(\bs \tau)|_e = 0 \text{  for all  } e\in \mathring{\mathcal E}_h\}.
\end{equation*}
A similar constraint can be found in~\cite{CockbuGopala2005Incompressible} when considering hybridization of edge elements. The space $\Sigma_{k,{\rm new}}^{\div\div}$ is $H(\div\div)$-conforming and compared with other existing elements, the imposed continuity is minimal~\cite[Proposition 3.6]{Fuhrer;Heuer;Niemi:2019ultraweak} and no super-smoothness imposed in lower dimensional sub-simplices. In particular, no vertex DoFs are needed. 

The requirement $k\geq 3$ can be relaxed to $k\geq 2$ by enriching the shape function space
$$
\Sigma_{k^+}(T;\mathbb S):=\mathbb P_k(T;\mathbb S) \oplus \boldsymbol x\boldsymbol x^{\intercal}\mathbb H_{k-1}(T),
$$
which is in the spirit of the Raviart-Thomas (RT) element for $H(\div)$-conforming vector finite element~\cite{Raviart.P;Thomas.J1977,ArnoldFalkWinther2006}. 
A Raviart-Thomas type $H(\div\div)$-conforming finite element space $\Sigma_{k^+}^{\div\div}$ for symmetric tensors can be constructed for $k\geq 2$.

Motivated by the construction in~\cite{FuehrerHeuer2023} in 2D, we further construct a lower order space $H(\div\div)$-conforming finite element $\Sigma_{1^{++}}$ by enriching $\mathbb P_1(T; \mathbb S)$ by some quadratic and cubic polynomials. The 3D version is illustrated in Fig.~\ref{fig:3Dsimple}. 

\begin{figure}[htbp]
\subfigure[The lowest degree $H(\div\div)$-conforming element $\Sigma_{1^{++}}(T;\mathbb S)$ (with $36$ DoFs) for $\bs \sigma$.]{
\begin{minipage}[t]{0.5\linewidth}
\centering
\includegraphics*[width=4.25cm]{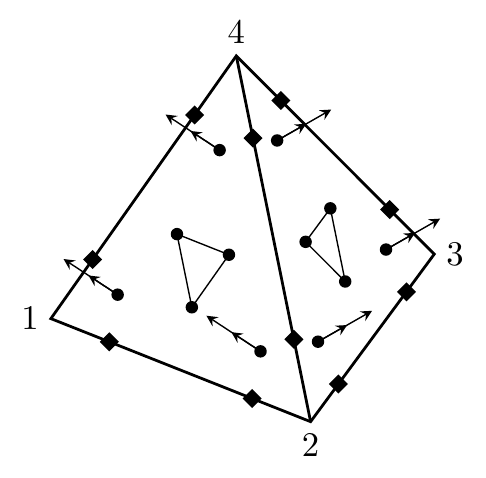}
\end{minipage}}
\subfigure[Discontinuous $\mathbb P_1(T)$ element (with $12$ DoFs) for $u$.]
{\begin{minipage}[t]{0.52\linewidth}
\centering
\includegraphics*[width=4.2cm]{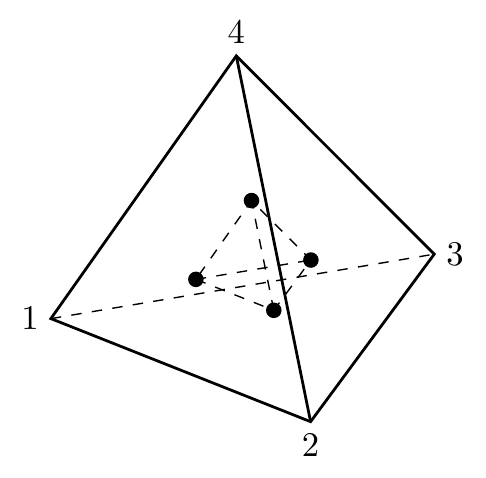}
\end{minipage}}
\caption{The lowest degree pair $\Sigma_{1^{++}}(T;\mathbb S) - \mathbb P_1(T)$ in three dimensions.}
\label{fig:3Dsimple}
\end{figure}

The symmetric tensor finite element with only normal-normal continuity for $k\geq0$ is shown in~\cite{Hellan1967,Herrmann1967,Johnson1973,PechsteinSchoeberl2011,Pechstein;Schoberl:2018analysis}.
For the discretization of the biharmonic equation in two dimensions, referred to as the Hellan-Herrmann-Johnson (HHJ) mixed method~\cite{Hellan1967,Herrmann1967,Johnson1973,ArnoldBrezzi1985,Comodi1989,ChenHuHuang2018}, the normal-normal continuous finite element for symmetric tensors is employed. Notably, there is currently no existing HHJ method for dimensions greater than two. The normal-normal continuous finite element for symmetric tensors is also adopted in~\cite{PechsteinSchoeberl2011,Pechstein;Schoberl:2018analysis} to discretize the linear elasticity, known as the tangential-displacement normal-normal-stress (TDNNS) method. We also refer to~\cite{christiansenFiniteElementSystems2023} for an $H(\rm rot~rot)$-conforming finite element for symmetric tensors on the Clough-Tocher split in two dimensions.

The $H(\div\div)$-conforming finite element constructed in this paper is applicable for discretizing the biharmonic equation for all dimensions $d\geq 2$ and offers optimal convergence for symmetric tensors, along with a fourth-order 
higher superconvergence for the postprocessed deflection. Through a hybridization technique, the implementation of the mixed method developed in this paper can be treated as a generalization of hybridized HHJ methods from 2D to arbitrary dimensions.

The $H(\div\div)$-conforming space $\Sigma_{k,{\rm new}}^{\div\div}$  might be somewhat challenging to implement in practical applications. This complexity arises from the stringent continuity requirements placed on $\tr_1(\bs \tau)$ and $\tr_2(\bs \tau)$, as well as the patch constraint imposed on edge jumps. To mitigate these challenges, we employ a hybridization technique~~\cite{fraeijs1965displacement,ArnoldBrezzi1985} that effectively relaxes these continuity conditions. We utilize the discontinuous stress space $\Sigma_{k}^{-1} = V_{k}^{-1}(\mathcal T_h; \mathbb S)$, 
and broken space 
$$
\mathring{M}^{-1}_{k-2,k-1, k, k} = V_{k-2}^{-1}(\mathcal T_h) \times V_{k-1}^{-1}(\mathring{\mathcal F}_h)\times V_{k}^{-1}(\mathring{\mathcal F}_h)\times V_{k}^{-1}(\mathring{\mathcal E}_h), 
$$
where $V_r^{-1}$ denotes the discontinuous polynomial space of degree $r$ with respect to some finite set, $\mathcal T_h$ is a triangulation, $\mathring{\mathcal F}_h$ is the set of interior $(d-1)$-dimensional faces, and $\mathring{\mathcal E}_h$ the set of interior $(d-2)$-dimensional faces. Spaces on $\mathring{\mathcal F}_h$ and $\mathring{\mathcal E}_h$ can be thought of as Lagrange multipliers for the required continuity. For example, $V_{k-1}^{-1}(\mathring{\mathcal F}_h)$ is for $\tr_2(\bs \sigma)$ which is one degree lower than that of $\bs \sigma$ as $\tr_2(\bs \sigma)$ consists of first-order derivatives of $\bs \sigma$.

Define the weak $(\div\div)_w$ operator
$$
(\div\div)_w\bs \sigma := ((\div\div)_T\bs \sigma, -h_F^{-1}[\tr_2(\bs \sigma)]|_F, h_F^{-3}[\bs n^{\intercal}\bs \sigma\bs n]|_F, h_e^{-2}[ \tr_e(\bs \sigma)]|_e).
$$
A hybridized mixed finite element method for the biharmonic equation is: find $\bs \sigma_h \in \Sigma_{k}^{-1}$ and $u_h\in \mathring{M}^{-1}_{k-2,k-1, k, k}$ s.t.
\begin{subequations}\label{intro:biharmonicMfemhy}
 \begin{align}
\label{intro:biharmonicMfemhy1} (\bs \sigma_h, \bs \tau) + ((\div\div)_w \bs \tau,  u_h)_{0,h}&=0  \qquad\qquad\quad \forall~\boldsymbol{\tau} \in \Sigma_{k}^{-1},\\
\label{intro:biharmonicMfemhy2} ((\div\div)_w \bs \sigma_h, v)_{0,h} & = -(f, v_0) \quad \quad  \forall~v\in \mathring{M}^{-1}_{k-2,k-1, k, k},
\end{align}
\end{subequations}
with appropriate modification of $(f,v_0)$ for $k=0,1,2$.
We will establish the following discrete inf-sup condition,
\begin{equation*}
 \inf_{v\in \mathring{M}^{-1}_{k-2,k-1,k,k}} \sup_{\boldsymbol{\tau}\in \Sigma_{k}^{-1}} \frac{((\div\div)_w\boldsymbol{\tau}, v)_{0,h}}{\|\boldsymbol{\tau}\|_{\div\div_w}\|v\|_{0,h}} = \alpha >0, \quad k\geq 0,
\end{equation*}
from which the well-posedness of~\eqref{intro:biharmonicMfemhy} follows. When $k = 0$,~\eqref{intro:biharmonicMfemhy} is equivalent to using the Morley-Wang-Xu element~\cite{WangXu2006} for the biharmonic equation. In other words,~\eqref{intro:biharmonicMfemhy} generalize the popular quadratic Morley element to higher order and to higher dimensions.

%


Optimal convergence rates will be established for the solution $(\bs \sigma_h, u_h)$ to~\eqref{intro:biharmonicMfemhy}:
\begin{equation*}
\| \bs \sigma-\bs \sigma_h\|_0+ |Q_Mu-u_h|_{2,h} + \|Q_Mu-u_h\|_{0,h} \lesssim h^{k+1}|u|_{k+3}.
\end{equation*}
Post-processing techniques can be used to obtain $u_h^*$ with $k\geq3$ satisfying
 \begin{equation*}
\|\nabla_{h}^2(u-u_h^{\ast})\|_{0} \lesssim h^{k+1}|u|_{k+3}, \quad \|u-u_h^{\ast}\|_{0} \lesssim h^{\min\{2k-2, k+3\}}\|u\|_{k+3}. 
\end{equation*}
Hybridization~\eqref{intro:biharmonicMfemhy} can be also generalized to the Raviart-Thomas type $\Sigma_{k^+}^{-1}-\mathring{M}^{-1}_{k-1,k-1, k, k}$ for $k\geq 2$ and $\Sigma_{1^{++}}^{-1}-\mathring{M}^{-1}_{1,1, 1, 1}$ for $k=1$. 

We define the weak Hessian operator $\nabla_w^2$ as the adjoint of $(\div\div)_w$ with respect to a mesh-dependent inner product $(\cdot,\cdot)_{0,h}$. Using the operator $\nabla_w^2$, we can interpret the hybridization~\eqref{intro:biharmonicMfemhy} as a weak Galerkin method for the biharmonic equation, which does not require any additional stabilization:
\begin{equation}\label{intro:biharmonicMfemWG}
(\nabla_w^2 u_h, \nabla_w^2 v) = (f, v_0) \quad \forall~v\in \mathring{M}^{-1}_{k-2,k-1, k, k},
\end{equation}
with appropriate modification of computing $(f, v_0)$ for low order cases.
Restricting~\eqref{intro:biharmonicMfemWG} to different subspaces of $\mathring{M}^{-1}_{k-2,k-1, k, k}$ will derive new discrete methods:
\begin{itemize}
\item 
Embedding the $H^2$-nonconforming virtual element on simplices in~\cite{ChenHuang2020a} into the broken space $\mathring{M}^{-1}_{k-2,k-1, k, k}$, we acquire a stabilization-free non-conforming virtual element method for the biharmonic equation. 
\item Embedding the continuous Lagrange element $\mathring{V}_k$ into $\mathring{M}^{-1}_{k-2,k-1, k, k}$, we obtain a parameter-free $C^0$ discontinuous Galerkin (DG) method for the biharmonic equation, which generalizes the 2D scheme in~\cite{HuangHuang2014} to arbitrary dimension $d\geq 2$.
\end{itemize}

In three dimensions, we construct the finite element  $\div\div$ complex, for $k\geq 3$,
\begin{equation}\label{intro:divdivcomplex3dfem}
{\bf RT}\xrightarrow{\subset} V_{k+2}^H\xrightarrow{\dev\grad} \Sigma_{k+1}^{\sym\curl}\xrightarrow{\sym\curl} \Sigma_{k,{\rm new}}^{\div\div} \xrightarrow{\div{\div}} V^{-1}_{k-2}(\mathcal T_h)\xrightarrow{}0,
\end{equation}
where ${\bf RT}:= \{a\boldsymbol x + \boldsymbol b: a\in \mathbb R, \boldsymbol b \in \mathbb R^3\}$, and $V_{k+2}^H$ is the vectorial Hermite element space~\cite{Ciarlet1978}.
Since no supersmooth DoFs for space $\Sigma_{k,{\rm new}}^{\div\div}$, we construct  $H(\sym\curl;\mathbb T)$-conforming finite element space $\Sigma_{k+1}^{\sym\curl}$ simpler than those in~\cite{ChenHuang2022,Chen;Huang:2020Finite,Hu;Liang;Ma:2021Finite,Hu;Liang;Ma;Zhang:2022conforming}. Lower order finite element  $\div\div$ complexes for $k=1,2$ in three dimensions are also constructed.
The first half of the complex~\eqref{intro:divdivcomplex3dfem} can be replaced by
\begin{equation}\label{intro:firsthalf}
{\bf RT}\xrightarrow{\subset} V_{k+2}^L\xrightarrow{\dev\grad} \overline{\Sigma}_{k+1}^{\sym\curl}\xrightarrow{\sym\curl}, 
\end{equation}
and the second half by 
$
\xrightarrow{\sym\curl} \Sigma_{k^+}^{\div\div} \xrightarrow{\div{\div}} V^{-1}_{k-1}(\mathcal T_h)\xrightarrow{}0,
$
which leads to several variants of~\eqref{intro:divdivcomplex3dfem}; see Section \ref{sec:femdivdivcomplex} for details.

With the weak $\div\div_w$ operator, for $k\geq 1$, we can construct the distributional finite element divdiv complex 
\begin{equation*}
{\bf RT}\xrightarrow{\subset} V_{k+2}^H\xrightarrow{\dev\grad} \Sigma_{k+1}^{\sym\curl}\xrightarrow{\sym\curl} \Sigma_{k}^{-1} \xrightarrow{\div{\div}_w} \mathring{M}^{-1}_{k-2,k-1, k, k}\xrightarrow{}0.
\end{equation*}
The normal-normal continuous finite element $\Sigma_{k}^{\rm nn}$ can be treated as a subspace of $\Sigma_{k}^{-1}$ and the corresponding distributional divdiv complex becomes, for $k\geq 1$, 
\begin{equation*}
{\bf RT}\xrightarrow{\subset} V_{k+2}^H\xrightarrow{\dev\grad} \Sigma_{k+1}^{\sym\curl}\xrightarrow{\sym\curl} \Sigma_{k}^{\rm nn} \xrightarrow{\div{\div}_w} \mathring{M}^{-1}_{k-2,k-1, \cdot, k}\xrightarrow{}0,
\end{equation*}
which can be treated as a generalization of 2D distributive divdiv complex involving HHJ elements developed in \cite{ChenHuHuang2018} to 3D. Again the first half can be replaced by~\eqref{intro:firsthalf} for $k\geq0$ and more variants, including $k=0$ case, can be found in Section \ref{sec:distributivedivdiv}.

The rest of this paper is organized as follows. Hybridizable $H(\div\div)$-conforming finite elements in arbitrary dimension are constructed in Section~\ref{sec:divdivfem}. 
A mixed finite element method together with error analysis, post-processing, and duality argument are presented in Section~\ref{sec:mfem}. 
Then in Section~\ref{sec:hybridization}, 
the hybridization and its equivalence to other methods are presented for the mixed finite element method of the biharmonic equation.
Several new finite element divdiv complexes in three dimensions are devised in Section~\ref{sec:femdivdivcomplex}.

\section{$H(\div\div)$-conforming finite elements}\label{sec:divdivfem}
In this section, we discuss $H(\div\div)$-conforming finite elements. We review existing finite elements that enforce conformity by ensuring continuity on the normal plane of lower-dimensional sub-simplices, which is known as super-smoothness. By using a redistribution technique, we obtain a new element without such super-smoothness. Additionally, we construct a Raviart-Thomas type element using enriched polynomial spaces.

\subsection{Notation}

Let $\Omega\subset \mathbb{R}^d~(d\geq 2)$ be a bounded
polytope.
Given a bounded domain $D\subset\mathbb{R}^{d}$ and a
non-negative integer $k$, let $H^k(D)$ be the usual Sobolev space of functions
over $D$, whose norm and semi-norm are denoted by
$\Vert\cdot\Vert_{k,D}$ and $|\cdot|_{k,D}$ respectively. 
Define $H_0^k(D)$ as the closure of $C_{0}^{\infty}(D)$ with
respect to the norm $\Vert\cdot\Vert_{k,D}$.
Let $(\cdot, \cdot)_D$ be the standard inner product on $L^2(D)$. If $D$ is $\Omega$, we abbreviate
$\Vert\cdot\Vert_{k,D}$, $|\cdot|_{k,D}$ and $(\cdot, \cdot)_D$ by $\Vert\cdot\Vert_{k}$, $|\cdot|_{k}$ and $(\cdot, \cdot)$,
respectively.
Denote by $h_D$ the diameter of $D$.

For a $d$-dimensional simplex $T$, we let $\Delta(T)$ denote all the subsimplices of $T$, while $\Delta_{\ell}(T)$ denotes the set of subsimplices of dimension $\ell$, for $0\leq \ell \leq d$. 

For $f\in\Delta_{\ell}(T)$ with $0\leq \ell\leq d$, let $\boldsymbol n_{f,1}, \cdots, \boldsymbol n_{f,d-\ell}$ be linearly independent unit normal vectors, and $\boldsymbol t_{f,1}, \cdots, \boldsymbol t_{f,\ell}$ be its orthonormal tangential vectors.
We abbreviate $\boldsymbol n_{F,1}$ as $\boldsymbol n_{F}$ or $\bs n$ when $\ell=d-1$.
We also abbreviate $\boldsymbol n_{f,i}$ and $\boldsymbol t_{f,i}$ as $\boldsymbol n_{i}$ and $\boldsymbol t_{i}$ respectively if not causing any confusion. 
For a $(d-1)$-dimensional face $F\in\partial T$ and a $(d-2)$-dimensional  face $e\in\partial F$, $\bs n_{F,e}$ denotes the  normal direction of $e$ on $F$ induced by the orientation of $F$. When $d=2$, $e$ is a vertex and $F$ is an edge. Then $\bs n_{F,e} = \bs t_F$ if $e$ is the end point of $F$ for the orientation given by $\bs t_F$ and $\bs n_{F,e} = - \bs t_F$ otherwise. We use $\bs n_{\partial T}$ to denote the unit outward normal vector of $\partial T$ which is a piecewise constant vector function.

Given a face $F\in\Delta_{d-1}(T)$, and a vector $\boldsymbol v\in \mathbb R^d$, define 
$$
\Pi_F\boldsymbol v:= (\boldsymbol n_F\times \boldsymbol v)\times \boldsymbol n_F = (\boldsymbol I - \boldsymbol n_F\boldsymbol n_F^{\intercal})\boldsymbol v
$$ 
as the projection of $\boldsymbol v$ onto the face $F$. 
For a scalar function $v$, define the surface gradient
\begin{equation*}
\nabla_{F}v:=\Pi_F\nabla v = \nabla v-\frac{\partial v}{\partial n_{F}}\boldsymbol n_{F}=\sum_{i=1}^{d-1}\frac{\partial v}{\partial t_{F,i}}\boldsymbol t_{F,i},
\end{equation*}
namely the projection of $\nabla v$ to the face $F$, which is independent of the choice of the normal vectors. Denote by $\div_{F}\bs v:=\nabla_{F}\cdot(\Pi_F\bs v)$ the corresponding surface divergence.

Denote by $\mathcal{T}_h$ a conforming triangulation of $\Omega$ with each element being a simplex, where $h:=\max_{T\in\mathcal{T}_h}h_T$.
Let $\mathcal{F}_h$, $\mathring{\mathcal{F}}_h$, $\mathcal{E}_h$ and $\mathring{\mathcal{E}}_h$ be the set of all $(d-1)$-dimensional faces, interior $(d-1)$-dimensional faces, $(d-2)$-dimensional faces and interior $(d-2)$-dimensional faces, respectively.
Set $\mathcal F_h^{\partial}:=\mathcal{F}_h\backslash\mathring{\mathcal{F}}_h$ and $\mathcal E_h^{\partial}:=\mathcal{E}_h\backslash\mathring{\mathcal{E}}_h$.
For $e\in \mathcal E_h$, denote by $\omega_e := \{T\in \mathcal T_h: e\subset T\}$ as the set of all simplices containing $e$. 
We use $\nabla_h, \nabla_h^2$ and $(\div\div)_h$ to represent the element-wise gradient, Hessian and $\div\div$ with respect to $\mathcal T_h$.
Consider two adjacent simplices $T_1$ and $T_2$ sharing an interior face $F$. Define the average and the jump of a function $w$ on $F$ as
\begin{equation*}
\{w\}:=\frac{1}{2}\big((w|_{T_1})|_F+(w|_{T_2})|_F\big),\;\; [w]:=(w|_{T_1})|_F\boldsymbol{n}_F\cdot\boldsymbol{n}_{\partial T_1}+(w|_{T_2})|_F\boldsymbol{n}_F\cdot\boldsymbol{n}_{\partial T_2}.
\end{equation*}
On a face $F$ lying on the boundary $\partial\Omega$, the above terms become
\begin{equation*}
\{w\}:=w|_F,\quad [w]:=w|_F.
\end{equation*}

For a bounded domain $D\subset\mathbb{R}^{d}$ and 
a non-negative integer $k$, 
let $\mathbb P_k(D)$ stand for the set of all polynomials over $D$ with the total degree no more than $k$. When $k<0$, set $\mathbb P_k(D) := \{0\}.$
Let $Q_{k,D}$ be the $L^2$-orthogonal projector onto $\mathbb P_k(D)$, and $Q_{k}$ its element-wise version with respect to $\mathcal{T}_h$. Let $\mathbb H_k(D):=\mathbb P_k(D)\backslash\mathbb P_{k-1}(D)$ be the space of homogeneous polynomials of degree $k$. 
In the binomial coefficient notation ${n\choose k}$, if $n\geq 0, k<0$, we set ${n\choose k} := 0$.

Let $V_{k}^{-1}(\mathcal T_h) := \prod_{T\in \mathcal T_h} \mathbb P_{k}(T)$ for $k\geq 0$ and abbreviate as $V_{k}^{-1}$ when the dependence of $\mathcal T_h$ is not emphasized.

Set $\mathbb M:=\mathbb R^{d\times d}$.
Denote by $\mathbb S$, $\mathbb K$  and $\mathbb T$ the subspace of symmetric matrices, skew-symmetric matrices and traceless matrices of $\mathbb M$, respectively. For a space $B(D)$ defined on $D$,
let $B(D; \mathbb{X}):=B(D)\otimes\mathbb{X}$ be its vector or tensor version for $\mathbb{X}$ being $\mathbb{R}^d$, $\mathbb{M}$, $\mathbb{S}$, $\mathbb{K}$ and $\mathbb{T}$.

Throughout this paper, we use ``$\lesssim\cdots $" to mean that ``$\leq C\cdots$", where
letter $C$ is a generic positive constant independent of $h$,
which may stand for different values at its different occurrences. The notation $A\eqsim B$ means $B\lesssim A\lesssim B$. 

\subsection{Trace and continuity}

We consider the continuity of a piecewise smooth tensor function to be in the Sobolev space
$$
H(\div{\div },\Omega; \mathbb{S}):=\{\boldsymbol{\tau}\in L^{2}(\Omega; \mathbb{S}): \div {\div}\boldsymbol{\tau}\in L^{2}(\Omega)\},
$$
which plays a central role in our later constructions. We start from the Green's identity established  in~\cite{Chen;Huang:2020Finite,ChenHuangDivRn2022} for the operator divdiv. 

The trace $\tr^{\div\div}\bs \sigma$, as a distribution, is defined as the difference 
$$
\langle \tr^{\div\div}\bs \sigma, \tr^{\nabla^2} v \rangle_{\partial T}:= (\div\div\boldsymbol \sigma, v)_T - (\boldsymbol \sigma, \nabla^2v)_T.
$$
We decompose $\tr^{\div\div}\bs \sigma$ and $\tr^{\nabla^2} v$ into two face-wise trace operators and one edge trace operator.

\begin{lemma}[Lemma 5.2 in~\cite{ChenHuangDivRn2022}]
We have for any $\boldsymbol \sigma\in \mathcal C^2(T; \mathbb S)$ and $v\in H^2(T)$ that
\begin{align}
&\notag  (\div\div\boldsymbol \sigma, v)_T=(\boldsymbol \sigma, \nabla^2v)_T \\
&- \sum_{F\in\partial T}\left[( \tr_1(\bs \sigma), \tr_1(v))_{F} -  ( \tr_2(\bs \sigma), \tr_2(v))_F\right] -\sum_{e\in\Delta_{d-2}(T)}(\tr_e(\bs \sigma), \tr_e(v))_e, \label{eq:greenidentitydivdiv} 
\end{align}
where
\begin{align*}
&\tr_1(\bs \sigma) = \boldsymbol  n_{\partial T}^{\intercal}\boldsymbol \sigma\boldsymbol  n_{\partial T}, &  & \tr_1( v) = \partial_n v\mid_{\partial T},\\
&\tr_2(\bs \sigma) =  \boldsymbol n_{\partial T}^{\intercal}\div \boldsymbol \sigma +  \div_F(\boldsymbol\sigma \boldsymbol n_{\partial T}),&  & \tr_2(v) = v\mid_{\partial T},\\
&\tr_e(\bs \sigma) = \sum_{F\in\partial T,e\in \partial F}\boldsymbol n_{F,e}^{\intercal}\boldsymbol \sigma \boldsymbol n_{\partial T}, &  & \tr_e(v) = v \mid_{\Delta_{d-2}(T)}.
\end{align*}
\end{lemma}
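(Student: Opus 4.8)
The plan is to derive \eqref{eq:greenidentitydivdiv} by integrating by parts twice in the interior of $T$ to move both divergences onto $v$, and then splitting the resulting boundary integral over $\partial T$ into normal and tangential pieces, with one further surface integration by parts carried out face by face.

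First I would move the two divergences onto $v$. Writing $\div\div\bs\sigma=\div(\div\bs\sigma)$ with $\div\bs\sigma$ the row-wise divergence, a first application of the divergence theorem gives
\[
(\div\div\bs\sigma, v)_T = -(\div\bs\sigma, \nabla v)_T + (\bs n_{\partial T}^{\intercal}\div\bs\sigma,\, v)_{\partial T},
\]
and a second application, now integrating by parts against $\nabla v$ and using the symmetry of $\bs\sigma$, gives
\[
-(\div\bs\sigma,\nabla v)_T = (\bs\sigma,\nabla^2 v)_T - (\bs\sigma\bs n_{\partial T},\, \nabla v)_{\partial T}.
\]
Combining the two produces the volume term $(\bs\sigma,\nabla^2 v)_T$ together with a single boundary integral $\sum_{F\in\partial T}\int_F\big[(\bs n^{\intercal}\div\bs\sigma)\,v - (\bs\sigma\bs n)\cdot\nabla v\big]$, where $\bs n=\bs n_{\partial T}$ on each face $F$.

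Next, on each face $F$ I would decompose both $\nabla v$ and $\bs\sigma\bs n$ into normal and tangential components, $\nabla v=(\partial_n v)\bs n+\nabla_F v$ and $\bs\sigma\bs n=(\bs n^{\intercal}\bs\sigma\bs n)\bs n+\Pi_F(\bs\sigma\bs n)$. The normal–normal pairing immediately yields $-(\bs n^{\intercal}\bs\sigma\bs n,\partial_n v)_F=-(\tr_1(\bs\sigma),\tr_1(v))_F$, leaving the term $\int_F\big[(\bs n^{\intercal}\div\bs\sigma)\,v-\Pi_F(\bs\sigma\bs n)\cdot\nabla_F v\big]$. For the tangential part I would invoke the surface integration-by-parts formula on the manifold $F$ with boundary $\partial F$,
\[
\int_F \bs w\cdot\nabla_F v = -\int_F(\div_F\bs w)\,v + \sum_{e\in\partial F}\int_e(\bs w\cdot\bs n_{F,e})\,v,
\]
applied to $\bs w=\Pi_F(\bs\sigma\bs n)$. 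Since $\bs n_{F,e}$ is tangential to $F$ one has $\bs w\cdot\bs n_{F,e}=\bs n_{F,e}^{\intercal}\bs\sigma\bs n$, and $\div_F\bs w=\div_F(\bs\sigma\bs n)$. The volume contribution then combines with $\bs n^{\intercal}\div\bs\sigma$ to form precisely $\tr_2(\bs\sigma)=\bs n_{\partial T}^{\intercal}\div\bs\sigma+\div_F(\bs\sigma\bs n_{\partial T})$, giving $(\tr_2(\bs\sigma),\tr_2(v))_F$, while the $\partial F$ contributions are edge integrals $-\sum_{e\in\partial F}\int_e(\bs n_{F,e}^{\intercal}\bs\sigma\bs n)\,v$.

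Finally I would reorganize the double sum over faces and their boundary edges into a single sum over $e\in\Delta_{d-2}(T)$, grouping all $F\in\partial T$ with $e\in\partial F$ to recognize $\sum_{F:\,e\in\partial F}\bs n_{F,e}^{\intercal}\bs\sigma\bs n_{\partial T}=\tr_e(\bs\sigma)$ together with $v|_e=\tr_e(v)$, which assembles the edge term $-\sum_e(\tr_e(\bs\sigma),\tr_e(v))_e$ and completes \eqref{eq:greenidentitydivdiv}. The main obstacle is this last surface integration-by-parts step: I must justify the tangential divergence formula on each face and, above all, keep the co-normal orientation convention for $\bs n_{F,e}$ consistent, so that the edge contributions coming from the several faces meeting along a given $e$ aggregate with the correct signs into $\tr_e(\bs\sigma)$. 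Tracking these orientation conventions and the normal/tangential splittings, rather than any deep analytic difficulty, is where the real care is required.
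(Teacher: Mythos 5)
Your proof is correct and is essentially the canonical derivation of this identity: two integrations by parts in $T$, splitting $\bs\sigma\bs n_{\partial T}$ and $\nabla v$ into normal and tangential parts on each flat face, the surface divergence theorem on $F$ to produce $\div_F(\bs\sigma\bs n_{\partial T})$ and the co-normal edge terms, and finally regrouping the face-edge double sum by $e\in\Delta_{d-2}(T)$. The paper itself does not reprove this lemma (it quotes Lemma 5.2 of the cited reference), and your argument — including the orientation bookkeeping for $\bs n_{F,e}$, which is determined by $F$ and $e$ alone — is exactly the standard one given there.
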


When summing over all elements and assuming the test function $v$ is smooth enough, e.g. $v\in C^2(\Omega)$, we can merge the terms on the interior faces and edges. For an interior face $F\in \  \mathring{\mathcal F}_h$, denote by $T_1, T_2$ two elements containing $F$. 
Introduce the jumps
\begin{align*}
&[\tr_1(\bs \sigma)]_F : = \bs n_{\partial T_1}^{\intercal}\bs \sigma \bs n_{\partial T_1} \mid _{F} - \bs n_{\partial T_2}^{\intercal}\bs \sigma \bs n_{\partial T_2} \mid _{F},  \\
&[\tr_2(\bs \sigma)]_F := (\boldsymbol n_{\partial T_1}^{\intercal}\div \boldsymbol \sigma +  \div_F(\boldsymbol\sigma \boldsymbol n_{\partial T_1}))\mid _{F} + (\boldsymbol n_{\partial T_2}^{\intercal}\div \boldsymbol \sigma +  \div_F(\boldsymbol\sigma \boldsymbol n_{\partial T_2}))\mid _{F},  &  &\\
& [ \tr_e(\bs \sigma)]|_e := \sum_{T\in \omega_e} \sum_{F\in\partial T, e\in\partial F} (\boldsymbol n_{F,e}^{\intercal}\boldsymbol \sigma \boldsymbol n_{\partial T})|_e.  &  &
\end{align*}
We recall the results from~\cite{Fuhrer;Heuer;Niemi:2019ultraweak} using our notation.
\begin{lemma}[Proposition 3.6 in~\cite{Fuhrer;Heuer;Niemi:2019ultraweak}]\label{lm:divdivconforming}
Let $\bs \sigma \in L^2(\Omega;\mathbb S)$ and $\bs \sigma|_T\in H^{2}(T;\mathbb S)$ for each $T\in \mathcal T_h$. Then $\bs \sigma \in H(\div\div,\Omega;\mathbb S)$ if and only if 
\begin{enumerate}
\item $[\tr_1(\bs \sigma)]_F = 0$ for all $F\in \  \mathring{\mathcal F}_h$;
\smallskip
\item $[\tr_2(\bs \sigma)]_F = 0$ for all $F\in \  \mathring{\mathcal F}_h$;
\smallskip
\item $[ \tr_e(\bs \sigma)]|_e = 0$ for all $e\in \ \mathring{\mathcal E}_h$. 
\end{enumerate} 
\end{lemma}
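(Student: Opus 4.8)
The plan is to characterize membership in $H(\div\div,\Omega;\mathbb S)$ through its distributional definition and to reduce the defining integration-by-parts to a sum of element-local Green's identities. Recall that $\bs\sigma\in H(\div\div,\Omega;\mathbb S)$ if and only if the distribution $v\mapsto (\bs\sigma,\nabla^2 v)$, $v\in C_0^\infty(\Omega)$, is represented by an $L^2(\Omega)$ function. First I would write $(\bs\sigma,\nabla^2 v)=\sum_{T\in\mathcal T_h}(\bs\sigma,\nabla^2 v)_T$ and apply the element-wise Green's identity \eqref{eq:greenidentitydivdiv} on each $T$. Summing yields $(\bs\sigma,\nabla^2 v)=((\div\div)_h\bs\sigma,v)+J(v)$, where $(\div\div)_h\bs\sigma\in L^2(\Omega)$ is the piecewise $\div\div$ and $J(v)$ collects all face and edge contributions on the skeleton. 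Testing with $v$ supported in the interior of a single element forces any $L^2$ representative of $\div\div\bs\sigma$ to coincide with $(\div\div)_h\bs\sigma$; hence $\bs\sigma\in H(\div\div,\Omega;\mathbb S)$ if and only if $J(v)=0$ for all $v\in C_0^\infty(\Omega)$.

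The heart of the reduction is to merge the element-local boundary terms into the jump functionals $[\tr_1(\bs\sigma)]_F$, $[\tr_2(\bs\sigma)]_F$ and $[\tr_e(\bs\sigma)]|_e$, which amounts to bookkeeping the parity of each trace in the normal $\bs n_{\partial T}$. On an interior face $F$ shared by $T_1,T_2$ we have $\bs n_{\partial T_2}=-\bs n_{\partial T_1}$ while $v$ and $\nabla v$ are single-valued. Since $\tr_1(\bs\sigma)=\bs n^\intercal\bs\sigma\bs n$ is even and $\tr_1(v)=\partial_n v$ is odd in $\bs n$, the two facewise $\tr_1$ terms combine into $([\tr_1(\bs\sigma)]_F,\partial_n v)_F$ with the difference appearing; since $\tr_2(\bs\sigma)$ is odd and $\tr_2(v)=v$ is even, the two $\tr_2$ terms combine into $-([\tr_2(\bs\sigma)]_F,v)_F$ with the sum appearing; and summing $\tr_e(\bs\sigma)$ over $T\in\omega_e$ against the single-valued $\tr_e(v)=v|_e$ gives $([\tr_e(\bs\sigma)]|_e,v)_e$. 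Boundary faces and edges drop out because $v$ and $\partial_n v$ vanish there. This produces $J(v)=\sum_{F\in\mathring{\mathcal F}_h}\big[([\tr_1(\bs\sigma)]_F,\partial_n v)_F-([\tr_2(\bs\sigma)]_F,v)_F\big]+\sum_{e\in\mathring{\mathcal E}_h}([\tr_e(\bs\sigma)]|_e,v)_e$; here $\bs\sigma|_T\in H^2(T)$ guarantees all three traces are genuine $L^2$ functions on the respective subsimplices, so the pairings are ordinary $L^2$ products.

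From this identity the \emph{if} direction is immediate. For the \emph{only if} direction I would decouple the three conditions by localized test functions and proceed in order. To obtain (1), fix $F\in\mathring{\mathcal F}_h$ and choose $v\in C_0^\infty(\Omega)$ supported in a neighborhood of the relative interior of $F$ (hence away from all edges and other faces) with $v|_F=0$ and $\partial_n v|_F$ an arbitrary smooth function; then $J(v)=([\tr_1(\bs\sigma)]_F,\partial_n v)_F=0$ gives (1) by density. With (1) in hand the $\tr_1$ terms vanish identically, so choosing $v$ localized near the interior of $F$ with $v|_F$ arbitrary forces $[\tr_2(\bs\sigma)]_F=0$, which is (2). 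Finally, with (1) and (2) all face contributions vanish, so taking $v$ localized near the relative interior of a single edge $e$ yields $([\tr_e(\bs\sigma)]|_e,v)_e=0$ and hence (3). The main obstacle is not analytic but organizational: one must track the parity of each trace in $\bs n$ precisely so that the element-local terms assemble into exactly the signed jumps defined before the lemma, and one must verify that a single $v\in C_0^\infty(\Omega)$ can independently prescribe $v|_F$ and $\partial_n v|_F$ on the interior of a face while its support avoids the lower-dimensional skeleton, which is what makes the three conditions separate cleanly.
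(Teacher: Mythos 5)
Your proof is correct, but note that the paper does not actually prove this lemma: it is recalled verbatim as Proposition 3.6 of~\cite{Fuhrer;Heuer;Niemi:2019ultraweak}, so there is no internal proof to compare against. Your argument --- characterizing $H(\div\div,\Omega;\mathbb S)$ through the distributional identity $\langle \div\div\bs\sigma, v\rangle = (\bs\sigma,\nabla^2 v)$ for $v\in C_0^\infty(\Omega)$, applying the element-wise Green's identity \eqref{eq:greenidentitydivdiv}, assembling the skeleton functional $J(v)$ with the correct parity bookkeeping (difference jump for the even trace $\tr_1$ paired with the odd $\partial_n v$, sum jump for the odd trace $\tr_2$ paired with the even $v$, plain patch sum on edges), and then decoupling the three conditions with test functions localized at relative interiors of faces and edges --- is the standard route and amounts to a self-contained reconstruction of the cited result; it is consistent with the trace machinery the paper sets up before the lemma. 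One point worth making explicit: the paper states \eqref{eq:greenidentitydivdiv} for $\bs\sigma\in\mathcal C^2(T;\mathbb S)$, whereas you apply it with $\bs\sigma|_T\in H^2(T;\mathbb S)$; this requires the routine density extension of the identity, which is legitimate precisely because the face traces of $\bs\sigma$ and $\div\bs\sigma$ and the codimension-two edge trace of $\bs\sigma$ are all continuous with respect to the $H^2(T)$ norm --- the same observation you invoke to justify that the skeleton pairings are ordinary $L^2$ products. With that remark added, your proof is complete.
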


Enforcing the jump condition $[ \tr_e(\bs \sigma)]|_e = 0$ in $H(\div\div)$-conforming finite element constructions is a challenging task as the constraint is imposed in the patch of $e$.
The continuity of $\bs \sigma$ projected onto the normal plane $\mathcal N_e$ of $e\in \mathring{\mathcal E}_h$ is sufficient but  by no means necessary.
More specifically, as a $(d-2)$-dimensional sub-simplex, the  dimension of the normal plane $\mathcal N_e$ is two. To enforce the continuity condition, we  choose two orthonormal directions ${\bs n_1, \bs n_2}$ normal to $e$ for each edge $e\in \mathcal E_h$. It is important to note that $\mathcal N_e$ depends solely on $e$ and not on the elements containing it.
We denote the space of $2\times 2$ symmetric matrices on $\mathcal N_e$ by $\mathbb S(\mathcal N_e)$, and define $Q_{\mathcal N_e}(\bs \sigma):=(\bs n_i^{\intercal}\bs \sigma\bs n_j)_{i,j=1,2}$ as the projection of $\bs \sigma\in \mathbb S$ onto $\mathbb S(\mathcal N_e)$. 

\begin{lemma}\label{lm:edgejump}
Let $\bs \sigma \in L^2(\Omega;\mathbb S)$ and $\bs \sigma|_T\in H^{2}(T;\mathbb S)$ for each $T\in \mathcal T_h$.
If $Q_{\mathcal N_e}(\bs \sigma)$ is continuous on $e$, then $[ \tr_e(\bs \sigma)]|_e = 0$ for all $e\in \ \mathring{\mathcal E}_h$. 
\end{lemma}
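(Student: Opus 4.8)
The plan is to show that the edge trace $\tr_e(\bs\sigma)$ sees $\bs\sigma$ only through its projection $Q_{\mathcal N_e}(\bs\sigma)$ onto the two-dimensional normal plane $\mathcal N_e$, and then to borrow the vanishing of the jump from globally smooth tensors, which lie in $H(\div\div,\Omega;\mathbb S)$ trivially. The key geometric observation is that, for every $(d-1)$-face $F$ with $e\subset\partial F$, both vectors occurring in the summand $\bs n_{F,e}^\intercal\bs\sigma\,\bs n_{\partial T}$ lie in $\mathcal N_e$: the outward normal $\bs n_{\partial T}$ is orthogonal to the whole face $F\supset e$, hence orthogonal to the tangent space of $e$, so $\bs n_{\partial T}\in\mathcal N_e$; and $\bs n_{F,e}$ is by construction orthogonal to $e$, so it too lies in $\mathcal N_e$. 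In fact $\{\bs n_{F,e},\bs n_{\partial T}\}$ is an orthonormal basis of $\mathcal N_e$.

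Writing $P$ for the orthogonal projection onto $\mathcal N_e$ and using $P\bs n_{F,e}=\bs n_{F,e}$, $P\bs n_{\partial T}=\bs n_{\partial T}$, and $P^\intercal=P$, each summand satisfies $\bs n_{F,e}^\intercal\bs\sigma\,\bs n_{\partial T}=\bs n_{F,e}^\intercal(P\bs\sigma P)\bs n_{\partial T}$, and $P\bs\sigma P$ is exactly the tensor encoded by $Q_{\mathcal N_e}(\bs\sigma)$ in the fixed basis $\bs n_1,\bs n_2$ of $\mathcal N_e$. Consequently, at each point of $e$ every term of $[\tr_e(\bs\sigma)]|_e$, and hence the full sum over the faces $F$ and over the elements $T\in\omega_e$, is determined by the traces $Q_{\mathcal N_e}(\bs\sigma)|_T$ on $e$ together with the local geometry of the patch $\omega_e$ alone. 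By the continuity hypothesis these traces coincide for all $T\in\omega_e$, yielding a single matrix $M(x):=Q_{\mathcal N_e}(\bs\sigma)(x)$ at each $x\in e$.

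Finally, at a point $x\in e$ I would compare $\bs\sigma$ with the constant symmetric tensor $\tilde{\bs\sigma}:=\sum_{i,j=1,2}\bar M_{ij}\,\bs n_i\bs n_j^\intercal$ built from $\bar M:=M(x)$, which satisfies $Q_{\mathcal N_e}(\tilde{\bs\sigma})=\bar M$ and belongs to $C^\infty(\Omega;\mathbb S)\subset H(\div\div,\Omega;\mathbb S)$. Since $[\tr_e(\cdot)]$ depends only on the $Q_{\mathcal N_e}$-value at $x$, we have $[\tr_e(\bs\sigma)](x)=[\tr_e(\tilde{\bs\sigma})](x)$, and Lemma~\ref{lm:divdivconforming} applied to the smooth, hence $H(\div\div)$-conforming, tensor $\tilde{\bs\sigma}$ forces $[\tr_e(\tilde{\bs\sigma})](x)=0$; therefore $[\tr_e(\bs\sigma)]|_e=0$. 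I expect the only real content to be the first step, namely the verification that $\bs n_{\partial T}$ (and not merely $\bs n_{F,e}$) lies in $\mathcal N_e$, so that the summands genuinely factor through $Q_{\mathcal N_e}(\bs\sigma)$; once this is established, the continuity assumption together with the comparison against a globally smooth tensor makes the cancellation automatic, and no explicit summation of the angular geometric contributions around the patch $\omega_e$ is required.
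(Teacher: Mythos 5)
Your proposal is correct, and it reaches the conclusion by a genuinely different mechanism than the paper. The first half of your argument---that both $\bs n_{F,e}$ and $\bs n_{\partial T}|_F$ lie in the normal plane $\mathcal N_e$, so every summand of $[\tr_e(\bs \sigma)]|_e$ factors through $Q_{\mathcal N_e}(\bs \sigma)$---is precisely the (largely implicit) step in the paper where the continuity hypothesis is used. Where you diverge is the cancellation itself. The paper finishes elementarily: since $e\in\mathring{\mathcal E}_h$, every face $F\supset e$ is itself interior and shared by exactly two elements $T_1,T_2\in\omega_e$; regrouping the double sum over such faces, the two terms attached to $F$ carry the same $\bs n_{F,e}$ and, by the continuity of $Q_{\mathcal N_e}(\bs\sigma)$, the same projected tensor value on $e$, but opposite normals $\bs n_{\partial T_1}|_F=-\bs n_{\partial T_2}|_F$, so they cancel pairwise. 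You instead note that the jump at a point of $e$ is a fixed, patch-geometry-dependent linear functional applied to the common matrix $M(x)$, and you annihilate this functional by testing it with the constant symmetric tensor built from $M(x)$: constants are smooth, hence in $H(\div\div,\Omega;\mathbb S)$, and the ``only if'' direction of Lemma~\ref{lm:divdivconforming} forces their edge jumps to vanish. This is legitimate---Lemma~\ref{lm:divdivconforming} is an external result stated before Lemma~\ref{lm:edgejump}, so there is no circularity---and it buys you freedom from any sign or orientation bookkeeping around $\omega_e$; the cost is that the proof is no longer self-contained, outsourcing exactly the pairwise cancellation that the paper makes explicit (and which is, in essence, the edge part of the ``only if'' direction of that characterization). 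Two small points of rigor: traces of $H^2(T;\mathbb S)$ functions on the codimension-two face $e$ are defined only almost everywhere, so your pointwise comparison should be read for a.e.\ $x\in e$; and since the constant matrix $\bar M$ is arbitrary, your argument in fact shows the geometric functional vanishes identically on $\mathbb S(\mathcal N_e)$, which is a slightly cleaner way to phrase the final step.
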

\begin{proof}
For each $F$ containing $e\in \mathring{\mathcal E}_h$, $F$ is also interior and thus there exist exactly two elements $T_1, T_2$ in the edge patch $\omega_e$ s.t. $F\in \partial T_i, i=1,2$. The normal vector $\bs n_{F,e}$ is induced by the orientation of $F$ which is independent of the elements but $\bs n_{\partial T}$ is the outward normal direction depending on the element  $T$ containing $F$, and $\bs n_{\partial T_1}\mid_F = -\bs n_{\partial T_2}\mid _F$. 
%
Therefore 
$(\boldsymbol n_{F,e}^{\intercal}\boldsymbol \sigma \boldsymbol n_{\partial T_1} + \boldsymbol n_{F,e}^{\intercal}\boldsymbol \sigma \boldsymbol n_{\partial T_2})|_e = 0$ and consequently $[ \tr_e(\bs \sigma)]|_e = 0$.
\end{proof}

\subsection{$H(\div\div)$-conforming finite elements}\label{sec:divdivconforming}
Several $H(\div\div)$-conforming finite elements have been constructed in a series of recent works~\cite{ChenHuang2020,Chen;Huang:2020Finite,ChenHuangDivRn2022,Hu;Liang;Ma:2021Finite,Hu;Liang;Ma;Zhang:2022conforming,Hu;Ma;Zhang:2020family}. In the following, we recall the version presented in~\cite[Theorem 5.10]{ChenHuangDivRn2022} with a slight change in the notation: $r$ in~\eqref{HdivdivSfemdof2} represents the dimension of the sub-simplex while in ~\cite[Theorem 5.10]{ChenHuangDivRn2022}, it is the co-dimension.

Recall that, for a simplex $T$ and an integer $k\geq 0$, the first kind Ned\'el\'ec element~\cite{Nedelec:1986family} is
$$
{\rm ND}_{k}(T) = \mathbb P_{k}(T;\mathbb R^d) \oplus \mathbb H_{k}(T; \mathbb K)\boldsymbol x =\grad \mathbb P_{k+1}(T)\oplus\mathbb P_{k}(T;\mathbb K)\boldsymbol x.
$$
Let ${\bf RM} :={\rm ND}_{0}(T)$ be the kernel of the operator $\defm := {\rm sym }\grad$. We have ${\bf RM}\subseteq{\rm ND}_{k-3}(T)$ when $k\geq3$.

For $k\geq 3$, the shape function space is $\Sigma_k(T; \mathbb S):=\mathbb P_k(T;\mathbb S)$ and degrees of freedom (DoFs) are given by
\begin{subequations}\label{eq:divdivS}
\begin{align}
\boldsymbol \tau (\texttt{v}), & \quad \texttt{v}\in \Delta_0(T), \label{HdivdivSfemdof1}\\
(\boldsymbol  n_i^{\intercal}\boldsymbol \tau\boldsymbol n_j, q)_f, & \quad q\in\mathbb P_{k-r-1}(f),  f\in\Delta_{r}(T),\;  r=1,\ldots, d-1,\label{HdivdivSfemdof2}\\
&\quad \textrm{ and } i,j=1,\ldots, d-r, i\leq j, \notag\\
( \tr_2(\bs \tau), q)_F, &\quad q\in\mathbb P_{k-1}(F), F\in \partial T,\label{HdivdivSfemdof3}\\
(\Pi_F\boldsymbol \tau\boldsymbol n, \boldsymbol q)_F, & \quad \boldsymbol q\in {\rm ND}_{k-2}(F),  F\in\partial T,\label{HdivdivSfemdof4}\\
(\boldsymbol \tau, \defm\boldsymbol q)_T, &\quad \boldsymbol q\in {\rm ND}_{k-3}(T)\backslash {\bf RM},\label{HdivdivSfemdof5}\\
(\boldsymbol \tau, \boldsymbol q)_T, &\quad \boldsymbol q\in  \ker (\cdot\boldsymbol x)\cap \mathbb P_{k-2}(T;\mathbb S), \label{HdivdivSfemdof6}
\end{align}
\end{subequations}
We can view DoFs in~\eqref{HdivdivSfemdof1} as a special case of those in~\eqref{HdivdivSfemdof2} if we treat $\mathbb R^d$ as the normal plane of the vertex $\texttt{v}$. DoFs~\eqref{HdivdivSfemdof1}-\eqref{HdivdivSfemdof2} will determine the trace $\bs n^{\intercal}\boldsymbol \tau\boldsymbol n$ and also imply the continuity of $\bs \tau$ on the normal plane of edges. Notice that DoF~\eqref{HdivdivSfemdof2} only exists for sub-simplex with dimension $r\leq k-1$. DoF~\eqref{HdivdivSfemdof3} is to impose the continuity of $\tr_2(\bs \tau)=\boldsymbol n_F^{\intercal}\div \boldsymbol \tau +  \div_F(\boldsymbol\tau \boldsymbol n_F),$ which is modified from the DoF of $\bs n^{\intercal}\div \bs \tau$ on $F$. To have the surjection $\bs n_F^{\intercal}\div \mathbb P_k(T;\mathbb S) = \mathbb P_{k-1}(F)$, the degree $k\geq 3$ is required; see~\cite[Lemma 5.3]{ChenHuangDivRn2022}. Moreover, $k\geq 3$ is also required so that ${\rm ND}_{k-3}(T)\backslash {\bf RM}$ in DoF~\eqref{HdivdivSfemdof5} is meaningful. The space ${\rm ND}_{k-3}(T)\backslash {\bf RM}$ can be any sub-space $X\subset {\rm ND}_{k-3}(T)$ satisfying ${\rm ND}_{k-3}(T)={\bf RM}\oplus X$.
 Since the kernel of the operator $\defm$ is ${\bf RM}$, in~\eqref{HdivdivSfemdof5}, we can also write ${\rm ND}_{k-3}(T)$ only. 
 
For $k=0,1,2$, one can check by direct calculation that the number of DoFs is more than the dimension of the shape function space. See also Remark \ref{rm:k=2}.

\begin{lemma}[Theorem 5.10~in~\cite{ChenHuangDivRn2022}]\label{lm:divdivS}
For $k\geq 3$, the DoFs~\eqref{eq:divdivS} are unisolvent for the space $\mathbb P_k(T;\mathbb S)$.
\end{lemma}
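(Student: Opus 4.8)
The plan is to prove unisolvence in the usual two parts: first check that the number of degrees of freedom in~\eqref{eq:divdivS} matches $\dim\mathbb P_k(T;\mathbb S)=\tfrac{d(d+1)}{2}\binom{k+d}{d}$, and then show that a $\bs\tau\in\mathbb P_k(T;\mathbb S)$ annihilating all the DoFs must vanish. The count is a bookkeeping exercise---summing the Lagrange-type contributions~\eqref{HdivdivSfemdof1}--\eqref{HdivdivSfemdof2} over all subsimplices together with the face DoFs~\eqref{HdivdivSfemdof3}--\eqref{HdivdivSfemdof4} and the interior DoFs~\eqref{HdivdivSfemdof5}--\eqref{HdivdivSfemdof6}---so I would record it and move on. All the real content is in the vanishing argument, which I would organize from the lowest-dimensional subsimplices up to the interior, using the Green's identity~\eqref{eq:greenidentitydivdiv} as the main tool.

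Assume every DoF vanishes. First I would treat the normal-plane components. The DoFs~\eqref{HdivdivSfemdof1}--\eqref{HdivdivSfemdof2} are precisely a geometric (Lagrange-type) decomposition of the scalar components $\bs n_i^{\intercal}\bs\tau\bs n_j=Q_{\mathcal N_f}(\bs\tau)$ attached to each subsimplex $f$; because the normal plane of a subsimplex $f\subset F$ contains $\bs n_F$, the data on the lower-dimensional faces furnishes exactly the boundary values needed to run the Lagrange argument one dimension at a time, yielding $\bs n^{\intercal}\bs\tau\bs n=0$ on every face and $Q_{\mathcal N_e}(\bs\tau)=0$ on every edge. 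Since $\tr_1(\bs\tau)$ and $\tr_e(\bs\tau)$ are assembled from these normal-plane entries, both vanish. Next, on a face $F$ the remaining tangential-normal part $\Pi_F\bs\tau\bs n$ has its $\partial F$-values already fixed by the same lower-dimensional DoFs, so the N\'ed\'elec moments~\eqref{HdivdivSfemdof4} pin down its interior, giving $\bs\tau\bs n=0$ on $\partial T$; and~\eqref{HdivdivSfemdof3} gives $\tr_2(\bs\tau)=0$ on each face.

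With all the boundary traces $\tr_1,\tr_2,\tr_e$ annihilated, Green's identity~\eqref{eq:greenidentitydivdiv} collapses to $(\div\div\bs\tau,v)_T=(\bs\tau,\nabla^2 v)_T$ for every $v\in H^2(T)$. Taking $v\in\mathbb P_{k-2}(T)$ and using $\nabla v\in{\rm ND}_{k-3}(T)$ with $\nabla^2 v=\defm(\nabla v)$, the DoF~\eqref{HdivdivSfemdof5} forces the right-hand side to vanish, and since $\div\div\bs\tau\in\mathbb P_{k-2}(T)$ we conclude $\div\div\bs\tau=0$. Feeding this back into the identity gives $(\bs\tau,\nabla^2 v)_T=0$ for all polynomials $v$, i.e. $\bs\tau$ is $L^2$-orthogonal to $\nabla^2\mathbb P_{k+2}(T)$. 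Combining this orthogonality with the last interior DoF~\eqref{HdivdivSfemdof6}, namely orthogonality to $\ker(\cdot\bs x)\cap\mathbb P_{k-2}(T;\mathbb S)$, and invoking the polynomial $\div\div$ decomposition of the interior bubble space underlying the complex, I would conclude $\bs\tau=0$.

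I expect two places to carry the weight. The first is the bookkeeping interlock in the boundary step: verifying that the normal-plane DoFs on all codimensions, restricted to a fixed face, really do supply the correct Lagrange/N\'ed\'elec boundary data so that~\eqref{HdivdivSfemdof4} suffices for the tangential-normal component---this is routine but must be checked dimension by dimension. The genuine obstacle, however, is the final algebraic step: establishing the polynomial decomposition (equivalently, the exactness of the relevant polynomial $\div\div$ sequence) that makes~\eqref{HdivdivSfemdof5}--\eqref{HdivdivSfemdof6} unisolvent on the $\div\div$ bubble space, so that $\bs\tau\perp\nabla^2\mathbb P_{k+2}(T)$ together with orthogonality to $\ker(\cdot\bs x)\cap\mathbb P_{k-2}(T;\mathbb S)$ leaves only $\bs\tau=0$. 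This is where the hypothesis $k\geq 3$ is genuinely used, through the surjectivity $\bs n_F^{\intercal}\div\,\mathbb P_k(T;\mathbb S)=\mathbb P_{k-1}(F)$ and the requirement that ${\rm ND}_{k-3}(T)\setminus{\bf RM}$ be nonempty.
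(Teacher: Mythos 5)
Two remarks before the substance: the paper itself contains \emph{no} proof of this lemma --- it is imported verbatim as Theorem 5.10 of \cite{ChenHuangDivRn2022}, and the remark following it only explains why the hypothesis $k\geq\max\{d,3\}$ of that reference can be relaxed to $k\geq 3$ (by pointing to Lemmas 4.5, 4.11, 5.3, 5.4 there). So your attempt must stand on its own. Its overall skeleton (boundary traces first, then Green's identity \eqref{eq:greenidentitydivdiv} to get $\div\div\bs\tau=0$, then interior DoFs) is the right one, and your step deducing $\div\div\bs\tau=0$ from \eqref{HdivdivSfemdof5} via $\grad\mathbb P_{k-2}(T)\subset{\rm ND}_{k-3}(T)$ is correct. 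But there is a misstatement in the boundary step and a genuine gap at the end. The misstatement: the $\partial F$-values of $\Pi_F\bs\tau\bs n$ are \emph{not} fixed by the lower-dimensional DoFs. On a $(d-2)$-face $e\subset\partial F$, the DoFs \eqref{HdivdivSfemdof1}--\eqref{HdivdivSfemdof2} control only the entries $\bs n_i^{\intercal}\bs\tau\bs n_j$ with both vectors normal to $e$; the component of $\bs\tau\bs n_F$ \emph{tangent} to $e$ is known only at vertices. What is true is that the component of $\Pi_F\bs\tau\bs n_F$ normal to $\partial F$ within $F$ vanishes, so $\Pi_F\bs\tau\bs n_F$ is a divergence bubble on $F$, and the moments \eqref{HdivdivSfemdof4} against ${\rm ND}_{k-2}(F)$ then kill it by the interior unisolvence of the BDM element on the $(d-1)$-simplex $F$. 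That is the argument that has to be invoked; your Lagrange-type ``boundary values plus interior moments'' reasoning would fail, but the conclusion $\bs\tau\bs n|_{\partial T}=0$ is salvageable.

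The fatal gap is the final step. Your stated hypotheses --- $\bs\tau\perp\nabla^2\mathbb P_{k+2}(T)$ together with $\bs\tau\perp\ker(\cdot\bs x)\cap\mathbb P_{k-2}(T;\mathbb S)$ --- do not imply $\bs\tau=0$, even after restricting to the bubble space. Concretely, take $d=2$, $k=4$: the bubble space $\{\bs\tau\in\mathbb P_4(T;\mathbb S):\bs\tau\bs n|_{\partial T}=0,\ \bs n^{\intercal}\div\bs\tau|_{\partial T}=0\}$ has dimension $6$ (the number of interior DoFs, $5$ from \eqref{HdivdivSfemdof5} plus $1$ from \eqref{HdivdivSfemdof6}); on bubbles, $\div\div$ maps into $\mathbb P_2(T)\cap\mathbb P_1(T)^{\perp}$, a $3$-dimensional space, so the condition $\div\div\bs\tau=0$ (which for a bubble is equivalent to $\bs\tau\perp\nabla^2\mathbb P_{k+2}(T)$) cuts at most $3$ dimensions, and orthogonality to the one-dimensional space $\ker(\cdot\bs x)\cap\mathbb P_2(T;\mathbb S)$ cuts at most one more; at least a two-parameter family of nonzero bubbles satisfies everything you assume. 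What you discarded is precisely the rest of DoF \eqref{HdivdivSfemdof5}: orthogonality to $\defm\bs q$ for $\bs q$ in \emph{all} of ${\rm ND}_{k-3}(T)$, including the non-gradient part $\mathbb P_{k-3}(T;\mathbb K)\bs x$, not merely the Hessians $\defm\grad\mathbb P_{k-2}(T)=\nabla^2\mathbb P_{k-2}(T)$. The actual route is: for a bubble, $(\bs\tau,\defm\bs q)_T=-(\div\bs\tau,\bs q)_T$, so \eqref{HdivdivSfemdof5} together with $\bs n^{\intercal}\div\bs\tau|_{\partial T}=0$ and $\div(\div\bs\tau)=0$ exhibits $\div\bs\tau$ as a divergence-free $H_0(\div)$ polynomial bubble with vanishing ${\rm ND}_{k-3}(T)$ moments, hence $\div\bs\tau=0$ (BDM unisolvence again); only then does one face the genuinely hard part, namely that the divergence-free symmetric bubble space pairs nondegenerately with $\ker(\cdot\bs x)\cap\mathbb P_{k-2}(T;\mathbb S)$. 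That last fact is the content of the lemmas of \cite{ChenHuangDivRn2022} cited in the paper's remark; it does not follow from exactness of the polynomial divdiv sequence alone, and your proposal leaves it unproved.
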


\begin{remark}\rm 
In~\cite[Theorem 5.10]{ChenHuangDivRn2022}, the requirement $k\geq \max\{d,3\}$ is presented. The condition $k\geq d$ is to ensure DoF~\eqref{HdivdivSfemdof2} exists on $(d-1)$-dimensional faces so that the inf-sup condition holds. Based on the key decomposition in~\cite[Fig. 5.1]{ChenHuangDivRn2022} and the characterization of each component established in Lemma 4.5 for $\tr^{\div}(\mathbb P_k(T; \mathbb S))$ with $k\geq 1$, Lemma 4.11 for $E_0'(\mathbb S)$ with $k\geq 2$, Lemma 5.3 for $\tr^{\div}\div F_r(\mathbb S)$ with $k\geq 3$, and Lemma 5.4 for $F_0'(\mathbb S)$ with $k\geq 3$, the uni-solvence holds with condition $k\geq 3$ only. $\qed$
\end{remark}

The finite element space $\Sigma_{k}^{\div\div}$ is defined as follows
\begin{align*}
\Sigma_{k}^{\div\div}:=\{\boldsymbol \tau\in L^2(\Omega;\mathbb S): & \boldsymbol \tau|_T\in\mathbb P_k(T;\mathbb S) \textrm{ for each } T\in\mathcal T_h, \\
&\; \textrm{ DoFs~\eqref{HdivdivSfemdof1},~\eqref{HdivdivSfemdof2}, and~\eqref{HdivdivSfemdof3} are single-valued} \}.    
\end{align*}
The single-valued DoFs in~\eqref{HdivdivSfemdof1} and~\eqref{HdivdivSfemdof2} imply the continuity of the $Q_{\mathcal N_f}(\bs \tau)$ function for all lower-dimensional sub-simplices $f$ of $T$. In particular, the edge jump vanishes, i.e., $[ \tr_e(\bs \tau)]|_e = 0$ as proven in Lemma \ref{lm:edgejump}. The continuity of $\tr_1(\bs \tau)$ and $\tr_2(\bs \tau)$ are imposed by DoFs~\eqref{HdivdivSfemdof1}-\eqref{HdivdivSfemdof3}. Therefore, we can conclude that $\Sigma_{k}^{\div\div}\subset H(\div\div, \Omega; \mathbb S)$ in view of Lemma \ref{lm:divdivconforming}.

DoF~\eqref{HdivdivSfemdof4} for the tangential-normal component $\Pi_F\boldsymbol \tau\boldsymbol n$ is considered as a local DoF, i.e., it is not single-valued across simplices. 
If DoF~\eqref{HdivdivSfemdof4} is also single-valued, then the function is also in $H(\div,\Omega;\mathbb S)$ and the corresponding element,  which is firstly introduced by Hu, Ma, and Zhang~\cite{Hu;Ma;Zhang:2020family}, is $H(\div\div;\mathbb S)\cap H(\div;\mathbb S)$-conforming.

When $k\geq \max\{d,3\}$, we have the discrete divdiv stability~\cite[Lemma 5.12]{ChenHuangDivRn2022}. Namely $\div\div: \Sigma_{k}^{\div\div}\to V_{k-2}^{-1}(\mathcal T_h)$ is surjective and the following inf-sup condition holds with a constant $\alpha$ independent of $h$
\begin{equation*}
\inf_{p_h\in V^{-1}_{k-2}(\mathcal T_h)}\sup_{\boldsymbol\tau_h\in \Sigma_{k}^{\div\div}}
\frac{(\div\div\boldsymbol{\tau}_h, p_h)}{\|\boldsymbol\tau_h\|_{\div\div} \|p_h\|_0} = \alpha > 0, \quad k\geq \max\{d,3\}.
\end{equation*}
Although the element is well defined for $k\geq 3$, the constraint $k\geq d$ is required for the inf-sup condition. When $k\geq d$, DoF~\eqref{HdivdivSfemdof2} includes the moment $\int_F\boldsymbol n^{\intercal}\boldsymbol \tau\boldsymbol n\dd S$ for $F\in\partial T$, which is required by the fact that the range space $\div\div\Sigma_{k}^{\div\div}$ should include all piecewise linear functions.

Implementing the $H(\div\div)$-conforming element defined by DoF~\eqref{eq:divdivS}  can be challenging due to the high degree $k\geq \max\{d,3\}$ and the relatively complex degrees of freedom. In two dimensions, an $H(\div\div)\cap H(\div)$-conforming element has been successfully implemented and applied to discretize the biharmonic equation using the basis for Hu-Zhang $H(\div; \mathbb S)$-element, as described in~\cite{Hu;Ma;Zhang:2020family}.

We will present a new $H(\div\div)$-conforming finite element with minimal smoothness.
%
For $k\geq 3$, the shape function space is still $\mathbb P_k(T;\mathbb S)$ and the following DoFs~\eqref{eq:newdivdivS} are proposed:
\begin{subequations}\label{eq:newdivdivS}
\begin{align}
(\tr_e(\bs \tau), q)_e, &\quad q\in \mathbb P_k(e), e\in \Delta_{d-2}(T),\label{eq:newdivdivdof1}\\
(\bs n^{\intercal}\bs \tau \bs n, q )_F, &\quad q\in \mathbb P_k(F), F\in \partial T,\label{eq:newdivdivdof2}\\
( \tr_2(\bs \tau), q)_F, &\quad q\in\mathbb P_{k-1}(F), F\in \partial T,\label{eq:newdivdivdof3}\\
(\Pi_F\boldsymbol \tau\boldsymbol n, \boldsymbol q)_F, & \quad \boldsymbol q\in {\rm ND}_{k-2}(F),  F\in\partial T,\label{eq:newdivdivdof4}\\
(\boldsymbol \tau, \defm\boldsymbol q)_T, &\quad \boldsymbol q\in {\rm ND}_{k-3}(T),\label{eq:newdivdivdof5}\\
(\boldsymbol \tau, \boldsymbol q)_T, &\quad \boldsymbol q\in  \ker (\cdot\boldsymbol x)\cap \mathbb P_{k-2}(T;\mathbb S). \label{eq:newdivdivdof6}
\end{align}
\end{subequations}
Comparing with DoFs~\eqref{eq:divdivS}, the difference is that DoFs~\eqref{HdivdivSfemdof1}-\eqref{HdivdivSfemdof2} are redistributed to edges and faces to form DoFs ~\eqref{eq:newdivdivdof1}-\eqref{eq:newdivdivdof2}. 

We now briefly explain the redistribution process. Without loss of generality, consider vertex $\texttt{v}_0$. Choose $\{\bs n_{F_i}, i=1,\ldots, d\}$ as a basis of $\mathbb R^d$, where $F_i$ is the $(d-1)$-dimensional face containing $\texttt{v}_0$ for $i=1,\ldots, d$. DoF $\boldsymbol \tau (\texttt{v}_0)\in \mathbb S$ is determined by the symmetric matrix $( \bs n_{F_i}^{\intercal}\boldsymbol \tau (\texttt{v}_0)\bs n_{F_j})_{i,j=1,\ldots, d}$. We redistribute the diagonal entry $\bs n_{F_i}\boldsymbol \tau (\texttt{v}_0)\bs n_{F_i}$ to face $F_i$, for $i=1,\ldots, d$, and the off-diagonal $\bs n_{F_i}\boldsymbol \tau (\texttt{v}_0)\bs n_{F_j}, 1\leq i< j\leq d$, to edge $e_{ij} = F_i\cap F_j$. 
Such redistribution can be generalized to DoF~\eqref{HdivdivSfemdof2}. For a lower dimensional sub-simplex $f\in \Delta_r(T), r=1,\ldots, d-1$, use $\{\bs n_{F_i}, f\in \Delta_r(F_i), i=1,\ldots, d-r\}$ as the basis of the normal plane $\mathcal N_f$ of $f$. We can redistribute the diagonal $\bs n_{F_i}^{\intercal}\boldsymbol \tau\bs n_{F_i}|_f$ to face $F_i$ and off-diagonal $\bs n_{F_i}^{\intercal}\boldsymbol \tau\bs n_{F_j}|_f$ to edge $e_{ij}= F_i\cap F_j$. 

After the redistribution, we merge DoFs. A function $u\in \mathbb P_k(T)$ can be determined by
\begin{equation}\label{eq:PkDoF}
(u, q)_T, \quad q\in \mathbb P_k(T).
\end{equation}
Recall that the geometric decomposition of the Lagrange element in~\cite[(2.6)]{ArnoldFalkWinther2009} is
\begin{align}
\label{eq:Prdec}
\mathbb P_k(T) &= \Oplus_{r = 0}^{d}\Oplus_{f\in \Delta_{r}(T)} b_f\mathbb P_{k - (r +1)} (f),
\end{align}
where $b_f\in\mathbb P_{r+1}(f), b_f|_{\partial f} = 0,$ is the $\mathbb P_{r+1}$-polynomial bubble function on $f$.
Based on~\eqref{eq:Prdec}, DoF~\eqref{eq:PkDoF} can be decomposed into
\begin{equation}\label{eq:PkdecDoF}
(u, q)_f, \quad q\in\mathbb P_{k-r-1}(f),  f\in\Delta_{r}(T),\;  r=0,1,\ldots, d.
\end{equation}
Vice versa, DoFs in~\eqref{eq:PkdecDoF} can be merged into~\eqref{eq:PkDoF}. 

After redistribution, we merge DoFs facewisely and edgewisely. For example, on a face $F$, we will have DoFs
\begin{equation}\label{eq:nnface}
(\boldsymbol  n_F^{\intercal}\boldsymbol \tau\boldsymbol n_F, q)_f,  \quad q\in\mathbb P_{k-r-1}(f),  f\in\Delta_{r}(F),\;  r=0,1,\ldots, d-1.
\end{equation}
By the decomposition of the Lagrange element~\eqref{eq:Prdec}, we can merge~\eqref{eq:nnface} to DoF~\eqref{eq:newdivdivdof2}. Similarly on an edge $e$ shared by $F_1$ and $F_2$, we merge DoFs for $\boldsymbol  n_{F_1}^{\intercal}\boldsymbol \tau\boldsymbol n_{F_2}$ to 
\begin{equation}\label{eq:nnedge}
(\boldsymbol  n_{F_1}^{\intercal}\boldsymbol \tau\boldsymbol n_{F_2}, q)_e,\quad q\in \mathbb P_k(e), e\in \Delta_{d-2}(T).
\end{equation}
To switch from DoF~\eqref{eq:nnedge} to edge jump DoF~\eqref{eq:newdivdivdof1}, we require the following lemma. 

\begin{lemma}\label{lem:Snnbasis}
For a $(d-2)$-dimensional face $e\in\Delta_{d-2}(T)$, let $F_1$ and $F_2$ be the two $(d-1)$-dimensional faces in $\Delta_{d-1}(T)$ sharing $e$, and $\bs n_{F_i}=\bs n_{F_i,\partial T}$ for $i=1,2$. Then 
$$\{\boldsymbol{n}_{F_1}\otimes\boldsymbol{n}_{F_1}, \boldsymbol{n}_{F_2}\otimes\boldsymbol{n}_{F_2}, \sym(\boldsymbol{n}_{F_1,e}\otimes\boldsymbol{n}_{F_1})+\sym(\boldsymbol{n}_{F_2,e}\otimes\boldsymbol{n}_{F_2})\}$$ 
and 
$$\{\boldsymbol{n}_{F_1}\otimes\boldsymbol{n}_{F_1}, \boldsymbol{n}_{F_2}\otimes\boldsymbol{n}_{F_2}, \sym(\boldsymbol{n}_{F_1}\otimes\boldsymbol{n}_{F_2})\}$$ 
are bases of the symmetric matrix space $\mathbb S(\mathcal N_e)$ on the normal plane of $e$.
\end{lemma}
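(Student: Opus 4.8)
The plan is to reduce both statements to a linear independence check and then resolve the first by an explicit but short computation in the normal plane. Since $e$ is $(d-2)$-dimensional, the normal plane $\mathcal N_e$ is two-dimensional, so $\mathbb S(\mathcal N_e)$ has dimension $3$; as each proposed set has exactly three elements, it suffices to prove linear independence. Because $F_1\ne F_2$ share the codimension-one face $e$, the outward normals $\bs n_{F_1},\bs n_{F_2}$ are distinct and hence linearly independent vectors lying in $\mathcal N_e$, so $\{\bs n_{F_1},\bs n_{F_2}\}$ is a basis of $\mathcal N_e$ and $\bs n_{F_1}\otimes\bs n_{F_1}$, $\bs n_{F_2}\otimes\bs n_{F_2}$ are themselves linearly independent. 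Set $W:=\operatorname{span}\{\bs n_{F_1}\otimes\bs n_{F_1},\,\bs n_{F_2}\otimes\bs n_{F_2}\}$, a two-dimensional subspace. Both candidate sets contain these two rank-one tensors, so in each case I only need to verify that the remaining third element does not lie in $W$.

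For the second set this is the elementary fact that if $\{\bs a,\bs b\}$ is a basis of a two-dimensional space, then $\{\bs a\otimes\bs a,\ \bs b\otimes\bs b,\ \sym(\bs a\otimes\bs b)\}$ is a basis of the associated space of symmetric tensors. I would record it via the Frobenius pairing: any $\bs G\in\mathbb S(\mathcal N_e)$ with $\bs a^{\intercal}\bs G\bs a=\bs b^{\intercal}\bs G\bs b=\bs a^{\intercal}\bs G\bs b=0$ has its bilinear form vanishing on all pairs of basis vectors, hence $\bs G=0$; thus the three symmetric tensors have trivial orthogonal complement and span $\mathbb S(\mathcal N_e)$. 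Applying this with $\bs a=\bs n_{F_1}$, $\bs b=\bs n_{F_2}$ settles the second claim.

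For the first set I would argue as follows. Since each $\bs n_{F_i,e}$ is a unit vector in $\mathcal N_e$ orthogonal to $\bs n_{F_i}$, the tensor $M^{*}:=\sym(\bs n_{F_1,e}\otimes\bs n_{F_1})+\sym(\bs n_{F_2,e}\otimes\bs n_{F_2})$ is trace-free, because $\tr\sym(\bs n_{F_i,e}\otimes\bs n_{F_i})=\bs n_{F_i,e}\cdot\bs n_{F_i}=0$. On the other hand, the trace-free elements of $W$ are exactly the multiples of $\bs n_{F_1}\otimes\bs n_{F_1}-\bs n_{F_2}\otimes\bs n_{F_2}$, so $M^{*}\in W$ would force $M^{*}$ to be a scalar multiple of this difference. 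To rule this out I fix an orthonormal frame of $\mathcal N_e$, write the four unit vectors $\bs n_{F_1},\bs n_{F_2},\bs n_{F_1,e},\bs n_{F_2,e}$ in coordinates parametrized by the angle $\gamma\in(0,\pi)$ between $\bs n_{F_1}$ and $\bs n_{F_2}$, identify $\mathbb S(\mathcal N_e)\cong\mathbb R^3$ by $\begin{psmallmatrix}p&q\\q&r\end{psmallmatrix}\mapsto(p,q,r)$, and compute the $3\times3$ determinant of the coordinate vectors of $\bs n_{F_1}\otimes\bs n_{F_1}$, $\bs n_{F_2}\otimes\bs n_{F_2}$, $M^{*}$. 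A short trigonometric reduction yields a determinant proportional to $\sin^{2}\gamma$, which is nonzero since $\gamma\in(0,\pi)$; this gives the claim.

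The delicate point — and the main obstacle — is the sign convention of the co-normals, which is precisely where the geometry enters. Writing $\bs n_{F_i,e}=\delta_i\, J\bs n_{F_i}$ for a fixed $90^{\circ}$ rotation $J$ of the plane $\mathcal N_e$ and signs $\delta_i=\pm1$, the determinant above carries a factor $\delta_1-\delta_2$: if the two co-normal frames had the same handedness ($\delta_1=\delta_2$), then $M^{*}$ would collapse into $W$ and the first set would fail to be a basis. What saves the proof is that $\bs n_{F_1,e}$ and $\bs n_{F_2,e}$ are defined by a single consistent (outward) orientation convention while $F_1$ and $F_2$ lie on opposite sides of $e$; consequently the frames $\{\bs n_{F_1},\bs n_{F_1,e}\}$ and $\{\bs n_{F_2},\bs n_{F_2,e}\}$ have opposite orientation, i.e. $\delta_1=-\delta_2$. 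I would confirm this sign cleanly through the explicit dihedral-angle coordinates (it matches the two-dimensional convention recalled before the lemma, where $\bs n_{F,e}=\pm\bs t_F$ points out of $F$ at the endpoint $e$), after which the remaining computation is entirely routine.
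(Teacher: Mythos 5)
Your proposal is correct --- I checked both your sign claim and your determinant --- but it follows a genuinely different route from the paper. The paper treats the second set as immediate ("clearly" a spanning set) and proves independence of the first set by a single coordinate-free pairing: assuming
\begin{equation*}
c_1\bs n_{F_1}\otimes\bs n_{F_1}+c_2\bs n_{F_2}\otimes\bs n_{F_2}+c_3\bigl(\sym(\bs n_{F_1,e}\otimes\bs n_{F_1})+\sym(\bs n_{F_2,e}\otimes\bs n_{F_2})\bigr)=0,
\end{equation*}
it tests against the single tensor $\sym(\bs n_{F_1,e}\otimes\bs n_{F_2,e})$; the orthogonality $\bs n_{F_i}\cdot\bs n_{F_i,e}=0$ annihilates the $c_1$ and $c_2$ terms, leaving $\tfrac12 c_3(\bs n_{F_1}\cdot\bs n_{F_2,e}+\bs n_{F_2}\cdot\bs n_{F_1,e})=0$, and both dot products are positive (each equals the sine of the dihedral angle at $e$), so $c_3=0$; independence of $\bs n_{F_1}\otimes\bs n_{F_1}$ and $\bs n_{F_2}\otimes\bs n_{F_2}$ then gives $c_1=c_2=0$. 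You instead cut $W$ down by the trace (so the only candidates in $W$ for $M^{*}$ are multiples of $\bs n_{F_1}\otimes\bs n_{F_1}-\bs n_{F_2}\otimes\bs n_{F_2}$) and finish with an explicit $3\times 3$ determinant in dihedral-angle coordinates; the determinant does come out proportional to $(\delta_1-\delta_2)\sin^2\gamma$, and your orientation analysis $\delta_1=-\delta_2$ is right: with the outward co-normal convention (which is what the paper's "induced by the orientation of $F$" means, as its own 2D example shows), the frames $\{\bs n_{F_1},\bs n_{F_1,e}\}$ and $\{\bs n_{F_2},\bs n_{F_2,e}\}$ have opposite handedness because $F_1$ and $F_2$ lie on opposite sides of the wedge cross-section at $e$. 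The two proofs consume the same geometric input in different packaging: the paper compresses it into the positivity of the two dot products (asserted without proof, i.e.\ at the same level of detail as your sign claim), whereas your version isolates exactly where the convention matters --- with a same-handed convention the third tensor would collapse into $W$ and the first set would fail to be a basis, a sharpness observation the paper leaves implicit. What the paper's pairing buys is brevity and freedom from coordinates and case analysis; what your route buys is this explicit sensitivity analysis, plus an actual proof (via Frobenius orthogonality) of the spanning claim for the second basis, which the paper omits.
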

\begin{proof}
Clearly, $$
\mathbb S(\mathcal N_e) = \operatorname{span}\{\boldsymbol{n}_{F_1}\otimes\boldsymbol{n}_{F_1}, \boldsymbol{n}_{F_2}\otimes\boldsymbol{n}_{F_2}, \sym(\boldsymbol{n}_{F_1}\otimes\boldsymbol{n}_{F_2})\},
$$
and $\sym(\boldsymbol{n}_{F_1,e}\otimes\boldsymbol{n}_{F_1})+\sym(\boldsymbol{n}_{F_2,e}\otimes\boldsymbol{n}_{F_2})\in \mathbb S(\mathcal N_e)$. 

Now we prove that $\boldsymbol{n}_{F_1}\otimes\boldsymbol{n}_{F_1}$, $\boldsymbol{n}_{F_2}\otimes\boldsymbol{n}_{F_2}$ and $\sym(\boldsymbol{n}_{F_1,e}\otimes\boldsymbol{n}_{F_1})+\sym(\boldsymbol{n}_{F_2,e}\otimes\boldsymbol{n}_{F_2})$
are linearly independent. Assume constants $c_1, c_2$ and $c_3$ satisfy
$$
c_1\boldsymbol{n}_{F_1}\otimes\boldsymbol{n}_{F_1}+c_2\boldsymbol{n}_{F_2}\otimes\boldsymbol{n}_{F_2}+c_3\big(\sym(\boldsymbol{n}_{F_1,e}\otimes\boldsymbol{n}_{F_1})+\sym(\boldsymbol{n}_{F_2,e}\otimes\boldsymbol{n}_{F_2})\big)=0.
$$
Let us show that $c_1=c_2=c_3=0$. Multiplying $\sym(\boldsymbol{n}_{F_1,e}\otimes\boldsymbol{n}_{F_2,e})$ on both sides of the last equation, we get
$$
\frac{1}{2}c_3(\boldsymbol{n}_{F_1}\cdot\boldsymbol{n}_{F_2,e} + \boldsymbol{n}_{F_2}\cdot\boldsymbol{n}_{F_1,e})=0.
$$
Noting that both $\boldsymbol{n}_{F_1}\cdot\boldsymbol{n}_{F_2,e}$ and $\boldsymbol{n}_{F_2}\cdot\boldsymbol{n}_{F_1,e}$ are positive, we get $c_3=0$. And this implies
$$
c_1\boldsymbol{n}_{F_1}\otimes\boldsymbol{n}_{F_1}+c_2\boldsymbol{n}_{F_2}\otimes\boldsymbol{n}_{F_2}=0.
$$
Thus, $c_1=c_2=0$.
\end{proof}

We are in the position to prove the uni-solvence. Recall that in the binomial coefficient notation ${n\choose k}$, if $n\geq 0, k<0$, we set ${n\choose k} := 0$.
\begin{lemma}
For $k\geq 3$, the DoFs~\eqref{eq:newdivdivS} are unisolvent for the space $\mathbb P_k(T;\mathbb S)$.
\end{lemma}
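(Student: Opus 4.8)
The plan is to reduce the unisolvence of the new DoFs~\eqref{eq:newdivdivS} to that of the original DoFs~\eqref{eq:divdivS}, which is guaranteed by Lemma~\ref{lm:divdivS}. Since \eqref{eq:newdivdivdof3}--\eqref{eq:newdivdivdof6} coincide verbatim with \eqref{HdivdivSfemdof3}--\eqref{HdivdivSfemdof6}, only the redistributed DoFs \eqref{eq:newdivdivdof1}--\eqref{eq:newdivdivdof2} differ from \eqref{HdivdivSfemdof1}--\eqref{HdivdivSfemdof2}, and the whole argument concentrates on these.

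First I would establish the dimension count. I would split every scalar functional in \eqref{HdivdivSfemdof1}--\eqref{HdivdivSfemdof2} into diagonal and off-diagonal pieces relative to the face-normal basis $\{\bs n_{F_i}\}$ of the normal plane $\mathcal N_f$ of each sub-simplex $f$, exactly as in the redistribution paragraph. For a fixed face $F$, the diagonal pieces $\bs n_F^{\intercal}\bs\tau\bs n_F$ attached to the sub-simplices $f\in\Delta_r(F)$, $r=0,\dots,d-1$, are precisely the components of the geometric decomposition~\eqref{eq:Prdec} of the scalar $\bs n_F^{\intercal}\bs\tau\bs n_F\in\mathbb P_k(F)$, so they merge bijectively into \eqref{eq:newdivdivdof2}. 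For a fixed edge $e=F_1\cap F_2$, the off-diagonal pieces $\bs n_{F_1}^{\intercal}\bs\tau\bs n_{F_2}$ attached to $f\in\Delta_r(e)$, $r=0,\dots,d-2$, reassemble via~\eqref{eq:Prdec} into the moments~\eqref{eq:nnedge} against $\mathbb P_k(e)$, which by Lemma~\ref{lem:Snnbasis} are in bijection with \eqref{eq:newdivdivdof1}. Hence the number of functionals in \eqref{eq:newdivdivS} equals the number in \eqref{eq:divdivS}, which is $\dim\mathbb P_k(T;\mathbb S)$ by Lemma~\ref{lm:divdivS}.

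It then suffices to prove that a $\bs\tau\in\mathbb P_k(T;\mathbb S)$ annihilating all of \eqref{eq:newdivdivS} is zero. I would show that all original DoFs~\eqref{eq:divdivS} vanish and invoke Lemma~\ref{lm:divdivS}. The vanishing of \eqref{eq:newdivdivdof2} forces $\bs n_F^{\intercal}\bs\tau\bs n_F=0$ on each face $F$ (the restriction lies in $\mathbb P_k(F)$ and is tested against all of $\mathbb P_k(F)$). On an edge $e=F_1\cap F_2$ the two diagonal components $\bs n_{F_1}^{\intercal}\bs\tau\bs n_{F_1}$ and $\bs n_{F_2}^{\intercal}\bs\tau\bs n_{F_2}$ therefore vanish on $e$, while \eqref{eq:newdivdivdof1} gives $\tr_e(\bs\tau)=0$ on $e$; since the three functionals pair $\bs\tau$ against the basis of $\mathbb S(\mathcal N_e)$ furnished by Lemma~\ref{lem:Snnbasis}, they determine $Q_{\mathcal N_e}(\bs\tau)$, and their vanishing yields $Q_{\mathcal N_e}(\bs\tau)=0$, in particular $\bs n_{F_1}^{\intercal}\bs\tau\bs n_{F_2}=0$ on every edge. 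For an arbitrary sub-simplex $f$ with face-normal basis $\bs n_{F_1},\dots,\bs n_{F_{d-r}}$ of $\mathcal N_f$, each diagonal entry vanishes because $f\subset F_i$, and each off-diagonal entry vanishes because $f\subset F_i\cap F_j=e_{ij}$; thus $Q_{\mathcal N_f}(\bs\tau)=0$ on $f$, so all moments \eqref{HdivdivSfemdof1}--\eqref{HdivdivSfemdof2} vanish. Together with \eqref{eq:newdivdivdof3}--\eqref{eq:newdivdivdof6} $=$ \eqref{HdivdivSfemdof3}--\eqref{HdivdivSfemdof6}, every DoF in \eqref{eq:divdivS} vanishes, and Lemma~\ref{lm:divdivS} gives $\bs\tau=0$.

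The main obstacle is bookkeeping rather than a single hard estimate: I must verify that the redistribute-then-merge correspondence between the two DoF families is genuinely a bijection. The key points are that each $(d-2)$-face $e$ receives off-diagonal contributions from exactly one pair of faces (because distinct $(d-1)$-faces of a simplex meet in a unique $(d-2)$-face, so $e=F_i\cap F_j$ determines $\{i,j\}$, leaving a single scalar $\bs n_{F_1}^{\intercal}\bs\tau\bs n_{F_2}$ to be merged on $e$), and that the per-face and per-edge counts match the geometric decomposition~\eqref{eq:Prdec}, including the vertex term ($r=0$) and the interior terms ($r=d-1$ on faces, $r=d-2$ on edges). The one genuinely algebraic ingredient, the passage between the trace functional $\tr_e$ and the off-diagonal component $\bs n_{F_1}^{\intercal}\bs\tau\bs n_{F_2}$ modulo the two diagonal components, is precisely Lemma~\ref{lem:Snnbasis}, which I would invoke in both the counting step and the vanishing step.
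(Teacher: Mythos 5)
Your proposal is correct and follows essentially the same route as the paper's proof: match the DoF count via the redistribute-then-merge correspondence with the geometric decomposition~\eqref{eq:Prdec}, then show that vanishing of~\eqref{eq:newdivdivS} forces vanishing of the original DoFs~\eqref{eq:divdivS} — using Lemma~\ref{lem:Snnbasis} to recover the off-diagonal edge components from $\tr_e(\bs\tau)$ and the two face diagonals, and restriction to lower-dimensional sub-simplices for the rest — and conclude by Lemma~\ref{lm:divdivS}. The only cosmetic difference is that the paper verifies the count by explicit binomial identities (\eqref{eq:redistribution}--\eqref{eq:merge}) rather than by your bijection bookkeeping, and your treatment of the vanishing step is more detailed than the paper's terse two-sentence version.
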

\begin{proof}
For a $d$-simplex $T$, the number of sub-simplexes of dimension $r$ is ${d+1 \choose r+1}$. The dimension of $\mathbb P_{k-r-1}(f)$ with $\dim f = r$ is ${ k-r-1 + r \choose k - r - 1}$ which also holds for $r\geq k$ as $\dim \mathbb P_{k-r-1}(f) =0$. The normal plane $\mathcal N_f$ of $f$, will have dimension $d - r$ and the symmetric tensor on $\mathcal N_f$ will have dimension ${ d -r +1 \choose 2}$ which can be split into off-diagonals and diagonal, i.e., ${d-r+1\choose2}={d-r\choose2}+d-r$.
The number of DoFs~\eqref{HdivdivSfemdof1}-\eqref{HdivdivSfemdof2} is
\begin{align}
\notag &\quad\sum_{r=0}^{d-1}{d+1\choose r+1}{k-1\choose k-r-1}{d-r+1\choose2} \\
\label{eq:redistribution} &=\frac{1}{2}d(d+1)\sum_{r=0}^{d-2}{d-1\choose r+1}{k-1\choose k-r-1}+(d+1)\sum_{r=0}^{d-1}{d\choose r+1}{k-1\choose k-r-1} \\
\label{eq:merge} &=\frac{1}{2}d(d+1){k+d-2\choose k}+(d+1){k+d-1\choose k},
\end{align}
which equals the number of DoFs~\eqref{eq:newdivdivdof1}-\eqref{eq:newdivdivdof2}.
Hence the number of DoFs~\eqref{eq:newdivdivS} matches the number of DoFs~\eqref{eq:divdivS} which is the dimension of the space $\mathbb P_k(T;\mathbb S)$ by Lemma \ref{lm:divdivS}. In the derivation above,~\eqref{eq:redistribution} corresponds to the redistribution of DoFs to edges and faces, and~\eqref{eq:merge} is the merge of DoFs for the Lagrange element on edges and faces.

Let $\boldsymbol{\tau} \in \mathbb{P}_k(T;\mathbb{S})$ and suppose that all DoFs given by~\eqref{eq:newdivdivS} vanish. Using Lemma~\ref{lem:Snnbasis}, we know that the vanishing DoFs~\eqref{eq:newdivdivdof1}-\eqref{eq:newdivdivdof2} imply that DoF~\eqref{eq:nnedge} also vanishes. Moreover, the vanishing of~\eqref{eq:nnedge} and~\eqref{eq:newdivdivdof2} implies that DoFs~\eqref{HdivdivSfemdof1}-\eqref{HdivdivSfemdof2} are also zero. Therefore, by the unisolvence property stated in Lemma \ref{lm:divdivS}, we conclude that $\mathbb{P}_k(T;\mathbb{S})$ is unisolvent.
\end{proof}

Define the global space 
\begin{align*}
\Sigma_{k}^{\div\div-} := \{\boldsymbol{\tau}\in L^2(\Omega;\mathbb S): &\, \boldsymbol{\tau}|_T\in \mathbb P_{k}(T;\mathbb S)\textrm{ for each } T\in\mathcal T_h, \\
&\quad\textrm{  DoFs~\eqref{eq:newdivdivdof2} and~\eqref{eq:newdivdivdof3} are single-valued} \}.
\end{align*}
By construction, for $\bs \tau\in \Sigma_{k}^{\div\div-}$, both $\tr_1(\bs \tau)$ and $\tr_2(\bs \tau)$ will be continuous. But the edge jump $[\tr_e(\bs \tau)]|_e$ may not vanish which prevents $\Sigma_{k}^{\div\div-}$ being $H(\div\div)$-conforming in view of Lemma \ref{lm:divdivconforming}. The edge jump condition $[\tr_e(\bs \tau)]|_e = 0$ is imposed patch-wisely on $\omega_e$. Inside each element, $\tr_e(\bs \tau)$ may not be zero and for different elements the edge jump are in general different. Therefore~\eqref{eq:newdivdivdof1} is not single-valued when defining $\Sigma_{k}^{\div\div-}$.

Define the subspace
$$
\Sigma_{k,{\rm new}}^{\div\div} := \{\bs \tau\in \Sigma_{k}^{\div\div-}: [\tr_e(\bs \tau)]|_e = 0 \text{ for all } e\in \mathring{\mathcal E}_h\}.
$$
That is we add constraints on the DoFs of the element-wise edge traces: $\tr_e^{T_1}(\bs \tau)+ \tr_e^{T_2}(\bs \tau) + \cdots + \tr_e^{T_{|\omega_e|}}(\bs \tau) |_e = 0$ to get an $H(\div\div)$-conforming subspace.


Let $I_h^{\div\div}: H^2(\Omega;\mathbb S)\to \Sigma_{k}^{\div\div-}$ be the cannocial interpolation operator based on the DoFs~\eqref{eq:newdivdivS}. Namely $N(I_h^{\div\div}\bs \tau) = N(\bs \tau)$ for all DoFs $N$ in~\eqref{eq:newdivdivS}. To save notation, we will abbreviate $I_h^{\div\div}\bs \tau$  as $\bs \tau_I$. 
Noting that
$$
[\tr_e(\bs \tau_I)]|_e=Q_{k,e}([\tr_e(\bs \tau)]|_e)=0\quad\forall~e\in\mathring{\mathcal{E}}_h, \bs\tau\in H^2(\Omega;\mathbb S), 
$$
so indeed $\bs \tau_I \in \Sigma_{k,{\rm new}}^{\div\div}$. 
\begin{lemma}
 $I_h^{\div\div}$ is a Fortin operator in the sense that: for $\bs\tau\in H^2(\Omega;\mathbb S)$,
\begin{equation}\label{eq:IdivdivQcd}
\div\div(\bs\tau_I)=Q_{k-2}(\div\div\bs\tau).
\end{equation}
\end{lemma}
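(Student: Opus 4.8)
The plan is to prove the commuting identity~\eqref{eq:IdivdivQcd} by reducing it to an element-wise moment test against polynomials. Since $\bs\tau_I\in\Sigma_{k,{\rm new}}^{\div\div}\subset H(\div\div,\Omega;\mathbb S)$, its distributional $\div\div$ coincides with the element-wise one, so on each $T\in\mathcal T_h$ we have $\div\div(\bs\tau_I)|_T\in\mathbb P_{k-2}(T)$; likewise $Q_{k-2}(\div\div\bs\tau)|_T\in\mathbb P_{k-2}(T)$. Two polynomials in $\mathbb P_{k-2}(T)$ agree iff they share all $L^2(T)$-moments against $q\in\mathbb P_{k-2}(T)$, and by definition of the projector $(Q_{k-2}(\div\div\bs\tau),q)_T=(\div\div\bs\tau,q)_T$. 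Writing $\bs\eta:=\bs\tau_I-\bs\tau\in H^2(T;\mathbb S)$, it therefore suffices to show
$$
(\div\div\bs\eta,q)_T=0\qquad\forall\,q\in\mathbb P_{k-2}(T),\ T\in\mathcal T_h.
$$

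First I would apply the Green's identity~\eqref{eq:greenidentitydivdiv} with $\bs\sigma=\bs\eta$ and $v=q$. Because $\bs\eta\in H^2(T;\mathbb S)$ rather than $\mathcal C^2$, the identity is first invoked for smooth tensors and then extended to $\bs\eta$ by density of $\mathcal C^2(T;\mathbb S)$ in $H^2(T;\mathbb S)$ together with continuity of the trace maps $\tr_1,\tr_2,\tr_e$. This expresses $(\div\div\bs\eta,q)_T$ as the volume term $(\bs\eta,\nabla^2 q)_T$, the face terms in $\tr_1$ and $\tr_2$, and the edge terms in $\tr_e$, all of which I then cancel using that every DoF in~\eqref{eq:newdivdivS} of $\bs\eta$ vanishes by definition of $I_h^{\div\div}$.

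The key observation for the volume term is that $\nabla^2 q=\defm(\nabla q)$ with $\nabla q\in\mathbb P_{k-3}(T;\mathbb R^d)\subset{\rm ND}_{k-3}(T)$, whence $(\bs\eta,\nabla^2 q)_T=0$ by~\eqref{eq:newdivdivdof5}; note that the remaining interior DoF~\eqref{eq:newdivdivdof6} is not needed here. For the $\tr_1$ face term, $(\tr_1(\bs\eta),\partial_n q)_F=0$ because DoF~\eqref{eq:newdivdivdof2} forces $\bs n^{\intercal}\bs\eta\bs n|_F=0$ identically (the quadratic dependence on the normal makes the $\bs n_{\partial T}$ versus $\bs n_F$ orientation irrelevant). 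For the $\tr_2$ face term, $(\tr_2(\bs\eta),q)_F=0$ since $\tr_2(q)=q|_F\in\mathbb P_{k-2}(F)\subset\mathbb P_{k-1}(F)$ lies in the test space of DoF~\eqref{eq:newdivdivdof3}. For the edge term, DoF~\eqref{eq:newdivdivdof1} forces $\tr_e(\bs\eta)|_e=0$, so $(\tr_e(\bs\eta),q)_e=0$. Hence every term on the right-hand side of~\eqref{eq:greenidentitydivdiv} vanishes and the claim follows.

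I expect the main obstacle to be the bookkeeping that justifies the density extension of the Green's identity to $H^2$ data, together with the well-definedness of the codimension-two edge trace $\tr_e$ in this low regularity, and the careful degree counting ensuring each of $\tr_1(q)=\partial_n q|_F$, $\tr_2(q)=q|_F$ and $\tr_e(q)=q|_e$ falls inside the test space of the corresponding redistributed DoF of $\bs\eta$. The genuinely structural step---recognizing $\nabla^2 q=\defm(\nabla q)$ so that only the symmetric-gradient interior DoF~\eqref{eq:newdivdivdof5} is invoked for the volume contribution---is the heart of the argument, while the face and edge contributions are disposed of directly by the merged DoFs~\eqref{eq:newdivdivdof1}--\eqref{eq:newdivdivdof3}.
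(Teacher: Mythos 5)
Your proposal is correct and takes essentially the same route as the paper, whose proof is precisely the one-line recipe you expand: apply the Green's identity~\eqref{eq:greenidentitydivdiv} to $\bs\eta=\bs\tau_I-\bs\tau$ against each $q\in\mathbb P_{k-2}(T)$ and annihilate every term using the vanishing DoFs~\eqref{eq:newdivdivS}, with the volume term handled via $\nabla^2 q=\defm(\nabla q)$ and $\nabla q\in\mathbb P_{k-3}(T;\mathbb R^d)\subset{\rm ND}_{k-3}(T)$. One small wording caveat: the vanishing DoFs do not force $\bs n^{\intercal}\bs\eta\bs n|_F$ or $\tr_e(\bs\eta)|_e$ to vanish \emph{identically} (these traces are not polynomials); they only kill their moments against $\mathbb P_k(F)$ and $\mathbb P_k(e)$, which is exactly what is needed since $\partial_n q$, $q|_F$ and $q|_e$ lie in those test spaces, as your own degree count at the end correctly observes.
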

\begin{proof}
It can be proved by using the Green's identity~\eqref{eq:greenidentitydivdiv} and the definition of  $I_h^{\div\div}$. 
\end{proof}

Using the Fortin operator, we arrive at the following inf-sup condition.
\begin{lemma}
We have the inf-sup condition
\begin{equation}\label{eq:newdivdivinfsup}
\inf_{p_h\in V^{-1}_{k-2}}\sup_{\boldsymbol\tau_h\in \Sigma_{k,{\rm new}}^{\div\div}}
\frac{(\div\div\boldsymbol{\tau}_h, p_h)}{\|\boldsymbol\tau_h\|_{\div\div} \|p_h\|_0} = \alpha > 0, \quad \text{for } k\geq 3.
\end{equation}
\end{lemma}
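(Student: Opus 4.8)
The plan is to derive the inf-sup condition \eqref{eq:newdivdivinfsup} directly from the Fortin operator $I_h^{\div\div}$ established in the previous lemma, following the standard Fortin-operator argument for proving discrete inf-sup stability. First I would recall that the corresponding continuous (or full-space) surjectivity result is available: for any $p_h \in V^{-1}_{k-2}$, since $\div\div: H^2(\Omega;\mathbb S) \to L^2(\Omega)$ is surjective at the continuous level, there exists $\bs\tau \in H^2(\Omega;\mathbb S)$ with $\div\div\bs\tau = p_h$ and $\|\bs\tau\|_2 \lesssim \|p_h\|_0$; this is where the requirement $k \geq 3$ enters, since one needs the polynomial degree high enough for the interpolation to make sense and for $\div\div\mathbb P_k(T;\mathbb S) = \mathbb P_{k-2}(T)$ to hold element-wise.

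Next I would apply the Fortin operator to this $\bs\tau$, setting $\bs\tau_h := \bs\tau_I = I_h^{\div\div}\bs\tau$. The commuting property \eqref{eq:IdivdivQcd} gives $\div\div\bs\tau_h = Q_{k-2}(\div\div\bs\tau) = Q_{k-2}(p_h) = p_h$, where the last equality holds because $p_h$ is already a piecewise polynomial of degree $k-2$, so the $L^2$-projection acts as the identity. Moreover, as noted in the development preceding the lemma, $\bs\tau_I \in \Sigma_{k,{\rm new}}^{\div\div}$ because the projected edge jump vanishes, so $\bs\tau_h$ is a legitimate competitor in the supremum. This yields
\[
\frac{(\div\div\bs\tau_h, p_h)}{\|\bs\tau_h\|_{\div\div}} = \frac{\|p_h\|_0^2}{\|\bs\tau_h\|_{\div\div}}.
\]

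It then remains to bound $\|\bs\tau_h\|_{\div\div} = (\|\bs\tau_h\|_0^2 + \|\div\div\bs\tau_h\|_0^2)^{1/2}$ by $\|p_h\|_0$. The term $\|\div\div\bs\tau_h\|_0 = \|p_h\|_0$ is immediate from the commuting diagram. For the $L^2$ part, I would invoke the standard boundedness and scaling properties of the canonical interpolation $I_h^{\div\div}$ based on the DoFs \eqref{eq:newdivdivS}, namely $\|\bs\tau_I\|_0 \lesssim \|\bs\tau\|_2$, together with the continuous stability estimate $\|\bs\tau\|_2 \lesssim \|p_h\|_0$ from the first step; combining these gives $\|\bs\tau_h\|_{\div\div} \lesssim \|p_h\|_0$. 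Substituting into the quotient above produces a lower bound $\alpha \|p_h\|_0$ uniform in $h$, which is exactly \eqref{eq:newdivdivinfsup}.

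The main obstacle I anticipate is establishing the uniform-in-$h$ boundedness of the interpolation operator $\|\bs\tau_I\|_0 \lesssim \|\bs\tau\|_2$ with the correct scaling, since the DoFs \eqref{eq:newdivdivS} live on sub-simplices of varying dimension (edges, faces, interior) and a naive argument would pick up negative powers of $h$ from the lower-dimensional face and edge functionals. The resolution is the usual scaling argument combined with a trace inequality: one maps to a reference element, bounds $I_h^{\div\div}$ there by equivalence of norms on the finite-dimensional space $\mathbb P_k(T;\mathbb S)$, and tracks the homogeneity of each DoF under the affine pullback so that the $h$-weights cancel. This is routine but is the technically delicate point; everything else follows mechanically from the Fortin property \eqref{eq:IdivdivQcd} and the surjectivity of $\div\div$ at the continuous level.
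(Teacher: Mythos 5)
Your proposal is correct and follows essentially the same route as the paper's proof: invoke the continuous-level bounded right inverse of $\div\div$ on $H^2(\Omega;\mathbb S)$, apply the Fortin operator $I_h^{\div\div}$ with the commuting property \eqref{eq:IdivdivQcd} to get $\div\div\bs\tau_I = p_h$ with $\bs\tau_I\in\Sigma_{k,{\rm new}}^{\div\div}$, and close with the scaling argument $\|\bs\tau_I\|_{\div\div}\lesssim\|\bs\tau\|_2\lesssim\|p_h\|_0$. The only difference is presentational: the paper compresses the interpolation bound into the phrase ``apply the scaling argument,'' whereas you spell out the reference-element and trace-inequality details that this phrase hides.
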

\begin{proof}
For $p_h\in V^{-1}_{k-2}$, there exists a function $\bs\tau\in H^2(\Omega;\mathbb S)$~\cite{Arnold;Hu:2020Complexes,PaulyZulehner2020} such that 
$$
\|\bs\tau\|_2\lesssim \|p_h\|_0,\quad \div\div\bs\tau=p_h.
$$
Let $\bs\tau_h=\bs\tau_I \in\boldsymbol\Sigma_{k,{\rm new}}^{\div\div}$. By~\eqref{eq:IdivdivQcd},
$$
\div\div\bs\tau_h=Q_{k-2}(\div\div\bs\tau)=p_h.
$$
Apply the scaling argument to get
$$
\|\bs\tau_h\|_{\div\div}\lesssim \|\bs\tau\|_2\lesssim \|p_h\|_0.
$$
Finally, we finish the proof of~\eqref{eq:newdivdivinfsup}.
\end{proof}

Comparing with the existing $H(\div\div)$-conforming elements constructed in~\cite{ChenHuang2020,Chen;Huang:2020Finite,ChenHuangDivRn2022,Hu;Liang;Ma:2021Finite,Hu;Liang;Ma;Zhang:2022conforming,Hu;Ma;Zhang:2020family}, we do not enforce  the normal plane continuity on lower dimensional sub-simplexes and thus no requirement $k\geq d$ for the inf-sup condition. However, the condition $k\geq 3$ is still needed to ensure ${\bf RM}=\ker(\defm)\subseteq{\rm ND}_{k-3}(T)$ in DoF~\eqref{eq:newdivdivdof5}. The following remark shows $\mathbb P_{k}(T;\mathbb S)$, $k\leq 2$, is not feasible.

\begin{remark}\label{rm:k=2}\rm
For a linear polynomial $v\in \mathbb P_1(T)$, by identity~\eqref{eq:greenidentitydivdiv} and the fact $
\nabla^2v=0$, we have for $\boldsymbol\tau\in\mathbb P_k(T; \mathbb S)$ that
\begin{equation}\label{eq:greenidentityP1}
(\div\div\boldsymbol\tau, v)_T=\!\sum_{F\in\partial T}\left[(\tr_2(\boldsymbol{\tau}), v)_F-(\boldsymbol  n^{\intercal}\boldsymbol\tau\boldsymbol  n, \partial_n v)_{F}\right]-\!\sum_{F\in\partial T}\sum_{e\in\partial F}(\boldsymbol n_{F,e}^{\intercal}\boldsymbol\tau \boldsymbol n, v)_e. 
\end{equation}
When $k\leq 2$,  for $\boldsymbol\tau\in\mathbb P_k(T; \mathbb S)$, $\div\div\boldsymbol\tau\in\mathbb P_0(T)$. We can choose a nonzero function $v\in\mathbb P_1(T)\cap L_0^2(T)$ such that $(\div\div\boldsymbol\tau, v)_T=0$, hence it follows
$$
\sum_{F\in\partial T}\left[(\tr_2(\boldsymbol{\tau}), v)_F-(\boldsymbol  n^{\intercal}\boldsymbol\tau\boldsymbol  n, \partial_n v)_{F}\right]-\sum_{F\in\partial T}\sum_{e\in\partial F}(\boldsymbol n_{F,e}^{\intercal}\boldsymbol\tau \boldsymbol n, v)_e=0. 
$$
This means the DoFs~\eqref{eq:newdivdivdof1}-\eqref{eq:newdivdivdof3} for traces are not linearly independent when $k\leq 2$. The range of the $\div\div$ operator should contain $\mathbb P_1(T)$ piecewisely.  $\qed$
\end{remark}


\subsection{Raviart-Thomas type elements}\label{sec:divdivconformingRT}
We enrich the range of the $\div\div$ operator with the addition of high order inner moments. 
Take the space of shape functions as 
$$
\Sigma_{k^+}(T;\mathbb S):= \mathbb P_k(T;\mathbb S) \oplus \boldsymbol x\boldsymbol x^{\intercal}\mathbb H_{k-1}(T),\quad k\geq 2.
$$
The additional component $\boldsymbol x\boldsymbol x^{\intercal}\mathbb H_{k-1}(T)$ expands the range of the $\div\div$ operator to $\mathbb P_{k-1}(T)$ as $\div\div(\boldsymbol x\boldsymbol x^{\intercal}\mathbb H_{k-1}(T)) = \mathbb H_{k-1}(T)$, which is one degree higher than the range $\div\div \mathbb P_k(T;\mathbb S) = \mathbb P_{k-2}(T)$.

For $k\geq 3$, the degrees of freedom are nearly identical to those given in~\eqref{eq:newdivdivS}, with the exception of enriching the DoF in~\eqref{eq:newdivdivdof5} to
\begin{equation}
(\boldsymbol \tau, \defm\boldsymbol q)_T \quad \text{for} \quad \boldsymbol q\in\mathbb P_{k-2}(T;\mathbb R^d).\label{eq:newdivdivdofRT5}
\end{equation}
%
 The degree of freedom $(\boldsymbol \tau, \defm \boldsymbol q)_T $ is increased from $\boldsymbol q \in  {\rm ND}_{k-3}(T)=\grad \mathbb P_{k-2}(T)\oplus\mathbb P_{k-3}(T;\mathbb K)\boldsymbol x$ in~\eqref{eq:newdivdivdof5} to $\mathbb P_{k-2}(T;\mathbb R^d)=\grad \mathbb P_{k-1}(T)\oplus\mathbb P_{k-3}(T;\mathbb K)\boldsymbol x$. All boundary DoFs~\eqref{eq:newdivdivdof1}-\eqref{eq:newdivdivdof4} remain the same as $(\boldsymbol x\boldsymbol x^{\intercal}\mathbb H_{k-1}(T))\boldsymbol n|_F\in \mathbb P_{k}(F;\mathbb R^d)$. 

For $k=2$, $\ker(\defm)={\rm ND}_{0}(K)\not\subseteq\mathbb P_{0}(T;\mathbb R^d)$ in DoF~\eqref{eq:newdivdivdofRT5}. 
We propose the following DoFs for $\Sigma_{2^+}(T;\mathbb S)$ which is a generalization of the $H(\div\div)$-conforming finite element constructed in~\cite{Chen;Huang:2020Finite} by the redistribution process:
\begin{subequations}\label{eq:newdivdivSk2RT}
\begin{align}
(\tr_e(\bs \tau), q)_e, &\quad q\in \mathbb P_2(e), e\in \Delta_{d-2}(T),\label{eq:newdivdivdofk2RT1}\\
(\bs n^{\intercal}\bs \tau \bs n, q )_F, &\quad q\in \mathbb P_2(F), F\in \partial T,\label{eq:newdivdivdofk2RT2}\\
( \tr_2(\bs \tau), q)_F, &\quad q\in\mathbb P_{1}(F), F\in \partial T,\label{eq:newdivdivdofk2RT3}\\
(\Pi_f\boldsymbol \tau\boldsymbol n_{F_r}, \boldsymbol q)_{f}, & \quad \boldsymbol q\in \mathbb B_2^{\div}(f), f= f_{0:r-2}\in \Delta_{r-2}(F_r), r= d,\ldots, 3,
\label{eq:newdivdivdofk2RT4}\\
(\boldsymbol \tau, \boldsymbol q)_T, &\quad \boldsymbol q\in  \ker(\boldsymbol x^{\intercal}\cdot\boldsymbol x)\cap \mathbb P_{1}(T;\mathbb S),\label{eq:newdivdivdofk2RT5}
\end{align}
\end{subequations}
where $f_{0:r}={\rm Convex}(\texttt{v}_0,\texttt{v}_1,\ldots, \texttt{v}_{r})$ is the $r$-dimensional simplex spanned by the vertices $\{\texttt{v}_0,\texttt{v}_1,\ldots, \texttt{v}_{r}\}$. A proof of the unisolvence can be found in Appendix~\ref{apdx:uinsol} (Theorem~\ref{th:k=2}).  


Define the global spaces, for $k\geq 2$, 
\begin{align*}
\Sigma_{k^+}^{\div\div}  := & 
\{ \boldsymbol{\tau}\in L^2(\Omega;\mathbb S): \, \boldsymbol{\tau}|_T\in \Sigma_{k^+}(T;\mathbb S) \textrm{ for each } T\in\mathcal T_h,\\
&\quad \textrm{  DoFs~\eqref{eq:newdivdivdof2} and~\eqref{eq:newdivdivdof3} are single-valued}, [\tr_e(\bs \tau)]|_e = 0 \text{ for all } e\in \mathring{\mathcal E}_h \}.
\end{align*}
We have $\Sigma_{k^+}^{\div\div}\subset H(\div\div,\Omega;\mathbb S)$.


Similar to the proof of~\eqref{eq:newdivdivinfsup} by using the canonical interpolation operator $I_h^{\div\div}$, we have the inf-sup condition
\begin{equation}\label{eq:newdivdivRTinfsup}
\inf_{p_h\in V^{-1}_{k-1}}\sup_{\boldsymbol\tau_h\in \Sigma_{k^+}^{\div\div}}
\frac{(\div\div\boldsymbol{\tau}_h, p_h)}{\|\boldsymbol\tau_h\|_{\div\div} \|p_h\|_0} = \alpha > 0, \quad \text{for } k\geq 2.
\end{equation}

\subsection{A lower order $H(\div\div)$-conforming finite element}\label{sec:lowerdivdivconforming}
For $k=1$, we enrich the $\mathbb P_1(T;\mathbb S)$ space by adding some quadratic and cubic polynomials. Take the shape function space as 
\begin{align}    
\Sigma_{1^{++}}(T;\mathbb S)
\label{eq:sigma1+} &\;= \mathbb P_1(T;\mathbb S)\oplus \sym(\boldsymbol{x}\otimes \mathbb H_1(T;\mathbb R^d)) \oplus \boldsymbol x\boldsymbol x^{\intercal}\mathbb H_{1}(T).
\end{align}
The range $\div\div(\boldsymbol x\boldsymbol x^{\intercal}\mathbb H_{1}(T)) = \mathbb H_{1}(T)$ and $\div\div \sym(\boldsymbol{x}\otimes \mathbb H_1(T;\mathbb R^d)) = \mathbb P_0(T)$. Consequently $\div\div \Sigma_{1^{++}}(T;\mathbb S) = \mathbb P_1(T)$. 

When $\bs\tau\in \Sigma_{1^{++}}(T;\mathbb S)$, we can see that $\tr_e(\bs \tau)\in\mathbb P_1(e)$ for $e\in \Delta_{d-2}(T)$, and $(\bs n^{\intercal}\bs \tau \bs n)|_F, \tr_2(\bs \tau)|_F\in\mathbb P_{1}(F)$ for $F\in \partial T$. Hence, we propose the following DoFs:
\begin{subequations}\label{eq:newdivdivSreduce}
\begin{align}
(\tr_e(\bs \tau), q)_e, &\quad q\in \mathbb P_1(e), e\in \Delta_{d-2}(T),\label{eq:newdivdivSreduce1}\\
(\bs n^{\intercal}\bs \tau \bs n, q )_F, &\quad q\in \mathbb P_1(F), F\in \partial T,\label{eq:newdivdivSreduce2}\\
( \tr_2(\bs \tau), q)_F, &\quad q\in\mathbb P_{1}(F), F\in \partial T.\label{eq:newdivdivSreduce3}
\end{align}
\end{subequations}

\begin{lemma}\label{lm:1++}
The DoFs~\eqref{eq:newdivdivSreduce} are unisolvent for the space $\Sigma_{1^{++}}(T;\mathbb S)$.
\end{lemma}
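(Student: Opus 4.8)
The plan is to prove unisolvence by the usual two-step scheme: first verify that the number of DoFs in~\eqref{eq:newdivdivSreduce} equals $\dim\Sigma_{1^{++}}(T;\mathbb S)$, and then show that a function annihilated by all DoFs must vanish. For the dimension bookkeeping I would record $\dim\mathbb P_1(T;\mathbb S)=\tfrac12 d(d+1)^2$, $\dim(\boldsymbol x\boldsymbol x^{\intercal}\mathbb H_1(T))=d$, and $\dim\sym(\boldsymbol x\otimes\mathbb H_1(T;\mathbb R^d))=d^2$; the last equality holds because the map $A\mapsto\sym(\boldsymbol x\boldsymbol x^{\intercal}A^{\intercal})$ is injective on $\mathbb R^{d\times d}$. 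Since the three summands of~\eqref{eq:sigma1+} live in homogeneous degrees $\{0,1\}$, $2$ and $3$, the sum is automatically direct and $\dim\Sigma_{1^{++}}(T;\mathbb S)=\tfrac12 d(d+1)(d+3)$. On the DoF side, \eqref{eq:newdivdivSreduce1} contributes $\binom{d+1}{2}(d-1)$ while \eqref{eq:newdivdivSreduce2} and \eqref{eq:newdivdivSreduce3} contribute $d(d+1)$ each, again summing to $\tfrac12 d(d+1)(d+3)$. Hence it suffices to prove injectivity.

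Next, assume all DoFs vanish. Because, for $\bs\tau\in\Sigma_{1^{++}}(T;\mathbb S)$, each trace is low degree, namely $\tr_e(\bs\tau)|_e\in\mathbb P_1(e)$ and $(\bs n^{\intercal}\bs\tau\bs n)|_F,\ \tr_2(\bs\tau)|_F\in\mathbb P_1(F)$, testing against the full spaces $\mathbb P_1(e)$ and $\mathbb P_1(F)$ forces the traces to vanish identically: $\tr_e(\bs\tau)|_e=0$ on every $e$, and $(\bs n^{\intercal}\bs\tau\bs n)|_F=\tr_1(\bs\tau)|_F=0$, $\tr_2(\bs\tau)|_F=0$ on every $F$. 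Inserting $v\in\mathbb P_1(T)$ into~\eqref{eq:greenidentityP1}, where $\nabla^2 v=0$, now makes every boundary term vanish, so $(\div\div\bs\tau,v)_T=0$ for all $v\in\mathbb P_1(T)$; since $\div\div\Sigma_{1^{++}}(T;\mathbb S)=\mathbb P_1(T)$, choosing $v=\div\div\bs\tau$ gives $\div\div\bs\tau=0$.

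Then I would reduce the shape function via the grading of $\div\div$. Writing $\bs\tau=\bs\tau_1+\bs\tau_2+\bs\tau_3$ along~\eqref{eq:sigma1+}, one has $\div\div\bs\tau_1=0$, $\div\div\bs\tau_2\in\mathbb P_0(T)$ and $\div\div\bs\tau_3\in\mathbb H_1(T)$. Matching homogeneous degrees in $\div\div\bs\tau=0$ yields $\div\div\bs\tau_3=0$ and $\div\div\bs\tau_2=0$; since $\div\div:\boldsymbol x\boldsymbol x^{\intercal}\mathbb H_1(T)\to\mathbb H_1(T)$ is an isomorphism, $\bs\tau_3=0$, eliminating the cubic enrichment and leaving $\bs\tau=\bs\tau_1+\bs\tau_2\in\mathbb P_2(T;\mathbb S)$ still carrying the vanishing traces. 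I would then invoke Lemma~\ref{lem:Snnbasis}: along each edge $e$ with adjacent faces $F_1,F_2$, the triple $(\bs n_{F_1}^{\intercal}\bs\tau\bs n_{F_1},\ \bs n_{F_2}^{\intercal}\bs\tau\bs n_{F_2},\ \tr_e(\bs\tau))$ is exactly the pairing of $\bs\tau$ against the basis of $\mathbb S(\mathcal N_e)$ in that lemma, so their vanishing gives $Q_{\mathcal N_e}(\bs\tau)|_e=0$, i.e. the whole normal-plane symmetric matrix of $\bs\tau$ vanishes pointwise along every edge.

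The main obstacle is the final step: concluding $\bs\tau_1+\bs\tau_2=0$ from $\div\div\bs\tau=0$ together with the recovered boundary data $(\bs n^{\intercal}\bs\tau\bs n)|_F=0$, $\tr_2(\bs\tau)|_F=0$ on faces and $Q_{\mathcal N_e}(\bs\tau)|_e=0$ on edges. This is tight, since Remark~\ref{rm:k=2} shows the trace DoFs are linearly dependent modulo $\div\div$, so the available conditions exactly match the remaining dimensions and leave no slack. I expect this to be the crux precisely because it cannot appeal to interior DoFs (there are none); I would therefore finish by a direct computation that exploits the rigid structure of the residual, namely that $\bs\tau_2=\sym(\boldsymbol x\otimes A\boldsymbol x)$ is governed by the single trace-free matrix $A$ while $\bs\tau_1$ is affine, tracking the face and edge data degree by degree. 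The normal-plane recovery on edges is the key tool, as it pins down the tangential-normal mixing that the normal-normal face conditions alone would miss.
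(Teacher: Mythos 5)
Your setup coincides with the paper's proof: the dimension count, the observation that all three traces lie in $\mathbb P_1$ so the vanishing DoFs force $\tr_1(\bs\tau)=\tr_2(\bs\tau)=\tr_e(\bs\tau)=0$ pointwise, the use of the Green's identity with $v\in\mathbb P_1(T)$ together with $\div\div\Sigma_{1^{++}}(T;\mathbb S)=\mathbb P_1(T)$ to get $\div\div\bs\tau=0$, the degree-matching that kills the cubic block $\boldsymbol x\boldsymbol x^{\intercal}\mathbb H_1(T)$, and the recovery of $Q_{\mathcal N_e}(\bs\tau)=0$ on edges via Lemma~\ref{lem:Snnbasis} are all exactly the paper's steps.

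However, there is a genuine gap: you stop at the crux. Your last paragraph announces that you ``would finish by a direct computation'' showing $\bs\tau_1+\bs\tau_2=0$, but no such computation is given, and this is precisely where the paper does the real work. Moreover, your stated key tool is off target: the edge normal-plane data is not what eliminates the quadratic block. The paper writes $\bs\tau=\bs\tau_1+\sym(\bs x\otimes\bs q)$ with $\bs q\in\mathbb H_1(T;\mathbb R^d)$, first extracts $\div\bs q=0$ from $\div\div\bs\tau=0$, and then exploits the \emph{face} trace $\tr_2$: since $\tr_2(\bs\tau_1)|_F$ is constant, $\tr_2(\bs\tau)=0$ forces $\tr_2(\sym(\bs x\otimes\bs q))|_F\in\mathbb P_0(F)$, and the explicit identities $\div(\bs x\bs q^{\intercal})=\bs q+\bs x\div\bs q$, $\div(\bs q\bs x^{\intercal})=(d+1)\bs q$, $\div_F(\bs x\,\bs q\cdot\bs n)=d\,\bs q\cdot\bs n$ give
\[
\tr_2(\sym(\bs x\otimes\bs q))|_F=(d+1)\bs q\cdot\bs n+\tfrac12\,\bs x\cdot\bs n\,(\div\bs q+\div_F\bs q),
\]
whose non-constant part is $(d+1)\bs q\cdot\bs n$; hence $(\bs q\cdot\bs n)|_F\in\mathbb P_0(F)$ on every face, so $\bs q\in\mathbf{RT}$, and $\bs q\in\mathbb H_1(T;\mathbb R^d)\cap\ker(\div)\cap\mathbf{RT}=\{0\}$. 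Only after this does the edge data enter: the remaining $\bs\tau\in\mathbb P_1(T;\mathbb S)$ satisfies $Q_{\mathcal N_e}(\bs\tau)=0$ on all edges, so at each vertex the values $\bs n_{F_i}^{\intercal}\bs\tau\bs n_{F_j}$ vanish for a full basis $\{\bs n_{F_i}\}$ of $\mathbb R^d$, giving $\bs\tau=0$ at every vertex and hence $\bs\tau\equiv0$. Without this face-trace computation (or a substitute), your injectivity argument is incomplete; as you yourself note, the trace conditions are linearly dependent modulo $\div\div$ (Remark~\ref{rm:k=2}), so one cannot conclude by counting alone.
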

\begin{proof}
DoFs~\eqref{eq:newdivdivSreduce1}-\eqref{eq:newdivdivSreduce2} are redistribution of vertex DoFs for $\mathbb P_1(T; \mathbb S)$. The enrichment in~\eqref{eq:sigma1+} has dimension $d^2 + d$ while the number of DoF~\eqref{eq:newdivdivSreduce3} is $(d+1)d$. Therefore
the number of DoF~\eqref{eq:newdivdivSreduce} is equal to $\dim\Sigma_{1^{++}}(T;\mathbb S) = \frac{1}{2}d(d+1)(d+3).$

Assume $\boldsymbol{\tau}\in\Sigma_{1^{++}}(T;\mathbb S)$, and all the DoFs~\eqref{eq:newdivdivSreduce} vanish. Then
\begin{equation}\label{eq:vanishingtrace}    
\tr_1(\bs \tau)=0,\quad \tr_2(\bs \tau)=0,\quad Q_{\mathcal N_e}(\bs \tau)=0 \textrm{ for } e\in\Delta_{d-2}(T).
\end{equation}
Apply the integration by parts to get $\div\div\bs \tau=0$. Consequently $\bs \tau \in \mathbb P_1(T;\mathbb S)+ \sym(\boldsymbol{x}\otimes \mathbb P_1(T;\mathbb R^d))$. 

Let $\bs \tau=\bs\tau_1+\sym(\bs x\otimes\bs q)$ with $\bs\tau_1\in\mathbb P_1(T;\mathbb S)$ and $\bs q\in \mathbb H_1(T;\mathbb R^d)$. Then $\div\bs q=0$ follows from $\div\div\bs \tau=0$.
Since $\tr_2(\bs \tau_1)$ is piecewise constant, the fact $\tr_2(\bs \tau)=0$ in~\eqref{eq:vanishingtrace} means $\tr_2(\sym(\bs x\otimes\bs q))|_F\in\mathbb P_0(F)$ for face $F\in\partial T$. By $\div(\bs x\bs q^{\intercal})=\bs q+\bs x\div\bs q$, $\div(\bs q\bs x^{\intercal})=(d+1)\bs q$, and $\div_F(\bs x\bs q\cdot\bs n)=d\bs q\cdot\bs n$, we get
$$
\tr_2(\sym(\bs x\otimes\bs q))|_F=(d+1)\bs q\cdot\bs n+\frac{1}{2}\bs x\cdot\bs n(\div\bs q +\div_F\bs q)\in\mathbb P_0(F).
$$
This indicates $(\bs q\cdot\bs n)|_F\in\mathbb P_0(F)$, which means $\bs q\in\mathbf{RT}$. By $\bs q\in \mathbb H_1(T;\mathbb R^d)\cap\ker(\div)$, $\bs q=0$. Now $\bs\tau\in\mathbb P_1(T;\mathbb S)$. 
The third identity in~\eqref{eq:vanishingtrace} implies $\bs\tau$ vanishes on all the vertices of $T$, therefore $\bs\tau=0$.
\end{proof} 

Define the global space
\begin{align*}
\Sigma_{1^{++}}^{\div\div}  := & 
\{ \boldsymbol{\tau}\in L^2(\Omega;\mathbb S): \, \boldsymbol{\tau}|_T\in \Sigma_{1^{++}}(T;\mathbb S) \textrm{ for each } T\in\mathcal T_h,\\
&\; \textrm{  DoFs~\eqref{eq:newdivdivSreduce2} and~\eqref{eq:newdivdivSreduce3} are single-valued}, [\tr_e(\bs \tau)]|_e = 0 \text{ for all } e\in \mathring{\mathcal E}_h\}.
\end{align*}
We have $\Sigma_{1^{++}}^{\div\div}\subset H(\div\div,\Omega;\mathbb S)$.
Again using the canonical interpolation operator $I_h^{\div\div}$, it holds the inf-sup condition
\begin{equation}\label{eq:newdivdivk1infsup}
\inf_{p_h\in V^{-1}_{1}}\sup_{\boldsymbol\tau_h\in \Sigma_{1^{++}}^{\div\div}}
\frac{(\div\div\boldsymbol{\tau}_h, p_h)}{\|\boldsymbol\tau_h\|_{\div\div} \|p_h\|_0} = \alpha > 0.
\end{equation}
In two dimensions, i.e., $d=2$, the finite element space $\Sigma_{1^{++}}^{\div\div}$ has been constructed in~\cite{FuehrerHeuer2023}. Our construction of $\Sigma_{1^{++}}^{\div\div}$ for general $d\geq 2$ is motivated by their work.

\section{A mixed method for the biharmonic equation}\label{sec:mfem}
This section will discuss a mixed finite element method for solving the biharmonic equation. Optimal convergence rates are obtained. Post-processing techniques will be introduced to further improve the accuracy of the solution. 

\subsection{Mixed methods for the biharmonic equation}
Let $f\in L^2(\Omega)$ be given. Consider the biharmonic equation
\begin{equation}\label{eq:biharmonic}
\begin{cases}
\Delta^2u=f\quad \textrm{in }\Omega, \\
u|_{\partial\Omega}=\partial_{n}u|_{\partial\Omega}=0.
\end{cases}
\end{equation}
The mixed formulation is: find $\bs \sigma \in H(\div\div,\Omega;\mathbb S), u\in L^2(\Omega)$ s.t.
\begin{subequations}\label{eq:biharmonicmixed}
 \begin{align}
(\bs \sigma, \boldsymbol{\tau}) + (\div\div\boldsymbol{\tau}, u)&=0  \qquad\quad\quad\;\; \forall~\boldsymbol{\tau} \in H(\div\div,\Omega;\mathbb S),\\
(\div\div\bs \sigma, v) & = -(f, v) \quad \quad  \forall~v\in L^2(\Omega).
\end{align}
\end{subequations}
Notice that the Dirichlet boundary condition $u|_{\partial\Omega}=\partial_{n}u|_{\partial\Omega}=0$ is built naturally into the weak formulation. 

We will use either the pair $\Sigma_{k,{\rm new}}^{\div\div} - V^{-1}_{k-2}$ or $\Sigma_{k^+}^{\div\div} -V^{-1}_{k-1}$, and unify the notation as 
$$
\Sigma_{k,r}^{\div\div} - V^{-1}_{r} := 
\begin{cases}
\Sigma_{k,{\rm new}}^{\div\div} - V^{-1}_{k-2},  & r = k -2, k\geq 3,\\
\Sigma_{k^+}^{\div\div} -V^{-1}_{k-1}, & r = k -1, k\geq 2,\\
\Sigma_{1^{++}}^{\div\div} - V^{-1}_{1}, & r = k =1.
\end{cases}
$$

A mixed finite element method for biharmonic equation~\eqref{eq:biharmonic} is to find $(\bs \sigma_h, u_h)\in \Sigma_{k,r}^{\div\div}\times V^{-1}_{r}$ with $r\geq 1$, s.t. 
\begin{subequations}\label{eq:biharmonicMfem}
 \begin{align}
\label{eq:biharmonicMfem1}
(\bs \sigma_h, \boldsymbol{\tau}) + (\div\div\boldsymbol{\tau}, u_h)&=0  \qquad\quad\quad\;\; \forall~\boldsymbol{\tau} \in \Sigma_{k,r}^{\div\div},\\
\label{eq:biharmonicMfem2}
(\div\div\bs \sigma_h, v) & = -(f, v) \quad \quad  \forall~v\in V^{-1}_{r}.
\end{align}
\end{subequations}
The mixed method~\eqref{eq:biharmonicMfem} is well-posed due to the discrete inf-sup conditions~\eqref{eq:newdivdivinfsup},~\eqref{eq:newdivdivRTinfsup} and~\eqref{eq:newdivdivk1infsup}. By the standard procedure, we have the following error estimates.

\begin{lemma}
Let $u\in H_0^2(\Omega)$ be the solution of biharmonic equation~\eqref{eq:biharmonic} and $\bs \sigma=-\nabla^2u$. 
Let $(\bs \sigma_h, u_h)\in \Sigma_{k,r}^{\div\div} \times V^{-1}_{r}$ be the solution of the mixed method~\eqref{eq:biharmonicMfem} for $r \geq 1$. 
Assume $u\in H^{k+3}(\Omega)$ and $f\in H^{r+1}(\Omega)$. We have
\begin{align}
\notag
\| \div\div (\bs \sigma - \bs \sigma_h)\|_0 & \lesssim h^{r+1}\|f\|_{r+1},\\
\label{biharmonicMfemErr1}
\| Q_r u - u_h\|_0 + \| \bs \sigma - \bs \sigma_h \|_0 & \lesssim h^{k+1}\|\bs \sigma\|_{k+1},\\
\label{biharmonicMfemErr2}
\| u - u_h \|_0 & \lesssim h^{r+1}\| u\|_{r+1}.
\end{align}
\end{lemma}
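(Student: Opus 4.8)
The plan is to carry out the standard Babuška–Brezzi error analysis for mixed methods, exploiting the commuting (Fortin) property~\eqref{eq:IdivdivQcd}—which in the unified notation reads $\div\div(\bs\tau_I)=Q_r(\div\div\bs\tau)$ for each of the three element families—together with the discrete inf-sup conditions~\eqref{eq:newdivdivinfsup},~\eqref{eq:newdivdivRTinfsup}, and~\eqref{eq:newdivdivk1infsup}, all uniform in $h$. Since $\Sigma_{k,r}^{\div\div}\subset H(\div\div,\Omega;\mathbb S)$ and $V_r^{-1}\subset L^2(\Omega)$, the exact pair $(\bs\sigma,u)$ with $\bs\sigma=-\nabla^2u$ satisfies~\eqref{eq:biharmonicmixed} when tested against discrete functions, giving the Galerkin orthogonality relations
\begin{align*}
(\bs\sigma-\bs\sigma_h,\bs\tau)+(\div\div\bs\tau,u-u_h)&=0 \qquad \forall~\bs\tau\in\Sigma_{k,r}^{\div\div},\\
(\div\div(\bs\sigma-\bs\sigma_h),v)&=0 \qquad \forall~v\in V_r^{-1}.
\end{align*}

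First I would establish the $\div\div$ estimate. The second relation shows that $\div\div\bs\sigma_h\in V_r^{-1}$ coincides with $Q_r(\div\div\bs\sigma)$; since $\div\div\bs\sigma=-f$, this yields $\div\div(\bs\sigma-\bs\sigma_h)=-(I-Q_r)f$, and the standard projection bound $\|(I-Q_r)f\|_0\lesssim h^{r+1}\|f\|_{r+1}$ closes the first inequality. Next, for the $\bs\sigma$ estimate I set $\bs\sigma_I:=I_h^{\div\div}\bs\sigma$. The commuting property gives $\div\div\bs\sigma_I=Q_r(\div\div\bs\sigma)=\div\div\bs\sigma_h$, so $\div\div(\bs\sigma_I-\bs\sigma_h)=0$. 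Testing the first orthogonality relation with $\bs\tau=\bs\sigma_I-\bs\sigma_h$ kills the coupling term and produces $(\bs\sigma-\bs\sigma_h,\bs\sigma_I-\bs\sigma_h)=0$; a one-line computation then gives $\|\bs\sigma_I-\bs\sigma_h\|_0\le\|\bs\sigma-\bs\sigma_I\|_0$, and the triangle inequality with the interpolation estimate $\|\bs\sigma-\bs\sigma_I\|_0\lesssim h^{k+1}\|\bs\sigma\|_{k+1}$ delivers the $\bs\sigma$-part of~\eqref{biharmonicMfemErr1}.

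For the displacement error I would bound $\|Q_ru-u_h\|_0$ through the inf-sup condition. For any $\bs\tau\in\Sigma_{k,r}^{\div\div}$, the inclusion $\div\div\bs\tau\in V_r^{-1}$ gives $(\div\div\bs\tau,Q_ru-u_h)=(\div\div\bs\tau,u-u_h)$, which by the first orthogonality relation equals $-(\bs\sigma-\bs\sigma_h,\bs\tau)$ and is hence bounded by $\|\bs\sigma-\bs\sigma_h\|_0\,\|\bs\tau\|_{\div\div}$. The inf-sup condition then yields $\|Q_ru-u_h\|_0\lesssim\|\bs\sigma-\bs\sigma_h\|_0\lesssim h^{k+1}\|\bs\sigma\|_{k+1}$, finishing~\eqref{biharmonicMfemErr1}. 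Finally, $\|u-u_h\|_0\le\|u-Q_ru\|_0+\|Q_ru-u_h\|_0$, where $\|u-Q_ru\|_0\lesssim h^{r+1}\|u\|_{r+1}$ is the projection error and the second summand, carrying at least the same power of $h$, is of equal or higher order; this gives~\eqref{biharmonicMfemErr2} with the stated leading rate.

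The main obstacle is the interpolation error estimate $\|\bs\sigma-\bs\sigma_I\|_0\lesssim h^{k+1}\|\bs\sigma\|_{k+1}$, which is \emph{not} a purely algebraic consequence of the Fortin identity: it requires verifying that the canonical interpolation $I_h^{\div\div}$ defined by the DoFs~\eqref{eq:newdivdivS} is well-defined and stable on $H^2(\Omega;\mathbb S)$ with the correct affine scaling. Because these DoFs involve edge and face traces of $\bs\tau$ and of its first derivatives, the verification rests on trace inequalities and a Bramble–Hilbert argument on the reference simplex, after which standard scaling and the preservation of $\mathbb P_k(T;\mathbb S)$ by $I_h^{\div\div}$ produce the optimal order; the analogous reasoning covers the $\Sigma_{k^+}^{\div\div}$ and $\Sigma_{1^{++}}^{\div\div}$ cases. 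Everything else is routine given the already-established Fortin property and inf-sup stability.
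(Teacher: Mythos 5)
Your proposal is correct and follows essentially the same route as the paper: the $\div\div$ error via $f-Q_rf$, the partial orthogonality $(\bs\sigma-\bs\sigma_h,\bs\sigma_I-\bs\sigma_h)=0$ obtained from the Fortin property $\div\div(\bs\sigma_I-\bs\sigma_h)=0$, the inf-sup condition to bound $\|Q_ru-u_h\|_0$ by $\|\bs\sigma-\bs\sigma_h\|_0$, and the triangle inequality for $\|u-u_h\|_0$. The only cosmetic differences (bounding $\|\bs\sigma_I-\bs\sigma_h\|_0$ and then applying the triangle inequality rather than the direct Pythagorean bound, and using the inf-sup quotient instead of constructing $\bs\tau$ with $\div\div\bs\tau=Q_ru-u_h$) do not change the argument.
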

\begin{proof}
By~\eqref{eq:biharmonicMfem2},
$$
\| \div\div (\bs \sigma - \bs \sigma_h)\|_0  = \| f - Q_r f \|_0 \lesssim h^{r+1}\|f\|_{r+1}.
$$
From~\eqref{eq:biharmonicmixed} and~\eqref{eq:biharmonicMfem}, we have the error equation
\begin{equation}\label{erroreqn}
(\bs \sigma - \bs \sigma_h, \bs \tau) + (\div\div \bs \tau, Q_r u- u_h) = 0 \quad \forall~\bs \tau \in  \Sigma_{k,r}^{\div\div}. 
\end{equation}
Taking $\bs \tau = \bs \sigma_h - \bs \sigma_I$ and noticing $\div\div (\bs \sigma_h - \bs \sigma_I) = 0$, we obtain the partial orthogonality $(\bs \sigma - \bs \sigma_h,  \bs \sigma_h - \bs \sigma_I) = 0$ and thus 
$$
\| \bs \sigma - \bs \sigma_h \|_0 \leq \| \bs \sigma - \bs \sigma_I \|_0 \lesssim h^{k+1}\|\bs \sigma\|_{k+1}.
$$
By the inf-sup condition, we can find $\bs \tau\in\Sigma_{k,r}^{\div\div}$ s.t. $\div\div\bs \tau = Q_r u - u_h$ and obtain the estimate for $\|Q_r u- u_h\|_0$ by the Cauchy-Schwarz inequality. 

Estimate~\eqref{biharmonicMfemErr2} can be obtained by the triangle inequality and standard error estimate of the $L^2$ projection $\| u - Q_r u\|_0$.
\end{proof}
%
%

Observing that when the parameter $r$ satisfies $r=k-1$ or $r=k-2$, the error estimate~\eqref{biharmonicMfemErr1} exhibits one or two orders of convergence higher than that of~\eqref{biharmonicMfemErr2}. It is expected that a refined interior approximation of higher accuracy than $u_h$ can be obtained via post-processing techniques.



\subsection{Postprocessing}

Following the postprocessing in~\cite{ChenHuang2020} rather than those in~\cite{Stenberg1991,Comodi1989}, we will construct a new superconvergent approximation to deflection $u$ by using the optimal estimate of $\| \bs \sigma-\bs \sigma_h\|_0$ and the superconvergent estimate of $\|Q_r u - u_h\|_0$ in~\eqref{biharmonicMfemErr1}. 

Define a new approximation $u_h^{\ast}\in V_{k+2}^{-1}$ to $u$ elementwisely as a solution of the following problem: for any $T\in\mathcal{T}_h$,
\begin{subequations}\label{eq:postprocess}
 \begin{align}
(\nabla^2u_h^{\ast}, \nabla^2v)_T&=-(\boldsymbol{\sigma}_h, \nabla^2v)_T \quad\forall~v\in \mathbb P_{k+2}(T),\label{eq:postprocess1}  \\
(u_h^{\ast}, v)_T&=(u_h, v)_T \qquad\quad\,\forall~v\in \mathbb P_{1}(T).\label{eq:postprocess2}
\end{align}
\end{subequations}

\begin{theorem}\label{thm:uuhstar1}
Let $u\in H_0^2(\Omega)$ be the solution of biharmonic equation~\eqref{eq:biharmonic} and $\boldsymbol{\sigma}=-\nabla^2u$. 
Let $u_h^{\ast}\in V_{k+2}^{-1}$ be the solution of~\eqref{eq:postprocess} for $r \geq 1$.
Assume $u\in H^{k+3}(\Omega)$. We have
\begin{equation*}
\|u-u_h^{\ast}\|_{0} + \|\nabla_{h}^2(u-u_h^{\ast})\|_{0} \lesssim h^{k+1}|u|_{k+3}.
\end{equation*}
\end{theorem}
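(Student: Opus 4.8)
The plan is to insert a local, fully computable comparison function $w\in\mathbb P_{k+2}(T)$ that mimics the definition of $u_h^\ast$ but uses the exact data, and then estimate $u-u_h^\ast$ through the elementwise splitting $u-u_h^\ast=(u-w)+(w-u_h^\ast)$. Concretely, for each $T\in\mathcal T_h$ I would define $w\in\mathbb P_{k+2}(T)$ by the analogue of~\eqref{eq:postprocess} with exact data: $(\nabla^2 w,\nabla^2 v)_T=(\nabla^2 u,\nabla^2 v)_T$ for all $v\in\mathbb P_{k+2}(T)$, together with $(w,v)_T=(u,v)_T$ for all $v\in\mathbb P_1(T)$. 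This problem is well posed because $\ker(\nabla^2)=\mathbb P_1(T)$, so $|\cdot|_{2,T}$ is a norm on $\mathbb P_{k+2}(T)/\mathbb P_1(T)$ while the $\mathbb P_1$ moments pin down the remaining kernel. Standard Bramble--Hilbert and scaling arguments then furnish the approximation bounds $\|\nabla^2(u-w)\|_{0,T}\lesssim h_T^{k+1}|u|_{k+3,T}$ and $\|u-w\|_{0,T}\lesssim h_T^{k+3}|u|_{k+3,T}$.

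For the Hessian half I would first bound $\|\nabla^2(w-u_h^\ast)\|_{0,T}$. Subtracting~\eqref{eq:postprocess1} from the defining relation for $w$ and using $\nabla^2 u=-\bs\sigma$ yields, for every $v\in\mathbb P_{k+2}(T)$, the residual identity $(\nabla^2(w-u_h^\ast),\nabla^2 v)_T=(\bs\sigma_h-\bs\sigma,\nabla^2 v)_T$. Choosing $v=w-u_h^\ast$ and applying Cauchy--Schwarz gives $\|\nabla^2(w-u_h^\ast)\|_{0,T}\le\|\bs\sigma-\bs\sigma_h\|_{0,T}$. Summing over $T$, combining with the approximation bound for $w$, and invoking~\eqref{biharmonicMfemErr1} together with $\bs\sigma=-\nabla^2 u$ produces $\|\nabla_h^2(u-u_h^\ast)\|_0\lesssim h^{k+1}|u|_{k+3}$.

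For the $L^2$ half I would treat $\|w-u_h^\ast\|_{0,T}$ by splitting off its $\mathbb P_1$ part. Let $P_1^T$ denote the $L^2$-projection onto $\mathbb P_1(T)$; a scaled Poincaré inequality gives $\|(w-u_h^\ast)-P_1^T(w-u_h^\ast)\|_{0,T}\lesssim h_T^2\,|w-u_h^\ast|_{2,T}$, which is controlled by $h_T^2\|\bs\sigma-\bs\sigma_h\|_{0,T}$ from the previous step. For the low-order part, the constraints $(w,v)_T=(u,v)_T$ and $(u_h^\ast,v)_T=(u_h,v)_T$ for $v\in\mathbb P_1(T)$ give $(w-u_h^\ast,v)_T=(u-u_h,v)_T$; since $r\ge1$ we have $\mathbb P_1\subset\mathbb P_r$ and $(u-Q_ru,v)_T=0$, so this equals $(Q_ru-u_h,v)_T$. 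Hence $P_1^T(w-u_h^\ast)=P_1^T(Q_ru-u_h)$ and $\|P_1^T(w-u_h^\ast)\|_{0,T}\le\|Q_ru-u_h\|_{0,T}$. Assembling the two contributions, adding $\|u-w\|_{0,T}$, summing, and invoking the superconvergence bound~\eqref{biharmonicMfemErr1} yields $\|u-u_h^\ast\|_0\lesssim h^{k+1}|u|_{k+3}$.

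The main obstacle is more conceptual than computational: one must recognize that the superconvergent quantity the scheme controls is $Q_ru-u_h$, \emph{not} $u-u_h$, so the $\mathbb P_1$-moment matching in~\eqref{eq:postprocess2} has to be routed through the $L^2$-projection. This is precisely where the hypothesis $r\ge1$ is used, guaranteeing $\mathbb P_1\subset\mathbb P_r$ so that $u-Q_ru$ is orthogonal to the test space; it also explains why the $h_T^2$ weight from the Poincaré step is harmless, since it multiplies the already optimal $\|\bs\sigma-\bs\sigma_h\|_0$ and only improves the rate. The remaining routine care is in verifying well-posedness of $w$ and that the Poincaré and Bramble--Hilbert constants scale with the correct powers of $h_T$ on a shape-regular mesh.
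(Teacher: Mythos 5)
Your proof is correct, and it follows the same skeleton as the paper's: insert a local polynomial comparison function, bound the Hessian part by $\|\bs\sigma-\bs\sigma_h\|_0$ via the defining relation~\eqref{eq:postprocess1}, and handle the $L^2$ part by splitting off the $\mathbb P_1(T)$ component, which the moment condition~\eqref{eq:postprocess2} ties to the superconvergent quantity $Q_ru-u_h$ (your observation that this is where $r\geq 1$, i.e.\ $\mathbb P_1\subset\mathbb P_r$, enters is exactly the paper's use of it), while a scaled Poincar\'e inequality absorbs the complement with an $h_T^2$ weight. The one substantive difference is the choice of comparison function: you use the local Hessian (elliptic) projection $w$ with $\mathbb P_1$-moment matching, whereas the paper uses the $L^2$ projection $Q_{k+2,T}u$ and tests with $z=(I-Q_{1,T})(Q_{k+2,T}u-u_h^{\ast})$. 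Your choice makes the error equation cleaner --- $(\nabla^2(w-u_h^{\ast}),\nabla^2v)_T=(\bs\sigma_h-\bs\sigma,\nabla^2v)_T$ with no extra term --- but obliges you to verify well-posedness and Bramble--Hilbert bounds for $w$; the paper's choice has immediate approximation properties but picks up the additional consistency term $(\nabla^2(Q_{k+2,T}u-u),\nabla^2z)_T$ in~\eqref{eq:zH2}, which is then bounded by the interpolation error of $Q_{k+2,T}$. The two routes are of essentially equal length and yield the same rates.
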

\begin{proof}
For simplicity, let $z\in V_{k+2}^{-1}(\mathcal T_h)$ be defined by $z|_T=(I-Q_{1,T})(Q_{k+2,T}u-u_h^{\ast})$. Since $Q_{1,T}z=0$, we have
\begin{equation}\label{eq:zequiv}
\|z\|_{0, T}\eqsim h_T|z|_{1,T}\eqsim h_T^2|z|_{2,T}.
\end{equation}
Take $v=z|_T$ in~\eqref{eq:postprocess1} to obtain
\[
(\nabla^2(u-u_h^{\ast}), \nabla^2z)_T=-(\boldsymbol{\sigma}-\boldsymbol{\sigma}_h, \nabla^2z)_T.
\]
Noting the definition of $z$, we have
\begin{equation*}
|z|_{2,T}^2=(\nabla^2(Q_{k+2,T}u-u_h^{\ast}), \nabla^2z)_T=(\nabla^2(Q_{k+2,T}u-u), \nabla^2z)_T - (\boldsymbol{\sigma}-\boldsymbol{\sigma}_h, \nabla^2z)_T,
\end{equation*}
which implies
\begin{equation}\label{eq:zH2}
|Q_{k+2,T}u-u_h^{\ast}|_{2,T}=|z|_{2,T}\lesssim |u-Q_{k+2,T}u|_{2,T} + \|\boldsymbol{\sigma}-\boldsymbol{\sigma}_h\|_{0,T}.
\end{equation}
Hence $|u-u_h^{\ast}|_{2,h}\lesssim h^{k+1}|u|_{k+3}$ follows from the triangle inequality, the estimate of $Q_{k+2,T}$, and error estimate~\eqref{biharmonicMfemErr1}.

On the other side,
by~\eqref{eq:postprocess2}, we have
\begin{equation*}
\|Q_{1,T}(Q_{k+2,T}u-u_h^{\ast})\|_{0,T}=\|Q_{1,T}(Q_{r,T}u-u_h)\|_{0,T}\leq \|Q_{r,T}u-u_h\|_{0,T},
\end{equation*}
which together with~\eqref{eq:zequiv} yields
\begin{align*}
\|Q_{k+2,T}u-u_h^{\ast}\|_{0,T}&\leq \|Q_{1,T}(Q_{k+2,T}u-u_h^{\ast})\|_{0,T}+\|z\|_{0,T} \\
&\lesssim \|Q_{r,T}u-u_h\|_{0,T} + h_T^2|z|_{2,T}.
\end{align*}
By the triangle inequality and~\eqref{eq:zH2},
\begin{align}    
\label{eq:u-uhast}\|u-u_h^{\ast}\|_{0,T}&\lesssim \|u-Q_{k+2,T}u\|_{0,T}+\|Q_{r,T}u-u_h\|_{0,T} \\
&\quad+ h_T^2(|u-Q_{k+2,T}u|_{2,T} + \|\boldsymbol{\sigma}-\boldsymbol{\sigma}_h\|_{0,T}). \notag
\end{align}
Hence, $\|u-u_h^{\ast}\|_{0}\lesssim h^{k+1}|u|_{k+3}$ follows from the estimate of $Q_{k+2,T}$ and~\eqref{biharmonicMfemErr1}.
\end{proof}

\subsection{Duality argument}
To further enhance the convergence rate of $\|Q_r u - u_h\|_{0}$ and achieve a superconvergent $L^2$-error estimate for the post-processed approximation, we employ a duality argument. Consider the biharmonic equation
\begin{equation*}
\begin{cases}
\Delta^2\widetilde{u}=Q_r u - u_h\quad \textrm{in }\Omega, \\
\widetilde{u}|_{\partial\Omega}=\partial_{n}\widetilde{u}|_{\partial\Omega}=0.
\end{cases}
\end{equation*}
Let $\widetilde{\boldsymbol{\sigma}}:=-\nabla^2\widetilde{u}$. We assume that $\widetilde{u}\in H^4(\Omega)\cap H_0^2(\Omega)$ and the bound
\begin{equation}\label{dualregularH4}
\|\widetilde{\boldsymbol{\sigma}}\|_2 + \|\widetilde{u}\|_4\lesssim \|Q_r u - u_h\|_0.
\end{equation}
In two dimensions, 
when $\Omega$ is  a bounded polygonal domain with all the inner anlges smaller than $126.383696^{\circ}$, the regularity estimate~\eqref{dualregularH4} holds~\cite[Theorem 2]{BlumRannacher1980}.

\begin{theorem}\label{thm:uuhstarL2}
Let $u\in H_0^2(\Omega)$ be the solution of biharmonic equation~\eqref{eq:biharmonic} and $\bs \sigma=-\nabla^2u$. 
Let $(\bs \sigma_h, u_h)\in \Sigma_{k,r}^{\div\div} \times V^{-1}_{r}$ be the solution of the mixed method~\eqref{eq:biharmonicMfem} for $r \geq 1$.
Let $u_h^{\ast}$ be obtained by the post-processing~\eqref{eq:postprocess} using $\bs \sigma_h$ and $u_h$. 
Assume $u\in H^{k+3}(\Omega)$, $f\in H^{r+1}(\Omega)$ and the regularity estimate~\eqref{dualregularH4} holds. We have
\begin{equation*}
\|Q_r u - u_h\|_{0} + \|u-u_h^{\ast}\|_{0} \lesssim h^{k+3}\|u\|_{k+3} + h^{\min\{2r+2, r+5\}}\|f\|_{r+1}.
\end{equation*}
\end{theorem}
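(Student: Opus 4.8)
The plan is to run an Aubin--Nitsche duality argument on the dual problem already introduced, whose solution $\widetilde{\bs\sigma} = -\nabla^2\widetilde u$ satisfies $\div\div\widetilde{\bs\sigma} = -(Q_r u - u_h)$ and the regularity bound \eqref{dualregularH4}. First I would write
\[
\|Q_r u - u_h\|_0^2 = -(\div\div\widetilde{\bs\sigma},\, Q_r u - u_h),
\]
and, since $Q_r u - u_h \in V_r^{-1}$, replace $\widetilde{\bs\sigma}$ by its canonical interpolant $\widetilde{\bs\sigma}_I = I_h^{\div\div}\widetilde{\bs\sigma}$ via the commuting relation $\div\div\widetilde{\bs\sigma}_I = Q_r(\div\div\widetilde{\bs\sigma})$ of \eqref{eq:IdivdivQcd}. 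Feeding $\bs\tau = \widetilde{\bs\sigma}_I$ into the error equation \eqref{erroreqn} then yields the clean identity
\[
\|Q_r u - u_h\|_0^2 = (\bs\sigma - \bs\sigma_h,\, \widetilde{\bs\sigma}_I).
\]

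Next I would split $\widetilde{\bs\sigma}_I = \widetilde{\bs\sigma} + (\widetilde{\bs\sigma}_I - \widetilde{\bs\sigma})$ and estimate the two terms. For the exact-solution term, testing the weak identity $(\widetilde{\bs\sigma}, \bs\tau) + (\div\div\bs\tau, \widetilde u) = 0$ (valid for all $\bs\tau\in H(\div\div,\Omega;\mathbb S)$ because $\widetilde u\in H_0^2(\Omega)$) with $\bs\tau = \bs\sigma - \bs\sigma_h$ gives $(\bs\sigma - \bs\sigma_h, \widetilde{\bs\sigma}) = -(\div\div(\bs\sigma - \bs\sigma_h), \widetilde u)$. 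Since $\div\div\bs\sigma_h = -Q_r f$ we have $\div\div(\bs\sigma - \bs\sigma_h) = -(f - Q_r f)$, and the $L^2$-orthogonality of $f - Q_r f$ to $V_r^{-1}$ lets me subtract $Q_r\widetilde u$, reducing this term to $(f - Q_r f,\, \widetilde u - Q_r\widetilde u)$. Cauchy--Schwarz with $\|f - Q_r f\|_0 \lesssim h^{r+1}\|f\|_{r+1}$ and $\|\widetilde u - Q_r\widetilde u\|_0 \lesssim h^{\min\{r+1,4\}}\|\widetilde u\|_{\min\{r+1,4\}}$ produces the $\|f\|_{r+1}$ contribution. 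For the interpolation term I would use $\|\widetilde{\bs\sigma} - \widetilde{\bs\sigma}_I\|_0 \lesssim h^2|\widetilde{\bs\sigma}|_2$ (only $H^2$ is available for $\widetilde{\bs\sigma}$) together with the optimal rate $\|\bs\sigma - \bs\sigma_h\|_0 \lesssim h^{k+1}\|\bs\sigma\|_{k+1}$ from \eqref{biharmonicMfemErr1}, giving a contribution of order $h^{k+3}\|u\|_{k+3}$.

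The only delicate bookkeeping is the exponent $\min\{2r+2, r+5\}$: the factor $\|\widetilde u - Q_r\widetilde u\|_0$ gains order $r+1$, but $\widetilde u$ is only guaranteed to lie in $H^4$, so this gain saturates at $4$; multiplying by the $h^{r+1}$ from $f - Q_r f$ gives $h^{2r+2}$ for $r\le 3$ and $h^{r+5}$ for $r\ge 3$, while the regularity \eqref{dualregularH4} supplies the factor $\|\widetilde u\|_4 \lesssim \|Q_r u - u_h\|_0$ (and $\|\widetilde{\bs\sigma}\|_2 \lesssim \|Q_r u - u_h\|_0$ for the interpolation term) needed to divide through. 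Collecting both contributions and cancelling one power of $\|Q_r u - u_h\|_0$ yields the stated bound for $\|Q_r u - u_h\|_0$.

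Finally, the estimate for $\|u - u_h^{\ast}\|_0$ follows by inserting this improved bound into the elementwise inequality \eqref{eq:u-uhast} from the proof of Theorem \ref{thm:uuhstar1}, since the remaining terms $\|u - Q_{k+2}u\|_0$, $h^2|u - Q_{k+2}u|_2$, and $h^2\|\bs\sigma - \bs\sigma_h\|_0$ are all of order $h^{k+3}\|u\|_{k+3}$. I expect the main obstacle to be justifying the replacement $\widetilde{\bs\sigma}\to\widetilde{\bs\sigma}_I$ and the interpolation estimate $\|\widetilde{\bs\sigma} - \widetilde{\bs\sigma}_I\|_0 \lesssim h^2|\widetilde{\bs\sigma}|_2$, which requires $I_h^{\div\div}$ to be well defined and $O(h^2)$-accurate on merely $H^2$ tensors (in particular that the codimension-two edge traces $\tr_e$ are meaningful on $H^2$).
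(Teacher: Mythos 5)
Your proposal is correct and follows essentially the same route as the paper's proof: the identity $\|Q_r u - u_h\|_0^2 = (\bs\sigma-\bs\sigma_h,\widetilde{\bs\sigma}_I)$ via the commuting property \eqref{eq:IdivdivQcd} and the error equation \eqref{erroreqn}, the split into $(\bs\sigma-\bs\sigma_h,\widetilde{\bs\sigma}_I-\widetilde{\bs\sigma}) + (f-Q_rf,\widetilde u-Q_r\widetilde u)$, the saturation of the dual gain at $H^4$ regularity producing $h^{\min\{2r+2,r+5\}}$, and finally \eqref{eq:u-uhast} for $\|u-u_h^{\ast}\|_0$. Your closing concern is also handled exactly as you anticipate: $I_h^{\div\div}$ is defined on $H^2(\Omega;\mathbb S)$ by construction, so the $O(h^2)$ interpolation bound on $\widetilde{\bs\sigma}$ is legitimate.
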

\begin{proof}
Set $v=Q_r u - u_h$ for simplicity. By~\eqref{eq:IdivdivQcd},~\eqref{erroreqn} and integration by parts,
\begin{align*}
\|Q_r u - u_h\|_0^2&=-(\div\div\widetilde{\boldsymbol{\sigma}},v)=-(\div\div\widetilde{\boldsymbol{\sigma}}_I, v)=(\boldsymbol{\sigma}-\boldsymbol{\sigma}_h, \widetilde{\boldsymbol{\sigma}}_I) \\
&=(\boldsymbol{\sigma}-\boldsymbol{\sigma}_h, \widetilde{\boldsymbol{\sigma}}_I-\widetilde{\boldsymbol{\sigma}})-(\boldsymbol{\sigma}-\boldsymbol{\sigma}_h, \nabla^2\widetilde{u}) \\
&=(\boldsymbol{\sigma}-\boldsymbol{\sigma}_h, \widetilde{\boldsymbol{\sigma}}_I-\widetilde{\boldsymbol{\sigma}})-(\div\div(\boldsymbol{\sigma}-\boldsymbol{\sigma}_h), \widetilde{u}) \\
&=(\boldsymbol{\sigma}-\boldsymbol{\sigma}_h, \widetilde{\boldsymbol{\sigma}}_I-\widetilde{\boldsymbol{\sigma}})+(f-Q_rf, \widetilde{u}-Q_r\widetilde{u}).
\end{align*}
Apply the Cauchy-Schwarz inequality and interpolation error estimate to get
$$
\|Q_r u - u_h\|_0^2\lesssim h^2\|\boldsymbol{\sigma}-\boldsymbol{\sigma}_h\|_0|\widetilde{\boldsymbol{\sigma}}|_2 + h^{\min\{r+1,4\}}\|f-Q_rf\|_0\|\widetilde{u}\|_4.
$$
Thus the bound on $\|Q_r u - u_h\|_{0}$ follows from the regularity estimate~\eqref{dualregularH4}, and the bound on $\|u-u_h^{\ast}\|_{0}$ follows from~\eqref{eq:u-uhast} and~\eqref{biharmonicMfemErr1}.
\end{proof}


\section{Hybridization}\label{sec:hybridization}
This section will discuss the hybridization of the mixed finite element method~\eqref{eq:biharmonicMfem}. Spaces of Lagrange multipliers are introduced to relax the continuity of $\tr_1(\bs \tau)$, $\tr_2(\bs \tau)$, and the patch constraint imposed on edge jumps. Weak divdiv stability will be proved. 
Equivalent weak Galerkin and non-conforming virtual element methods formulation will also be provided, as well as a $C^0$ discontinuous Galerkin (CDG) method. 

\subsection{Broken spaces and weak differential operators}
For $k\geq 0$, define $$\Sigma_{k,r}^{-1} :=\prod_{T\in \mathcal T_h}\Sigma_{k,r}(T;\mathbb S)$$ with
$$
\Sigma_{k,r}(T;\mathbb S) := 
\begin{cases}
\mathbb P_{k}(T;\mathbb S),  & r = k -2,\\
\mathbb P_{k}(T;\mathbb S)\oplus\boldsymbol x\boldsymbol x^{\intercal}\mathbb H_{k-1}(T), & r = k -1.
\end{cases}
$$
We also write $\Sigma_{k}^{-1} = \Sigma_{k,k-2}^{-1}$ and $\Sigma_{k^+}^{-1} = \Sigma_{k,k-1}^{-1}$ for $k\geq 0$ when $r$ is not emphasized. The case $\Sigma_{1^{++}}^{-1}$ is defined by the enriched local space~\eqref{eq:sigma1+} and is not included in this notation system.
Define the discontinuous polynomial spaces
$$
V_{r}^{-1}(\mathcal F_h) := \prod_{F\in \mathcal F_h} \mathbb P_{r}(F), \quad V_{r}^{-1}(\mathcal E_h) := \prod_{e\in \mathcal E_h} \mathbb P_{r}(e).
$$
Spaces for the scalar function are: for $r = k-2$ or $k-1$
\begin{align*}
M^{-1}_{r,k-1, k, k} &= V_{r}^{-1}(\mathcal T_h) \times V_{k-1}^{-1}(\mathcal F_h)\times V_{k}^{-1}(\mathcal F_h)\times V_{k}^{-1}(\mathcal E_h),\\ 
\mathring{M}^{-1}_{r,k-1, k, k} &= V_{r}^{-1}(\mathcal T_h) \times V_{k-1}^{-1}(\mathring{\mathcal F}_h)\times V_{k}^{-1}(\mathring{\mathcal F}_h)\times V_{k}^{-1}(\mathring{\mathcal E}_h). 
\end{align*}
When the index is less than zero, we use $\cdot$ to de-emphasize it. For example, when $k=1,$ $ r=k-2=-1$, the space is denoted by $M^{-1}_{\cdot, 0, 1, 1}$; for $k=0$, it is $M^{-1}_{\cdot, \cdot, 0, 0}$.
Spaces on $\mathring{\mathcal F}_h$ and $\mathring{\mathcal E}_h$ can be thought of as Lagrange multiplier for the required continuity. For example, $V_{k-1}^{-1}(\mathring{\mathcal F}_h)$ is for $\tr_2(\bs \sigma)$ which is one degree lower than that of $\bs \sigma$ as $\tr_2(\bs \sigma)$ consists of first-order derivatives of $\bs \sigma$.
Space $\mathring{M}^{-1}_{r,k-1, k, k}$ can be treated as a subspace of $M^{-1}_{r,k-1, k, k}$ by zero extension to boundary faces and edges.
A function $v\in M^{-1}_{r,k-1, k, k}$ can be written as $v = (v_0, v_b, v_n, v_e)$, where $v_0$ represents function value in the interior, $v_b$ on faces, $v_e$ on edges, and $v_n$ for the normal derivative on faces. 

Introduce the inner products $(\cdot,\cdot)_{0,h}$ with weight:
\begin{align*}    
((u_0, u_b, u_n, u_e), (v_0, v_b, v_n, v_e))_{0,h} &=  \sum_{T\in \mathcal T_h}(u_0, v_0)_{T} + \sum_{F\in\mathcal F_h}h_F(u_b, v_b)_{F} \\
&\quad+ \sum_{F\in\mathcal F_h}h_F^3(u_n, v_n)_{F}+ \sum_{e\in\mathcal E_h}h_e^2(u_e, v_e)_{e}.
\end{align*}
The induced norm is denoted by $\|\cdot\|_{0,h}$. Different scalings are introduced such that all terms have the same scaling as the $L^2$-inner product $(u_0, v_0)$.

We will use either the pair $\Sigma_{k,k-2}^{-1} - \mathring{M}^{-1}_{k-2,k-1, k, k} $ or $\Sigma_{k,k-1}^{-1} - \mathring{M}^{-1}_{k-1,k-1, k, k}$, and unify the notation as $\Sigma_{k,r}^{-1} - \mathring{M}^{-1}_{r,k-1, k, k}$.


Define weak divdiv operator $(\div\div)_w: \Sigma_{k,r}^{-1} \to \mathring{M}^{-1}_{r,k-1, k, k}$ as
$$
(\div\div)_w\bs \sigma := ((\div\div)_T\bs \sigma, -h_F^{-1}[\tr_2(\bs \sigma)]|_F, h_F^{-3}[\bs n^{\intercal}\bs \sigma\bs n]|_F, h_e^{-2}[ \tr_e(\bs \sigma)]|_e),
$$
and extend to $\overline{(\div\div)}_w:  \Sigma_{k,r}^{-1} \to  M^{-1}_{r,k-1, k, k}$ by including boundary faces and edges. The negative power scaling is introduced to match the scaling of the second order derivative $(\div\div)_T\bs \sigma$.
When $\bs \sigma\in \Sigma_{k,r}^{-1}\cap H(\div\div,\Omega;\mathbb S)$, $(\div\div)_w\bs \sigma = (\div\div)\bs \sigma$ but $\overline{(\div\div)}_w\bs \sigma \neq \div\div \bs \sigma$ pointwisely as terms on the boundary faces and edges are included. However, $\overline{(\div\div)}_w\bs \sigma = (\div\div)_w\bs \sigma = \div\div \bs \sigma$ in the distribution sense as the test function vanishes on the boundary. 


For $v = (v_0, v_b, v_n, v_e)\in M^{-1}_{r,k-1, k, k}$, $k\geq 0$, define weak Hessian $\nabla_w^2 v\in \Sigma_{k,r}^{-1}$ s.t. for all $\bs \sigma \in \Sigma_{k,r}(T;\mathbb S)$ and $T\in \mathcal T_h$:
\begin{align}
\notag(\nabla_w^2 v, \bs \sigma )_T := & (v_0, \div\div_h \bs \sigma)_T \\
&- (v_b, \tr_2(\bs \sigma))_{\partial T} + (v_n\bs n_F\cdot\bs n,\bs n^{\intercal}\label{eq:weakhess}
\bs \sigma\bs n)_{\partial T}+ \sum_{e\in \Delta_{d-2}(T)}(v_e, \tr_e(\bs \sigma))_e. 
\end{align}
Using integration by parts, we also have an equivalent formula on $\nabla_w^2v$
\begin{equation}\label{eq:weakhess2}
\begin{aligned}
(\nabla_w^2 v, \bs \sigma )_T  = & (\nabla_h^2 v_0, \bs\sigma)_T +  (v_0 - v_b, \tr_2(\bs \sigma))_{\partial T} - ( \partial_n v_0 - v_n\bs n_F\cdot\bs n,\bs n^{\intercal}\bs \sigma\bs n)_{\partial T} \\
&+ \sum_{e\in \Delta_{d-2}(T)}(v_e - v_0, \tr_e(\bs \sigma))_e.
\end{aligned}
\end{equation}


For piecewise smooth $v\in H^2(\Omega)$, define $Q_M v\in M^{-1}_{r, k-1, k, k}$ by local $L^2$-projection
$$
Q_M v = (Q_{r,T} v, Q_{k-1,F}v, Q_{k,F} \partial_{n_F} v, Q_{k,e} v)_{T\in \mathcal T_h, F\in \mathcal F_h, e\in \mathcal E_h},
$$
then by definition
\begin{equation}\label{weakhessProjcd}    
\nabla_w^2 Q_M v = Q_{\Sigma} \nabla^2 v,
\end{equation}
where $Q_{\Sigma}$ is the $L^2$-projection to the space $\Sigma_{k,r}^{-1}$. 

By definition, we have the following formulae on the integration by parts. 
\begin{lemma}
 We have the integration by parts
\begin{align*}
((\div\div)_w\bs \sigma, v)_{0,h} = ( \bs \sigma, \nabla_w^2 v), \quad \bs \sigma \in \Sigma_{k,r}^{-1}, v\in \mathring{M}^{-1}_{r,k-1, k, k},\\
(\overline{(\div\div)}_w\bs \sigma, v)_{0,h} = ( \bs \sigma, \nabla_w^2 v), \quad \bs \sigma \in \Sigma_{k,r}^{-1}, v\in M^{-1}_{r,k-1, k, k}.
\end{align*}
\end{lemma}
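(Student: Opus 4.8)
The plan is to expand both sides directly from their definitions and reconcile them by regrouping the element-boundary integrals into face-wise and edge-wise contributions. The only real content is that the weights hidden in $(\div\div)_w$ are precisely the reciprocals of those in $(\cdot,\cdot)_{0,h}$, and that the element-local traces sum to the jumps appearing in $(\div\div)_w$.

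First I would expand the left-hand side. Writing $v=(v_0,v_b,v_n,v_e)$ and inserting the definition of $(\div\div)_w\bs\sigma$ into the weighted product $(\cdot,\cdot)_{0,h}$, the factors $h_F^{-1},h_F^{-3},h_e^{-2}$ cancel against the weights $h_F,h_F^3,h_e^2$, leaving
\[
((\div\div)_w\bs\sigma, v)_{0,h} = \sum_{T}((\div\div)_T\bs\sigma, v_0)_T - \sum_{F}([\tr_2(\bs\sigma)]|_F, v_b)_F + \sum_F([\bs n^{\intercal}\bs\sigma\bs n]|_F, v_n)_F + \sum_e([\tr_e(\bs\sigma)]|_e, v_e)_e.
\]
For $v\in\mathring{M}^{-1}_{r,k-1,k,k}$ only interior faces and edges survive, since $v_b,v_n,v_e$ vanish on $\mathcal F_h^{\partial}$ and $\mathcal E_h^{\partial}$.

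Next I would expand the right-hand side as $(\bs\sigma,\nabla_w^2 v)=\sum_T(\nabla_w^2 v,\bs\sigma)_T$ and substitute the defining formula~\eqref{eq:weakhess}. The interior term matches term one above immediately. The remaining three are element-boundary integrals $\sum_T(\cdot)_{\partial T}$, which I regroup by summing over the elements sharing each sub-simplex. Each interior face $F$ is shared by exactly two simplices $T_1,T_2$; since $v_b$ and $v_n$ are single-valued on $F$ while $\tr_2(\bs\sigma)$ and $\bs n^{\intercal}\bs\sigma\bs n$ are element-dependent, the two element contributions combine into the jumps $[\tr_2(\bs\sigma)]|_F$ and $[\bs n^{\intercal}\bs\sigma\bs n]|_F$. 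Each interior edge $e$ is shared by the patch $\omega_e$, and as $v_e$ is single-valued the element sum $\sum_{T\in\omega_e}\tr_e^T(\bs\sigma)$ is exactly $[\tr_e(\bs\sigma)]|_e$ by the definition of the edge jump.

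The step I expect to require the most care is the orientation bookkeeping in the normal-normal term. Here I would use that $\bs n_{\partial T}$ and $\bs n_F$ are parallel unit vectors, so $\bs n^{\intercal}\bs\sigma\bs n=\bs n_F^{\intercal}\bs\sigma\bs n_F$ is sign-free but element-dependent, while the factor $\bs n_F\cdot\bs n_{\partial T}=\pm1$ in~\eqref{eq:weakhess} together with $\bs n_{\partial T_1}|_F=-\bs n_{\partial T_2}|_F$ converts the two-element sum into the signed jump consistent with the jump convention. For $\tr_2$ the outward normals already carry the sign, so the two element traces add directly to $[\tr_2(\bs\sigma)]|_F$. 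Matching all four groups gives the first identity. For the extended operator $\overline{(\div\div)}_w$ and $v\in M^{-1}_{r,k-1,k,k}$, the identical regrouping applies, except that boundary faces and edges now contribute a single element trace, which is consistent with the boundary convention $[w]=w|_F$; the second identity then follows in the same way.
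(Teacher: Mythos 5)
Your proposal is correct and is exactly the argument the paper has in mind: the paper states this lemma with no written proof beyond ``by definition,'' and your verification---cancelling the weights $h_F^{\pm1},h_F^{\pm3},h_e^{\pm2}$, then regrouping the element-boundary sums in~\eqref{eq:weakhess} into face and edge jumps using the single-valuedness of $v_b,v_n,v_e$ and the sign conventions $\bs n_F\cdot\bs n_{\partial T}=\pm1$, $\bs n_{\partial T_1}|_F=-\bs n_{\partial T_2}|_F$---is precisely the omitted bookkeeping. Your handling of the signs (difference-type jump for $\bs n^{\intercal}\bs\sigma\bs n$, sum-type jump for $\tr_2$ and $\tr_e$ since those traces are odd in the outward normal) and of the boundary convention $[w]=w|_F$ for the barred operator is accurate.
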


As a consequence, for $\bs \sigma\in \Sigma_{k,r}^{-1}, v\in C_0^{\infty}(\Omega)$, we have $$((\div\div)_w\bs \sigma, Q_Mv)_{0,h} = (\bs \sigma, \nabla^2 v) = \langle \div\div \bs \sigma, v\rangle,$$ where the last $\langle\cdot, \cdot \rangle$ is the duality pair. Namely $(\div\div)_w$ can be viewed as a discretization of $\div\div$ operator in the distributional sense.


\subsection{Weak divdiv stability}
Introduce the norm square $\|\boldsymbol{\tau}\|_{\div\div_w}^2:=\|\boldsymbol{\tau}\|_{0}^2+\|(\div\div)_w\boldsymbol{\tau}\|_{0,h}^2$, and $\|\boldsymbol{\tau}\|_{\overline{\div\div}_w}^2:=\|\boldsymbol{\tau}\|_{0}^2+\|\overline{(\div\div)}_w\boldsymbol{\tau}\|_{0,h}^2$.

\begin{theorem}\label{thm:weakdivdiv}
We have the inf-sup condition: there exist constants $\alpha$ and $\bar{\alpha}$ independent of $h$ s.t.
\begin{equation}\label{eq:divdivwinfsup}
 \inf_{v\in \mathring{M}^{-1}_{r,k-1,k,k}} \sup_{\boldsymbol{\tau}\in \Sigma_{k,r}^{-1}} \frac{((\div\div)_w\boldsymbol{\tau}, v)_{0,h}}{\|\boldsymbol{\tau}\|_{\div\div_w}\|v\|_{0,h}} = \alpha >0, \quad \text{ for } k \geq 0,
\end{equation}
\begin{equation}\label{eq:divdivbarwinfsup}
 \inf_{v\in M^{-1}_{r,k-1,k,k}/\mathbb P_1} \sup_{\boldsymbol{\tau}\in \Sigma_{k,r}^{-1}} \frac{(\overline{(\div\div)}_w\boldsymbol{\tau}, v)_{0,h}}{\|\boldsymbol{\tau}\|_{\overline{\div\div}_w}\|v\|_{0,h}} = \bar{\alpha} >0, \quad \text{ for } k \geq 0.
\end{equation}
\end{theorem}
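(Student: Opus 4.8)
The plan is to establish both inf-sup conditions through the same mechanism: for each multiplier $v$, construct a single stress $\bs\tau$ whose weak divdiv reproduces $v$, with a norm controlled by $\|v\|_{0,h}$. For \eqref{eq:divdivwinfsup} it suffices to show that $(\div\div)_w:\Sigma_{k,r}^{-1}\to\mathring{M}^{-1}_{r,k-1,k,k}$ has a right inverse that is bounded in the $\|\cdot\|_{\div\div_w}$-norm, i.e. given $v\in\mathring{M}^{-1}_{r,k-1,k,k}$ one produces $\bs\tau$ with $(\div\div)_w\bs\tau=v$ and $\|\bs\tau\|_{\div\div_w}\lesssim\|v\|_{0,h}$. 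Taking this $\bs\tau$ as the test function gives $((\div\div)_w\bs\tau,v)_{0,h}=\|v\|_{0,h}^2$, so the ratio is bounded below by a mesh-independent constant $\alpha=1/C$. The identical reduction applies to \eqref{eq:divdivbarwinfsup} on the quotient space.

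To build the right inverse I would split $v=(v_0,v_b,v_n,v_e)$ into its skeleton part and its interior part and treat them in two stages. First, using the dual basis of the edge DoF \eqref{eq:newdivdivdof1}, the face normal--normal DoF \eqref{eq:newdivdivdof2}, and the face $\tr_2$ DoF \eqref{eq:newdivdivdof3}, I would construct a piecewise polynomial $\bs\tau^b\in\Sigma_{k,r}^{-1}$ realizing the three jump targets $h_e^2 v_e$, $h_F^3 v_n$, and $-h_F v_b$, concentrating each prescribed jump on a single element of the corresponding face or edge patch. Because these are the only nonzero DoFs of $\bs\tau^b$, the dual-basis property decouples the three assignments, and a scaling argument on the reference simplex yields both $\|\bs\tau^b\|_0\lesssim\|v\|_{0,h}$ and $\|(\div\div)_h\bs\tau^b\|_0\lesssim\|v\|_{0,h}$; set $g:=(\div\div)_h\bs\tau^b\in V_r^{-1}$. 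Second, I would invoke the already-established conforming inf-sup \eqref{eq:newdivdivinfsup} (resp. \eqref{eq:newdivdivRTinfsup}, \eqref{eq:newdivdivk1infsup}) to find a conforming $\bs\tau^c\in\Sigma_{k,r}^{\div\div}\subset H(\div\div,\Omega;\mathbb S)$ with $\div\div\bs\tau^c=v_0-g$ and $\|\bs\tau^c\|_{\div\div}\lesssim\|v_0-g\|_0\lesssim\|v\|_{0,h}$. Since $\bs\tau^c$ is $H(\div\div)$-conforming, all of its jumps vanish, so $\bs\tau:=\bs\tau^b+\bs\tau^c$ satisfies $(\div\div)_w\bs\tau=(v_0,v_b,v_n,v_e)=v$ with the desired bound.

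The step I expect to be the main obstacle is the coupling between the interior component and the skeleton through the constant/linear modes. Applying the Green's identity \eqref{eq:greenidentitydivdiv} to $v\in\mathbb P_1(T)$ (the computation in Remark \ref{rm:k=2}) shows that any element-interior bubble, whose boundary DoFs all vanish, produces a divdiv orthogonal to $\mathbb P_1$; hence the $\mathbb P_1$-moment of the interior target cannot be generated locally and must be carried by boundary traces, which are simultaneously constrained by the jump prescription. This is precisely what the conforming corrector $\bs\tau^c$ resolves: being globally $H(\div\div)$-conforming, it supplies the full $v_0-g$, including its $\mathbb P_1$ part, without introducing any jump. For the low-order instances where the interior component degenerates (the branch $r=k-2$ with $k\le 2$, so that $v_0$ is trivial or piecewise constant and $g$ vanishes), the second stage is either absent or verified directly, consistently with the equivalence to the Morley--Wang--Xu element noted for $k=0$. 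The $h$-scalings entering the two norm bounds are routine once the reference-element estimates are fixed.

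For the inclusive operator $\overline{(\div\div)}_w$ in \eqref{eq:divdivbarwinfsup}, the same construction applies after identifying the obstruction responsible for the quotient by $\mathbb P_1$. For a global linear polynomial $p\in\mathbb P_1$, identity \eqref{weakhessProjcd} gives $\nabla_w^2 Q_M p=Q_{\Sigma}\nabla^2 p=0$, so by the integration-by-parts lemma $(\overline{(\div\div)}_w\bs\tau,Q_M p)_{0,h}=(\bs\tau,\nabla_w^2 Q_M p)=0$ for every $\bs\tau\in\Sigma_{k,r}^{-1}$; thus $Q_M(\mathbb P_1)$ lies in the orthogonal complement of the range and must be factored out, which is exactly the quotient $M^{-1}_{r,k-1,k,k}/\mathbb P_1$. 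On this complement the boundary faces and edges are now part of the jump data, and repeating the two-stage construction — with $\bs\tau^b$ allowed to act on boundary faces and edges and $\bs\tau^c$ supplied by the conforming space — produces a bounded right inverse modulo $Q_M(\mathbb P_1)$, measured in $\|\cdot\|_{\overline{\div\div}_w}$, which yields $\bar{\alpha}>0$.
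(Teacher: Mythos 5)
Your two-stage construction for the case $r\geq 1$ --- a skeleton part $\bs\tau^b$ prescribed through the DoFs \eqref{eq:newdivdivdof1}--\eqref{eq:newdivdivdof3}, plus a conforming corrector $\bs\tau^c$ supplied by \eqref{eq:newdivdivinfsup}, \eqref{eq:newdivdivRTinfsup} or \eqref{eq:newdivdivk1infsup} --- is exactly the paper's argument there; the only difference (you concentrate each jump on one element of the patch, the paper spreads it with factors $\tfrac12$ and $1/|\omega_e|$) is immaterial. Your identification of the $\mathbb P_1$ kernel of the adjoint for \eqref{eq:divdivbarwinfsup} is also correct.

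The genuine gap is in the low-order cases $k=0,1,2$ with $r\leq 0$, which the theorem explicitly claims ($k\geq 0$) and which you dismiss with the assertion that ``$v_0$ is trivial or piecewise constant and $g$ vanishes,'' so that the second stage is ``either absent or verified directly.'' That assertion is false, and both stages of your construction break there. Stage one requires a dual basis of the trace DoFs, i.e.\ their linear independence on the shape function space; by Remark~\ref{rm:k=2}, for $\bs\tau\in\mathbb P_k(T;\mathbb S)$ with $k\leq 2$ the DoFs \eqref{eq:newdivdivdof1}--\eqref{eq:newdivdivdof3} are linearly \emph{dependent} (for $k=0$ the count alone fails: a constant tensor has ${d+1\choose 2}$ components against $(d+1)+{d+1\choose 2}$ trace targets), so no such dual basis exists. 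Stage two fails as well: for $k=2$, $r=0$ the target $v_0-g$ with $v_0\in V_0^{-1}$ is genuinely nonzero, $g=(\div\div)_h\bs\tau^b$ has no reason to vanish, and there is no conforming corrector, since $\Sigma_{2}^{\div\div}$ does not exist. The paper's fix for $k=1,2$ is an embedding/degree-reduction trick absent from your proposal: embed the multiplier into the enriched pair (e.g.\ $\mathring{M}^{-1}_{0,1,2,2}\subset\mathring{M}^{-1}_{1,1,2,2}$), solve with $\Sigma_{2^+}^{-1}$ (resp.\ $\Sigma_{1^{++}}^{-1}$) by the already-proved case, and then show the solution automatically lies in the smaller space --- for $k=2$ because $(\div\div)_T\bs\tau\in\mathbb P_0(T)$ while the enrichment's $\div\div$ is homogeneous of degree one, and for $k=1$ by stripping off $\sym(\bs x\otimes\mathbb H_1(T;\mathbb R^d))$ via the argument of Lemma~\ref{lm:1++}. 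For $k=0$ the paper uses an entirely different mechanism, of which your proposal contains no trace beyond a passing mention: identify $\mathring{M}^{-1}_{\cdot,\cdot,0,0}$ with the Morley--Wang--Xu space through $Q_M$, solve the discrete MWX problem $(\nabla_h^2 w_h,\nabla_h^2\chi)=(v,Q_M\chi)_{0,h}$, and take $\bs\tau=\nabla_h^2 w_h\in\Sigma_0^{-1}$, bounding it by the MWX norm equivalence and discrete Poincar\'e inequality. Without these three additional ideas the theorem is proved only for $r\geq 1$ and the $1^{++}$ pair, not for all $k\geq 0$.
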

\begin{proof}
The proof of~\eqref{eq:divdivwinfsup} and~\eqref{eq:divdivbarwinfsup} is similar. We will prove~\eqref{eq:divdivwinfsup} for $r=k-2$ which also works for $r=k-1$ with appropriate change of DoFs to define $\Sigma_{k^+}^{\div\div}$ rather than $\Sigma_{k,{\rm new}}^{\div\div}$.

\step{1} We first consider the case $r\geq 1$ for which an $H(\div\div)$-conforming finite element either $\Sigma_{k}^{\div\div}, k\geq 3,$ or $\Sigma_{k^+}^{\div\div}, k\geq 2,$ have been constructed.

For $e\in \mathring{\mathcal{E}}_h$, let $|\omega_e|$ be the number of elements in $\omega_e$.
First consider a tensor $\boldsymbol{\tau}_{b}\in \Sigma_k^{-1}$ with DoFs
$$
\tr_2(\boldsymbol{\tau}_{b})|_F =  - \frac{1}{2}h_F v_b,\quad\bs n^{\intercal}\boldsymbol{\tau}_{b}\bs n |_F = \frac{1}{2}h_F^3 v_n (\bs n_F\cdot\bs n) \;\; \textrm{ on } F\in\partial T,
$$
$$
(\bs n_{F_1,e}^{\intercal}\boldsymbol{\tau}_{b} \bs n_{F_1}+\bs n_{F_2,e}^{\intercal}\boldsymbol{\tau}_{b} \bs n_{F_2})|_e = \frac{1}{|\omega_e|}h_e^2v_e \;\; \textrm{ on } e\in\Delta_{d-2}(T)
$$
for each $T\in\mathcal T_h$, and others in~\eqref{eq:newdivdivS} vanish. 
Consequently, 
$$
(\div\div)_w\boldsymbol{\tau}_{b} =((\div\div)_T\boldsymbol{\tau}_{b}, v_b, v_n, v_e)_{T\in\mathcal{T}_h}, \text{ and }
\|\boldsymbol{\tau}_{b}\|_{\div\div_w}\lesssim \|v\|_{0,h}.
$$

Then by the inf-sup condition~\eqref{eq:newdivdivinfsup}, we can find $\boldsymbol{\tau}_0\in\Sigma_{k,{\rm new}}^{\div\div}$ s.t. $\div\div\boldsymbol{\tau}_0 = v_0 - (\div\div)_h\boldsymbol{\tau}_{b}$, and $\|\boldsymbol{\tau}_0 \|_{\div\div}\lesssim \|v_0\|_0 + \| (\div\div)_h\boldsymbol{\tau}_{b}\|_{0} \lesssim \|v\|_{0,h}$. 

Set $\boldsymbol{\tau}=\boldsymbol{\tau}_0 + \boldsymbol{\tau}_{b}$. We have $(\div\div)_w\boldsymbol{\tau}= v$ and $\|\boldsymbol{\tau}\|_{\div\div_w} \lesssim \| v \|_{0,h}$, which verifies the inf-sup condition~\eqref{eq:divdivwinfsup}.

The pair $\Sigma_{1^{++}}^{-1}-\mathring{M}^{-1}_{1,1,1,1}$ can be proved similarly as an $H(\div\div)$-conforming finite element space $\Sigma_{1^{++}}^{\div\div}$ can be constructed. However, for $\Sigma_{k}^{-1}$, $k= 1,2$, and $\Sigma_{1^{+}}^{-1}$, no finite elements have been constructed and will be treated differently.

\smallskip
\step{2} Consider $k=2$. Given $v\in \mathring{M}^{-1}_{0,1,2,2}\subset  \mathring{M}^{-1}_{1,1,2,2}$, by the established inf-sup condition for $\Sigma_{2^+}^{-1}-\mathring{M}^{-1}_{1,1,2,2}$, we can find $\bs \tau \in \Sigma_{2^+}^{-1}$ s.t. $\div\div_w \bs \tau = v$. We claim $\bs \tau \in \Sigma_{2}^{-1}$ as $\div\div_T \bs \tau \in \mathbb P_0(T)$ and the range of the enrichment $\div\div(\boldsymbol x\boldsymbol x^{\intercal}\mathbb H_{1}(T)) = \mathbb H_{1}(T)$. This finishes the weak divdiv stability for $\div\div_w: \Sigma_{2}^{-1}\to \mathring{M}^{-1}_{0,1,2,2}$.

\smallskip
\step{3} Consider $k=1$. Given $v\in \mathring{M}^{-1}_{\cdot,0,1,1}\subset  \mathring{M}^{-1}_{1,1,1,1}$, by the established inf-sup condition for $\Sigma_{1^{++}}^{-1}-\mathring{M}^{-1}_{1,1,1,1}$, we can find $\bs \tau \in \Sigma_{1^{++}}^{-1}$ s.t. $\div\div_w \bs \tau = v$. As $v_0=0$, we conclude $\div\div_h \bs \tau=0$. Consequently $\bs \tau \in \mathbb P_1(T;\mathbb S)+ \sym(\boldsymbol{x}\otimes \mathbb P_1(T;\mathbb R^d))$. 
By the proof of Lemma \ref{lm:1++}, we can derive $\bs\tau\in\mathbb P_1(T;\mathbb S)$ from the fact $\tr_2(\bs \tau)\in \mathbb P_0(F)$. Namely we obtain the stability for the pair $\Sigma_{1}^{-1}-\mathring{M}^{-1}_{\cdot,0,1,1}$. 
Then by adding $\bs x^{\intercal}\bs x\mathbb P_0(T)$ element-wise, we obtain the stability for $\Sigma_{1^{+}}^{-1} -\mathring{M}^{-1}_{0,0,1,1}$. This finishes all $k=1$ cases.

\smallskip
\step{4} Consider $k=0$.
We shall use the non-conforming finite element space as the bridge. The space $\mathring{M}^{-1}_{\cdot, \cdot, 0, 0}$ can be identified as the Morley-Wang-Xu (MWX) element $\mathring{V}^{\rm MWX}_2$~\cite{WangXu2006} through the bijection $Q_M: \mathring{V}^{\rm MWX}_2\to \mathring{M}^{-1}_{\cdot,\cdot,0,0}$. Similar as~\eqref{weakhessProjcd}, it holds 
$\nabla_w^2 Q_M\chi = Q_{\Sigma} \nabla_h^2 \chi$ for $\chi\in \mathring{V}^{\rm MWX}_2$.
Given $v\in \mathring{M}^{-1}_{\cdot,\cdot,0,0}$, let $w_h\in\mathring{V}^{\rm MWX}_2$ satisfy
\begin{equation*}
(\nabla_h^2w_h, \nabla_h^2\chi)=(v, Q_M\chi)_{0,h}, \quad\chi\in\mathring{V}^{\rm MWX}_2.
\end{equation*}
Take $\boldsymbol{\tau}=\nabla_h^2w_h\in \Sigma_{0}^{-1}$, then $\div\div_w \bs \tau = v$, and
\begin{equation*}
\|\boldsymbol{\tau}\|_0^2=(\boldsymbol{\tau}, \nabla_w^2 Q_Mw_h)=(v, Q_Mw_h)_{0,h}\leq \|v\|_{0,h}\|Q_Mw_h\|_{0,h}.
\end{equation*}
By the norm equivalence $\|Q_Mw_h\|_{0,h}\eqsim \|w_h\|_0$ of MWX element and the Poincar\'e inequality $\|w_h\|_0\lesssim \|\nabla_h^2w_h\|_0$ \cite[Lemma~8]{WangXu2006}, we have $\|\boldsymbol{\tau}\|_0\lesssim \|v\|_{0,h}$, which means $\|\boldsymbol{\tau}\|_{\div\div_w} \lesssim \| v \|_{0,h}$. Thus the inf-sup condition~\eqref{eq:divdivwinfsup} holds for $k=0$.
\end{proof}

As the adjoint of the $\div\div_w$, $\nabla_w^2$ is injective. We obtain another version of the inf-sup condition.
\begin{corollary}
We have
\begin{equation}\label{eq:infsuphessian}
\inf_{v\in \mathring{M}^{-1}_{r,k-1,k,k}} \sup_{\boldsymbol{\tau}\in \Sigma_{k,r}^{-1}} \frac{(\div\div_w \boldsymbol{\tau}, v)_{0,h}}{\|\boldsymbol{\tau}\|_{0}\| \nabla_w^2v \|_0} = 1, \quad \text{ for } k\geq 0.
\end{equation}
\end{corollary}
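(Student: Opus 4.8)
The plan is to exploit the adjoint relation between $(\div\div)_w$ and $\nabla_w^2$ in order to collapse the mixed Rayleigh quotient into a pure $L^2$ quotient on the tensor space $\Sigma_{k,r}^{-1}$. First I would invoke the integration by parts identity $((\div\div)_w\boldsymbol{\tau}, v)_{0,h} = (\boldsymbol{\tau}, \nabla_w^2 v)$, valid for $\boldsymbol{\tau}\in \Sigma_{k,r}^{-1}$ and $v\in \mathring{M}^{-1}_{r,k-1,k,k}$, to rewrite the numerator in~\eqref{eq:infsuphessian} as the standard $L^2$-inner product $(\boldsymbol{\tau}, \nabla_w^2 v)$ on $\Sigma_{k,r}^{-1}$. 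Here I use that $\nabla_w^2 v$ lies in $\Sigma_{k,r}^{-1}$ by its very definition, so it is an admissible test tensor.

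Next, fixing $v$, I would evaluate the inner supremum over $\boldsymbol{\tau}$. By the Cauchy--Schwarz inequality one has $(\boldsymbol{\tau}, \nabla_w^2 v) \leq \|\boldsymbol{\tau}\|_0\,\|\nabla_w^2 v\|_0$, with equality attained at the choice $\boldsymbol{\tau} = \nabla_w^2 v$; hence $\sup_{\boldsymbol{\tau}}(\boldsymbol{\tau}, \nabla_w^2 v)/\|\boldsymbol{\tau}\|_0 = \|\nabla_w^2 v\|_0$. Dividing by the factor $\|\nabla_w^2 v\|_0$ appearing in the denominator of~\eqref{eq:infsuphessian} then yields exactly $1$ for every admissible $v$, so the outer infimum over $v$ is likewise equal to $1$.

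The only point requiring care, and the main obstacle, is that the quotient in~\eqref{eq:infsuphessian} must be well defined, i.e., $\|\nabla_w^2 v\|_0 \neq 0$ for every nonzero $v\in \mathring{M}^{-1}_{r,k-1,k,k}$. This is precisely the injectivity of $\nabla_w^2$ on $\mathring{M}^{-1}_{r,k-1,k,k}$, which I would deduce from the already established inf-sup condition~\eqref{eq:divdivwinfsup}. Indeed, if $\nabla_w^2 v = 0$ for some $v$, then by the integration by parts identity $((\div\div)_w\boldsymbol{\tau}, v)_{0,h} = (\boldsymbol{\tau}, \nabla_w^2 v) = 0$ for all $\boldsymbol{\tau}\in \Sigma_{k,r}^{-1}$, and~\eqref{eq:divdivwinfsup} then forces $\|v\|_{0,h} = 0$, hence $v = 0$. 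With injectivity in hand, the Cauchy--Schwarz computation of the previous paragraph is valid on all of $\mathring{M}^{-1}_{r,k-1,k,k}\setminus\{0\}$, and the equality~\eqref{eq:infsuphessian} follows for all $k\geq 0$.
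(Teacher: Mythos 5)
Your proposal is correct and follows essentially the same route as the paper: rewrite the numerator via the adjoint identity, take $\boldsymbol{\tau} = \nabla_w^2 v$ to achieve equality in Cauchy--Schwarz, and use injectivity of $\nabla_w^2$ (equivalently, that $\|\nabla_w^2\cdot\|_0$ is a norm on $\mathring{M}^{-1}_{r,k-1,k,k}$) to ensure the quotient is well defined. Your explicit derivation of that injectivity from the inf-sup condition~\eqref{eq:divdivwinfsup} is exactly the justification the paper leaves implicit in the phrase ``as the adjoint of $\div\div_w$, $\nabla_w^2$ is injective.''
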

\begin{proof}
We can take $\bs \tau = \nabla_w^2v$ to finish the proof as $\nabla^2_w: \mathring{M}^{-1}_{r,k-1, k, k}\to \Sigma_{k,r}^{-1}$ is injective and $\| \nabla^2_w \cdot\|_0$ is a norm on $\mathring{M}^{-1}_{r,k-1, k, k}$.
\end{proof}

\subsection{Hybridized discretization of the biharmonic equation}
A hybridization of the mixed finite element discretization~\eqref{eq:biharmonicMfem} of the biharmonic equation is: 
Find $\bs \sigma_h \in \Sigma_{k,r}^{-1}$ and $u_h\in \mathring{M}^{-1}_{r,k-1, k, k}$ s.t.
\begin{subequations}\label{eq:biharmonicMfemhy}
 \begin{align}
\label{eq:biharmonicMfemhy1}
(\bs \sigma_h, \bs \tau) + (\div\div_w \bs \tau,  u_h)_{0,h}&=0  \qquad\qquad\quad \forall~\boldsymbol{\tau} \in \Sigma_{k,r}^{-1},\\
\label{eq:biharmonicMfemhy2}
(\div\div_w \bs \sigma_h, v)_{0,h} & = -(f, v_0) \quad \quad  \forall~v\in \mathring{M}^{-1}_{r,k-1, k, k},
\end{align}
\end{subequations}
with appropriate modification of $(f, v_0)$ for the case $r\leq 0$ which will be discussed later.

More generally, for a given function $f_h = (f_0, f_b, f_n, f_e) \in \mathring{M}^{-1}_{r,k-1, k, k}$, we consider the mixed variational problem
\begin{subequations}\label{eq:hy}
 \begin{align}
\label{eq:hy1}
(\bs \sigma_h, \bs \tau) + (\div\div_w \bs \tau,  u_h)_{0,h}&=0  \qquad\qquad\quad \forall~\boldsymbol{\tau} \in \Sigma_{k,r}^{-1},\\
\label{eq:hy2}
(\div\div_w \bs \sigma_h, v)_{0,h} & = (f_h, v)_{0,h} \quad \;\;  \forall~v\in \mathring{M}^{-1}_{r,k-1, k, k}.
\end{align}
\end{subequations}
The biharmonic equation is a special case with $f_h=(-Q_r f, 0,0,0)$. 

\begin{lemma}\label{lm:equivalence}
The hybridized mixed finite element method~\eqref{eq:hy} has a unique solution $\bs \sigma_h \in \Sigma_{k,r}^{-1}$ and $u_h=((u_h)_0, (u_h)_b, (u_h)_n, (u_h)_e)\in \mathring{M}^{-1}_{r,k-1, k, k}$ for $k\geq 0$, and 
\begin{equation}\label{eq:hystability}
\| \bs \sigma_h\|_{\div\div_w} + \| u_h\|_{0,h} \lesssim \| f_h\|_{0,h}. 
\end{equation}
Moreover,  when $r\geq 1$, the solution $(\bs \sigma_h, (u_h)_0)\in\Sigma_{k,r}^{\div\div}\times V^{-1}_{r}$ to~\eqref{eq:biharmonicMfemhy} is the solution of the mixed finite element method~\eqref{eq:biharmonicMfem}.
\end{lemma}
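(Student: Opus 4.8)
The proof splits into two parts: the well-posedness together with the stability bound~\eqref{eq:hystability}, valid for all $k\geq 0$, and the equivalence with the mixed method~\eqref{eq:biharmonicMfem} in the conforming regime $r\geq 1$.

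\textbf{Well-posedness.} The plan is to read~\eqref{eq:hy} as a saddle-point problem with bilinear forms $a(\bs\sigma,\bs\tau):=(\bs\sigma,\bs\tau)$ on $\Sigma_{k,r}^{-1}$ and $b(\bs\tau,v):=((\div\div)_w\bs\tau,v)_{0,h}$, and to invoke the Brezzi theory with the norm $\|\cdot\|_{\div\div_w}$ on $\Sigma_{k,r}^{-1}$ and $\|\cdot\|_{0,h}$ on $\mathring{M}^{-1}_{r,k-1,k,k}$. The inf-sup condition for $b$ is precisely~\eqref{eq:divdivwinfsup} from Theorem~\ref{thm:weakdivdiv}, which holds for all $k\geq 0$. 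Coercivity of $a$ on the kernel $K:=\{\bs\tau\in\Sigma_{k,r}^{-1}:(\div\div)_w\bs\tau=0\}$ is immediate, since on $K$ one has $\|\bs\tau\|_{\div\div_w}^2=\|\bs\tau\|_0^2+\|(\div\div)_w\bs\tau\|_{0,h}^2=\|\bs\tau\|_0^2=a(\bs\tau,\bs\tau)$. Boundedness of $a$ and $b$ is clear from Cauchy--Schwarz and the definitions of the two norms. Brezzi's theorem then delivers a unique pair $(\bs\sigma_h,u_h)$ and the bound $\|\bs\sigma_h\|_{\div\div_w}+\|u_h\|_{0,h}\lesssim\sup_{v}(f_h,v)_{0,h}/\|v\|_{0,h}=\|f_h\|_{0,h}$, the last equality because $(\cdot,\cdot)_{0,h}$ is the inner product inducing $\|\cdot\|_{0,h}$. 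This is~\eqref{eq:hystability}.

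\textbf{Equivalence when $r\geq 1$.} Here a conforming element $\Sigma_{k,r}^{\div\div}$ exists, and the plan is to show the Lagrange multipliers force $\bs\sigma_h$ into it. Expanding $((\div\div)_w\bs\sigma_h,v)_{0,h}$ against the four components $v=(v_0,v_b,v_n,v_e)$ and testing~\eqref{eq:biharmonicMfemhy2} with $v$ supported on one component at a time yields, since $f_h=(-Q_rf,0,0,0)$ has only a $v_0$-component, the orthogonality relations $([\tr_2(\bs\sigma_h)],v_b)_F=0$, $([\bs n^{\intercal}\bs\sigma_h\bs n],v_n)_F=0$ and $([\tr_e(\bs\sigma_h)],v_e)_e=0$ over all interior faces and edges. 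Because the jump quantities lie in $\mathbb P_{k-1}(F)$, $\mathbb P_k(F)$ and $\mathbb P_k(e)$ respectively, matching exactly the multiplier degrees in $\mathring{M}^{-1}_{r,k-1,k,k}$, the $L^2$-projections annihilate the full jumps, so $[\tr_2(\bs\sigma_h)]=[\bs n^{\intercal}\bs\sigma_h\bs n]=[\tr_e(\bs\sigma_h)]=0$ on the interior. By Lemma~\ref{lm:divdivconforming}, together with the single-valuedness of DoFs~\eqref{eq:newdivdivdof2} and~\eqref{eq:newdivdivdof3}, this places $\bs\sigma_h\in\Sigma_{k,r}^{\div\div}$, whence $(\div\div)_w\bs\sigma_h=((\div\div)_h\bs\sigma_h,0,0,0)$. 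Restricting~\eqref{eq:biharmonicMfemhy2} to $v=(v_0,0,0,0)$ reproduces~\eqref{eq:biharmonicMfem2}; restricting the test function in~\eqref{eq:biharmonicMfemhy1} to $\bs\tau\in\Sigma_{k,r}^{\div\div}\subset\Sigma_{k,r}^{-1}$ kills the jump contributions of $\bs\tau$, leaving $(\bs\sigma_h,\bs\tau)+((\div\div)\bs\tau,(u_h)_0)=0$, which is~\eqref{eq:biharmonicMfem1}. Thus $(\bs\sigma_h,(u_h)_0)$ solves the well-posed method~\eqref{eq:biharmonicMfem}, and uniqueness identifies it as its solution.

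I expect the main obstacle to be the degree bookkeeping in the second part: one must confirm that the three interior jumps really have degrees $k-1$, $k$, $k$ so that projection onto the multiplier spaces annihilates them entirely rather than only their low-order parts. For the Raviart--Thomas enrichment $\bs x\bs x^{\intercal}\mathbb H_{k-1}(T)$ this is not automatic and rests on the observation that $\bs x\cdot\bs n$ is constant on each face $F$, which lowers $\bs n^{\intercal}(\bs x\bs x^{\intercal}h)\bs n|_F$ and the corresponding edge traces to degree $\leq k$, preserving the match; the interior balance~\eqref{eq:biharmonicMfem2} is unaffected because $\div\div(\bs x\bs x^{\intercal}\mathbb H_{k-1}(T))=\mathbb H_{k-1}(T)$ is already reproduced. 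The remaining verifications---coercivity, boundedness, and the identification of the dual norm on $\mathring{M}^{-1}_{r,k-1,k,k}$ with $\|\cdot\|_{0,h}$---are routine.
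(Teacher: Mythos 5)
Your proposal is correct and follows essentially the same route as the paper: well-posedness and the bound~\eqref{eq:hystability} via Babu\v{s}ka--Brezzi theory with the weak divdiv inf-sup condition~\eqref{eq:divdivwinfsup}, and, for $r\geq 1$, testing~\eqref{eq:biharmonicMfemhy2} against the multiplier components to force the jumps of $\bs\sigma_h$ to vanish, so that $\bs\sigma_h\in\Sigma_{k,r}^{\div\div}$ and restricting $\bs\tau\in\Sigma_{k,r}^{\div\div}$ in~\eqref{eq:biharmonicMfemhy1} recovers~\eqref{eq:biharmonicMfem}. The paper leaves the jump-annihilation step implicit; your degree bookkeeping (including the observation that $\bs x\cdot\bs n$ is constant on each face, so the Raviart--Thomas enrichment's traces stay within the multiplier degrees) is exactly the detail needed to justify it.
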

\begin{proof}
The discrete method~\eqref{eq:hy} is well-posed thanks to the weak divdiv stability~\eqref{eq:divdivwinfsup}. 
The stability~\eqref{eq:hystability} is from the Babuska-Brezzi theory. 
%

For the biharnominc equation~\eqref{eq:biharmonicMfemhy},  $f_h=(-Q_r f, 0,0,0)$. Therefore $\bs \sigma_h\in \Sigma_{k,r}^{\div\div}$ and $\div\div \bs \sigma_h = Q_{r}f$. By restricting $\bs \tau\in \Sigma_{k,r}^{\div\div}$ in~\eqref{eq:biharmonicMfemhy1}, we conclude $(\bs \sigma_h, (u_h)_0)$ is the solution to~\eqref{eq:biharmonicMfem}.  
\end{proof}
Notice that the mixed formulation~\eqref{eq:biharmonicMfem} is only presented for $r\geq 1, k\geq 2$, where $H(\div\div)$-conforming finite elements are constructed. While the hybridized version is well-posed for all $k\geq 0$.

Using the stability result~\eqref{eq:hystability}, we can prove the following discrete Poincar\'e inequality. 
\begin{lemma}
 On the space $\mathring{M}^{-1}_{r,k-1, k, k}$, we have
\begin{equation}\label{eq:Poincare}    
\|u\|_{0,h}  \lesssim \|\nabla_w^2u\|_0, \quad u\in \mathring{M}^{-1}_{r,k-1, k, k}\quad \text{ for } k\geq 0.
\end{equation}
\end{lemma}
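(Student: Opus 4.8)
The plan is to derive the Poincar\'e inequality directly from the weak divdiv inf-sup condition~\eqref{eq:divdivwinfsup} together with the integration-by-parts identity $((\div\div)_w\bs\tau, v)_{0,h} = (\bs\tau, \nabla_w^2 v)$, rather than reworking the Babu\v{s}ka--Brezzi stability~\eqref{eq:hystability} from scratch. Fix an arbitrary $u\in \mathring{M}^{-1}_{r,k-1,k,k}$; the goal is to bound $\|u\|_{0,h}$ by $\|\nabla_w^2 u\|_0$ up to an $h$-independent constant.

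First I would invoke the inf-sup condition~\eqref{eq:divdivwinfsup}, which holds for all $k\ge 0$, to obtain $\alpha\|u\|_{0,h} \le \sup_{\bs\tau\in\Sigma_{k,r}^{-1}} ((\div\div)_w\bs\tau, u)_{0,h}/\|\bs\tau\|_{\div\div_w}$. The key step is then to rewrite the numerator using the integration-by-parts lemma: because $u$ lies in the space with vanishing boundary contributions $\mathring{M}^{-1}_{r,k-1,k,k}$, we have $((\div\div)_w\bs\tau, u)_{0,h} = (\bs\tau, \nabla_w^2 u)$ for every $\bs\tau\in\Sigma_{k,r}^{-1}$, so the quotient becomes $(\bs\tau, \nabla_w^2 u)/\|\bs\tau\|_{\div\div_w}$.

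Next I would bound this pairing by the Cauchy--Schwarz inequality, $(\bs\tau,\nabla_w^2 u)\le \|\bs\tau\|_0\,\|\nabla_w^2 u\|_0$, and absorb the norm mismatch using $\|\bs\tau\|_0\le \|\bs\tau\|_{\div\div_w}$, which is immediate from the definition $\|\bs\tau\|_{\div\div_w}^2 = \|\bs\tau\|_0^2 + \|(\div\div)_w\bs\tau\|_{0,h}^2$. The supremum over $\bs\tau$ therefore collapses to $\|\nabla_w^2 u\|_0$, yielding $\alpha\|u\|_{0,h}\le \|\nabla_w^2 u\|_0$, i.e. the claim with constant $\alpha^{-1}$.

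The computation is short, so there is no serious analytic obstacle; the one point that requires care is the bookkeeping of the boundary terms. One must apply the integration-by-parts identity in its $\mathring{M}^{-1}$ version (test functions vanishing on $\partial\Omega$), since the extended operator $\overline{(\div\div)}_w$ paired with the full space $M^{-1}$ would introduce boundary face and edge contributions, and the analogous argument would then have to rest on~\eqref{eq:divdivbarwinfsup} modulo $\mathbb P_1$. Restricting to $\mathring{M}^{-1}_{r,k-1,k,k}$ keeps the pairing exact and lets the inf-sup constant $\alpha$ pass through unchanged.
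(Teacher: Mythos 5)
Your proof is correct and takes essentially the same route as the paper: both arguments combine the weak divdiv inf-sup condition~\eqref{eq:divdivwinfsup} with the integration-by-parts identity $((\div\div)_w\bs\tau, u)_{0,h} = (\bs\tau,\nabla_w^2 u)$ and the Cauchy--Schwarz inequality plus $\|\bs\tau\|_0\le\|\bs\tau\|_{\div\div_w}$. The only cosmetic difference is that the paper first extracts (via the well-posedness of the hybridized system~\eqref{eq:hy}) an explicit $\bs\sigma$ with $\div\div_w\bs\sigma=u$ and $\|\bs\sigma\|_0\lesssim\|u\|_{0,h}$ and then tests with $v=u$, whereas you bound the supremum over all $\bs\tau$ directly; the two formulations of the duality argument are equivalent.
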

\begin{proof}
For $f_h = u$ in~\eqref{eq:hy}, we can find $\bs \sigma\in \Sigma^{-1}_{k,r}$ s.t. $\div\div_w\bs \sigma = u$ and $\|\bs \sigma\|_0\lesssim \|u\|_{0,h}$. Set $v = u$ in ~\eqref{eq:hy2}, we obtain
$$
\|u\|_{0,h}^2 = (\div\div_w\bs \sigma, u)_{0,h} = (\bs \sigma, \nabla_w^2 u)\leq \| \bs \sigma\|_0\| \nabla_w^2 u\|_0\lesssim \|u\|_{0,h}\| \nabla_w^2 u\|_0,
$$
which implies the desired inequality.
\end{proof}


We now present error analysis of scheme~\eqref{eq:biharmonicMfemhy} for $r \geq 1$ which is equivalent to the mixed finite element method~\eqref{eq:biharmonicMfem}. Thus we focus on the error estimate of $u_h$.
\begin{theorem}\label{th:mfemerror}
Let $u\in H_0^2(\Omega)$ be the solution of biharmonic equation~\eqref{eq:biharmonic} and $\bs \sigma=-\nabla^2u$. 
Let $\bs \sigma_h\in \Sigma_{k,r}^{-1}, u_h\in \mathring{M}^{-1}_{r,k-1, k, k}$ be the solution of the discrete method~\eqref{eq:biharmonicMfemhy} for $r \geq 1$ and $k\geq 2$. 
Assume $u\in H^{k+3}(\Omega)$. We have
\begin{equation*}
\|\nabla_w^2(Q_Mu-u_h)\|_0 + \|Q_Mu-u_h\|_{0,h} \lesssim h^{k+1}|u|_{k+3}.    
\end{equation*}
\end{theorem}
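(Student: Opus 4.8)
The plan is to reduce the entire estimate to the already-available $L^2$-bound for the stress together with the discrete Poincar\'e inequality \eqref{eq:Poincare}. Write $e_h := Q_Mu - u_h \in \mathring{M}^{-1}_{r,k-1,k,k}$. Since $r\geq 1$, Lemma~\ref{lm:equivalence} guarantees that the stress $\bs\sigma_h$ produced by the hybridized scheme \eqref{eq:biharmonicMfemhy} coincides with the stress of the mixed method \eqref{eq:biharmonicMfem}, so the optimal bound $\|\bs\sigma-\bs\sigma_h\|_0\lesssim h^{k+1}|u|_{k+3}$ from \eqref{biharmonicMfemErr1} is directly at my disposal. The strategy is first to show that $\nabla_w^2 e_h$ equals $\bs\sigma_h$ minus an $L^2$-projection of $\bs\sigma$, then to bound $\|\nabla_w^2 e_h\|_0$ by the stress error plus a projection error, and finally to recover $\|e_h\|_{0,h}$ from $\|\nabla_w^2 e_h\|_0$.

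First I would extract an identity for $\bs\sigma_h$. Testing \eqref{eq:biharmonicMfemhy1} with an arbitrary $\bs\tau\in\Sigma_{k,r}^{-1}$ and applying the integration-by-parts identity $(\div\div_w\bs\tau, u_h)_{0,h} = (\bs\tau, \nabla_w^2 u_h)$ gives $(\bs\sigma_h + \nabla_w^2 u_h, \bs\tau) = 0$ for all such $\bs\tau$. Because $\nabla_w^2 u_h$ belongs to $\Sigma_{k,r}^{-1}$ by construction, it is itself an admissible test function, which forces
$$
\bs\sigma_h = -\nabla_w^2 u_h.
$$
On the other hand, the commuting relation \eqref{weakhessProjcd} applied with $v=u$, combined with $\bs\sigma=-\nabla^2u$, yields $\nabla_w^2 Q_M u = Q_{\Sigma}\nabla^2 u = -Q_{\Sigma}\bs\sigma$. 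Subtracting the two expressions produces the clean error identity
$$
\nabla_w^2 e_h = \nabla_w^2 Q_M u - \nabla_w^2 u_h = \bs\sigma_h - Q_{\Sigma}\bs\sigma.
$$

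The weak-Hessian estimate then follows from a triangle inequality,
$$
\|\nabla_w^2 e_h\|_0 \leq \|\bs\sigma_h-\bs\sigma\|_0 + \|\bs\sigma - Q_{\Sigma}\bs\sigma\|_0,
$$
where I bound the first term by \eqref{biharmonicMfemErr1} and the second by the standard $L^2$-projection error estimate; both are $\lesssim h^{k+1}|u|_{k+3}$ since $\bs\sigma=-\nabla^2u$. Finally I would apply the discrete Poincar\'e inequality \eqref{eq:Poincare} to $e_h$, obtaining $\|e_h\|_{0,h}\lesssim \|\nabla_w^2 e_h\|_0\lesssim h^{k+1}|u|_{k+3}$, which delivers both terms in the stated bound.

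The argument is essentially algebraic once the preparatory results are in place, so I do not expect a serious analytic obstacle. The only point demanding care is the identification $\bs\sigma_h=-\nabla_w^2 u_h$: it hinges on $\nabla_w^2 u_h$ genuinely lying in the discrete stress space $\Sigma_{k,r}^{-1}$ so that it is an admissible test function, and on the exactness of the integration-by-parts pairing. Likewise, invoking Lemma~\ref{lm:equivalence} to certify that $\bs\sigma_h$ is the mixed-method stress is what permits the direct use of \eqref{biharmonicMfemErr1}, sparing me from re-deriving a stress error estimate in the broken hybridized setting.
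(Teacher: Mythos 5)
Your proposal is correct, and it takes a mildly but genuinely different route from the paper. Both proofs share the same skeleton: the elimination $\bs \sigma_h = -\nabla_w^2 u_h$ (valid because $\nabla_w^2 u_h\in \Sigma_{k,r}^{-1}$ by definition of the weak Hessian, exactly as you argue), the commuting identity $\nabla_w^2 Q_M u = Q_{\Sigma}\nabla^2 u = -Q_{\Sigma}\bs \sigma$ from~\eqref{weakhessProjcd}, and the discrete Poincar\'e inequality~\eqref{eq:Poincare} to pass from $\|\nabla_w^2(Q_Mu-u_h)\|_0$ to $\|Q_Mu-u_h\|_{0,h}$. Where you diverge is the bound on the weak-Hessian error: the paper stays inside the weak Galerkin reformulation, inserts the Fortin interpolant $\bs \sigma_I$ (using~\eqref{eq:IdivdivQcd} to write $-(f,v_0)=(\bs \sigma_I,\nabla_w^2 v)$), and derives the error equation $\|\nabla_w^2(Q_Mu-u_h)\|_0^2 = (\bs \sigma_I - Q_{\Sigma}\bs\sigma, \nabla_w^2(Q_Mu-u_h))$, so the final bound is $\|\bs\sigma_I - Q_{\Sigma}\bs\sigma\|_0$, a pure approximation error. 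You instead exploit the \emph{exact} identity $\nabla_w^2(Q_Mu-u_h) = \bs\sigma_h - Q_{\Sigma}\bs\sigma$ and recycle the mixed-method stress estimate~\eqref{biharmonicMfemErr1}, which is legitimate precisely because Lemma~\ref{lm:equivalence} identifies $\bs\sigma_h$ with the mixed-method stress when $r\geq 1$. Your version is shorter and arguably sharper (the mixed-method analysis gives $\|\bs\sigma-\bs\sigma_h\|_0\leq\|\bs\sigma-\bs\sigma_I\|_0$, so your bound is never worse), and it makes transparent that the rate is exactly the stress error plus an $L^2$-projection error; the paper's version has the advantage of being self-contained in the weak Galerkin framework, which is the structure reused for the low-order cases $k=0,1,2$ in Theorem~\ref{th:mfemerrork012}, where no conforming element (hence no equivalence with a mixed method) is available. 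One cosmetic point: as stated,~\eqref{biharmonicMfemErr1} has $\|\bs\sigma\|_{k+1}$ on the right, whereas the theorem asserts the seminorm $|u|_{k+3}$; to land exactly on the claimed bound you should note that the proof of~\eqref{biharmonicMfemErr1} actually gives $\|\bs\sigma-\bs\sigma_h\|_0\leq \|\bs\sigma-\bs\sigma_I\|_0\lesssim h^{k+1}|\bs\sigma|_{k+1}=h^{k+1}|u|_{k+3}$, which only involves the seminorm.
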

\begin{proof}
In~\eqref{eq:biharmonicMfemhy}, as $\bs \sigma_h$ is discontinuous, we can eliminate $\bs \sigma_h$ elementwisely and use the weak Hessian to obtain an equivalent formulation: find $u_h\in \mathring{M}^{-1}_{r,k-1, k, k}$, s.t. 
\begin{equation*}
(\nabla_w^2 u_h, \nabla_w^2 v) = (f, v_0) \quad \forall~v\in \mathring{M}^{-1}_{r,k-1, k, k}.
\end{equation*}
For $r\geq 1$, we have the canonical interpolation $\bs \sigma_I\in \Sigma_{k,r}$ satisfying 
$$
(\bs \sigma_I, \nabla_w^2 v) = (\div\div_w \bs \sigma_I , v)_{0,h} = (\div\div \bs \sigma_I , v_0)= (Q_r \div\div \bs \sigma, v_0) = -(f, v_0).
$$
On the other hand, we have the property $\nabla_w^2 Q_M u = Q_{\Sigma}\nabla^2 u = -Q_{\Sigma}\bs \sigma$. 

Let $v = Q_Mu-u_h$. We then have
\begin{align*}
\|\nabla_w^2(Q_Mu-u_h)\|_0^2 &
=-(Q_{\Sigma}\boldsymbol{\sigma}, \nabla_w^2v) - (f, v_0) = ( \bs \sigma_I-Q_{\Sigma}\boldsymbol{\sigma}, \nabla_w^2 v).
\end{align*}
Then the error estimate on $\|\nabla_w^2(Q_Mu-u_h)\|_0$ follows from Cauch-Schwarz inequality, triangle inequality, and the estimate of $\|\bs \sigma - Q_{\Sigma}\bs \sigma\|$ and $\|\bs \sigma - \bs \sigma_I\|$.

Estimate on $\|Q_Mu-u_h\|_{0,h}$ is a consequence of the Poincar\'e inequality~\eqref{eq:Poincare}.
\end{proof}

Note that Theorem \ref{th:mfemerror} covers only the case $r\geq 1, k\geq 2$. We now give corrections to low order cases: $k=0,1,2$ and $r\leq 0$. 

For $k=1,2$, we define $v^{\rm CR}\in\mathbb P_1(T)$ by $Q_{0,F}v^{\rm CR} = Q_{0,F}v_b$ for $F\in\partial T$. The load term 
$(f, v_0)$ is replaced by $(f, v^{\rm CR})$ for $k=1$ and by $(f, v^{\rm CR}+v_0-Q_0v^{\rm CR})$ for $k=2, r=0$. 

For $k=0$, $v = (v_n, v_e)\in M^{-1}_{\cdot, \cdot, 0, 0}$, we define $v_0 = Q_M^{-1}v\in \mathring{V}^{\rm MWX}_2$ and $v_b = Q_M^{-1}v$ on $\partial T$. With this $v_b$, we can define $v^{\rm CR}$. 
From this point of view,~\eqref{eq:biharmonicMfemhy} generalizes the well-known $\mathbb P_2$ Morley element to higher order and arbitrary dimensions.

We can write $(f, v_0 + (I - Q_r)v^{\rm CR})$ for all $k\geq 0$ cases. We will present the error analysis after we identify~\eqref{eq:biharmonicMfemhy} with the non-conforming virtual element methods (VEM).

\subsection{Equivalence to other methods}
In~\eqref{eq:biharmonicMfemhy}, as $\bs \sigma_h$ is discontinuous, we can eliminate $\bs \sigma_h$ elementwisely and use the weak Hessian to obtain a weak Galerkin formulation: find $u_h\in \mathring{M}^{-1}_{r,k-1, k, k}$, s.t. 
%
\begin{equation}\label{eq:biharmonicMfemWG}
(\nabla_w^2 u_h, \nabla_w^2 v) = (f, v_0 + (I - Q_r)v^{\rm CR}) \quad \forall~v\in \mathring{M}^{-1}_{r,k-1, k, k}, \quad k\geq 0.
\end{equation}
The discrete method~\eqref{eq:biharmonicMfemWG} is well-posed, since $\| \nabla^2_w (\cdot)\|$ constitutes a norm on the space $\mathring{M}^{-1}_{r,k-1, k, k}$ by~\eqref{eq:infsuphessian}. Indeed~\eqref{eq:biharmonicMfemWG} is equivalent to~\eqref{eq:biharmonicMfemhy}. Moreover, the weak divdiv stability, which is equivalent to the coercivity of the bilinear form $(\nabla_w^2 \cdot, \nabla_w^2 \cdot)$, obviates the need for any additional stabilization. This not only simplifies the implementation, but also facilitates the error analysis. Some weak Galerkin methods without extrinsic stabilization for the biharmonic equation are designed recently on polytopal meshes in~\cite{YeZhang2020,ZhuXieWang2023}. 

For a simplex $T$, recall the local space of the $H^2$-nonconforming virtual element introduced in~\cite{ChenHuang2020a} for $r=k-2$ or $ k-1$
\begin{align*}
V_{k+2}^{\rm VEM}(T):=\big\{  u \in H^2(T): & \tr_1(\nabla^2 u)|_F\in\mathbb P_{k}(F), \tr_2(\nabla^2 u)|_F\in\mathbb P_{k-1}(F), \\ &
 \tr_e(\nabla^2 u)\in\mathbb P_{k}(e)\; \forall~F\in\partial T, e\in \Delta_{d-2}(T), \Delta^2u \in \mathbb P_{r}(T) \big \}.
\end{align*}
Define the global virtual element space
\begin{align*}
\mathring{V}_{k+2}^{\rm VEM}:=\big\{ & u \in L^2(\Omega): u|_T \in V_{k+2}^{\rm VEM}(T) \textrm{ for } T\in\mathcal T_h, \; 
Q_{k-1,F}u, Q_{k,F}(\partial_{n_F}u),  \\
& Q_{k,e}u \textrm{ are single-valued for } F\in\mathring{\mathcal{F}}_h, e\in\mathring{\mathcal{E}}_h, \textrm{ and vanish on boundary } \partial\Omega\big\}.
\end{align*}
The well-posedness of VEM space using DoFs $(Q_{r,T}u,
Q_{k-1,F}u, Q_{k,F}(\partial_{n_F}u), Q_{k,e}u)$ can be found in~\cite{ChenHuang2020a}. In general a function $v\in \mathring{V}_{k+2}^{\rm VEM}$ is non-polynomial, with the only exception of $k=0$, and thus its point-wise value may not be known. Instead several projections to polynomial spaces using DoFs will be used.

Given a function $(v_0, v_b, v_n, v_e)\in M^{-1}_{r,k-1, k, k}$, we can  define an $H^2$ nonconforming virtual element function $v\in \mathring{V}_{k+2}^{\rm VEM}$ by $Q_Mv=(v_0, v_b, v_n, v_e)$. That is $Q_M:  \mathring{V}_{k+2}^{\rm VEM} \to \mathring{M}^{-1}_{r,k-1, k, k}$ is a bijection.
Similar as~\eqref{weakhessProjcd}, it holds 
\begin{equation}\label{weakhessVEMProjcd}
\nabla_w^2 Q_M v = Q_{\Sigma} \nabla_h^2 v\quad\quad\forall~v\in \mathring{V}_{k+2}^{\rm VEM}.
\end{equation}
We have a unified construction $v^{\rm CR} = I^{\rm CR} v$ where $I^{\rm CR}$ is the interpolation operator to the nonconforming linear element space. The face integral $\int_F v$ is a DoF when $k\geq 1$ and when $k = 0$, $\int_F v$ is computable as $v$ is a quadratic polynomial.

%

Then~\eqref{eq:biharmonicMfemWG} becomes: find $u_h\in \mathring{V}_{k+2}^{\rm VEM}$, for $k\geq 0$, s.t. 
\begin{equation}\label{eq:biharmonicMfemVEM}
(Q_{\Sigma} \nabla_h^2 u_h, Q_{\Sigma} \nabla_h^2 v) = (f, v^{\rm CR} + Q_r(v -v^{\rm CR})) \quad \forall~v\in \mathring{V}_{k+2}^{\rm VEM}.
\end{equation}
So we obtain a stabilization free non-conforming VEM for the biharmonic equation on triangular meshes due to the weak divdiv stability. 

We will use the following norm equivalence, whose proof can be found in Appendix~\ref{apdx:normequiv}, and the error analysis of VEM to provide another convergence analysis of~\eqref{eq:biharmonicMfemhy}
\begin{equation}
\label{eq:VEMnormequivH2}
\|Q_{\Sigma} \nabla_h^2 v\|_0 \eqsim \|\nabla_h^2 v\|_0, \quad v\in \mathring{V}_{k+2}^{\rm VEM}, k\geq 0.
\end{equation}



\begin{theorem}\label{th:mfemerrork012}
Let $u\in H_0^2(\Omega)$ be the solution of biharmonic equation~\eqref{eq:biharmonic} and $\bs \sigma=-\nabla^2u$. 
Let $\bs \sigma_h\in \Sigma_{k,r}^{-1}, u_h\in \mathring{M}^{-1}_{r,k-1, k, k}$ be the solution of the discrete method~\eqref{eq:biharmonicMfemhy} for $k \geq 0$. 
Assume $u\in H^{k+3}(\Omega)$. We have
\begin{equation*}
\|\nabla_w^2(Q_Mu-u_h)\|_0 + \|Q_Mu-u_h\|_{0,h} \lesssim h^{k+1}(|u|_{k+3}+\delta_{k0}h\|f\|_0).    
\end{equation*}
\end{theorem}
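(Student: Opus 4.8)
The plan is to exploit the equivalence of the hybridized scheme~\eqref{eq:biharmonicMfemhy} with the stabilization-free nonconforming virtual element method~\eqref{eq:biharmonicMfemVEM} and to run a standard nonconforming error analysis there. Through the bijection $Q_M:\mathring{V}_{k+2}^{\rm VEM}\to \mathring{M}^{-1}_{r,k-1,k,k}$ I identify $u_h$ with $\tilde u_h:=Q_M^{-1}u_h\in \mathring{V}_{k+2}^{\rm VEM}$, and let $u_I\in \mathring{V}_{k+2}^{\rm VEM}$ be the canonical interpolant of $u$ matching the DoFs $(Q_{r,T}u,Q_{k-1,F}u,Q_{k,F}\partial_{n_F}u,Q_{k,e}u)$; since $u\in H_0^2(\Omega)$, these moments vanish on $\partial\Omega$, so $u_I$ indeed lies in the space with homogeneous boundary conditions. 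As these DoFs are exactly the components of $Q_M$, we have $Q_Mu_I=Q_Mu$, whence $Q_Mu-u_h=Q_M(u_I-\tilde u_h)$. Combining~\eqref{weakhessVEMProjcd} with the norm equivalence~\eqref{eq:VEMnormequivH2} gives
\[
\|\nabla_w^2(Q_Mu-u_h)\|_0=\|Q_{\Sigma}\nabla_h^2(u_I-\tilde u_h)\|_0\eqsim\|\nabla_h^2(u_I-\tilde u_h)\|_0,
\]
so it suffices to bound $\|\nabla_h^2(u_I-\tilde u_h)\|_0$; the estimate for $\|Q_Mu-u_h\|_{0,h}$ then follows at once from the discrete Poincar\'e inequality~\eqref{eq:Poincare}.

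Setting $e:=u_I-\tilde u_h$ and testing~\eqref{eq:biharmonicMfemVEM} with $e$, I obtain
\[
\|Q_{\Sigma}\nabla_h^2 e\|_0^2=(Q_{\Sigma}\nabla_h^2 u_I,Q_{\Sigma}\nabla_h^2 e)-\big(f,\,e^{\rm CR}+Q_r(e-e^{\rm CR})\big).
\]
I would then insert the exact solution $\nabla^2u=-\bs\sigma$ by writing $(Q_{\Sigma}\nabla_h^2 u_I,Q_{\Sigma}\nabla_h^2 e)=(\nabla^2u,\nabla_h^2 e)+(Q_{\Sigma}\nabla_h^2 u_I-\nabla^2u,\nabla_h^2 e)$, where the last term is controlled by $\|\nabla^2u-Q_{\Sigma}\nabla_h^2 u_I\|_0\lesssim h^{k+1}|u|_{k+3}$, a combination of the standard estimates for $Q_{\Sigma}$ and for the VEM interpolant $u_I$.

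The remaining consistency term $(\nabla^2u,\nabla_h^2 e)-(f,e^{\rm CR}+Q_r(e-e^{\rm CR}))$ is handled by applying the Green's identity~\eqref{eq:greenidentitydivdiv} elementwise with $\bs\sigma=\nabla^2u$ and $\div\div\bs\sigma=\Delta^2u=f$. Summing over $\mathcal T_h$ yields $\sum_T(f,e)_T$ plus interior face and edge contributions pairing the single-valued traces $\tr_1(\nabla^2u),\tr_2(\nabla^2u),\tr_e(\nabla^2u)$ against the jumps of $\partial_n e$, $e$, and the edge values of $e$. Because the moments $Q_{k,F}\partial_{n_F}e$, $Q_{k-1,F}e$, $Q_{k,e}e$ are single-valued across interior faces and edges and vanish on $\partial\Omega$, each pairing may be replaced by its difference against the corresponding $L^2$-projection of the trace of $\nabla^2 u$ and is $O(h^{k+1}|u|_{k+3}\|\nabla_h^2 e\|_0)$ by the Bramble--Hilbert lemma. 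The leftover load discrepancy $\sum_T(f,e)_T-(f,e^{\rm CR}+Q_r(e-e^{\rm CR}))=(f-Q_rf,\,e-e^{\rm CR})$ is exactly what the Crouzeix--Raviart correction $e^{\rm CR}=I^{\rm CR}e$ is designed to tame; since $\nabla_h^2 e^{\rm CR}=0$ we have $\|e-e^{\rm CR}\|_0\lesssim h^2\|\nabla_h^2 e\|_0$, so this term is of optimal order $h^{k+1}$ for $k\geq1$, whereas for $k=0$ the reduced accuracy of the piecewise-linear reconstruction (with $Q_r=0$) leaves the extra factor $h\|f\|_0$, producing the $\delta_{k0}h\|f\|_0$ contribution.

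The main obstacle is precisely the consistency of the load functional in the low-order regime $r\leq 0$ (the cases $k=0,1,2$), where the canonical $H(\div\div)$-interpolant $\bs\sigma_I$ used in the proof of Theorem~\ref{th:mfemerror} is unavailable and, for $k=0$, no genuine interior value exists at all. Verifying that the substitution $v_0+(I-Q_r)v^{\rm CR}$ restores the asserted rates uniformly across these cases, and isolating the genuinely degraded order recorded by $\delta_{k0}h\|f\|_0$ in the Morley-type case $k=0$, is the delicate part; the rest is a routine assembly of the norm equivalence~\eqref{eq:VEMnormequivH2}, the Green's identity~\eqref{eq:greenidentitydivdiv}, and Bramble--Hilbert approximation bounds.
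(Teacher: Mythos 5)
Your proposal follows the paper's own route step for step: identify the hybridized solution with the stabilization-free VEM solution through $Q_M$, take the canonical VEM interpolant $u_I$ (so $Q_Mu_I=Q_Mu$, and, by~\eqref{weakhessVEMProjcd} together with~\eqref{weakhessProjcd}, $Q_{\Sigma}\nabla_h^2u_I=Q_{\Sigma}\nabla^2u$ \emph{exactly} --- which, incidentally, makes your appeal to ``standard estimates for the VEM interpolant'' unnecessary, since your projection term is then precisely $\|(I-Q_{\Sigma})\nabla^2u\|_0$), derive the same error equation with the splitting into projection error, consistency error, and load perturbation, and close with the norm equivalence~\eqref{eq:VEMnormequivH2} and the discrete Poincar\'e inequality~\eqref{eq:Poincare}. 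The one step the paper does not argue from scratch is the consistency term $(\nabla^2u,\nabla_h^2v)-(f,v)$, for which it cites \cite[Lemmas 5.5 and 5.6]{ChenHuang2020a}; that is exactly the step where your self-contained argument has a gap.

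The gap is the case $k=0$. You claim that, using Green's identity~\eqref{eq:greenidentitydivdiv}, ``each pairing may be replaced by its difference against the corresponding $L^2$-projection of the trace of $\nabla^2u$ and is $O(h^{k+1}|u|_{k+3}\|\nabla_h^2e\|_0)$ by Bramble--Hilbert.'' For $k=0$ the multiplier space is $\mathring{M}^{-1}_{\cdot,\cdot,0,0}$: there is no face component $u_b$ at all ($Q_{k-1,F}=Q_{-1,F}$ is vacuous), so in the pairing $\sum_{F}(\tr_2(\nabla^2u),[e])_F$ there is no single-valued face moment of $e$ to exploit. The jump across a face of a Morley--Wang--Xu function does not even have zero mean (in 2D it is a quadratic vanishing only at the two endpoints), so subtracting $Q_{0,F}\tr_2(\nabla^2u)$ leaves a residual term that the projection-subtraction argument cannot absorb, and under the stated regularity $u\in H^{3}$ the argument you describe does not deliver the bound $h\,|u|_{3}\|\nabla_h^2e\|_0$. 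The standard resolution --- the content of the lemmas the paper cites, going back to the Morley/MWX analysis~\cite{WangXu2006} --- reworks this face term using $f=\Delta^2u$, and this is the true origin of the $\delta_{k0}h\|f\|_0$ contribution \emph{inside the consistency error}; you instead attribute $\delta_{k0}h\|f\|_0$ solely to the load discrepancy $((I-Q_r)f,\,e-e^{\rm CR})$ (which does also produce such a term), leaving the $k=0$ consistency pairing unjustified. Since your closing paragraph explicitly defers this ``delicate part,'' and since covering $k=0,1,2$ is precisely what distinguishes this theorem from Theorem~\ref{th:mfemerror}, the proposal is incomplete at its essential step: you must either carry out the $f$-based consistency argument for the MWX case or cite it, as the paper does.
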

\begin{proof}
Due to the equivalence between~\eqref{eq:biharmonicMfemVEM} and~\eqref{eq:biharmonicMfemhy}, it is equivalent to prove
\begin{equation*}
\|\nabla_w^2Q_M(u-u_h)\|_0 + \|Q_M(u-u_h)\|_{0,h} \lesssim h^{k+1}(|u|_{k+3}+\delta_{k0}h\|f\|_0),    
\end{equation*}
where $u_h\in \mathring{V}_{k+2}^{\rm VEM}$ is the solution of the virtual element method~\eqref{eq:biharmonicMfemVEM}.

We outline the proof and refer to \cite{ChenHuang2020a} for details. Notice that there is an index shift in the notation. Results in \cite{ChenHuang2020a} are applied to $\mathring{V}_{k+2}^{\rm VEM}$ with degree $k+2$ for $k\geq 0$. 

Let $I_hu$ be the nodal interpolation of $u$ based on the DoFs of $V_{k+2}^{\rm VEM}(T)$ \cite[(2.6)-(2.9)]{ChenHuang2020a}. Then $Q_Mu=Q_M(I_hu)$ and thus $Q_{\Sigma} \nabla_h^2(I_hu) = \nabla_w^2 Q_M(I_hu) = \nabla_w^2 Q_M u = Q_{\Sigma} \nabla^2u$. Set $v=I_hu-u_h$. We have the error equation
\begin{align*}
\|Q_{\Sigma}\nabla_h^2(I_hu-u_h)\|_0^2 =&  ((Q_{\Sigma} - I) \nabla^2u,  \nabla_h^2v) + (\nabla^2u,  \nabla_h^2v) - (f, v) \\
&+ (f, (I-Q_r)(v -v^{\rm CR})).
\end{align*}
The first term is bounded by 
$$
((Q_{\Sigma} - I) \nabla^2u,  \nabla_h^2v)\leq \|(Q_{\Sigma} - I) \nabla^2u \|_0 \| \nabla_h^2v\|_0 \lesssim h^{k+1}|u|_{k+3} \| \nabla_h^2v\|_0.
$$
The second term is the consistence error \cite[Lemma 5.5 and 5.6]{ChenHuang2020a} 
\begin{equation*}
(\nabla^2u, \nabla_h^2 v_h) - (f, v_h) \lesssim h^{k+1}(|u|_{k+3}+\delta_{k0}h\|f\|_0)\| \nabla_h^2 v_h\|_0. 
\end{equation*}
The third term is a perturbation and can be bounded by
\begin{align*}
(f, (I-Q_r)(v -v^{\rm CR}))&=((I-Q_r)f, v -v^{\rm CR})\\
&\lesssim h^{k+1}\big((1-\delta_{k0})|f|_{k-1}+\delta_{k0}h\|f\|_0\big)\|\nabla_h^2v\|_0.
\end{align*}

Putting together, we have
\begin{align*}
\|Q_{\Sigma}\nabla_h^2(I_hu-u_h)\|_0^2  
&\lesssim h^{k+1}(|u|_{k+3}+\delta_{k0}h\|f\|_0)\|\nabla_h^2v\|_0 \\
&\lesssim h^{k+1}(|u|_{k+3}+\delta_{k0}h\|f\|_0)\| Q_{\Sigma}\nabla_h^2v\|_0,
\end{align*}
where in the last step, we have used the norm equivalence~\eqref{eq:VEMnormequivH2}. Canceling one $\| Q_{\Sigma}\nabla_h^2v\|_0$ to get the desired error estimate.
\end{proof}

In view of $\bs \sigma_h=-Q_{\Sigma} \nabla_h^2 u_h$, the post-processing $u_h^{\ast}$ defined by~\eqref{eq:postprocess} is indeed the local $H^2$ projection of $u_h$ to the polynomial space, i.e.,
\begin{equation*}
(\nabla^2u_h^{\ast}, \nabla^2v)_T=(\nabla^2u_h, \nabla^2v)_T,\quad v\in \mathbb P_{k+2}(T), T\in\mathcal T_h.
\end{equation*}

When some partial continuity is imposed on $\Sigma_k^{-1}$, we can simplify the pair space. For example, consider the normal-normal continuous element $\Sigma_{k,r}^{\rm nn}$ by asking DoFs on $\bs n^{\intercal}\bs \tau \bs n$ are unique, then there is no need of Lagrange multiplier for $u_n$. We have the surjectivity
$$\Sigma_k^{\rm nn} \xrightarrow{\div{\div}_w} \mathring{M}^{-1}_{k-2, k-1, \cdot , k}\xrightarrow{}0 \quad \text{ for } k\geq 0.
$$ 

Given a function $(u_0, u_b, u_e)\in \mathring{M}^{-1}_{k-2,k-1, \cdot, k}$, for $k\geq 1$, using $(u_0,u_b)$, we can define a weak gradient $\nabla_w (u_0,u_b)\in \mathbb P_{k-1}(T; \mathbb R^d)$ by 
$$
(\nabla_w (u_0,u_b), \bs q)_T = - (u_0, \div \bs q)_{T} + (u_b, \bs n^{\intercal}\bs q)_{\partial T}, \quad \bs q\in \mathbb P_{k-1}(T; \mathbb R^d),
$$
and a surface weak gradient $\nabla_{w,F}(u_b, u_e)\in \mathbb P_{k}(F; \mathbb R^{d-1})$ using $(u_b, u_e)$ by
$$
(\nabla_{w,F} (u_b, u_e), \bs q)_F = - (u_b, \div_F \bs q)_{F} + (u_e, \bs n_{F,e}^{\intercal}\bs q)_{\partial F}, \quad \bs q\in \mathbb P_{k}(F; \mathbb R^{d-1}),
$$
where $\mathbb P_{k-1}(F; \mathbb R^{d-1})$ is the polynomial vector tangential to $F$. For $k=0$, we only have $u_e$ on edges and can define $u_b$ as the nonconforming linear element on $F$ based on $u_e$ on $\partial F$. After that, using the average of $u_b$, to define the nonconforming linear element inside $T$. 

With this notation, we have a simpler formulation of $\div\div_w$
\begin{align}
\notag (\div\div_w \bs \tau, (v_0, v_b, v_e))_{0,h} =& -\sum_{T\in \mathcal T_h} (\div_h \bs \tau, \nabla_w (v_0,v_b))_T \\
\label{eq:HHJdivdiv} &+ \sum_{F\in \mathring{\mathcal F}_h} ([\Pi_F\bs \tau\bs n], \nabla_{w,F}(v_b,v_e))_F.
\end{align}
In computation,~\eqref{eq:HHJdivdiv} provides an alternative discretization without relatively complicated trace $\tr_2(\bs \tau)$ and $\tr_e(\bs \tau)$. 

In two dimensions, the space $\mathring{M}^{-1}_{k-2, k-1, \cdot , k}$ can be identified as the Lagrange element $\mathring{V}^L_{k+1}$. The weak gradient operators become the gradient operators and~\eqref{eq:HHJdivdiv} is the bilinear form used in the HHJ formulation. Therefore restricting to the pair $\Sigma^{\rm nn}_{k} - \mathring{M}^{-1}_{k-2,k-1, \cdot, k}$, we generalize HHJ to high dimensions whose hybridization is exactly~\eqref{eq:biharmonicMfemhy} with appropriate correction on $(f,v_0)$ for low order cases.

\subsection{A $C^0$ DG method for the biharmonic equation}\label{sec:cdgbiharmonic}
A $C^0$ discontinuous Galerkin (CDG) method for biharmonic equation can be developed by embedding the Lagrange element space $\mathring{V}_k(\mathcal T_h)$ into the broken space $\mathring{M}^{-1}_{r,k-1, k, k}$. This approach enables us to preserve the optimal order of convergence while reducing the size of the linear algebraic system.

We start with the embedding, for $k\geq 2$, 
\begin{align*}
E^{\rm CDG} &: \mathring{V}_k(\mathcal T_h) \to \mathring{M}^{-1}_{r,k-1, k, k},\\
E^{\rm CDG}u &:= (Q_{r,T}u, Q_{k-1,F}u, \{\partial_{n_F}u\}|_F, u|_e)_{T\in \mathcal T_h, F\in\mathring{\mathcal{F}}_h, e\in\mathring{\mathcal{E}}_h}. 
\end{align*}
For the boundary face $F\in \partial\mathcal{F}_h$ and $F\subset\partial T$, modify the jump and the average as
\begin{equation}\label{eq:scalejump}
[u]=2u|_{T},\quad \{u\}=\frac{1}{2}u|_{T}.
\end{equation}
By~\eqref{eq:weakhess}, for any $\boldsymbol{\tau}\in\Sigma_{k,r}^{-1}(T;\mathbb S)$, the weak Hessian $\nabla_w^2 E^{\rm CDG}u$ is
\begin{equation}\label{eq:weakhessCDG}
\begin{aligned}
(\nabla_w^2 E^{\rm CDG}u, \boldsymbol{\tau} )_T = & (u, (\div\div)_T \boldsymbol{\tau})_T+ (\{\partial_{n_F}u\}\bs n_F\cdot\bs n,\bs n^{\intercal}\boldsymbol{\tau}\bs n)_{\partial T\cap\mathring{\mathcal{F}}_h} \\
&- (u, \tr_2(\boldsymbol{\tau}))_{\partial T}+ \sum_{e\in \Delta_{d-2}(T)}(u, [\boldsymbol n_{F,e}^{\intercal}\boldsymbol{\tau}\boldsymbol n]|_T)_e\\
 = & (\nabla_h^2 u, \boldsymbol{\tau})_T  - \frac{1}{2}( [\partial_n u],\bs n^{\intercal}\boldsymbol{\tau}\bs n)_{\partial T},
\end{aligned}
\end{equation}
where we use the fact $\partial_nu - \{\partial_{n_F}u\}\bs n_F\cdot\bs n = \frac{1}{2}[\partial_nu]$.

Let $(0, 0, u_n, 0)\in \mathring{M}^{-1}_{r,k-1, k, k}$ be given. By the definition of the weak Hessian, we have
$$
(\nabla_w^2 u_n,\boldsymbol{\tau})_T:= (\nabla_w^2 (0, 0,u_n,0),\boldsymbol{\tau})_T = (u_n\bs n_F\cdot\bs n,\bs n^{\intercal}\boldsymbol{\tau}\bs n)_{\partial T},
$$
where $u_n$ is defined on faces only, while $\nabla_w^2 u_n$ is element-wise polynomial. This quantity is sometimes referred to as the ``lifting" of a boundary trace in the literature~\cite{WellsDung2007,BassiRebay1997,BrezziManziniMariniPietraEtAl2000}. 

To save notation, define $\nabla_{w}^2u :=\nabla_w^2 E^{\rm CDG}u$ for $u\in \mathring{V}_{k}$. We can write~\eqref{eq:weakhessCDG} as
\begin{equation}\label{eq:WGlocal2}
\nabla_{w}^2u= \nabla_h^2 u - \frac{1}{2}\nabla_w^2[\partial_nu]\boldsymbol{n}_{F}\cdot\boldsymbol{n}_{\partial T},
\end{equation}
where $[\partial_nu]\in V^{-1}_{k-1}(\mathcal F_h)$ and $\boldsymbol{n}_{F}\cdot\boldsymbol{n}_{\partial T}=\pm 1$ accounting for the consistency of orientation of face $F$.

Restricting the bilinear form $(\nabla_w^2 \cdot, \nabla_w^2 \cdot)$ to the subspace $E^{\rm CDG}\mathring{V}_k(\mathcal T_h)$, we obtain a $C^0$ DG formulation.
\begin{lemma}
For $u, v\in \mathring{V}_k(\mathcal T_h)$, for $k\geq 2$, we have
\begin{equation*}
(\nabla_w^2 u, \nabla_w^2 v) = a^{\rm CDG}(u,v),
\end{equation*}
where
\begin{align*}    
a^{\rm CDG}(u,v) &= \sum_{T\in \mathcal T_h} ( \nabla_h^2 u, \nabla_h^2 v)_T - \sum_{F\in{\mathcal F}_h}\left [( \{\partial_{nn}u \}, [\partial_nv])_F + ([\partial_nu], \{\partial_{nn}v\})_{F} \right ]\\
&\quad + \frac{1}{4}(\nabla_w^2[\partial_nu], \nabla_w^2 [\partial_nv]).
\end{align*}
\end{lemma}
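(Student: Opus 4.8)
The plan is to prove the identity by a direct, term-by-term expansion of the elementwise inner product, using the two closed forms of the weak Hessian already at hand: the boundary representation in the last line of~\eqref{eq:weakhessCDG}, namely
\[
(\nabla_w^2 u, \boldsymbol{\tau})_T = (\nabla_h^2 u, \boldsymbol{\tau})_T - \tfrac12\big([\partial_n u], \boldsymbol n^{\intercal}\boldsymbol{\tau}\boldsymbol n\big)_{\partial T}, \qquad \boldsymbol{\tau}\in\Sigma_{k,r}(T;\mathbb S),
\]
together with the pointwise tensor representation~\eqref{eq:WGlocal2}. Writing $(\nabla_w^2 u,\nabla_w^2 v)=\sum_{T\in\mathcal T_h}(\nabla_w^2 u,\nabla_w^2 v)_T$, I would first apply the boundary representation with the admissible test tensor $\boldsymbol{\tau}=\nabla_w^2 v|_T\in\Sigma_{k,r}(T;\mathbb S)$, splitting each elementwise contribution into a volume piece $(\nabla_h^2 u,\nabla_w^2 v)_T$ and a boundary piece $-\tfrac12([\partial_n u],\boldsymbol n^{\intercal}(\nabla_w^2 v)\boldsymbol n)_{\partial T}$.

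For the volume piece I would apply the same representation once more, now to $v$ tested against $\boldsymbol{\tau}=\nabla_h^2 u|_T$, which is legitimate since $\nabla_h^2 u|_T\in\mathbb P_{k-2}(T;\mathbb S)\subset\Sigma_{k,r}(T;\mathbb S)$; this yields the elementwise Hessian term $(\nabla_h^2 u,\nabla_h^2 v)_T$ and the first cross term $-\tfrac12([\partial_n v],\partial_{nn}u)_{\partial T}$, using $\boldsymbol n^{\intercal}\nabla_h^2 u\,\boldsymbol n=\partial_{nn}u$ (orientation independent). For the boundary piece I would insert the normal-normal trace of $\nabla_w^2 v$ read off from~\eqref{eq:WGlocal2}, i.e.\ $\boldsymbol n^{\intercal}(\nabla_w^2 v)\boldsymbol n=\partial_{nn}v-\tfrac12(\boldsymbol n_F\cdot\boldsymbol n_{\partial T})\,\boldsymbol n^{\intercal}(\nabla_w^2[\partial_n v])\boldsymbol n$; this separates the boundary piece into the second cross term $-\tfrac12([\partial_n u],\partial_{nn}v)_{\partial T}$ and a remainder $+\tfrac14\big([\partial_n u](\boldsymbol n_F\cdot\boldsymbol n_{\partial T}),\boldsymbol n^{\intercal}(\nabla_w^2[\partial_n v])\boldsymbol n\big)_{\partial T}$. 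I then recognize the remainder as $\tfrac14(\nabla_w^2[\partial_n u],\nabla_w^2[\partial_n v])_T$ by reading the defining relation of the lifting $\nabla_w^2(0,0,\cdot,0)$ backwards with test tensor $\nabla_w^2[\partial_n v]$.

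The final step is the assembly: summing over $T$ and regrouping the element-boundary sums $\sum_T\sum_{F\in\partial T}$ into a sum over $F\in\mathcal F_h$. Here the single-valued jump $[\partial_n u]$ pairs with the two element values of $\partial_{nn}v$ from the elements sharing $F$, and by the definition of the average these combine as $\partial_{nn}v|_{T_1}+\partial_{nn}v|_{T_2}=2\{\partial_{nn}v\}$, turning $-\tfrac12\sum_T([\partial_n u],\partial_{nn}v)_{\partial T}$ into $-\sum_F([\partial_n u],\{\partial_{nn}v\})_F$, and symmetrically for the other cross term; the lifting remainders accumulate into $\tfrac14(\nabla_w^2[\partial_n u],\nabla_w^2[\partial_n v])$ over $\Omega$. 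Boundary faces are absorbed uniformly by the rescaled conventions~\eqref{eq:scalejump}, for which the single adjacent element reproduces the same pairing, so no separate boundary treatment is needed. Collecting the four groups gives exactly $a^{\mathrm{CDG}}(u,v)$.

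I expect the main obstacle to be the orientation bookkeeping in this assembly: one must carry the sign $\boldsymbol n_F\cdot\boldsymbol n_{\partial T}=\pm1$ through the normal-normal trace substitution consistently so that, on the one hand, the jump $[\partial_n u]$ stays orientation-free while the two element nn-components genuinely sum to the average $2\{\partial_{nn}v\}$ rather than to a jump, and, on the other hand, the two orientation factors occurring in the lifting remainder cancel (their product is $1$) so that the remainder is precisely the lifting inner product with no stray sign. Verifying that the boundary-face scalings in~\eqref{eq:scalejump} are exactly those making the interior and boundary contributions share the same algebraic form is the remaining detail that warrants care.
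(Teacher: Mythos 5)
Your proposal is correct and follows essentially the same route as the paper's proof: a direct expansion of $(\nabla_w^2 u,\nabla_w^2 v)$ using~\eqref{eq:weakhessCDG}/\eqref{eq:WGlocal2}, with the cross terms $-\tfrac12\sum_T([\partial_n u],\partial_{nn}v)_{\partial T}$ converted to face sums $-\sum_F([\partial_n u],\{\partial_{nn}v\})_F$ and the lifting remainder identified as $\tfrac14(\nabla_w^2[\partial_n u],\nabla_w^2[\partial_n v])$ via the defining relation of $\nabla_w^2(0,0,\cdot,0)$, exactly as in the paper. Your extra care with the orientation factors $\boldsymbol n_F\cdot\boldsymbol n_{\partial T}$ and the boundary-face conventions~\eqref{eq:scalejump} is sound and only makes explicit what the paper's one-line substitution argument leaves implicit.
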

\begin{proof}
It is a straightforward substitution of~\eqref{eq:WGlocal2} into $(\nabla_w^2 u, \nabla_w^2 v)$. The cross term 
$$
\frac{1}{2}\sum_{T\in \mathcal T_h} (\nabla_h^2u, \nabla_w^2[\partial_nv]\boldsymbol{n}_{F}\cdot\boldsymbol{n}_{\partial T}) = \frac{1}{2}\sum_{T\in \mathcal T_h} (\partial_{nn} u, [\partial_nv])_{\partial T} = \sum_{F\in{\mathcal F}_h} ( \{\partial_{nn}u \}, [\partial_nv])_F,
$$
where the scaling $2$ or $1/2$ in~\eqref{eq:scalejump} are introduced for the unity of notation for interior and boundary faces.
\end{proof}

We obtain a $C^0$ DG method for the biharmonic equation: Find $u_h\in \mathring{V}_k(\mathcal T_h)$ s.t. 
\begin{equation}\label{eq:CDG}
a^{\rm CDG}(u_h, v) = (f, Q_{r}v)\quad \forall~v\in \mathring{V}_k(\mathcal T_h).
\end{equation}
The boundary condition $u|_{\partial \Omega} = 0$ is build into the space $\mathring{V}_k(\mathcal T_h)$ while $\partial_n u |_{\partial \Omega}= 0$ is weakly imposed in DG sense. 

It is worth noting that the widely-used interior penalty $C^0$ DG (IPCDG) method for the biharmonic equation~\cite{EngelGarikipatiHughesLarsonEtAl2002,BrennerSung2005} requires a stabilization term in the form $\gamma(h_F^{-1}[\partial_n u],[\partial_n v])_{\mathcal{F}_h}$, where $\gamma$ is chosen to be sufficiently large. In contrast, the CDG method~\eqref{eq:CDG} employs the bilinear form of the weak Hessian of jumps, i.e., $(\nabla_w^2 [\partial_n u], \nabla_w^2 [\partial_n v])$, as a parameter-free stabilization technique. It coincides with the approach proposed in~\cite[(2.9)]{HuangHuang2014} for the two-dimensional case. 

The error analysis can be carried out following the approach in 2D \cite{HuangHuang2014}.
To save the space, we only present the result below. 
\begin{theorem}
Let $u\in H_0^2(\Omega)$ be the solution of biharmonic equation~\eqref{eq:biharmonic}. 
Let $u_h\in \mathring{V}_k(\mathcal T_h)$ be the solution of the discrete method~\eqref{eq:CDG} for $k\geq 2$. 
Assume $u\in H^{k+1}(\Omega)$. We have
\begin{equation*}
\|\nabla^2u- \nabla_w^2 u_h\|_0 \lesssim h^{k-1}(|u|_{k+1}+|f|_{\max\{k-3,0\}}).    
\end{equation*}
\end{theorem}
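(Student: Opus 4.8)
The plan is to treat~\eqref{eq:CDG} as the restriction of the weak Galerkin scheme~\eqref{eq:biharmonicMfemWG} to the embedded subspace $E^{\rm CDG}\mathring V_k(\mathcal T_h)$, so that the bilinear form $a^{\rm CDG}(u,v)=(\nabla_w^2 u,\nabla_w^2 v)$ is automatically coercive with respect to the energy norm $\vertiii{v}:=\|\nabla_w^2 v\|_0$. That $\vertiii{\cdot}$ is a genuine norm on $\mathring V_k$ follows from the injectivity of $\nabla_w^2$ in~\eqref{eq:infsuphessian} together with the discrete Poincar\'e inequality~\eqref{eq:Poincare}; this gives well-posedness of~\eqref{eq:CDG} and places us in a Strang-type framework with no stabilization parameter to track. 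First I would fix a suitable interpolant $u_I\in\mathring V_k$ of the exact solution $u$ (the Lagrange nodal interpolant, legitimate since $u\in H^{k+1}\subset C^0$ in the relevant dimensions) and split the error by the triangle inequality $\|\nabla^2 u-\nabla_w^2 u_h\|_0\le\|\nabla^2 u-\nabla_w^2 u_I\|_0+\vertiii{u_I-u_h}$.

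For the approximation term I would use the pointwise formula~\eqref{eq:WGlocal2}, namely $\nabla_w^2 u_I=\nabla_h^2 u_I-\tfrac12\nabla_w^2[\partial_n u_I]\,\bs n_F\cdot\bs n_{\partial T}$. The first piece is controlled by the standard interpolation estimate $\|\nabla^2 u-\nabla_h^2 u_I\|_0\lesssim h^{k-1}|u|_{k+1}$. For the lifting piece, since $u\in H^{k+1}$ has a single-valued normal derivative we have $[\partial_n u]=0$, so $[\partial_n u_I]=[\partial_n(u_I-u)]$ is an interpolation remainder; an inverse--trace estimate applied to the defining relation of the lifting $\nabla_w^2 u_n$ in~\eqref{eq:weakhess} yields $\|\nabla_w^2[\partial_n u_I]\|_0\lesssim(\sum_F h_F^{-1}\|[\partial_n(u-u_I)]\|_{0,F}^2)^{1/2}\lesssim h^{k-1}|u|_{k+1}$. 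Hence $\|\nabla^2 u-\nabla_w^2 u_I\|_0\lesssim h^{k-1}|u|_{k+1}$.

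For the discrete term I would set $e_h=u_I-u_h$ and use coercivity, $\vertiii{e_h}^2=a^{\rm CDG}(u_I,e_h)-(f,Q_r e_h)$. The crux is the consistency identity $(\nabla^2 u,\nabla_w^2 v)=(f,v)$ for every $v\in\mathring V_k$: integrating $(\nabla^2 u,\nabla_w^2 v)$ by parts twice element-wise, the volume contribution produces $(\Delta^2 u,v)=(f,v)$, while the boundary terms collapse because $v\in C^0$ with $v|_{\partial\Omega}=0$, $[\partial_n u]=0$, and $\partial_{nn}u$ is single-valued, so the jump contributions in $a^{\rm CDG}$ cancel exactly against those generated by the Green identity. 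Writing $\nabla_w^2 u_I=\nabla^2 u+(\nabla_w^2 u_I-\nabla^2 u)$ then gives $\vertiii{e_h}^2=(\nabla_w^2 u_I-\nabla^2 u,\nabla_w^2 e_h)+(f,(I-Q_r)e_h)$. The first term is bounded by the approximation estimate above times $\vertiii{e_h}$. For the load perturbation I would use $(f,(I-Q_r)e_h)=((I-Q_r)f,(I-Q_r)e_h)$ together with $\|(I-Q_r)e_h\|_{0,T}\lesssim h_T^2|e_h|_{2,T}$, the projection bound $\|(I-Q_r)f\|_0\lesssim h^{\max\{k-3,0\}}|f|_{\max\{k-3,0\}}$ (recalling $f=\Delta^2 u\in H^{k-3}$), and the norm equivalence $\|\nabla_h^2 e_h\|_0\eqsim\vertiii{e_h}$ (the analogue of~\eqref{eq:VEMnormequivH2}), which altogether yields $(f,(I-Q_r)e_h)\lesssim h^{k-1}|f|_{\max\{k-3,0\}}\vertiii{e_h}$. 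Canceling one factor of $\vertiii{e_h}$ and combining with the approximation term gives the claimed bound.

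The step I expect to be the main obstacle is the consistency analysis, and within it the bookkeeping of the low-order load perturbation $(f,(I-Q_r)e_h)$: extracting the sharp power $h^{k-1}$ with only $f\in H^{\max\{k-3,0\}}$ requires pairing the correct projection-error order for $f$ against the $h^2$ gain of $(I-Q_r)e_h$, and then converting the broken $H^2$-seminorm of $e_h$ into the weak-Hessian energy norm through the nonstandard equivalence $\|\nabla_h^2\cdot\|_0\eqsim\|\nabla_w^2\cdot\|_0$; the exceptional low orders (e.g. $k=2$ with $r=0$, where $Q_0$ reproduces only constants) need separate, slightly more careful projection and discrete Poincar\'e estimates. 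The remaining integration-by-parts cancellations in the consistency identity are routine and parallel the two-dimensional treatment in~\cite{HuangHuang2014}.
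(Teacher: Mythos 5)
The paper itself does not prove this theorem; it states the result and defers the analysis to the two\-/dimensional argument of the cited work of Huang and Huang, so your proposal must stand on its own. Your architecture is the natural one and matches what the paper intends: coercivity of $a^{\rm CDG}$ in the energy norm $\vertiii{v}=\|\nabla_w^2 E^{\rm CDG}v\|_0$, an interpolant split, a consistency identity, and a separate estimate of the load perturbation $(f,(I-Q_r)e_h)$. Your treatment of the approximation term via~\eqref{eq:WGlocal2} and the lifting bound is correct, and your closing caveat about the low orders ($k=2$, $r=0$, where $(I-Q_0)$ gains only one power of $h$) is exactly the right concern. Two smaller omissions: well-posedness of~\eqref{eq:CDG} needs injectivity of $E^{\rm CDG}$ in addition to injectivity of $\nabla_w^2$, and the jump control $\sum_F h_F^{-1}\|[\partial_n e_h]\|_{0,F}^2\lesssim \vertiii{e_h}^2$ that your argument silently uses should be extracted from the norm equivalence of Lemma~\ref{lm:normequivalence}.

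The genuine gap is at the step you yourself call the crux: the identity $(\nabla^2u,\nabla_w^2 v)=(f,v)$ for $v\in\mathring V_k$ is \emph{not} exact, and the claimed ``exact cancellation'' of jump terms fails. The reason is that $\nabla_w^2 E^{\rm CDG}v$ is a piecewise polynomial in $\Sigma_{k,r}^{-1}$, and its defining relation~\eqref{eq:weakhessCDG} can only be tested against polynomial tensors; hence $(\nabla^2u,\nabla_w^2 v)_T=(Q_{\Sigma,T}\nabla^2u,\nabla_w^2 v)_T=(\nabla^2u,\nabla_h^2v)_T-\tfrac12([\partial_nv],\bs n^{\intercal}(Q_{\Sigma,T}\nabla^2u)\bs n)_{\partial T}$. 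After the element-wise Green identity~\eqref{eq:greenidentitydivdiv}, the face terms $\sum_F(\{\partial_{nn}u\},[\partial_nv])_F$ cancel against $\sum_F(\{\bs n^{\intercal}(Q_\Sigma\nabla^2u)\bs n\},[\partial_nv])_F$ only up to the remainder $R(u,v)=\sum_F(\{\bs n^{\intercal}(I-Q_\Sigma)(\nabla^2u)\bs n\},[\partial_nv])_F$, which is generically nonzero (the same obstruction forbids formally applying the adjoint identity $(\bs\tau,\nabla_w^2v)=(\div\div_w\bs\tau,v)_{0,h}$ with $\bs\tau=\nabla^2u\notin\Sigma_{k,r}^{-1}$). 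Your error equation is therefore missing a term. The repair is standard and preserves the rate: $|R(u,e_h)|\lesssim \bigl(\sum_F h_F\|(I-Q_\Sigma)\nabla^2u\|_{0,F}^2\bigr)^{1/2}\bigl(\sum_F h_F^{-1}\|[\partial_ne_h]\|_{0,F}^2\bigr)^{1/2}\lesssim h^{k-1}|u|_{k+1}\vertiii{e_h}$. Alternatively, and closer to the machinery the paper uses for the hybridized scheme (Theorem~\ref{th:mfemerror}), replace $\nabla^2u$ by the canonical interpolant $-\bs\sigma_I$ in the consistency step: since $\bs\sigma_I$ has single-valued traces and satisfies the Fortin property~\eqref{eq:IdivdivQcd}, the identity $(\bs\sigma_I,\nabla_w^2v)=(\div\div_w\bs\sigma_I,E^{\rm CDG}v)_{0,h}=-(f,Q_rv)$ \emph{is} exact, at the price of the interpolation error $\|\bs\sigma-\bs\sigma_I\|_0\lesssim h^{k-1}|u|_{k+1}$ and of requiring enough regularity to define $\bs\sigma_I$ (fine for $k\geq3$; $k=2$ needs the separate treatment you already flag).
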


The resulting linear algebraic system from the $C^0$ DG discretization is significantly reduced compared to the hybridized version. Despite the use of this simpler element, the method retains the optimal order of convergence. Hence, the $C^0$ DG method provides an attractive alternative to the hybridized approach. On the other hand, the hybridized mixed finite element method~\eqref{intro:biharmonicMfemhy} can be post-processed to improve the convergence rate.

\section{Finite element divdiv complexes in three dimensions}
\label{sec:femdivdivcomplex}
In this section we will first present finite element divdiv complexes involving conforming finite element spaces. Then we construct the distributional finite element divdiv complexes using the weak divdiv operator.

\subsection{Conforming finite element divdiv complexes}
The three-dimensional divdiv complex is~\cite{Arnold;Hu:2020Complexes,PaulyZulehner2020}
\begin{align*}
\resizebox{1.0\hsize}{!}{$
{\bf RT}\xrightarrow{\subset} H^1(\Omega;\mathbb R^3)\xrightarrow{\dev\grad} H(\sym\curl,\Omega;\mathbb T)\xrightarrow{\sym\curl} H(\div\div, \Omega;\mathbb S) \xrightarrow{\div{\div}} L^2(\Omega)\xrightarrow{}0,
$}
\end{align*}
where ${\bf RT}= \{a\boldsymbol x + \boldsymbol b: a\in \mathbb R, \boldsymbol b \in \mathbb R^3\}$, $H(\sym\curl,\Omega;\mathbb T)$ is the space of traceless tensor $\boldsymbol \sigma\in L^2(\Omega;\mathbb T)$ such that $\sym \curl \boldsymbol \sigma \in L^2(\Omega; \mathbb S)$ with the row-wise $\curl$ operator. 

\subsubsection{Finite element complexes starting from Hermite element}
We start from the vectorial Hermite element space in three dimensions~\cite{Ciarlet1978}
\begin{align*}
V_{k+2}^H:=\{\boldsymbol v_h\in H^1(\Omega;\mathbb R^3): \; &\boldsymbol v_h|_T\in\mathbb P_{k+2}(T;\mathbb R^3)\textrm{ for each } T\in\mathcal T_h,\\
&\nabla \boldsymbol v_h(\delta) \textrm{ is single-valued at each vertex $\delta$ of $\mathcal T_h$} \}.
\end{align*}

Since no supersmooth DoFs in~\eqref{eq:newdivdivS}, we can use DoFs for $H(\sym\curl, \Omega;\mathbb T)$-conforming finite elements simpler than those in~\cite{ChenHuang2022,Chen;Huang:2020Finite,Hu;Liang;Ma:2021Finite}.
Take the space of shape functions as $\mathbb P_{k+1}(T;\mathbb T)$.
The degrees of freedom are given by
\begin{subequations}\label{newHsymcurlfemdof}
\begin{align}
\boldsymbol \tau(\delta), & \quad\delta\in \Delta_0(T), \bs \tau \in \mathbb T, \label{newHsymcurlfemdof1}\\
(\boldsymbol{n}_i^{\intercal}\boldsymbol \tau\boldsymbol{t}, q)_e, & \quad q\in\mathbb P_{k-1}(e), e\in\Delta_1(T), i=1,2,\label{newHsymcurlfemdof2}\\
(\boldsymbol n\times \sym(\boldsymbol\tau\times\boldsymbol n)\times \boldsymbol n, \boldsymbol q)_F, &\quad\boldsymbol q\in\mathbb B_{k+1}^{tt}(F;\mathbb S), F\in\partial T,\label{newHsymcurlfemdof3}\\
(\boldsymbol n\cdot \boldsymbol\tau\times\boldsymbol n, \boldsymbol q)_F, & \quad\boldsymbol q\in \mathbb B^{\div_F}_{k+1}(F),  F\in\partial T,\label{newHsymcurlfemdof4}\\
(\boldsymbol \tau, \boldsymbol q)_T, & \quad\boldsymbol q\in\mathbb B_{k+1}(\sym\curl, T;\mathbb T), \label{newHsymcurlfemdof5} 
\end{align}
\end{subequations}
where 
\begin{align*}
\mathbb B_{k+1}^{tt}(F;\mathbb S) &:= \{ \bs \tau\in \mathbb P_{k+1}(F;\mathbb S): \bs\tau(\texttt{v}) = 0 \textrm{  for }\texttt{v}\in\Delta_0(F), \bs t^{\intercal}\bs\tau\bs t|_{\partial F} = 0\}, \\
\mathbb B_{k+1}^{\div_F}(F) &:= \{ \bs v\in \mathbb P_{k+1}(F;\mathbb R^2): \bs v\cdot\bs n_{F,e}|_{\partial F} = 0\}, \\
\mathbb B_{k+1}(\sym\curl, T;\mathbb T)&:=\{  
\bs\tau\in\mathbb P_{k+1}(T;\mathbb T):(\boldsymbol n\cdot \boldsymbol \tau\times\boldsymbol n)|_{\partial T}=0, \\
& \qquad\qquad\qquad\qquad\quad\;\; (\boldsymbol n\times \sym(\boldsymbol\tau\times\boldsymbol n)\times\boldsymbol n)|_{\partial T}=0\}.
\end{align*}
Characterization of $\mathbb B_{k+1}^{\div_F}(F)$ can be found  in~\cite[Lemma 4.2]{ChenHuang2021Geometric} and $\mathbb B_{k+1}(\sym\curl, T;\mathbb T)$ in~\cite[Lemma 5.7]{Chen;Huang:2020Finite}. In particular, we know $\dim \mathbb B_{k+1}(\sym\curl, T;\mathbb T) =  \frac{1}{3}(4k^3+6k^2-10k)$ and $\dim \mathbb B_{k+1}^{\div_F}(F)  = 2 \dim\mathbb P_{k+1}(F) - 3\times (k+2) = k^2 + 2k$. The bubble space $\dim \mathbb B_{k+1}^{tt}(F;\mathbb S) = 3 \dim\mathbb P_{k+1}(F) - 3 \times 3 - 3\times k = \frac{3}{2} (k^2+3k)$.




\begin{lemma}
The DoFs~\eqref{newHsymcurlfemdof} are unisolvent for the space $\mathbb P_{k+1}(T;\mathbb T)$ for $k\geq 0$.
\end{lemma}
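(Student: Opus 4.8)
The plan is to prove unisolvence in the standard two-step way: first check that the number of DoFs in~\eqref{newHsymcurlfemdof} equals $\dim\mathbb P_{k+1}(T;\mathbb T)=8\binom{k+4}{3}=\tfrac43(k+2)(k+3)(k+4)$, and then prove injectivity, i.e.\ that any $\boldsymbol\tau\in\mathbb P_{k+1}(T;\mathbb T)$ annihilated by all DoFs vanishes. For the count, the four vertices contribute $4\cdot 8$ via~\eqref{newHsymcurlfemdof1}, the six edges contribute $6\cdot 2\cdot k$ via~\eqref{newHsymcurlfemdof2}, the four faces contribute $4\dim\mathbb B_{k+1}^{tt}(F;\mathbb S)+4\dim\mathbb B_{k+1}^{\div_F}(F)=4\cdot\tfrac32(k^2+3k)+4\cdot(k^2+2k)$ via~\eqref{newHsymcurlfemdof3}--\eqref{newHsymcurlfemdof4}, and the interior contributes $\dim\mathbb B_{k+1}(\sym\curl,T;\mathbb T)=\tfrac13(4k^3+6k^2-10k)$ via~\eqref{newHsymcurlfemdof5}; summing these and comparing with $\tfrac43(k+2)(k+3)(k+4)$ yields equality, so it remains to prove injectivity.

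For injectivity I would eliminate DoFs along an increasing chain of sub-simplices, assuming all DoFs vanish on $\boldsymbol\tau$. First, \eqref{newHsymcurlfemdof1} forces $\boldsymbol\tau(\delta)=0$ at every vertex. Next, together with \eqref{newHsymcurlfemdof2}, the normal-tangential components $\boldsymbol n_i^{\intercal}\boldsymbol\tau\boldsymbol t$ vanish identically on each edge $e$ for $i=1,2$. Then, for each face $F$ I would show the two face traces $\boldsymbol n\cdot\boldsymbol\tau\times\boldsymbol n$ and $\boldsymbol n\times\sym(\boldsymbol\tau\times\boldsymbol n)\times\boldsymbol n$ vanish: using the characterization of $\mathbb B_{k+1}^{\div_F}(F)$ in~\cite[Lemma 4.2]{ChenHuang2021Geometric}, the only boundary component of $\boldsymbol n\cdot\boldsymbol\tau\times\boldsymbol n$ left uncontrolled by \eqref{newHsymcurlfemdof4} is the edge-tangential part $\boldsymbol n_F^{\intercal}\boldsymbol\tau\boldsymbol t$, already annihilated on the edges, so this trace vanishes by a self-testing argument; the symmetric tangential-tangential trace handled by \eqref{newHsymcurlfemdof3} is more delicate. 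Finally, once both face traces vanish on $\partial T$, by the very definition of the interior bubble space $\boldsymbol\tau\in\mathbb B_{k+1}(\sym\curl,T;\mathbb T)$ (characterized in~\cite[Lemma 5.7]{Chen;Huang:2020Finite}), and choosing $\boldsymbol q=\boldsymbol\tau$ in \eqref{newHsymcurlfemdof5} gives $\|\boldsymbol\tau\|_{0,T}^2=0$, hence $\boldsymbol\tau=0$.

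The hard part will be the symmetric tangential-tangential trace in the face step, and specifically the edge ($\partial F$) values it requires. Since $\mathbb B_{k+1}^{tt}(F;\mathbb S)$ imposes $\boldsymbol t^{\intercal}\boldsymbol q\boldsymbol t|_{\partial F}=0$, testing $\boldsymbol n\times\sym(\boldsymbol\tau\times\boldsymbol n)\times\boldsymbol n$ against it does not directly control the edge tangential-tangential component $\boldsymbol t^{\intercal}\boldsymbol\tau\boldsymbol t$, which is not among the DoFs as written and must be recovered algebraically before the self-testing argument applies. The mechanism I expect to use is the traceless constraint: in the orthonormal edge frame $\{\boldsymbol t,\boldsymbol n_{F,e},\boldsymbol n_F\}$ it reads $\boldsymbol t^{\intercal}\boldsymbol\tau\boldsymbol t=-\boldsymbol n_{F,e}^{\intercal}\boldsymbol\tau\boldsymbol n_{F,e}-\boldsymbol n_F^{\intercal}\boldsymbol\tau\boldsymbol n_F$, coupling the missing component to the normal-plane form of $\boldsymbol\tau|_e$. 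A face-by-face treatment is circular, so the delicate point is to process the two faces $F_1,F_2$ sharing $e$ simultaneously: their in-plane frames provide two distinct bases of the $2$-dimensional normal plane $\mathcal N_e$, and combined with the vanishing edge DoFs this should furnish enough independent relations on the $2\times2$ symmetric normal-plane form to force it, and hence $\boldsymbol t^{\intercal}\boldsymbol\tau\boldsymbol t$, to vanish on every edge. With the edge traces established, the self-testing on each face closes the face step and the interior step finishes the proof.
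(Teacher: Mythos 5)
Your overall skeleton --- dimension count, then vertex $\to$ edge $\to$ face $\to$ interior elimination with self-testing of the face and interior bubbles --- is exactly the paper's, your count is correct, and your treatment of the trace $\bs n\cdot\bs\tau\times\bs n$ via \eqref{newHsymcurlfemdof4} is right. The genuine gap is in the step you flag as ``the hard part,'' and it comes from a miscomputation of the face trace. The $tt$-component on $\partial F$ of the tensor $\bs n\times\sym(\bs\tau\times\bs n)\times\bs n$ is \emph{not} $\bs t^{\intercal}\bs\tau\bs t$: each cross product with $\bs n$ rotates tangential vectors by $90^{\circ}$ in the plane of $F$ (it sends $\bs t\mapsto\pm\bs n_{F,e}$ and $\bs n_{F,e}\mapsto\mp\bs t$), so
\begin{equation*}
\bs t^{\intercal}\left(\bs n\times\sym(\bs\tau\times\bs n)\times\bs n\right)\bs t
\;=\;\pm\,\bs n_{F,e}^{\intercal}\sym(\bs\tau\times\bs n)\,\bs n_{F,e}
\;=\;\pm\,\bs n_{F,e}^{\intercal}\bs\tau\,\bs t
\end{equation*}
up to orientation-dependent signs; this is precisely the identity \eqref{eq:20231129} on which the paper's proof turns. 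The right-hand side is a normal--tangent component of $\bs\tau$, already annihilated on every edge by the vanishing DoFs \eqref{newHsymcurlfemdof1}--\eqref{newHsymcurlfemdof2}. Hence $(\bs n\times\sym(\bs\tau\times\bs n)\times\bs n)|_F$ lies in $\mathbb B_{k+1}^{tt}(F;\mathbb S)$ with no further work, the self-testing with \eqref{newHsymcurlfemdof3} closes the face step at once, and the component $\bs t^{\intercal}\bs\tau\bs t$ never enters the argument.

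Moreover, the repair you sketch could not be made to work even if it were needed. Tracelessness in the frame $\{\bs t,\bs n_{F,e},\bs n_F\}$ gives $\bs t^{\intercal}\bs\tau\bs t=-\left(\bs n_{F,e}^{\intercal}\bs\tau\bs n_{F,e}+\bs n_F^{\intercal}\bs\tau\bs n_F\right)$, but the quantity in parentheses is the trace of the block of $\bs\tau$ on the normal plane $\mathcal N_e$, and this trace is invariant under the choice of orthonormal basis of $\mathcal N_e$. Since $\{\bs n_{F_1,e},\bs n_{F_1}\}$ and $\{\bs n_{F_2,e},\bs n_{F_2}\}$ are both orthonormal bases of $\mathcal N_e$, the two faces sharing $e$ produce the \emph{same} relation, not two independent ones. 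In addition, no DoF in \eqref{newHsymcurlfemdof} controls the normal--normal components $\bs n_i^{\intercal}\bs\tau\bs n_j$ along the interior of an edge --- \eqref{newHsymcurlfemdof2} prescribes only the normal--tangent components --- so there is no source of equations from which your ``simultaneous two-face'' step could force the normal-plane block, and with it $\bs t^{\intercal}\bs\tau\bs t$, to vanish. Fortunately, as explained above, that step is unnecessary: the correct identification of the boundary components reduces everything to the normal--tangent data you have already killed.
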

\begin{proof}
The number of DoFs~\eqref{newHsymcurlfemdof} is 
$$
4\times 8+6\times 2k+4\times \left(\frac{3}{2}(k^2+3k)+ (k^2+2k)\right)+\frac{1}{3}(4k^3+6k^2-10k)=8{k+4\choose 3},
$$
which equals $\dim\mathbb P_{k+1}(T;\mathbb T)$.

Assume $\bs\tau\in\mathbb P_{k+1}(T;\mathbb T)$ and all the DoFs~\eqref{newHsymcurlfemdof} vanish.
Clearly $(\boldsymbol{n}_i^{\intercal}\boldsymbol \tau\boldsymbol{t})|_e=0$ follows from the vanishing DoFs~\eqref{newHsymcurlfemdof1}-\eqref{newHsymcurlfemdof2} for $e\in\Delta_1(T)$ and $i=1,2$.
Notice that
for $e\in\Delta_1(T)$ being an edge of face $F\in\partial T$, we have
\begin{equation}\label{eq:20231129}
\bs n_{F,e}^{\intercal}\sym(\boldsymbol\tau\times\boldsymbol n_F)\bs n_{F,e}=\boldsymbol n_{F,e}^{\intercal}\boldsymbol\tau\bs t,\quad \bs n_{F}^{\intercal}(\boldsymbol\tau\times\boldsymbol n_F)\bs n_{F,e}=\boldsymbol n_{F}^{\intercal}\boldsymbol\tau\bs t.
\end{equation}
Hence $(\boldsymbol n\times \sym(\boldsymbol\tau\times\boldsymbol n)\times \boldsymbol n)|_F\in\mathbb B_{k+1}^{tt}(F;\mathbb S)$ and $(\boldsymbol n\cdot \boldsymbol\tau\times\boldsymbol n)|_F\in\mathbb B_{k+1}^{\div_F}(F)$ for $F\in\partial T$. Then we get from the vanishing DoFs~\eqref{newHsymcurlfemdof3}-\eqref{newHsymcurlfemdof4} that $\bs\tau\in\mathbb B_{k+1}(\sym\curl, T;\mathbb T)$, which together with the vanishing DoF~\eqref{newHsymcurlfemdof5} yields $\bs\tau=0$.
\end{proof}

The finite element space $\Sigma_{k+1}^{\sym\curl}$ is defined as follows
\begin{align*}
\Sigma_{k+1}^{\sym\curl}:=\{\boldsymbol \tau\in L^2(\Omega;\mathbb T):\,& \boldsymbol \tau|_T\in\mathbb P_{k+1}(T;\mathbb T) \textrm{ for each } T\in\mathcal T_h, \textrm{  } \\
&\textrm{ all the DoFs~\eqref{newHsymcurlfemdof} are single-valued} \}.    
\end{align*}
DoFs~\eqref{newHsymcurlfemdof1}-\eqref{newHsymcurlfemdof2} on $e\in\Delta_1(T)$ determine $(\boldsymbol{n}_i^{\intercal}\boldsymbol \tau\boldsymbol{t})|_e$. By~\eqref{eq:20231129}, $(\boldsymbol{n}_i^{\intercal}\boldsymbol \tau\boldsymbol{t})|_e$ and~\eqref{newHsymcurlfemdof3} determine $(\boldsymbol n\times \sym(\boldsymbol\tau\times\boldsymbol n)\times \boldsymbol n)|_F$, and $(\boldsymbol{n}_i^{\intercal}\boldsymbol \tau\boldsymbol{t})|_e$ and~\eqref{newHsymcurlfemdof4} determine $(\boldsymbol n\cdot \boldsymbol\tau\times\boldsymbol n)|_F$. Therefore, $\Sigma_{k+1}^{\sym\curl}\subset H(\sym\curl,\Omega;\mathbb T)$ by the characterization of traces of functions in $H(\sym\curl,\Omega;\mathbb T)$ given in \cite{Chen;Huang:2020Finite}.


\begin{theorem}\label{thm:divdivcomplexes1}
Assume $\Omega$ is a bounded and topologically trivial Lipschitz domain in $\mathbb R^3$.
The finite element  $\div\div$ complex
\begin{equation}\label{eq:divdivcomplex3dfem}
{\bf RT}\xrightarrow{\subset} V_{k+2}^H\xrightarrow{\dev\grad}\Sigma_{k+1}^{\sym\curl}\xrightarrow{\sym\curl} \Sigma_{k,{\rm new}}^{\div\div} \xrightarrow{\div{\div}} V^{-1}_{k-2}\xrightarrow{}0, \; \text{ for } k\geq 3,
\end{equation}
is exact. Similarly, the finite element  $\div\div$ complex
\begin{equation*}
{\bf RT}\xrightarrow{\subset} V_{k+2}^H\xrightarrow{\dev\grad} \Sigma_{k+1}^{\sym\curl}\xrightarrow{\sym\curl} \Sigma_{k^+}^{\div\div} \xrightarrow{\div{\div}} V^{-1}_{k-1}\xrightarrow{}0, \quad \text{ for } k\geq 2,
\end{equation*}
involving Raviart-Thomas type space $\Sigma_{k^+}^{\div\div}$ is exact. 
\end{theorem}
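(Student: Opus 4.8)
The plan is to prove exactness node by node, reducing it to four assertions, and to observe that the two displayed complexes are handled by one and the same argument (with $r=k-2$, $k\geq 3$ in the first and $r=k-1$, $k\geq 2$ in the Raviart--Thomas case), the only differences being the target space $V_{k-2}^{-1}$ versus $V_{k-1}^{-1}$, the enriched shape functions, and the inf-sup condition invoked. First I would verify that the sequence is a complex: since $\Sigma_{k+1}^{\sym\curl}\subset H(\sym\curl,\Omega;\mathbb T)$ and $\Sigma_{k,{\rm new}}^{\div\div}\subset H(\div\div,\Omega;\mathbb S)$ are conforming subspaces on which the differential operators coincide with their continuous counterparts, the identities $\sym\curl\circ\dev\grad=0$ and $\div\div\circ\sym\curl=0$ are inherited pointwise from the continuous $\div\div$ complex, and $\dev\grad(\mathbf{RT})=0$ by direct computation. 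Exactness at the two ends is then cheap: surjectivity of $\div\div$ onto $V_{k-2}^{-1}$ (resp. $V_{k-1}^{-1}$) is precisely the discrete inf-sup condition \eqref{eq:newdivdivinfsup} (resp. \eqref{eq:newdivdivRTinfsup}), while $\ker(\dev\grad|_{V_{k+2}^H})=\mathbf{RT}$ follows from exactness of the continuous complex at $H^1(\Omega;\mathbb R^3)$ together with $\mathbf{RT}\subset V_{k+2}^H$.

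The core of the proof is exactness at the two interior slots. I would establish $\img(\sym\curl)=\ker(\div\div)$ in $\Sigma_{k,{\rm new}}^{\div\div}$ directly and then obtain $\img(\dev\grad)=\ker(\sym\curl)$ in $\Sigma_{k+1}^{\sym\curl}$ from a dimension count. Concretely, take $\boldsymbol\tau\in\Sigma_{k,{\rm new}}^{\div\div}$ with $\div\div\boldsymbol\tau=0$; the exact continuous complex furnishes $\boldsymbol\phi\in H(\sym\curl,\Omega;\mathbb T)$ with $\sym\curl\boldsymbol\phi=\boldsymbol\tau$, and the task is to replace $\boldsymbol\phi$ by a representative lying in $\Sigma_{k+1}^{\sym\curl}$. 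This would be done by combining (i) exactness of the local polynomial sequence $\mathbb P_{k+2}(T;\mathbb R^3)\xrightarrow{\dev\grad}\mathbb P_{k+1}(T;\mathbb T)\xrightarrow{\sym\curl}\mathbb P_{k}(T;\mathbb S)\xrightarrow{\div\div}\mathbb P_{k-2}(T)$ on each element, which provides an elementwise $\mathbb P_{k+1}$ potential, with (ii) matching the single-valued degrees of freedom \eqref{newHsymcurlfemdof} across faces and edges so that the patched potential is globally $H(\sym\curl)$-conforming, using the trace characterization of $H(\sym\curl,\Omega;\mathbb T)$; the freedom of adding local $\ker(\sym\curl)$ polynomials is exploited to reconcile interelement traces without disturbing $\sym\curl\boldsymbol\phi=\boldsymbol\tau$.

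The remaining interior exactness is forced by counting dimensions. Reading off local dimensions from the unisolvence lemmas and aggregating them over the triangulation via Euler's relation, I would verify
\[
\dim V_{k+2}^H-\dim\Sigma_{k+1}^{\sym\curl}+\dim\Sigma_{k,{\rm new}}^{\div\div}-\dim V_{k-2}^{-1}=\dim\mathbf{RT}=4 .
\]
Since the sequence is a complex already exact at $V_{k+2}^H$, at $V_{k-2}^{-1}$, and at $\Sigma_{k,{\rm new}}^{\div\div}$, rank--nullity together with this identity pins down $\dim\img(\dev\grad)=\dim\ker(\sym\curl)$, and the complex property upgrades this to equality of the (nested) spaces, giving exactness at $\Sigma_{k+1}^{\sym\curl}$. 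Contractibility of $\Omega$ enters here: it guarantees that the continuous complex has cohomology only $\mathbf{RT}$, so no extra Betti numbers appear in the count. The Raviart--Thomas complex is treated identically, with $\Sigma_{k^+}(T;\mathbb S)$, the local sequence ending in $\mathbb P_{k-1}(T)$, and \eqref{eq:newdivdivRTinfsup} replacing \eqref{eq:newdivdivinfsup}.

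I expect the main obstacle to be step (ii) of the interior argument: upgrading the continuous $\sym\curl$-potential to a conforming finite element field carrying the prescribed vertex, edge, and face degrees of freedom. This demands careful bookkeeping of the traces of $H(\sym\curl)$ fields and a verification that local correction by $\ker(\sym\curl)$ polynomials suffices to enforce single-valuedness while preserving the equation. The global dimension count is the second delicate point, as it must correctly assemble vertex, edge, face, and interior contributions through Euler's relation rather than a naive sum of local dimensions.
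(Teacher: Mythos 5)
Your global architecture (complex property, surjectivity of $\div\div$ via the inf-sup conditions, one interior exactness proved directly, the other recovered from an Euler-characteristic dimension count) is the same as the paper's, and your alternating-sum identity
\[
\dim V_{k+2}^H-\dim\Sigma_{k+1}^{\sym\curl}+\dim\Sigma_{k,{\rm new}}^{\div\div}-\dim V_{k-2}^{-1}=4
\]
is indeed equivalent to the count the paper carries out with Euler's relation. The problem is that you chose to prove \emph{directly} the hard slot and to recover the easy one by counting, and your direct step has a genuine gap. Given $\boldsymbol\tau\in\Sigma_{k,{\rm new}}^{\div\div}$ with $\div\div\boldsymbol\tau=0$, your plan to patch elementwise polynomial potentials $\boldsymbol\phi_T$ into a member of $\Sigma_{k+1}^{\sym\curl}$ by adding local $\ker(\sym\curl)$ corrections does not go through as sketched. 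First, the differences $\boldsymbol\phi_{T_1}-\boldsymbol\phi_{T_2}$ of neighboring local potentials are \emph{not} kernel fields: $\sym\curl(\boldsymbol\phi_{T_1}-\boldsymbol\phi_{T_2})=\boldsymbol\tau|_{T_1}-\boldsymbol\tau|_{T_2}$ as polynomials, and only certain traces of $\boldsymbol\tau$ match across the face, so the mismatch you must remove is not of gauge type to begin with. Second, membership in $\Sigma_{k+1}^{\sym\curl}$ requires all DoFs~\eqref{newHsymcurlfemdof} to be single-valued, including the full vertex values $\boldsymbol\phi(\delta)\in\mathbb T$; this is a supersmoothness that a generic $H(\sym\curl)$ potential does not possess and that no commuting projection is available to enforce. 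Third, even where the mismatches are of gauge type, resolving them consistently around edge and vertex patches is a cocycle (discrete cohomology vanishing) problem that you neither formulate nor prove. This step is precisely the hard content of the theorem, and asserting it is "careful bookkeeping" is not a proof.

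The fix is to swap which slot you handle directly, which is what the paper does. Exactness at $\Sigma_{k+1}^{\sym\curl}$ admits a short direct argument: if $\sym\curl\boldsymbol\tau=0$, the continuous complex gives $v\in H^1(\Omega;\mathbb R^3)$ with $\dev\grad v=\boldsymbol\tau$; since $\boldsymbol\tau$ is piecewise polynomial, $v$ is piecewise polynomial of degree $k+2$; and single-valuedness of the vertex DoFs of $\boldsymbol\tau$ forces $\nabla v$ to be single-valued at vertices (the jump of $\nabla v$ at a vertex shared by face-adjacent elements is a multiple of the identity by tracelessness, and tangential continuity of $v$ on the shared face kills it), so $v\in V_{k+2}^H$. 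Here the gauge freedom is only the finite-dimensional ${\bf RT}$, which is why this direction is cheap, whereas your direction carries the infinite-dimensional gauge freedom $\dev\grad H^1(\Omega;\mathbb R^3)$. With this direct step in place, the dimension count you already set up — using surjectivity of $\div\div$, $\ker(\dev\grad)={\bf RT}$, and the containment $\sym\curl\Sigma_{k+1}^{\sym\curl}\subseteq\ker(\div\div)\cap\Sigma_{k,{\rm new}}^{\div\div}$ — yields exactness at $\Sigma_{k,{\rm new}}^{\div\div}$, and the Raviart--Thomas complex follows by the same argument with $r=k-1$ and the inf-sup condition~\eqref{eq:newdivdivRTinfsup}.
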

\begin{proof}
The proof of two complexes is similar. So we focus on~\eqref{eq:divdivcomplex3dfem}. 

Clearly~\eqref{eq:divdivcomplex3dfem} is complex. 
We have proved the $\div\div$ operator is surjective. For $\bs \tau \in\ker(\sym\curl)\cap \Sigma_{k+1}^{\sym\curl}$, there exists a $v\in H^1(\Omega)$ s.t. $\dev \grad v = \bs \tau$. As $\bs \tau$ is piecewise polynomial, so is $v$. And the continuity of $\bs \tau$ at vertices implies $v$ is $C^1$ at vertices. Therefore we verified $\bs \tau \in\ker(\sym\curl)\cap \Sigma_{k+1}^{\sym\curl} = \dev \grad V^{H}_{k+2}$.

It remains to verify $\Sigma_{k,{\rm new}}^{\div\div}\cap\ker(\div\div)=\sym\curl\Sigma_{k+1}^{\sym\curl}$ by dimension count. 
It is easy to show the constraints $ [\tr_e(\bs \tau)]|_e = 0 \text{ for all } e\in \mathring{\mathcal E}_h$ are linearly independent. Therefore 
\begin{align*}    
\dim\Sigma_{k,{\rm new}}^{\div\div}&=\dim\Sigma_{k}^{\div\div-}-(k+1)|\mathring{\mathcal E}_h| \\
&=(k+1)(k^2+k+2)|\mathcal T_h|+(k+1)^2|\mathcal F_h|-(k+1)|\mathring{\mathcal E}_h|.
\end{align*}
Hence we have
\begin{align*}
&\dim(\Sigma_{k,{\rm new}}^{\div\div}\cap\ker(\div\div))=\dim\Sigma_{k,{\rm new}}^{\div\div}-\dim V^{-1}_{k-2}(\mathcal T_h)\\
= &\frac{1}{6}(k+1)(5k^2+7k+12)|\mathcal T_h| +(k+1)^2|\mathcal F_h|-(k+1)|\mathring{\mathcal E}_h|.
\end{align*}
While
\begin{align*}
&\dim\sym\curl\Sigma_{k+1}^{\sym\curl}=
\dim\Sigma_{k+1}^{\sym\curl} -\dim V_{k+2}^H+\dim {\bf RT} \\
=&\frac{1}{6}(k-1)k(5k+17)|\mathcal T_h| + (k^2+5k)|\mathcal F_h| - (k-3)|\mathcal E_h|-4| \mathcal V_h |+ 4.
\end{align*}
Then
\begin{align*}
&\quad \dim(\Sigma_{k,{\rm new}}^{\div\div}\cap\ker(\div\div))-\dim\sym\curl\Sigma_{k+1}^{\sym\curl}\\
&=(6k+2)|\mathcal T_h|-(3k-1)|\mathcal F_h|-(k+1)|\mathring{\mathcal E}_h|+(k-3)|\mathcal E_h| + 4|\mathcal V_h|-4\\
&=k(6|\mathcal T_h|-3|\mathcal F_h|+|\mathcal E_h^{\partial}|) + 2|\mathcal T_h|+|\mathcal F_h|+|\mathcal E_h^{\partial}|-4|\mathcal E_h|+4|\mathcal V_h|-4.
\end{align*}
By the relation $4|\mathcal T_h|=2|\mathcal F_h|-|\mathcal F_h^{\partial}|$ and $3|\mathcal F_h^{\partial}|=2|\mathcal E_h^{\partial}|$,
\begin{align*}
6|\mathcal T_h|-3|\mathcal F_h|+|\mathcal E_h^{\partial}|=-\frac{3}{2}|\mathcal F_h^{\partial}|+|\mathcal E_h^{\partial}|=0.
\end{align*}
This together with the Euler's formula $|\mathcal V_h|-|\mathcal E_h|+|\mathcal F_h|-|\mathcal T_h|=1$ yields
\begin{align*}
2|\mathcal T_h|+|\mathcal F_h|+|\mathcal E_h^{\partial}|-4|\mathcal E_h|+4|\mathcal V_h|-4&=-4|\mathcal T_h|+4|\mathcal F_h|-4|\mathcal E_h|+4|\mathcal V_h|-4 =0.
\end{align*}
Combining the last three identities gives 
$$
\dim(\Sigma_{k,{\rm new}}^{\div\div}\cap\ker(\div\div))=\dim(\sym\curl\Sigma_{k+1}^{\sym\curl}).
$$
Therefore, $\Sigma_{k,{\rm new}}^{\div\div}\cap\ker(\div\div)=\sym\curl\Sigma_{k+1}^{\sym\curl}$.
\end{proof}

\subsubsection{Finite element complexes starting from Lagrange element}
We present finite element divdiv complexes with the lowest smoothness in three dimensions.

We start from the vectorial Lagrange element space $V_{k+2}^L$. 
Define the $H(\sym\curl, \Omega;\mathbb T)$-conforming space with the lowest smoothness
$$
\overline{\Sigma}_{k+1}^{\sym\curl}:=\{\boldsymbol \tau\in H(\sym\curl,\Omega;\mathbb T): \boldsymbol \tau|_T\in\mathbb P_{k+1}(T;\mathbb T)\textrm{ for each } T\in\mathcal T_h\}.
$$
Although $\overline{\Sigma}_{k+1}^{\sym\curl}$ exists, it is hard to give local DoFs. Notice that $\Sigma_{k+1}^{\sym\curl}\subseteq \overline{\Sigma}_{k+1}^{\sym\curl}$.

\begin{theorem}\label{thm:divdivcomplexes2}
Assume $\Omega$ is a bounded and topologically trivial Lipschitz domain in $\mathbb R^3$.
The finite element  $\div\div$ complexes
\begin{equation}\label{eq:divdivcomplex3dfem1}
{\bf RT}\xrightarrow{\subset} V_{k+2}^L\xrightarrow{\dev\grad}\overline{\Sigma}_{k+1}^{\sym\curl}\xrightarrow{\sym\curl} \Sigma_{k,{\rm new}}^{\div\div} \xrightarrow{\div{\div}} V^{-1}_{k-2}\xrightarrow{}0,  \text{ for } k\geq 3,
\end{equation}
and
\begin{equation}\label{eq:divdivcomplex3dfem2}
{\bf RT}\xrightarrow{\subset} V_{k+2}^L\xrightarrow{\dev\grad}\overline{\Sigma}_{k+1}^{\sym\curl}\xrightarrow{\sym\curl} \Sigma_{k^+}^{\div\div} \xrightarrow{\div{\div}} V^{-1}_{k-1} \xrightarrow{}0, \text{ for } k\geq 2,
\end{equation}
are exact. 
\end{theorem}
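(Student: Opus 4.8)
The plan is to reduce both complexes to their Hermite-based counterparts in Theorem~\ref{thm:divdivcomplexes1}, exploiting the inclusion $\Sigma_{k+1}^{\sym\curl}\subseteq \overline{\Sigma}_{k+1}^{\sym\curl}$ and the fact that the two rightmost spaces are identical in the two theorems. The only spots where \eqref{eq:divdivcomplex3dfem1}--\eqref{eq:divdivcomplex3dfem2} differ from those of Theorem~\ref{thm:divdivcomplexes1} are at $V_{k+2}^L$ (replacing $V_{k+2}^H$) and at $\overline{\Sigma}_{k+1}^{\sym\curl}$ (replacing $\Sigma_{k+1}^{\sym\curl}$), so I would establish exactness there and invoke Theorem~\ref{thm:divdivcomplexes1} elsewhere. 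I would treat \eqref{eq:divdivcomplex3dfem1} and \eqref{eq:divdivcomplex3dfem2} in parallel, since the arguments differ only in the bookkeeping of polynomial degrees.

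First I would check the complex property. The identities $\dev\grad|_{\bf RT}=0$, $\sym\curl\,\dev\grad=0$ and $\div\div\,\sym\curl=0$ are those of the continuous divdiv complex and hold distributionally. The one inclusion requiring a word is $\sym\curl\,\overline{\Sigma}_{k+1}^{\sym\curl}\subseteq \Sigma_{k,{\rm new}}^{\div\div}$: for $\bs\tau\in \overline{\Sigma}_{k+1}^{\sym\curl}$ the tensor $\bs\sigma:=\sym\curl\bs\tau$ is piecewise in $\mathbb P_k(T;\mathbb S)$ and satisfies $\div\div\bs\sigma=0\in L^2(\Omega)$, hence $\bs\sigma\in H(\div\div,\Omega;\mathbb S)$; Lemma~\ref{lm:divdivconforming} then forces $[\tr_1(\bs\sigma)]_F=[\tr_2(\bs\sigma)]_F=0$ and $[\tr_e(\bs\sigma)]|_e=0$, which are exactly the defining continuity conditions of $\Sigma_{k,{\rm new}}^{\div\div}$ (the $\Sigma_{k^+}^{\div\div}$ case is identical). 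Exactness at $V_{k+2}^L$ reduces to $\ker(\dev\grad)\cap V_{k+2}^L={\bf RT}$: elementwise $\dev\grad v|_T=0$ gives $v|_T\in{\bf RT}$, and global $C^0$-continuity of the Lagrange space forces a single ${\bf RT}$ field.

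The heart of the matter is exactness at $\overline{\Sigma}_{k+1}^{\sym\curl}$, that is, $\ker(\sym\curl)\cap\overline{\Sigma}_{k+1}^{\sym\curl}=\dev\grad V_{k+2}^L$. Since $\overline{\Sigma}_{k+1}^{\sym\curl}$ carries no local degrees of freedom, I cannot argue by a DoF-based dimension count as in Theorem~\ref{thm:divdivcomplexes1}; instead I would use exactness of the continuous divdiv complex. Given $\bs\tau\in\overline{\Sigma}_{k+1}^{\sym\curl}$ with $\sym\curl\bs\tau=0$, the topological triviality of $\Omega$ yields $v\in H^1(\Omega;\mathbb R^3)$ with $\dev\grad v=\bs\tau$. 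On each $T$, local polynomial exactness of the divdiv complex produces $w_T\in\mathbb P_{k+2}(T;\mathbb R^3)$ with $\dev\grad w_T=\bs\tau|_T$; then $\dev\grad(v|_T-w_T)=0$ gives $v|_T-w_T\in{\bf RT}$, so $v|_T\in\mathbb P_{k+2}(T;\mathbb R^3)$. As $v$ is continuous, $v\in V_{k+2}^L$, whence $\bs\tau\in\dev\grad V_{k+2}^L$. The crucial contrast with the Hermite case is that $\overline{\Sigma}_{k+1}^{\sym\curl}$ imposes no vertex continuity on $\bs\tau$, so $v$ need only be $C^0$, and the Lagrange space $V_{k+2}^L$ rather than the $C^1$-at-vertices space $V_{k+2}^H$ is the correct potential space.

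Finally, exactness at $\Sigma_{k,{\rm new}}^{\div\div}$ (resp. $\Sigma_{k^+}^{\div\div}$) follows by a sandwiching argument that needs no new computation. From $\Sigma_{k+1}^{\sym\curl}\subseteq\overline{\Sigma}_{k+1}^{\sym\curl}$ and the inclusion verified above,
\[
\sym\curl\,\Sigma_{k+1}^{\sym\curl}\subseteq\sym\curl\,\overline{\Sigma}_{k+1}^{\sym\curl}\subseteq\Sigma_{k,{\rm new}}^{\div\div}\cap\ker(\div\div)=\sym\curl\,\Sigma_{k+1}^{\sym\curl},
\]
where the last equality is the exactness from Theorem~\ref{thm:divdivcomplexes1}; hence all three coincide and $\sym\curl\,\overline{\Sigma}_{k+1}^{\sym\curl}=\Sigma_{k,{\rm new}}^{\div\div}\cap\ker(\div\div)$. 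Surjectivity of $\div\div$ onto $V^{-1}_{k-2}$ (resp. $V^{-1}_{k-1}$) is already part of Theorem~\ref{thm:divdivcomplexes1}, as the relevant spaces are unchanged. I expect the main obstacle to be the exactness at $\overline{\Sigma}_{k+1}^{\sym\curl}$: because this space lacks local degrees of freedom, the dimension-count route is unavailable, so one must lean on the continuous complex together with local polynomial exactness to construct and glue the potential, and take care that global $H^1$-regularity combined with elementwise polynomiality is precisely what places the potential in $V_{k+2}^L$.
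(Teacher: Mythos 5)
Your proposal is correct and takes essentially the same route as the paper's proof: both reduce to Theorem~\ref{thm:divdivcomplexes1} via the inclusion $\Sigma_{k+1}^{\sym\curl}\subseteq\overline{\Sigma}_{k+1}^{\sym\curl}$ together with the sandwiching identity $\sym\curl\Sigma_{k+1}^{\sym\curl}\subseteq\sym\curl\overline{\Sigma}_{k+1}^{\sym\curl}\subseteq\Sigma_{k,{\rm new}}^{\div\div}\cap\ker(\div\div)=\sym\curl\Sigma_{k+1}^{\sym\curl}$, and both handle exactness at $\overline{\Sigma}_{k+1}^{\sym\curl}$ by extracting a potential $\bs v\in H^1(\Omega;\mathbb R^3)$ from the continuous divdiv complex and observing that elementwise polynomiality plus $H^1$-continuity places $\bs v$ in $V_{k+2}^L$. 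Your added justification that $\dev\grad(\bs v|_T)$ being polynomial forces $\bs v|_T$ to be polynomial (via a local polynomial potential and the kernel ${\bf RT}$) merely fills in a step the paper asserts tersely, so the two arguments coincide in substance.
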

\begin{proof}
By the similarity of two complexes, we focus on the exactness of complex~\eqref{eq:divdivcomplex3dfem1}.

By the exactness of complex~\eqref{eq:divdivcomplex3dfem}, we have 
$$
\div\div\Sigma_{k,{\rm new}}^{\div\div} = V^{-1}_{k-2}, \quad \sym\curl\Sigma_{k+1}^{\sym\curl}=\Sigma_{k,{\rm new}}^{\div\div}\cap\ker(\div\div).
$$
Noting that $\Sigma_{k+1}^{\sym\curl}\subseteq\overline{\Sigma}_{k+1}^{\sym\curl}$, it follows
$$
\sym\curl\Sigma_{k+1}^{\sym\curl}\subseteq\sym\curl\overline{\Sigma}_{k+1}^{\sym\curl}\subseteq \Sigma_{k,{\rm new}}^{\div\div}\cap\ker(\div\div).
$$
Hence $\sym\curl\overline{\Sigma}_{k+1}^{\sym\curl}=\Sigma_{k,{\rm new}}^{\div\div}\cap\ker(\div\div)$.

Clearly $\dev\grad V_{k+2}^L\subseteq(\overline{\Sigma}_{k+1}^{\sym\curl}\cap\ker(\sym\curl))$. On the other side, for $\boldsymbol{\tau}\in\overline{\Sigma}_{k+1}^{\sym\curl}\cap\ker(\sym\curl)$, there exists $\boldsymbol{v}\in H^1(\Omega;\mathbb R^3)$ satisfying $\boldsymbol{\tau}=\sym\curl\boldsymbol{v}$. On each tetrahedron $T\in\mathcal{T}_h$, $\sym\curl(\boldsymbol{v}|_T)\in\mathbb P_{k+1}(T;\mathbb T)$, then $\boldsymbol{v}|_T\in\mathbb P_{k+2}(T;\mathbb R^3)$ and $\boldsymbol{v}\in V_{k+2}^L$. Therefore complex~\eqref{eq:divdivcomplex3dfem1} is exact.
\end{proof}

\subsubsection{Lower order finite element divdiv complexes} 
The previous divdiv complexes did not cover the case $k = 0, 1$. In this subsection, we consider $k=1$ and refer to Section \ref{sec:distributivedivdiv} for $k=0$ in the distributional sense.
For the $H(\sym\curl, \Omega;\mathbb T)$-conforming finite element,
we take the space of shape functions as 
$$
\Sigma_{2^+}(T;\mathbb T):=\mathbb P_{2}(T;\mathbb T)\oplus(\boldsymbol{x}\otimes(\boldsymbol{x}\times\mathbb H_1(T;\mathbb R^3))),
$$
whose dimension is $80+8=88$.
Since $\sym\curl(\boldsymbol{x}\otimes\boldsymbol{v})=\sym(\boldsymbol{x}\otimes\curl\boldsymbol{v})$ for $\boldsymbol{v}\in H^1(T;\mathbb R^3)$, we have
$
\sym\curl(\boldsymbol{x}\otimes (\boldsymbol{x}\times\mathbb H_1(T;\mathbb R^3)))=\sym(\boldsymbol{x}\otimes\curl \mathbb H_2(T;\mathbb R^3))
$,
which means $\dim\sym\curl(\boldsymbol{x}\otimes (\boldsymbol{x}\times\mathbb H_1(T;\mathbb R^3)))=\dim\curl \mathbb H_2(T;\mathbb R^3)=8$, hence $\sym\curl$ is injective on $\boldsymbol{x}\otimes (\boldsymbol{x}\times\mathbb H_1(T;\mathbb R^3))$.

The degrees of freedom are given by
\begin{subequations}\label{lowerHsymcurlfemdof}
\begin{align}
\boldsymbol \tau(\delta), & \quad\delta\in \Delta_0(T), \bs \tau\in \mathbb T, \label{lowerHsymcurlfemdof1}\\
(\boldsymbol{n}_i^{\intercal}\boldsymbol \tau\boldsymbol{t}, q)_e, & \quad q\in\mathbb P_{0}(e), e\in\Delta_1(T), i=1,2,\label{lowerHsymcurlfemdof2}\\
(\boldsymbol n\times \sym(\boldsymbol\tau\times\boldsymbol n)\times \boldsymbol n, \boldsymbol q)_F, &\quad\boldsymbol q\in\mathbb B_{2^+}^{tt}(F;\mathbb S), F\in\partial T,\label{lowerHsymcurlfemdof3}\\
(\boldsymbol n\cdot \boldsymbol\tau\times\boldsymbol n, \boldsymbol q)_F, & \quad\boldsymbol q\in \mathbb B^{\div_F}_{2}(F),  F\in\partial T,\label{lowerHsymcurlfemdof4}
\end{align}
\end{subequations}
where 
\begin{align*}
\mathbb B_{2^+}^{tt}(F;\mathbb S) := \{& \bs \tau\in \mathbb P_{2}(F;\mathbb S)+(\boldsymbol{x}\times \boldsymbol{n})\otimes (\boldsymbol{x}\times \boldsymbol{n})\mathbb P_1(F): \\
&\qquad \bs\tau(\texttt{v}) = 0 \textrm{  for }\texttt{v}\in\Delta_0(F), 
\bs t^{\intercal}\bs\tau\bs t|_{\partial F} = 0\}.
\end{align*}
By Section 3 in~\cite{ChenHuang2020}, $\dim\mathbb B_{2^+}^{tt}(F;\mathbb S)=8$. Recall that $\dim \mathbb B_{2}^{\div_F}(F)  = 3$.

\begin{lemma}
The DoFs~\eqref{lowerHsymcurlfemdof} are unisolvent for the space $\Sigma_{2^+}(T;\mathbb T)$.
\end{lemma}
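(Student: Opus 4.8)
The plan is to follow, essentially verbatim, the template of the unisolvence proof for DoFs~\eqref{newHsymcurlfemdof}, adapting each step to the enriched shape function space $\Sigma_{2^+}(T;\mathbb T)=\mathbb P_2(T;\mathbb T)\oplus(\boldsymbol x\otimes(\boldsymbol x\times\mathbb H_1(T;\mathbb R^3)))$. First I would record the dimension count: the vertex DoFs~\eqref{lowerHsymcurlfemdof1} contribute $4\times 8=32$, the edge DoFs~\eqref{lowerHsymcurlfemdof2} contribute $6\times 2=12$, and the two families of face DoFs~\eqref{lowerHsymcurlfemdof3}-\eqref{lowerHsymcurlfemdof4} contribute $4\times 8=32$ and $4\times 3=12$ using $\dim\mathbb B_{2^+}^{tt}(F;\mathbb S)=8$ and $\dim\mathbb B_2^{\div_F}(F)=3$, totalling $88=\dim\Sigma_{2^+}(T;\mathbb T)$. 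It then suffices to assume $\bs\tau\in\Sigma_{2^+}(T;\mathbb T)$ with all DoFs~\eqref{lowerHsymcurlfemdof} vanishing and prove $\bs\tau=0$; since there is no interior DoF here, the real content is that the boundary trace map on $\Sigma_{2^+}(T;\mathbb T)$ is injective.

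For the edge step I would show $(\boldsymbol n_i^{\intercal}\bs\tau\boldsymbol t)|_e=0$ for each $e\in\Delta_1(T)$ and $i=1,2$. The only subtlety beyond the pure polynomial case is that the enrichment is cubic, so a priori $\boldsymbol n_i^{\intercal}\bs\tau\boldsymbol t|_e$ could have degree $3$. I would check that it is in fact only quadratic: writing a point of $e$ as $\boldsymbol x=\boldsymbol x_0+s\boldsymbol t$ and $\boldsymbol v=A\boldsymbol x$, the enrichment contributes $(\boldsymbol n_i\cdot\boldsymbol x)\,((\boldsymbol x\times A\boldsymbol x)\cdot\boldsymbol t)$, and the would-be $s^2$ term drops because $(\boldsymbol t\times A\boldsymbol t)\cdot\boldsymbol t=0$; hence $\boldsymbol n_i^{\intercal}\bs\tau\boldsymbol t|_e\in\mathbb P_2(e)$. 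The two endpoint values (from the vanishing vertex DoFs, which fix the whole tensor at $\delta\in\Delta_0(T)$) together with the single $\mathbb P_0(e)$ moment in~\eqref{lowerHsymcurlfemdof2} then determine this quadratic, forcing $\boldsymbol n_i^{\intercal}\bs\tau\boldsymbol t|_e=0$.

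For the face step I would invoke the algebraic identities~\eqref{eq:20231129}, which are pointwise and therefore degree-independent. Taking the two normals of $e\subset\partial F$ to be $\boldsymbol n_{F,e}$ and $\boldsymbol n_F$, the vanishing edge traces give that $\boldsymbol n\times\sym(\bs\tau\times\boldsymbol n)\times\boldsymbol n|_F$ vanishes at the vertices of $F$ and has vanishing $\boldsymbol t^{\intercal}(\cdot)\boldsymbol t$ component on $\partial F$, while $\boldsymbol n\cdot\bs\tau\times\boldsymbol n|_F$ has vanishing $\boldsymbol n_{F,e}$-component on $\partial F$. I would then verify that the cubic part of the tangential–tangential trace produced by the enrichment lies in $(\boldsymbol x\times\boldsymbol n)\otimes(\boldsymbol x\times\boldsymbol n)\mathbb P_1(F)$, so that the two traces land precisely in $\mathbb B_{2^+}^{tt}(F;\mathbb S)$ and $\mathbb B_2^{\div_F}(F)$. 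Since DoFs~\eqref{lowerHsymcurlfemdof3}-\eqref{lowerHsymcurlfemdof4} test against all of these bubble spaces, choosing the test function equal to the trace itself yields $\boldsymbol n\times\sym(\bs\tau\times\boldsymbol n)\times\boldsymbol n|_F=0$ and $\boldsymbol n\cdot\bs\tau\times\boldsymbol n|_F=0$ on every face $F$.

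The main obstacle is the final step: concluding $\bs\tau=0$ without an interior DoF, i.e. showing that the enriched $\sym\curl$-bubble space $\{\bs\tau\in\Sigma_{2^+}(T;\mathbb T):\ \boldsymbol n\cdot\bs\tau\times\boldsymbol n|_{\partial T}=0,\ \boldsymbol n\times\sym(\bs\tau\times\boldsymbol n)\times\boldsymbol n|_{\partial T}=0\}$ is trivial. My plan is a two-stage reduction. First isolate the cubic homogeneous part of $\bs\tau$, which can come only from the enrichment $\boldsymbol x\otimes(\boldsymbol x\times A\boldsymbol x)$, and prove that the linear map sending the $8$-dimensional parameter (the class of $A$ modulo $\mathbb R\,\boldsymbol I$) to the collection of cubic tangential–tangential trace parts over the four faces is injective; this forces the enrichment to vanish. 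What then remains is $\bs\tau\in\mathbb P_2(T;\mathbb T)$ with vanishing tt- and nt-traces, i.e. $\bs\tau\in\mathbb B_2(\sym\curl,T;\mathbb T)$, whose dimension is $\tfrac13(4k^3+6k^2-10k)$ evaluated at $k=1$, namely $0$, so $\bs\tau=0$. Establishing the injectivity of this leading-order face trace on the enrichment is the one genuinely new computation, and I expect it to be the crux; it should reduce to the nondegeneracy of the triple-product pairings already exploited in the edge step, but the explicit bookkeeping over the four faces is where care is needed.
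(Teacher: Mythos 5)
Your proof is correct, and up through the face step it coincides with the paper's argument: same dimension count, same observation that $(\boldsymbol n_i^{\intercal}\bs\tau\boldsymbol t)|_e\in\mathbb P_2(e)$ (in fact the enrichment's edge trace is even linear, since $\boldsymbol n_i\cdot\boldsymbol x$ is constant along $e$), and the same conclusion that the two face traces land in $\mathbb B_{2^+}^{tt}(F;\mathbb S)$ and $\mathbb B_2^{\div_F}(F)$, so that all traces vanish and $\bs\tau\in\Sigma_{2^+}(T;\mathbb T)\cap\mathbb B_{3}(\sym\curl,T;\mathbb T)$. The genuine divergence is the concluding step. The paper never analyzes the enrichment directly: it invokes Theorem 5.12 of~\cite{Chen;Huang:2020Finite} together with the fact $\sym\curl\bs\tau\in\Sigma_{1^{++}}(T;\mathbb S)$ and the unisolvence of Lemma~\ref{lm:1++} to conclude $\sym\curl\bs\tau=0$, and then writes $\bs\tau=\dev\grad\bs q$ with $\bs q\in\mathbb P_3(T;\mathbb R^3)$, $\bs q|_{\partial T}=0$, hence $\bs\tau=0$. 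You instead kill the enrichment by a leading-term computation and finish with $\dim\mathbb B_2(\sym\curl,T;\mathbb T)=\frac{1}{3}(4+6-10)=0$. Your flagged crux does check out, and more cleanly than you anticipate: for $\bs\tau_E=\boldsymbol x\otimes(\boldsymbol x\times A\boldsymbol x)$, the cubic part of the tangential-tangential trace on a face $F$ equals $(\boldsymbol n_F^{\intercal}A\boldsymbol y)\,(\boldsymbol y\times\boldsymbol n_F)\otimes(\boldsymbol y\times\boldsymbol n_F)$ in the tangential variable $\boldsymbol y$, so its vanishing is exactly the statement that $\boldsymbol n_F$ is an eigenvector of $A^{\intercal}$; since any three of the four face normals are linearly independent and $\sum_i|F_i|\boldsymbol n_{F_i}=0$ with nonzero coefficients, having all four as eigenvectors forces all eigenvalues equal, i.e.\ $A\in\mathbb R\boldsymbol I$, which is precisely the kernel of the parametrization $A\mapsto\boldsymbol x\otimes(\boldsymbol x\times A\boldsymbol x)$. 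The trade-off: the paper's route is computation-free and simultaneously exhibits the mapping property of $\sym\curl$ from the enriched bubble space into the bubble functions of $\Sigma_{1^{++}}(T;\mathbb S)$, which is the structural fact reused for the exactness of the lower-order complexes (Theorem~\ref{thm:divdivcomplexesk1}), but it leans on an external theorem; your route is self-contained except for the bubble dimension formula from~\cite{Chen;Huang:2020Finite}, at the price of one explicit, though short, computation.
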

\begin{proof}
The number of DoFs~\eqref{lowerHsymcurlfemdof} is 
$$
4\times 8+6\times 2+4\times 8+4\times 3=88=\dim\Sigma_{2^+}(T;\mathbb T).
$$

Assume $\bs\tau\in\Sigma_{2^+}(T;\mathbb T)$ and all the DoFs~\eqref{lowerHsymcurlfemdof} vanish.
Notice that $(\boldsymbol{n}_i^{\intercal}\boldsymbol \tau\boldsymbol{t})|_e\in \mathbb P_{2}(e)$ for $e\in\Delta_1(T)$, and $(\boldsymbol n\cdot \boldsymbol\tau\times\boldsymbol n)|_F\in\mathbb P_{2}(F;\mathbb R^2)$ and $(\boldsymbol n\times \sym(\boldsymbol\tau\times\boldsymbol n)\times \boldsymbol n)|_F\in \mathbb P_{2}(F;\mathbb S)+(\boldsymbol{x}\times \boldsymbol{n})\otimes (\boldsymbol{x}\times \boldsymbol{n})\mathbb P_1(F)$ for $F\in\partial T$. Hence the vanishing DoFs~\eqref{lowerHsymcurlfemdof} imply $\bs\tau\in\Sigma_{2^+}(T;\mathbb T)\cap\mathbb B_{3}(\sym\curl, T;\mathbb T)$. By Theorem 5.12 in~\cite{Chen;Huang:2020Finite} and $\sym\curl\bs\tau\in\Sigma_{1^{++}}(T;\mathbb S)$, we get $\sym\curl\bs\tau=0$. Thus, $\bs\tau=\dev\grad\bs q$ with $\bs q\in\mathbb P_3(T;\mathbb R^3)$ satisfying $\bs q|_{\partial T}=0$. Therefore, $\bs q=0$ and $\bs\tau=0$.
\end{proof}

Define $H(\sym\curl)$-conforming finite element spaces as follows
\begin{align*}
\Sigma_{2^+}^{\sym\curl}&:=\{\boldsymbol \tau\in L^2(\Omega;\mathbb T): \boldsymbol \tau|_T\in\Sigma_{2^+}(T;\mathbb T) \textrm{ for each } T\in\mathcal T_h, \textrm{  } \\
&\qquad\qquad\qquad\qquad\;\textrm{ all the DoFs~\eqref{lowerHsymcurlfemdof} are single-valued} \},    \\
\overline{\Sigma}_{2^+}^{\sym\curl}&:=\{\boldsymbol \tau\in H(\sym\curl,\Omega;\mathbb T): \boldsymbol \tau|_T\in\Sigma_{2^+}(T;\mathbb T)\textrm{ for each } T\in\mathcal T_h\}.
\end{align*}
Clearly, $\Sigma_{2^+}^{\sym\curl}\subset H(\sym\curl,\Omega;\mathbb T)$, and
$\dim\Sigma_{2^+}^{\sym\curl}=11|\mathcal F_h|+2|\mathcal E_h|+8|\mathcal V_h|$.

Applying the argument in Theorem~\ref{thm:divdivcomplexes1} and Theorem~\ref{thm:divdivcomplexes2}, we have the following lower order finite element divdiv complexes.
\begin{theorem}\label{thm:divdivcomplexesk1}
Assume $\Omega$ is a bounded and topologically trivial Lipschitz domain in $\mathbb R^3$.
The finite element $\div\div$ complexes
\begin{equation*}
{\bf RT}\xrightarrow{\subset} V_{3}^H\xrightarrow{\dev\grad} \Sigma_{2^+}^{\sym\curl}\xrightarrow{\sym\curl} \Sigma_{1^{++}}^{\div\div} \xrightarrow{\div{\div}} V^{-1}_{1}\xrightarrow{}0,
\end{equation*}
\begin{equation*}
{\bf RT}\xrightarrow{\subset} V_{3}^L\xrightarrow{\dev\grad}\overline{\Sigma}_{2^+}^{\sym\curl}\xrightarrow{\sym\curl} \Sigma_{1^{++}}^{\div\div} \xrightarrow{\div{\div}} V^{-1}_{1}\xrightarrow{}0
\end{equation*}
are exact. 
\end{theorem}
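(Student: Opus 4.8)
The plan is to follow the blueprint of Theorems~\ref{thm:divdivcomplexes1} and~\ref{thm:divdivcomplexes2} verbatim, now with the enriched spaces $\Sigma_{2^+}^{\sym\curl}$ and $\Sigma_{1^{++}}^{\div\div}$. At the two ends the exactness is immediate: exactness at $V_3^H$ (resp.\ $V_3^L$) is just $\ker(\dev\grad)={\bf RT}\subseteq V_3^H$, since $a\boldsymbol x+\boldsymbol b\in\mathbb P_1$ has single-valued vertex gradients; and surjectivity of $\div\div:\Sigma_{1^{++}}^{\div\div}\to V_1^{-1}$ is the inf-sup condition~\eqref{eq:newdivdivk1infsup}, recalling $\div\div\,\Sigma_{1^{++}}(T;\mathbb S)=\mathbb P_1(T)$. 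That each arrow composes to zero is inherited from $\sym\curl\,\dev\grad=0$ and $\div\div\,\sym\curl=0$.

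For exactness at $\Sigma_{2^+}^{\sym\curl}$ in the first complex, I take $\bs\tau\in\Sigma_{2^+}^{\sym\curl}\cap\ker(\sym\curl)$ and write $\bs\tau=\bs\tau_2+\bs\eta$ with $\bs\tau_2\in\mathbb P_2(T;\mathbb T)$ and $\bs\eta\in\boldsymbol x\otimes(\boldsymbol x\times\mathbb H_1(T;\mathbb R^3))$ on each $T$. Since $\sym\curl\bs\eta=\sym(\boldsymbol x\otimes\curl\mathbb H_2(T;\mathbb R^3))\in\mathbb H_2(T;\mathbb S)$ is homogeneous of degree two while $\sym\curl\bs\tau_2\in\mathbb P_1(T;\mathbb S)$, matching the degree-two homogeneous parts in $\sym\curl\bs\tau=0$ forces $\sym\curl\bs\eta=0$, hence $\bs\eta=0$ by the injectivity of $\sym\curl$ on the enrichment noted above. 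Thus $\bs\tau$ is elementwise in $\mathbb P_2(T;\mathbb T)$, and the Poincar\'e argument of Theorem~\ref{thm:divdivcomplexes1} applies: there is $v\in H^1(\Omega;\mathbb R^3)$ with $\dev\grad v=\bs\tau$, necessarily piecewise in $\mathbb P_3(T;\mathbb R^3)$, and the vertex continuity of $\bs\tau$ makes $v$ of class $C^1$ at vertices, so $v\in V_3^H$.

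The main work is exactness at $\Sigma_{1^{++}}^{\div\div}$, established by a dimension count as in Theorem~\ref{thm:divdivcomplexes1}. The inclusion $\sym\curl\,\Sigma_{2^+}^{\sym\curl}\subseteq\Sigma_{1^{++}}^{\div\div}\cap\ker(\div\div)$ follows from the local containment $\sym\curl\,\Sigma_{2^+}(T;\mathbb T)\subseteq\Sigma_{1^{++}}(T;\mathbb S)$ (already used in the unisolvence of~\eqref{lowerHsymcurlfemdof}) together with the trace characterization of $H(\div\div)$. It then suffices to check
$$
\dim\bigl(\Sigma_{1^{++}}^{\div\div}\cap\ker(\div\div)\bigr)=\dim\sym\curl\,\Sigma_{2^+}^{\sym\curl}.
$$
The left side equals $\dim\Sigma_{1^{++}}^{\div\div}-\dim V_1^{-1}$ by surjectivity of $\div\div$, where $\dim\Sigma_{1^{++}}^{\div\div}$ is counted from the elementwise value $\dim\Sigma_{1^{++}}(T;\mathbb S)=36$ after merging the single-valued face DoFs~\eqref{eq:newdivdivSreduce2}--\eqref{eq:newdivdivSreduce3} and subtracting the $2|\mathring{\mathcal E}_h|$ linearly independent interior edge-jump constraints (there being $\dim\mathbb P_1(e)=2$ per interior edge). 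The right side equals $\dim\Sigma_{2^+}^{\sym\curl}-\dim V_3^H+\dim{\bf RT}$ by the exactness already proved upstream. I then substitute $\dim\Sigma_{2^+}^{\sym\curl}=11|\mathcal F_h|+2|\mathcal E_h|+8|\mathcal V_h|$, the counts for $V_3^H$ and $V_1^{-1}$, and reduce the resulting alternating sum to zero using Euler's formula $|\mathcal V_h|-|\mathcal E_h|+|\mathcal F_h|-|\mathcal T_h|=1$ together with $4|\mathcal T_h|=2|\mathcal F_h|-|\mathcal F_h^{\partial}|$ and $3|\mathcal F_h^{\partial}|=2|\mathcal E_h^{\partial}|$, exactly as in the $k\geq3$ computation. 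This bookkeeping is the only delicate step; once the two dimensions agree, the inclusion upgrades to equality.

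Finally, for the second complex I argue as in Theorem~\ref{thm:divdivcomplexes2}. The inclusions $\sym\curl\,\Sigma_{2^+}^{\sym\curl}\subseteq\sym\curl\,\overline{\Sigma}_{2^+}^{\sym\curl}\subseteq\Sigma_{1^{++}}^{\div\div}\cap\ker(\div\div)$, combined with the equality of the two ends just established, give $\sym\curl\,\overline{\Sigma}_{2^+}^{\sym\curl}=\Sigma_{1^{++}}^{\div\div}\cap\ker(\div\div)$. For exactness at $\overline{\Sigma}_{2^+}^{\sym\curl}$, any $\bs\tau\in\overline{\Sigma}_{2^+}^{\sym\curl}\cap\ker(\sym\curl)$ equals $\dev\grad v$ for some $v\in H^1(\Omega;\mathbb R^3)$, and since $\dev\grad v|_T\in\Sigma_{2^+}(T;\mathbb T)\cap\ker(\sym\curl)=\dev\grad\,\mathbb P_3(T;\mathbb R^3)$ by the same homogeneity argument as above, $v$ is a continuous piecewise cubic, i.e.\ $v\in V_3^L$. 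This yields exactness of both complexes.
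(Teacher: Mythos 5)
Your proposal is correct and takes essentially the same route as the paper, whose proof of this theorem is literally a one-line instruction to apply the arguments of Theorems~\ref{thm:divdivcomplexes1} and~\ref{thm:divdivcomplexes2}; your homogeneity argument splitting off the enrichment $\boldsymbol x\otimes(\boldsymbol x\times\mathbb H_1)$, the sandwich argument for the $\overline{\Sigma}_{2^+}^{\sym\curl}$ complex, and the dimension count are exactly the intended adaptations. I checked the bookkeeping you left implicit: with $\dim\Sigma_{1^{++}}^{\div\div}=6|\mathcal F_h|+12|\mathcal T_h|-2|\mathring{\mathcal E}_h|$, $\dim\Sigma_{2^+}^{\sym\curl}=11|\mathcal F_h|+2|\mathcal E_h|+8|\mathcal V_h|$, $\dim V_3^H=12|\mathcal V_h|+3|\mathcal F_h|$ and $\dim V_1^{-1}=4|\mathcal T_h|$, the difference of the two sides equals $2\,(6|\mathcal T_h|-3|\mathcal F_h|+|\mathcal E_h^{\partial}|)+(-4|\mathcal T_h|+4|\mathcal F_h|-4|\mathcal E_h|+4|\mathcal V_h|-4)$, which vanishes by the same two identities used in the $k\geq 3$ case, so the count indeed closes.
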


\subsection{Distributional finite element divdiv complexes}\label{sec:distributivedivdiv}
With the weak $\div\div_w$ operator, we can construct the distributional finite element divdiv complexes.
We first present finite element discretization of the distributional divdiv complex
\begin{align*}
\resizebox{1.0\hsize}{!}{$
{\bf RT}\xrightarrow{\subset} H^1(\Omega;\mathbb R^3)\xrightarrow{\dev\grad} H(\sym\curl,\Omega;\mathbb T)\xrightarrow{\sym\curl} L^2(\Omega;\mathbb S) \xrightarrow{\div{\div}} H^{-2}(\Omega)\xrightarrow{}0.
$}
\end{align*}

\begin{theorem}\label{th:distributivecomplex}
Assume $\Omega$ is a bounded and topologically trivial Lipschitz domain in $\mathbb R^3$. The following complex
\begin{equation}\label{eq:distribut2divdivcomplex3dfemkgeq3}
{\bf RT}\xrightarrow{\subset} V_{k+2}^H\xrightarrow{\dev\grad} \Sigma_{k+1}^{\sym\curl}\xrightarrow{\sym\curl} \Sigma_{k,r}^{-1} \xrightarrow{\div{\div}_w} \mathring{M}^{-1}_{r,k-1, k, k}\xrightarrow{}0, \text{ for } k\geq 1,
\end{equation}
is exact.
%
\end{theorem}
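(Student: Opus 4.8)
The plan is to read \eqref{eq:distribut2divdivcomplex3dfemkgeq3} as a cochain complex and to establish exactness at each node, borrowing the two left nodes from the conforming complex of Theorem~\ref{thm:divdivcomplexes1} and the right end from the weak divdiv stability of Theorem~\ref{thm:weakdivdiv}. First I would check that the sequence is a complex. The identities ${\bf RT}\subset\ker(\dev\grad)$ and $\sym\curl\,\dev\grad=0$ are classical. For the last composition, any $\bs\tau\in\Sigma_{k+1}^{\sym\curl}$ is globally $H(\sym\curl)$-conforming, so $\sym\curl\bs\tau\in H(\div\div,\Omega;\mathbb S)$ with $\div\div(\sym\curl\bs\tau)=0$; being $H(\div\div)$-conforming, all interior jumps feeding $\div\div_w$ vanish by Lemma~\ref{lm:divdivconforming}, whence $\div\div_w(\sym\curl\bs\tau)=0$.

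Two of the exactness statements are then inherited verbatim. Exactness at $V_{k+2}^H$ and at $\Sigma_{k+1}^{\sym\curl}$, namely $\ker(\dev\grad)\cap V_{k+2}^H={\bf RT}$ and $\ker(\sym\curl)\cap\Sigma_{k+1}^{\sym\curl}=\dev\grad V_{k+2}^H$, is proved exactly as in Theorem~\ref{thm:divdivcomplexes1}: a $\sym\curl$-free field is locally $\dev\grad$ of a piecewise $\mathbb P_{k+2}$ vector field which is $C^1$ at the vertices because of the Hermite vertex degrees of freedom, hence belongs to $V_{k+2}^H$; the argument is uniform in $k\geq1$. Surjectivity of $\div\div_w$ onto $\mathring{M}^{-1}_{r,k-1,k,k}$ is the inf-sup condition \eqref{eq:divdivwinfsup} of Theorem~\ref{thm:weakdivdiv}.

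The remaining and central task is exactness at $\Sigma_{k,r}^{-1}$, i.e. $\ker(\div\div_w)\cap\Sigma_{k,r}^{-1}=\sym\curl\Sigma_{k+1}^{\sym\curl}$, where $\supseteq$ is the complex property. I would first pin down the kernel: $\div\div_w\bs\sigma=0$ forces $\div\div_T\bs\sigma=0$ on each element and the vanishing of $[\tr_1]_F$, $[\tr_2]_F$ on interior faces and $[\tr_e]|_e$ on interior edges, which by Lemma~\ref{lm:divdivconforming} is precisely membership in $H(\div\div,\Omega;\mathbb S)$ together with $\div\div\bs\sigma=0$. When $r=k-1$ the enrichment drops out, since $\div\div(\boldsymbol x\boldsymbol x^{\intercal}\mathbb H_{k-1})=\mathbb H_{k-1}$ has strictly higher degree than $\div\div\mathbb P_k(T;\mathbb S)=\mathbb P_{k-2}(T)$; thus in every case $\ker(\div\div_w)\cap\Sigma_{k,r}^{-1}$ equals the $H(\div\div)$-conforming, $\div\div$-free, piecewise $\mathbb P_k$ space. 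For $k\geq2$ the conforming complexes of Theorem~\ref{thm:divdivcomplexes1} apply with the same factor $\Sigma_{k+1}^{\sym\curl}$: their middle cohomology gives $\sym\curl\Sigma_{k+1}^{\sym\curl}=\Sigma_{k,{\rm new}}^{\div\div}\cap\ker(\div\div)$ for $k\geq3$ and $=\Sigma_{k^+}^{\div\div}\cap\ker(\div\div)$ for $k\geq2$, and in both the enrichment is annihilated by $\div\div=0$, so the right-hand side is exactly the space computed above. This settles $k\geq2$ without any counting.

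The main obstacle is the lowest order $k=1$. Here the conforming complex of Theorem~\ref{thm:divdivcomplexesk1} uses the enriched space $\Sigma_{2^+}^{\sym\curl}$, whereas \eqref{eq:distribut2divdivcomplex3dfemkgeq3} uses the unenriched $\Sigma_2^{\sym\curl}\subsetneq\Sigma_{2^+}^{\sym\curl}$, and $\sym\curl\Sigma_2^{\sym\curl}$ is a proper subspace of $\sym\curl\Sigma_{2^+}^{\sym\curl}$; so the slick identification does not transfer. I would instead close $k=1$ by a dimension count: surjectivity gives $\dim\big(\ker(\div\div_w)\cap\Sigma_{1,r}^{-1}\big)=\dim\Sigma_{1,r}^{-1}-\dim\mathring{M}^{-1}_{r,0,1,1}$, while exactness at the earlier nodes gives $\dim\sym\curl\Sigma_2^{\sym\curl}=\dim\Sigma_2^{\sym\curl}-\dim V_3^H+\dim{\bf RT}$, so it suffices to verify that the Euler characteristic
\[
\dim{\bf RT}-\dim V_{k+2}^H+\dim\Sigma_{k+1}^{\sym\curl}-\dim\Sigma_{k,r}^{-1}+\dim\mathring{M}^{-1}_{r,k-1,k,k}
\]
vanishes at $k=1$. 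This I would evaluate from the local dimension formulas already recorded (those for $\Sigma_{k+1}^{\sym\curl}$, $V_{k+2}^H$, and the blocks of $\mathring{M}^{-1}_{r,k-1,k,k}$) together with Euler's relation $|\mathcal V_h|-|\mathcal E_h|+|\mathcal F_h|-|\mathcal T_h|=1$ and the boundary identities $4|\mathcal T_h|=2|\mathcal F_h|-|\mathcal F_h^{\partial}|$ and $3|\mathcal F_h^{\partial}|=2|\mathcal E_h^{\partial}|$, exactly in the manner of the closing computation of Theorem~\ref{thm:divdivcomplexes1}. In fact the same Euler-characteristic identity holds for all $k\geq1$, because the number of continuity constraints relaxed in passing from the conforming space to $\Sigma_{k,r}^{-1}$ matches the dimension of the face- and edge-multiplier blocks of $\mathring{M}^{-1}_{r,k-1,k,k}$, so this count also provides a uniform alternative to the case distinction above.
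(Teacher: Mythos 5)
Your proposal is correct, and its outer skeleton --- the complex property, exactness at $V_{k+2}^H$ and at $\Sigma_{k+1}^{\sym\curl}$ borrowed from Theorem~\ref{thm:divdivcomplexes1}, surjectivity of $\div\div_w$ from the inf-sup condition~\eqref{eq:divdivwinfsup} --- is exactly the paper's. Where you diverge is the treatment of exactness at $\Sigma_{k,r}^{-1}$. The paper disposes of it with a single dimension count, uniform in $k$: $\dim\big(\ker(\div\div_w)\cap\Sigma_{k}^{-1}\big)=\dim\Sigma_{k}^{-1}-\dim\mathring{M}^{-1}_{k-2,k-1,k,k}=\dim\Sigma_{k,{\rm new}}^{\div\div}-\dim V^{-1}_{k-2}$, where the first equality is rank--nullity via surjectivity and the second is precisely the observation in your closing remark (the relaxed continuity constraints match the face and edge multiplier blocks of $\mathring{M}^{-1}$); the last quantity was identified with $\dim\sym\curl\Sigma_{k+1}^{\sym\curl}$ inside the proof of Theorem~\ref{thm:divdivcomplexes1}. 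Your primary route is different and, for $k\geq 2$, cleaner: you characterize $\ker(\div\div_w)$ exactly via Lemma~\ref{lm:divdivconforming} (with the Raviart--Thomas enrichment eliminated by the homogeneity/degree argument), then quote the middle cohomology of the two conforming complexes verbatim, with no counting at all; only $k=1$ requires a count, and the Euler-characteristic identity you propose there does hold --- it collapses, using $4|\mathcal T_h|=2|\mathcal F_h|-|\mathcal F_h^{\partial}|$, $3|\mathcal F_h^{\partial}|=2|\mathcal E_h^{\partial}|$ and Euler's formula, to $4(|\mathcal V_h|-|\mathcal E_h|+|\mathcal F_h|-|\mathcal T_h|)-4=0$. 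What your route buys is rigor at the low end: the paper's one-line count implicitly needs $\dim\Sigma_{k,{\rm new}}^{\div\div}-\dim V^{-1}_{k-2}=\dim\sym\curl\Sigma_{k+1}^{\sym\curl}$ for all $k\geq1$, yet that identity (and the space $\Sigma_{k,{\rm new}}^{\div\div}$ itself, via DoFs) is only established for $k\geq 3$, and at $k=1$ nothing can be quoted from the conforming side since Theorem~\ref{thm:divdivcomplexesk1} uses the enriched $\Sigma_{2^+}^{\sym\curl}$ rather than $\Sigma_{2}^{\sym\curl}$ --- a subtlety you correctly isolate and resolve. What the paper's route buys is brevity and uniformity in $k$ and $r$; your final remark about constraints matching multipliers essentially reconstructs that argument, so the two proofs meet in the end.
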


%
\begin{proof}
The proof is similar to that for Theorem~\ref{thm:divdivcomplexes1}. The only difference is 
%
to verify $\dim \sym \curl \Sigma_{k+1}^{\sym\curl}  = \dim \ker(\div\div_w)\cap  \Sigma_{k}^{-1}$ by dimension count:
$$
\dim \ker(\div\div_w)\cap  \Sigma_{k}^{-1} = \dim  \Sigma_{k}^{-1} - \dim \mathring{M}^{-1}_{k-2,k-1, k, k} = \dim  \Sigma_{k,\rm new}^{\div\div} - \dim V_{k-2}^{-1}.
$$ 
%
\end{proof}

By dimension count and the structure of the enrichment, we have two more complexes for $k = 0, 1$. 

\begin{proposition}
For $k = 1$, the following complex
\begin{equation*}
{\bf RT}\xrightarrow{\subset} V_{3}^H\xrightarrow{\dev\grad} \Sigma_{2^+}^{\sym\curl}\xrightarrow{\sym\curl} \Sigma_{1^{++}}^{-1} \xrightarrow{\div{\div}_w} \mathring{M}^{-1}_{1, 1, 1, 1}\xrightarrow{}0
\end{equation*}
is also exact.
For $k = 0$, the following complex
\begin{equation*}
{\bf RT}\xrightarrow{\subset} V_{2}^L\xrightarrow{\dev\grad} \overline{\Sigma}_{1}^{\sym\curl}\xrightarrow{\sym\curl} \Sigma_{0}^{-1} \xrightarrow{\div{\div}_w} \mathring{M}^{-1}_{\cdot, \cdot, 0, 0}\xrightarrow{}0
\end{equation*}
is exact.
\end{proposition}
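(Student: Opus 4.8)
The plan is to mirror the exactness argument of Theorem~\ref{th:distributivecomplex}, splitting each complex into (i) the complex property, (ii) exactness at the two left-most spots, (iii) surjectivity of $\div\div_w$ onto the multiplier space, and (iv) exactness at the broken tensor space $\Sigma^{-1}$. Items (i) and (iii) are essentially already in place. The complex property holds because $\sym\curl$ sends an $H(\sym\curl,\Omega;\mathbb T)$-conforming field into $H(\div\div,\Omega;\mathbb S)\cap\ker(\div\div)$ (continuous divdiv complex), on which $\div\div_w$ collapses to the genuine, vanishing $\div\div$; together with the pointwise identities $\sym\curl\,\dev\grad=0$ and $\dev\grad|_{\bf RT}=0$. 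Surjectivity of $\div\div_w$ is not new either: for $k=1$ it is the weak divdiv stability of the pair $\Sigma_{1^{++}}^{-1}-\mathring{M}^{-1}_{1,1,1,1}$ from Step~3 of Theorem~\ref{thm:weakdivdiv}, and for $k=0$ it is the Morley--Wang--Xu bridge argument in Step~4 of Theorem~\ref{thm:weakdivdiv}.

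For $k=1$ I would exploit that the left half
\[
{\bf RT}\xrightarrow{\subset} V_{3}^H\xrightarrow{\dev\grad} \Sigma_{2^+}^{\sym\curl}\xrightarrow{\sym\curl}
\]
is shared verbatim with the exact conforming complex of Theorem~\ref{thm:divdivcomplexesk1}. Hence exactness at $V_3^H$ and at $\Sigma_{2^+}^{\sym\curl}$ is already available, as is the value $\dim\sym\curl\,\Sigma_{2^+}^{\sym\curl}=\dim(\Sigma_{1^{++}}^{\div\div}\cap\ker\div\div)=\dim\Sigma_{1^{++}}^{\div\div}-\dim V_1^{-1}$. It remains to prove exactness at $\Sigma_{1^{++}}^{-1}$. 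Since $\sym\curl\,\Sigma_{2^+}^{\sym\curl}\subseteq\ker(\div\div_w)\cap\Sigma_{1^{++}}^{-1}$ and surjectivity gives $\dim(\ker(\div\div_w)\cap\Sigma_{1^{++}}^{-1})=\dim\Sigma_{1^{++}}^{-1}-\dim\mathring{M}^{-1}_{1,1,1,1}$, I only need the bookkeeping identity
\[
\dim\Sigma_{1^{++}}^{-1}-\dim\mathring{M}^{-1}_{1,1,1,1}=\dim\Sigma_{1^{++}}^{\div\div}-\dim V_1^{-1}.
\]
This is the $\Sigma_{1^{++}}$ analogue of the identity used in Theorem~\ref{th:distributivecomplex}: passing from the broken $\Sigma_{1^{++}}^{-1}$ to the conforming $\Sigma_{1^{++}}^{\div\div}$ relaxes exactly the single-valuedness of $\bs n^{\intercal}\bs\tau\bs n$ and of $\tr_2(\bs\tau)$ on interior faces and the patch edge-jump on interior edges, whose counts are precisely $\dim V_1^{-1}(\mathring{\mathcal F}_h)$, $\dim V_1^{-1}(\mathring{\mathcal F}_h)$ and $\dim V_1^{-1}(\mathring{\mathcal E}_h)$, i.e.\ $\dim\mathring{M}^{-1}_{1,1,1,1}-\dim V_1^{-1}$. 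Matching dimensions upgrades the inclusion to equality.

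For $k=0$ the same scheme handles the left half through the Lagrange-based argument of Theorem~\ref{thm:divdivcomplexes2}: $\ker(\dev\grad)\cap V_2^L={\bf RT}$, and $\ker(\sym\curl)\cap\overline{\Sigma}_1^{\sym\curl}=\dev\grad V_2^L$ since a piecewise $\mathbb P_1(T;\mathbb T)$ curl-free field is $\dev\grad$ of a piecewise $\mathbb P_2(T;\mathbb R^3)$ field that is automatically $H^1$-conforming, hence lies in $V_2^L$. The crux is exactness at $\Sigma_0^{-1}$. I would avoid counting $\dim\overline{\Sigma}_1^{\sym\curl}$ (which has no local DoFs) and argue directly: for a piecewise-constant symmetric $\bs\sigma$ one has $\div\div_T\bs\sigma=0$ and $\tr_2(\bs\sigma)=0$ automatically, so by Lemma~\ref{lm:divdivconforming} the condition $\div\div_w\bs\sigma=0$ is \emph{equivalent} to $\bs\sigma\in H(\div\div,\Omega;\mathbb S)\cap\ker(\div\div)$. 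Such a $\bs\sigma$ is $\sym\curl$ of some $H(\sym\curl)$ field; solving $\sym\curl\bs v_T=\bs\sigma|_T$ in $\mathbb P_1(T;\mathbb T)$ elementwise and gluing the local potentials across faces—correcting the discrepancies, which lie in $\ker(\sym\curl)$, via the already-established exactness at $\overline{\Sigma}_1^{\sym\curl}$—yields a potential in $\overline{\Sigma}_1^{\sym\curl}$. This gives $\ker(\div\div_w)\cap\Sigma_0^{-1}\subseteq\sym\curl\,\overline{\Sigma}_1^{\sym\curl}$, and the reverse inclusion is the complex property.

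The main obstacle is precisely this $k=0$ spot at $\Sigma_0^{-1}$: the absence of local degrees of freedom for $\overline{\Sigma}_1^{\sym\curl}$ rules out the clean dimension count that works for $k=1$, so one must instead carry out the local-solve-and-glue construction of a globally $H(\sym\curl)$-conforming, piecewise-linear potential, using the lower exactness of the complex to cancel the face discrepancies. For $k=1$ the only genuinely new point is the dimensional bookkeeping identity, which is routine once the relaxed continuity constraints are matched term-by-term against the face and edge multiplier factors.
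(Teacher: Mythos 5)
Your $k=1$ argument is correct and is essentially the paper's own route: exactness and the dimension value $\dim\sym\curl\Sigma_{2^+}^{\sym\curl}=\dim\Sigma_{1^{++}}^{\div\div}-\dim V_1^{-1}$ come from Theorem~\ref{thm:divdivcomplexesk1}, surjectivity of $\div\div_w$ comes from Theorem~\ref{thm:weakdivdiv}, and the remaining bookkeeping identity closes the count. (In fact you can bypass the constraint-counting entirely: for $\bs\tau\in\Sigma_{1^{++}}^{-1}$ all traces entering $\div\div_w$ lie exactly in the multiplier polynomial spaces, so $\div\div_w\bs\tau=0$ is equivalent, via Lemma~\ref{lm:divdivconforming}, to $\bs\tau\in\Sigma_{1^{++}}^{\div\div}$ with $\div\div\bs\tau=0$; the two surjectivities then give the identity at once. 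The linear independence of the relaxed constraints, which your term-by-term matching needs, also follows from surjectivity of $\div\div_w$.)

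The $k=0$ case has a genuine gap at the glueing step, which is precisely the crux you flag. Your reduction is fine up to the point where $\bs\sigma\in\ker(\div\div_w)\cap\Sigma_0^{-1}$ is shown to lie in $H(\div\div,\Omega;\mathbb S)\cap\ker(\div\div)$ and local potentials $\bs v_T\in\mathbb P_1(T;\mathbb T)$ with $\sym\curl\bs v_T=\bs\sigma|_T$ are produced. But the correction step is unsubstantiated and, as invoked, circular. The discrepancy between the global potential $\bs v\in H(\sym\curl,\Omega;\mathbb T)$ and the broken field $\tilde{\bs v}=(\bs v_T)_T$ is $\bs v|_T-\bs v_T=\dev\grad\bs z_T$ with $\bs z_T\in H^1(T;\mathbb R^3)$ \emph{non-polynomial}; adding it back restores conformity but destroys piecewise linearity. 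To stay piecewise linear you must instead find element-wise corrections $\bs c_T\in\ker(\sym\curl)\cap\mathbb P_1(T;\mathbb T)=\dev\grad\,\mathbb P_2(T;\mathbb R^3)$ that cancel \emph{all} interior-face trace discrepancies of $\tilde{\bs v}$ simultaneously. That is a global, \v{C}ech-type statement which is essentially equivalent to the exactness at $\Sigma_0^{-1}$ you are trying to prove, and the tool you cite for it --- ``exactness at $\overline{\Sigma}_1^{\sym\curl}$'', i.e. $\ker(\sym\curl)\cap\overline{\Sigma}_1^{\sym\curl}=\dev\grad V_2^L$ --- cannot deliver it: that statement concerns globally conforming fields already in the kernel, and says nothing about whether a nonconforming collection of local potentials can be repaired by element-wise kernel corrections. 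Note also why this spot is genuinely delicate: the sandwich trick used in Theorem~\ref{thm:divdivcomplexes2} fails here because the only available local-DoF subspace of $\overline{\Sigma}_1^{\sym\curl}$ (piecewise linears continuous at vertices) has dimension of order $8|\mathcal V_h|$, far below $\dim\bigl(\ker(\div\div_w)\cap\Sigma_0^{-1}\bigr)=6|\mathcal T_h|-|\mathring{\mathcal F}_h|-|\mathring{\mathcal E}_h|$, so its image cannot fill the kernel. Your plan therefore needs an actual construction (or cohomological argument) for the simultaneous cancellation of face discrepancies; as written, the $k=0$ exactness at $\Sigma_0^{-1}$ is asserted, not proved.
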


We can define $\overline{\Sigma}_{k,r}^{\div\div}$, for $k=0,1,2, r \leq 0$,
$$
\overline{\Sigma}_{k,r}^{\div\div} = \{ \bs \tau \in H(\div\div, \Omega; \mathbb S): \bs \tau\mid_T\in \Sigma_{k,r}(T;\mathbb S)\}.
$$
Although local DoFs cannot be given for space $\overline{\Sigma}_{k}^{\div\div}, k=0,1,2$, a discretization of the biharmonic equation can be obtained by the hybridization.  
For example, $\overline{\Sigma}_{0}^{\div\div} = \ker(\div\div_w)\cap \Sigma_{0}^{-1}$ is defined by applying the following constraints to $\Sigma_0^{-1}$
\begin{equation*}
[\tr_e(\bs \tau)]|_e = 0 \;\; \textrm{ for } e\in \mathring{\mathcal E}_h,\quad [\bs n^{\intercal}\bs \tau\bs n]|_F = 0\;\; \textrm{ for } F\in \mathring{\mathcal F}_h.
\end{equation*}
By counting the dimension of $\overline{\Sigma}_0^{\div\div}$, these constraints are linearly independent. 

\begin{corollary}
Both conforming finite element $\div\div$ complexes~\eqref{eq:divdivcomplex3dfem1} and~\eqref{eq:divdivcomplex3dfem2} are exact for all $k = 0,1,2$ using space $\overline{\Sigma}_{k,r}^{\div\div}$ to replace $\Sigma_{k,\rm new}^{\div\div}$ or $\Sigma_{k^+}^{\div\div}$.
\end{corollary}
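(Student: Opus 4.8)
The plan is to avoid dimension counting altogether---it is unavailable here because the shape spaces $\Sigma_{k,r}(T;\mathbb S)$ for $k\le 2$ carry no local unisolvent degrees of freedom---and instead to realize $\overline{\Sigma}_{k,r}^{\div\div}$ as the subspace of the broken space $\Sigma_{k,r}^{-1}$ cut out by the jump components of the already-studied weak operator $\div\div_w$. The first step is to record, via Lemma~\ref{lm:divdivconforming}, the identification
$$
\overline{\Sigma}_{k,r}^{\div\div} = \{\boldsymbol\tau\in\Sigma_{k,r}^{-1}: [\tr_2(\boldsymbol\tau)]_F = 0,\ [\boldsymbol n^{\intercal}\boldsymbol\tau\boldsymbol n]_F = 0\ \forall\, F\in\mathring{\mathcal F}_h,\ [\tr_e(\boldsymbol\tau)]|_e = 0\ \forall\, e\in\mathring{\mathcal E}_h\}.
$$
Here one checks that for $\boldsymbol\tau|_T\in\Sigma_{k,r}(T;\mathbb S)$ one has $\tr_2(\boldsymbol\tau)|_F\in\mathbb P_{k-1}(F)$, $(\boldsymbol n^{\intercal}\boldsymbol\tau\boldsymbol n)|_F\in\mathbb P_{k}(F)$ and $\tr_e(\boldsymbol\tau)|_e\in\mathbb P_{k}(e)$, so that the three interior jumps lie \emph{exactly} in the multiplier factors $V_{k-1}^{-1}(\mathring{\mathcal F}_h)$, $V_{k}^{-1}(\mathring{\mathcal F}_h)$, $V_{k}^{-1}(\mathring{\mathcal E}_h)$. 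Consequently $\overline{\Sigma}_{k,r}^{\div\div}$ is precisely the set of $\boldsymbol\tau\in\Sigma_{k,r}^{-1}$ whose last three components of $\div\div_w\boldsymbol\tau$ vanish, and on it $\div\div_w$ reduces to the pointwise $\div\div$ with values in $V_r^{-1}$.

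With this bridge in place I would transfer exactness from the distributional complexes of Theorem~\ref{th:distributivecomplex} (covering $k=1,2$ and both $r=k-2$ and $r=k-1$, always with the Hermite-based $\Sigma_{k+1}^{\sym\curl}$) and from the subsequent Proposition (covering $k=0$, already with the Lagrange-based $\overline{\Sigma}_{1}^{\sym\curl}$). Surjectivity of $\div\div:\overline{\Sigma}_{k,r}^{\div\div}\to V_r^{-1}$ follows by lifting: given $p\in V_r^{-1}$, surjectivity of $\div\div_w$ yields $\boldsymbol\tau\in\Sigma_{k,r}^{-1}$ with $\div\div_w\boldsymbol\tau=(p,0,0,0)$, and the vanishing of the last three entries places $\boldsymbol\tau\in\overline{\Sigma}_{k,r}^{\div\div}$ with $\div\div\boldsymbol\tau=p$. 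For exactness at $\overline{\Sigma}_{k,r}^{\div\div}$, the identification gives $\ker(\div\div)\cap\overline{\Sigma}_{k,r}^{\div\div}=\ker(\div\div_w)\cap\Sigma_{k,r}^{-1}$, which the distributional complex equates to $\sym\curl\,\Sigma_{k+1}^{\sym\curl}$. A squeezing argument identical in spirit to the proof of Theorem~\ref{thm:divdivcomplexes2} then upgrades this to the Lagrange space: since $\Sigma_{k+1}^{\sym\curl}\subseteq\overline{\Sigma}_{k+1}^{\sym\curl}$ and $\sym\curl$ carries $\overline{\Sigma}_{k+1}^{\sym\curl}$ into $\ker(\div\div)\cap\overline{\Sigma}_{k,r}^{\div\div}$ (because $\sym\curl\,\mathbb P_{k+1}\subseteq\mathbb P_k\subseteq\Sigma_{k,r}$ and $\sym\curl$ maps $H(\sym\curl)$ into $H(\div\div)$), the chain
$$
\sym\curl\,\Sigma_{k+1}^{\sym\curl}\ \subseteq\ \sym\curl\,\overline{\Sigma}_{k+1}^{\sym\curl}\ \subseteq\ \ker(\div\div)\cap\overline{\Sigma}_{k,r}^{\div\div}\ =\ \sym\curl\,\Sigma_{k+1}^{\sym\curl}
$$
collapses to equalities, giving exactness at $\overline{\Sigma}_{k,r}^{\div\div}$.

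The first half of each complex---exactness at $V_{k+2}^L$, i.e.\ $\ker(\dev\grad)\cap V_{k+2}^L={\bf RT}$, and at $\overline{\Sigma}_{k+1}^{\sym\curl}$, i.e.\ $\ker(\sym\curl)\cap\overline{\Sigma}_{k+1}^{\sym\curl}=\dev\grad\,V_{k+2}^L$---is independent of the $\div\div$ space and carries over verbatim from the proof of Theorem~\ref{thm:divdivcomplexes2}, using only that $\overline{\Sigma}_{k+1}^{\sym\curl}$ and $V_{k+2}^L$ are globally conforming piecewise-polynomial spaces and the polynomial-degree lifting $\dev\grad(\boldsymbol v|_T)\in\mathbb P_{k+1}\Rightarrow \boldsymbol v|_T\in\mathbb P_{k+2}$. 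Assembling the three positions yields exactness of both complexes for $k=0,1,2$.

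The step I expect to demand the most care is the degree bookkeeping underlying the identification in the first paragraph, particularly for the enriched shape functions $\Sigma_{k^+}(T;\mathbb S)=\mathbb P_k(T;\mathbb S)\oplus\boldsymbol x\boldsymbol x^{\intercal}\mathbb H_{k-1}(T)$. One must verify that the enrichment does not raise the face and edge traces beyond the multiplier degrees; this works because $\boldsymbol x\cdot\boldsymbol n_F$ is constant on $F$ and Euler's identity collapses $\boldsymbol n^{\intercal}\div(\boldsymbol x\boldsymbol x^{\intercal}q)$ to a multiple of $(\boldsymbol x\cdot\boldsymbol n_F)\,q$, so that $\tr_2$ stays in $\mathbb P_{k-1}(F)$ while $\boldsymbol n^{\intercal}\boldsymbol\tau\boldsymbol n$ and $\tr_e$ remain in $\mathbb P_k(F)$ and $\mathbb P_k(e)$. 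Once this equivalence of ``$H(\div\div)$-conforming'' with ``the last three components of $\div\div_w$ vanish'' is secured, the exactness is inherited from the distributional complexes without any new dimension count.
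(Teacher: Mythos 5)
Your proposal is correct and takes essentially the same route the paper intends for this corollary: identify $\overline{\Sigma}_{k,r}^{\div\div}$ with the subspace of $\Sigma_{k,r}^{-1}$ on which the jump components of $\div\div_w$ vanish (via Lemma~\ref{lm:divdivconforming}), then transfer exactness from the distributional complexes of Theorem~\ref{th:distributivecomplex} and the subsequent proposition, using the squeezing argument of Theorem~\ref{thm:divdivcomplexes2} to pass to the Lagrange-based first half. Your additional verifications (the trace-degree bookkeeping for the enriched shape functions and the explicit lifting of $(p,0,0,0)$ for surjectivity) simply make explicit details the paper leaves implicit.
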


\begin{remark}\label{rm:moredivdivcomplex}\rm 
 The first half of complexes~\eqref{eq:distribut2divdivcomplex3dfemkgeq3} can be replaced by 
 $$
{\bf RT}\xrightarrow{\subset} V_{k+2}^L\xrightarrow{\dev\grad} \overline{\Sigma}_{k+1}^{\sym\curl}\xrightarrow{\sym\curl} \cdots \quad \text{ for } k\geq 0.
$$
\end{remark}

\begin{remark}\rm
Recall that we can identity non-conforming VEM space $Q_M:  \mathring{V}_{k+2}^{\rm VEM} \to \mathring{M}^{-1}_{r,k-1, k, k}$ through $Q_M$. Then we can rewrite the second half of complexes~\eqref{eq:distribut2divdivcomplex3dfemkgeq3} as 
 $$
\cdots \xrightarrow{\sym\curl} \Sigma_{k,r}^{-1} \xrightarrow{Q_M^{-1}\div{\div}_w} \mathring{V}_{k+2}^{\rm VEM}\xrightarrow{}0  \quad \text{ for } k\geq 0.
$$
\end{remark}

When some partial continuity is imposed on $\Sigma_k^{-1}$, we can simplify the last space. For example, consider the normal-normal continuous element $\Sigma_{k,r}^{\rm nn}$ by asking DoFs on $\bs n^{\intercal}\bs \tau \bs n$ are single valued, then there is no need of Lagrange multiplier $u_n$. 
The corresponding divdiv complexes are still exact as we only reduce the range space of $\div\div_w$; see the $\,\widetilde{}\,$ operation introduced in \cite{ChenHuang2022}.
As a result of Theorem~\ref{th:distributivecomplex}, we will get finite element discretizations of the distributional divdiv complex
\begin{align*}
\resizebox{1.0\hsize}{!}{$
{\bf RT}\xrightarrow{\subset} H^1(\Omega;\mathbb R^3)\xrightarrow{\dev\grad} H(\sym\curl,\Omega;\mathbb T)\xrightarrow{\sym\curl} H^{-1}(\div\div, \Omega;\mathbb S) \xrightarrow{\div{\div}} H^{-1}(\Omega)\xrightarrow{}0.
$}
\end{align*}

\begin{theorem}\label{th:distributive1complex}
Assume $\Omega$ is a bounded and topologically trivial Lipschitz domain in $\mathbb R^3$. The following complexes
\begin{equation*}
{\bf RT}\xrightarrow{\subset} V_{k+2}^H\xrightarrow{\dev\grad} \Sigma_{k+1}^{\sym\curl}\xrightarrow{\sym\curl} \Sigma_{k,r}^{\rm nn} \xrightarrow{\div{\div}_w} \mathring{M}^{-1}_{r,k-1, \cdot, k}\xrightarrow{}0, \text{ for } k\geq 1, 
\end{equation*}
\begin{equation*}
{\bf RT}\xrightarrow{\subset} V_{k+2}^L\xrightarrow{\dev\grad} \overline{\Sigma}_{k+1}^{\sym\curl}\xrightarrow{\sym\curl} \Sigma_{k,r}^{\rm nn} \xrightarrow{\div{\div}_w} \mathring{M}^{-1}_{r,k-1, \cdot, k}\xrightarrow{}0, \text{ for } k\geq 0, 
\end{equation*}
are exact.
%
%
%
\end{theorem}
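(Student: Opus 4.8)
The plan is to derive both complexes from the distributional complexes already established—namely Theorem~\ref{th:distributivecomplex} together with its $k=0,1$ variants and the Lagrange-start replacement of Remark~\ref{rm:moredivdivcomplex}—by the reduction that imposes normal-normal continuity on the stress space while simultaneously discarding the multiplier $u_n$. This is exactly the $\,\widetilde{}\,$ operation of~\cite{ChenHuang2022}: since $u_n$ is paired with the jump $h_F^{-3}[\bs n^{\intercal}\bs\sigma\bs n]|_F$ in $(\div\div)_w$, requiring this jump to vanish (i.e. restricting the source to $\Sigma_{k,r}^{\rm nn}$) precisely deletes the third slot of the range, replacing $\mathring M^{-1}_{r,k-1,k,k}$ by $\mathring M^{-1}_{r,k-1,\cdot,k}$.

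First I would record two structural facts. (i) For $\bs\sigma\in\Sigma_{k+1}^{\sym\curl}$ (or $\overline\Sigma_{k+1}^{\sym\curl}$), the tensor $\sym\curl\bs\sigma$ is globally $H(\div\div)$-conforming and $\div\div$-free; by Lemma~\ref{lm:divdivconforming}(1) its normal-normal trace satisfies $[\bs n^{\intercal}(\sym\curl\bs\sigma)\bs n]_F=0$ on every interior face, so $\sym\curl\Sigma_{k+1}^{\sym\curl}\subseteq\Sigma_{k,r}^{\rm nn}$ and $\div\div_w\circ\sym\curl=0$. (ii) For $\bs\sigma\in\Sigma_{k,r}^{\rm nn}$ the DoFs of $\bs n^{\intercal}\bs\sigma\bs n$ are single-valued, so the third component of $(\div\div)_w\bs\sigma$ is identically zero and $(\div\div)_w$ indeed maps $\Sigma_{k,r}^{\rm nn}$ into $\mathring M^{-1}_{r,k-1,\cdot,k}$. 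With these in hand, exactness at $V_{k+2}^{H}$ (resp. $V_{k+2}^{L}$) and at $\Sigma_{k+1}^{\sym\curl}$ (resp. $\overline\Sigma_{k+1}^{\sym\curl}$) is inherited verbatim from the parent complex, because the first half of the sequence is unchanged and $\ker(\sym\curl)$ is unaffected by shrinking its target.

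For exactness at $\Sigma_{k,r}^{\rm nn}$ I would use fact (i): the parent identity $\ker(\div\div_w)\cap\Sigma_{k,r}^{-1}=\sym\curl\Sigma_{k+1}^{\sym\curl}$ already lies inside $\Sigma_{k,r}^{\rm nn}$, so that
\[
\ker(\div\div_w)\cap\Sigma_{k,r}^{\rm nn}=\ker(\div\div_w)\cap\Sigma_{k,r}^{-1}=\sym\curl\Sigma_{k+1}^{\sym\curl},
\]
with no new dimension count required. The crux is then surjectivity of $\div\div_w\colon\Sigma_{k,r}^{\rm nn}\to\mathring M^{-1}_{r,k-1,\cdot,k}$, which is where the $\,\widetilde{}\,$ mechanism does its work. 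Given $v=(v_0,v_b,v_e)$, I would lift it to $\hat v=(v_0,v_b,0,v_e)\in\mathring M^{-1}_{r,k-1,k,k}$ by setting $v_n=0$ and invoke surjectivity of the parent $\div\div_w$ to produce $\bs\sigma\in\Sigma_{k,r}^{-1}$ with $(\div\div)_w\bs\sigma=\hat v$. The vanishing third slot forces $[\bs n^{\intercal}\bs\sigma\bs n]_F=0$ on all interior faces; since this jump is a full polynomial in $\mathbb P_k(F)=V_k^{-1}(F)$, it is equivalent to single-valuedness of the normal-normal DoFs, so automatically $\bs\sigma\in\Sigma_{k,r}^{\rm nn}$ and its weak divdiv reproduces $v$.

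The main obstacle is precisely this last implication: that the preimage furnished by the \emph{larger} complex already satisfies the extra normal-normal continuity, rather than merely projecting onto it. It hinges on the range component being the untruncated polynomial jump $[\bs n^{\intercal}\bs\sigma\bs n]_F\in V_k^{-1}(F)$, so that its vanishing is genuinely equivalent to membership in $\Sigma_{k,r}^{\rm nn}$; one should check this for the enriched Raviart-Thomas case as well, using that $(\bs x\bs x^{\intercal}\mathbb H_{k-1})$ contributes an $\bs n^{\intercal}\cdot\bs n$ trace of degree $\le k$ on each face. The $k=0,1$ Hermite cases and the $k\ge0$ Lagrange-start cases follow identically by feeding the corresponding parent complexes (the $k=0,1$ propositions and Remark~\ref{rm:moredivdivcomplex}) into the same reduction.
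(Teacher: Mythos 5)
Your proposal is correct and takes essentially the same route as the paper: the paper justifies this theorem in one line as a reduction of Theorem~\ref{th:distributivecomplex} (and its $k=0,1$ and Lagrange-start variants) via the $\,\widetilde{}\,$ operation of~\cite{ChenHuang2022}, i.e.\ imposing normal-normal continuity on the stress space while deleting the multiplier slot $u_n$, which is exactly your zero-lift surjectivity argument plus the observation that $\ker(\div\div_w)$ already lies in $\Sigma_{k,r}^{\rm nn}$. Your write-up merely makes explicit the details the paper leaves implicit (that the normal-normal jump is an untruncated polynomial in $V_k^{-1}(\mathring{\mathcal F}_h)$, including the enriched Raviart--Thomas case), and these checks are sound.
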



In two dimensions, the space $\mathring{M}^{-1}_{k-2, k-1, \cdot , k}$ can be identified as the Lagrange element $\mathring{V}^L_{k+1}$. The first distributional divdiv complex constructed in \cite{ChenHuHuang2018} can be written as
\begin{equation*}
{\bf RT}\xrightarrow{\subset} (V_{k+1}^L)^2\xrightarrow{\sym\curl} \Sigma_{k}^{\rm nn} \xrightarrow{\div{\div}_w} \mathring{V}^L_{k+1}\xrightarrow{}0, \quad \text{for } k\geq 0.
\end{equation*}
Complexes in Theorem \ref{th:distributive1complex} are its generalization to 3-D. 

We can further reduce the space of $u$ to $\mathring{M}^{-1}_{r,k-1, \cdot, \cdot}$ when the normal-normal continuity and $[\tr_{e}(\cdot)] = 0$ are both imposed and denoted by $\Sigma_{k,r}^{\rm nn, e}$ for $k\geq 1$. The space $\mathring{M}^{-1}_{r,k-1, \cdot, \cdot}$ can be identified as the $H^1$ non-conforming virtual element space \cite[Section 2.2]{ChenHuang2020a}
\begin{align*}
\mathring{V}_{k}^{1,\rm VEM}:=\big\{ & u \in L^2(\Omega): u|_T \in V_{k}^{1,\rm VEM}(T) \textrm{ for } T\in\mathcal T_h,   \\
& Q_{k-1,F}u \textrm{ is single-valued for } F\in\mathring{\mathcal{F}}_h, \textrm{ and vanish on boundary } \partial\Omega\big\},
\end{align*} 
where
$
V_{k}^{1,\rm VEM}(T):=\big\{  u \in H^1(T): \Delta u \in \mathbb P_{r}(T), \partial_nu|_F\in\mathbb P_{k-1}(F) \textrm{ for } F\in\partial T \big \}.
$
The DoFs of  $\mathring{V}_{k}^{1,\rm VEM}$ is given by $Q_Mu := \{Q_{r,T}u, Q_{k-1,F}u\}_{T\in \mathcal T_h, F\in \mathring{\mathcal{F}}_h}$ through which can be identified $\mathring{M}^{-1}_{r,k-1, \cdot, \cdot}$. 

We then obtain a divdiv complex ending with $\mathring{V}_{k}^{1,\rm VEM}$
\begin{equation*}
{\bf RT}\xrightarrow{\subset} V_{k+2}^H\xrightarrow{\dev\grad} \Sigma_{k+1}^{\sym\curl}\xrightarrow{\sym\curl} \Sigma_{k,r}^{\rm nn, e} \xrightarrow{Q_M^{-1}\div{\div}_w} \mathring{V}_{k}^{1,\rm VEM} \xrightarrow{}0, \text{ for } k\geq 1. 
\end{equation*}

\vskip 16pt

{\bf Acknowledgement}. We greatly appreciate the anonymous reviewers’ revising suggestions. In response to their feedback, we have changed the article's title to more accurately represent its content and have restructured the material to ensure a more coherent and smoother presentation.

\appendix 

\section{Uni-sovlence}\label{apdx:uinsol}
In this appendix, we give the uni-solvence of DoFs ~\eqref{eq:newdivdivSk2RT} for the space $\Sigma_{2^+}(T;\mathbb S)$.
First we recall a decomposition of a polynomial space and some barycentric calculus developed in~\cite{Chen;Huang:2020Finite}. 

\begin{lemma}\label{lem:xhessx}
Let $
\mathbb P_3(T)\backslash \mathbb P_1(T):=\{q\in\mathbb P_3(T): q(0)=0, \nabla q(0)=0\}.
$
The mapping $\boldsymbol x^{\intercal}\nabla^2\cdot\boldsymbol x:  \mathbb P_3(T)\backslash \mathbb P_1(T)\to \mathbb P_3(T)\backslash \mathbb P_1(T)$ is one-to-one. The mapping $\boldsymbol x^{\intercal}\cdot\boldsymbol x: \mathbb P_1(T;\mathbb S)\to \mathbb P_3(T)\backslash \mathbb P_1(T)$ is surjective.
\end{lemma}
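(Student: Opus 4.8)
The plan is to reduce both statements to the homogeneous grading of the polynomial spaces. Observe first that the conditions $q(0)=0$ and $\nabla q(0)=0$ annihilate precisely the constant and linear parts of the Taylor expansion at the origin, so that
$$
\mathbb P_3(T)\backslash\mathbb P_1(T)=\mathbb H_2(T)\oplus\mathbb H_3(T).
$$
The entire argument hinges on the Euler-type identity
$$
\boldsymbol x^{\intercal}\nabla^2 q\,\boldsymbol x = k(k-1)\,q,\qquad q\in\mathbb H_k(T),
$$
which I would establish by applying Euler's relation $\boldsymbol x\cdot\nabla q=kq$ twice: since each $\partial_i q$ lies in $\mathbb H_{k-1}(T)$, we have $\sum_j x_j\,\partial_j(\partial_i q)=(k-1)\partial_i q$, and multiplying by $x_i$ and summing over $i$ gives $\boldsymbol x^{\intercal}\nabla^2 q\,\boldsymbol x=(k-1)\,\boldsymbol x\cdot\nabla q=k(k-1)q$.

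For the first claim, note that $\nabla^2 q\in\mathbb H_{k-2}(T;\mathbb S)$ whenever $q\in\mathbb H_k(T)$, so $\boldsymbol x^{\intercal}\nabla^2\cdot\boldsymbol x$ preserves the grading and, by the identity above, acts as multiplication by $2$ on $\mathbb H_2(T)$ and by $6$ on $\mathbb H_3(T)$. Being a nonzero scalar on each graded component, it is a bijection of $\mathbb H_2(T)\oplus\mathbb H_3(T)$ onto itself; in particular it is one-to-one.

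For the second claim I would handle the two graded pieces separately. A symmetric-matrix field $\boldsymbol A\in\mathbb P_1(T;\mathbb S)$ splits as $\boldsymbol A_0+\boldsymbol A_1$ with $\boldsymbol A_0\in\mathbb P_0(T;\mathbb S)$ constant and $\boldsymbol A_1\in\mathbb H_1(T;\mathbb S)$, so that $\boldsymbol x^{\intercal}\boldsymbol A\boldsymbol x=\boldsymbol x^{\intercal}\boldsymbol A_0\boldsymbol x+\boldsymbol x^{\intercal}\boldsymbol A_1\boldsymbol x$ lands in $\mathbb H_2(T)\oplus\mathbb H_3(T)$ respecting the grading. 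Given any $p\in\mathbb H_2(T)$, the choice $\boldsymbol A_0=\tfrac12\nabla^2 p$ yields $\boldsymbol x^{\intercal}\boldsymbol A_0\boldsymbol x=p$ by the identity with $k=2$; given any $q\in\mathbb H_3(T)$, the choice $\boldsymbol A_1=\tfrac16\nabla^2 q\in\mathbb H_1(T;\mathbb S)$ yields $\boldsymbol x^{\intercal}\boldsymbol A_1\boldsymbol x=q$ by the identity with $k=3$. Summing the two preimages proves surjectivity onto $\mathbb H_2(T)\oplus\mathbb H_3(T)=\mathbb P_3(T)\backslash\mathbb P_1(T)$.

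The only genuine content is recognizing the Euler identity $\boldsymbol x^{\intercal}\nabla^2 q\,\boldsymbol x=k(k-1)q$; once it is in hand both statements become immediate, so I do not anticipate a real obstacle. The single point worth double-checking is that the origin at which the Taylor conditions are imposed is the same origin from which $\boldsymbol x$ is measured, so that the homogeneity bookkeeping is consistent; the barycentric setup used elsewhere in the appendix places the relevant vertex at the origin, which is compatible.
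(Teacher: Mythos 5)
Your proposal is correct and rests on exactly the same fact as the paper's own (one-line) proof, namely the Euler-type identity $\boldsymbol x^{\intercal}(\nabla^2 q)\boldsymbol x = k(k-1)q$ for $q\in\mathbb H_k(T)$; the paper simply states this identity "by direct computation," while you additionally spell out the grading $\mathbb P_3(T)\backslash\mathbb P_1(T)=\mathbb H_2(T)\oplus\mathbb H_3(T)$, the scalar action on each graded piece, and the explicit preimages $\tfrac12\nabla^2 p$ and $\tfrac16\nabla^2 q$ for surjectivity. No gap; your write-up is a faithful, more detailed version of the paper's argument.
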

\begin{proof}
By direct computation $\boldsymbol x^{\intercal}(\nabla^2 q) \boldsymbol x = r(r-1) q$ for $q\in \mathbb H_r(T), r\geq 0$.  
\end{proof}

\begin{lemma}
 We have the decomposition
 \begin{equation}\label{eq:decP1}
\mathbb P_1(T;\mathbb S) = \nabla^2 \mathbb P_3(T)\oplus (\ker(\boldsymbol x^{\intercal}\cdot\boldsymbol x)\cap \mathbb P_{1}(T;\mathbb S)),
\end{equation}
and consequently,
\begin{align*}
\dim \ker(\boldsymbol x^{\intercal}\cdot\boldsymbol x)\cap \mathbb P_{1}(T;\mathbb S) &=\dim\mathbb P_{1}(T;\mathbb S)-\dim\mathbb P_{3}(T)+\dim\mathbb P_{1}(T)\\
& =(d+1){d+1\choose2}-{d+3\choose3}+d+1=2{d+1\choose3}.
\end{align*}
\end{lemma}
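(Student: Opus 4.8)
The plan is to obtain the decomposition~\eqref{eq:decP1} from the two properties recorded in Lemma~\ref{lem:xhessx}, and then read off the dimension formula by rank--nullity. First I would note the inclusions and dimensions of the two summands. For any $q\in\mathbb P_3(T)$ the Hessian $\nabla^2 q$ is a symmetric matrix field with linear entries, so $\nabla^2\mathbb P_3(T)\subseteq\mathbb P_1(T;\mathbb S)$; since the kernel of $\nabla^2$ on $\mathbb P_3(T)$ is exactly the affine polynomials $\mathbb P_1(T)$, we get $\dim\nabla^2\mathbb P_3(T)=\dim\mathbb P_3(T)-\dim\mathbb P_1(T)$. Likewise $\boldsymbol x^{\intercal}\boldsymbol\tau\boldsymbol x$ vanishes to second order at the origin for every $\boldsymbol\tau\in\mathbb P_1(T;\mathbb S)$, so $\boldsymbol x^{\intercal}\cdot\boldsymbol x$ maps $\mathbb P_1(T;\mathbb S)$ into $\mathbb P_3(T)\backslash\mathbb P_1(T)$, and by the surjectivity part of Lemma~\ref{lem:xhessx} this map is onto. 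Rank--nullity then yields $\dim\big(\ker(\boldsymbol x^{\intercal}\cdot\boldsymbol x)\cap\mathbb P_1(T;\mathbb S)\big)=\dim\mathbb P_1(T;\mathbb S)-\dim\mathbb P_3(T)+\dim\mathbb P_1(T)$.

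Next I would verify that the sum is direct. Suppose $\boldsymbol\tau$ lies in both summands, so $\boldsymbol\tau=\nabla^2 q$ for some $q\in\mathbb P_3(T)$ and $\boldsymbol x^{\intercal}\boldsymbol\tau\boldsymbol x=0$. Because the Hessian is unchanged under adding an affine function, I may choose the representative $q\in\mathbb P_3(T)\backslash\mathbb P_1(T)$, i.e.\ $q(0)=0$ and $\nabla q(0)=0$. Then $\boldsymbol x^{\intercal}\nabla^2 q\,\boldsymbol x=0$, and the injectivity of $\boldsymbol x^{\intercal}\nabla^2\cdot\boldsymbol x$ on $\mathbb P_3(T)\backslash\mathbb P_1(T)$ forces $q=0$, whence $\boldsymbol\tau=0$. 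Thus $\nabla^2\mathbb P_3(T)\cap\big(\ker(\boldsymbol x^{\intercal}\cdot\boldsymbol x)\cap\mathbb P_1(T;\mathbb S)\big)=\{0\}$.

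Finally, adding the two dimensions computed above gives exactly $\dim\mathbb P_1(T;\mathbb S)$, so the internal direct sum of the two subspaces has full dimension and must equal $\mathbb P_1(T;\mathbb S)$; together with the directness this establishes~\eqref{eq:decP1}. The dimension identity then follows immediately, and the only remaining task is the elementary binomial simplification $(d+1)\binom{d+1}{2}-\binom{d+3}{3}+(d+1)=2\binom{d+1}{3}$, which I would check by expanding both sides, or by observing that both are degree-three polynomials in $d$ and verifying agreement at four values (e.g.\ $d=2,3,4,5$).

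The only genuinely delicate point is the directness argument: one must use that the affine polynomials lie in $\ker(\nabla^2)$ to select the representative $q\in\mathbb P_3(T)\backslash\mathbb P_1(T)$ before invoking the injectivity of $\boldsymbol x^{\intercal}\nabla^2\cdot\boldsymbol x$. Everything else is bookkeeping with dimensions and a routine binomial identity.
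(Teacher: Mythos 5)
Your proposal is correct and takes essentially the same route as the paper: the paper's (very terse) proof also derives the directness of the sum from the injectivity of $\boldsymbol x^{\intercal}\nabla^2\cdot\,\boldsymbol x$ on $\mathbb P_3(T)\backslash\mathbb P_1(T)$ and the dimension formula from the surjectivity of $\boldsymbol x^{\intercal}\cdot\,\boldsymbol x$, both supplied by Lemma~\ref{lem:xhessx}. You have simply made explicit the details the paper leaves implicit (normalizing the representative $q$ before invoking injectivity, the rank--nullity count, and the check that the two dimensions sum to $\dim\mathbb P_1(T;\mathbb S)$), all of which are accurate.
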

\begin{proof}
By Lemma~\ref{lem:xhessx}, $\nabla^2 \mathbb P_3(T)\cap (\ker(\boldsymbol x^{\intercal}\cdot\boldsymbol x)\cap \mathbb P_{1}(T;\mathbb S))=0$, and
$$
\dim \ker(\boldsymbol x^{\intercal}\cdot\boldsymbol x)\cap \mathbb P_{1}(T;\mathbb S) =\dim\mathbb P_{1}(T;\mathbb S)-\dim\mathbb P_{3}(T)+\dim\mathbb P_{1}(T),
$$
which ends the proof.
\end{proof}


Define 
$$
\mathbb B_k^{\div}(T) :=  \mathbb{P}_k\left(T ; \mathbb{R}^d\right) \cap H_0(\operatorname{div} , T) =  \{ \bs v\in \mathbb P_k(T;\mathbb R^d): \bs v\cdot \bs n|_{\partial T} = 0\}.
$$
Recall the characterization of the div bubble function. 
\begin{lemma}[Lemma 4.2 in~\cite{ChenHuang2021Geometric}]For an edge $e = [\texttt{v}_i, \texttt{v}_j]$, let $b_e = \lambda_{i}\lambda_j$ be the quadratic edge bubble function and $\bs t_e$ be tangential vector of $e$. Then we have
\begin{equation}\label{eq:B2}
\mathbb B_2^{\div}(T) = {\rm span}\{ b_e(x) \bs t_e: e\in \Delta_1(T)\}.
\end{equation}
\end{lemma}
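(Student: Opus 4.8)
The plan is to prove the two inclusions in \eqref{eq:B2} separately, getting $\supseteq$ by a direct trace computation and $\subseteq$ by an explicit reconstruction in barycentric coordinates. For the inclusion $\operatorname{span}\{b_e\bs t_e\}\subseteq\mathbb B_2^{\div}(T)$, I would fix an edge $e=[\texttt{v}_i,\texttt{v}_j]$ and a face $F\in\Delta_{d-1}(T)$ and check $(b_e\bs t_e)\cdot\bs n_F=0$ on $F$. There are two cases: if $F$ is the face opposite $\texttt{v}_i$ or opposite $\texttt{v}_j$, then one of the factors $\lambda_i,\lambda_j$ vanishes on $F$, so $b_e|_F=0$; otherwise both endpoints of $e$ lie on $F$, hence $e\subset F$ and $\bs t_e$ is tangent to $F$, so $\bs t_e\cdot\bs n_F=0$. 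In either case the normal trace vanishes, so $b_e\bs t_e\in\mathbb B_2^{\div}(T)$. This step is routine.

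For the reverse inclusion I would argue directly. Let $\bs v\in\mathbb B_2^{\div}(T)$. Since $\nabla\lambda_m$ is parallel to the normal of the face $\{\lambda_m=0\}$, the condition $\bs v\cdot\bs n=0$ there forces $\bs v\cdot\nabla\lambda_m=\lambda_m\psi_m$ with $\psi_m\in\mathbb P_1(T)$, for each $m=0,\dots,d$. Using that $\{\nabla\lambda_j\}_{j=1}^d$ is a basis of $\mathbb R^d$ with dual basis $\bs w_j=\texttt{v}_j-\texttt{v}_0$ (since $\nabla\lambda_i\cdot(\texttt{v}_j-\texttt{v}_0)=\delta_{ij}$ for $i,j\ge1$), I reconstruct $\bs v=\sum_{j=1}^d(\bs v\cdot\nabla\lambda_j)\bs w_j=\sum_{j=1}^d\lambda_j\psi_j\bs w_j$. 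The key observation is to exploit the single relation $\nabla\lambda_0=-\sum_{j=1}^d\nabla\lambda_j$ among the $d+1$ face normals: substituting it into the $m=0$ identity yields the scalar constraint $\sum_{m=0}^d\lambda_m\psi_m=0$.

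Next I would expand $\psi_m=\sum_\ell c_{m\ell}\lambda_\ell$ and use that the products $\{\lambda_m\lambda_\ell\}_{m\le\ell}$ form a (Bernstein) basis of $\mathbb P_2(T)$ to read off from $\sum_{m,\ell}c_{m\ell}\lambda_m\lambda_\ell=0$ the antisymmetry $c_{mm}=0$ and $c_{m\ell}=-c_{\ell m}$. Substituting back into $\bs v=\sum_{j\ge1,\ \ell\neq j}c_{j\ell}\lambda_j\lambda_\ell\bs w_j$ and grouping over each unordered pair $\{m,\ell\}$ then collapses each pair into a single edge term: for $1\le i<j\le d$ one gets $c_{ji}\lambda_i\lambda_j(\bs w_j-\bs w_i)\propto b_{[\texttt{v}_i,\texttt{v}_j]}\bs t_{[\texttt{v}_i,\texttt{v}_j]}$, while the pairs $\{0,j\}$ give $c_{j0}\lambda_0\lambda_j\bs w_j\propto b_{[\texttt{v}_0,\texttt{v}_j]}\bs t_{[\texttt{v}_0,\texttt{v}_j]}$, covering exactly the $\binom{d}{2}+d=\binom{d+1}{2}$ edges. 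Hence $\bs v\in\operatorname{span}\{b_e\bs t_e\}$, completing the proof.

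I expect the main obstacle to be the bookkeeping forced by the linear dependence of the $d+1$ face normals: the constraint $\sum_m\lambda_m\psi_m=0$ and the resulting antisymmetry of $(c_{m\ell})$ are precisely what make the pairwise regrouping close up into the edge generators, and one must verify that the distinguished vertex $\texttt{v}_0$ (used to build the dual basis $\bs w_j$) does not break the symmetry — the edges through $\texttt{v}_0$ being recovered from the $\{0,j\}$ pairs. As an alternative to this explicit computation, one could instead finish by a dimension count: the generators are linearly independent because the scalars $\{b_e\}$ are, so $\dim\operatorname{span}\{b_e\bs t_e\}=\binom{d+1}{2}$, and $\dim\mathbb B_2^{\div}(T)=\dim\mathbb P_2(T;\mathbb R^d)-(d+1)\dim\mathbb P_2(F)=\binom{d+1}{2}$ follows from surjectivity of the facewise normal-trace map (the BDM face degrees of freedom); there the obstacle is exactly justifying that surjectivity.
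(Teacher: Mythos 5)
Your proposal is correct, and it supplies a complete argument where the paper supplies none: the paper states the lemma as a citation of Lemma~4.2 in the reference, remarking only that the inclusion $b_e\bs t_e\in\mathbb B_2^{\div}(T)$ is easy and that the converse ``all quadratic divergence-free bubbles can be expressed in this form'' is the substantive part, deferred to that reference. Your converse direction is the genuine content, and it holds up: the normal-trace condition gives $\bs v\cdot\nabla\lambda_m=\lambda_m\psi_m$ with $\psi_m\in\mathbb P_1(T)$ (a quadratic vanishing on a face vanishes on the whole hyperplane, hence is divisible by $\lambda_m$); the expansion $\bs v=\sum_{j\geq 1}\lambda_j\psi_j\bs w_j$ in the dual basis is exact; the single dependence $\nabla\lambda_0=-\sum_{j\geq1}\nabla\lambda_j$ yields the constraint $\sum_{m=0}^d\lambda_m\psi_m=0$; and reading off coefficients in the Bernstein basis $\{\lambda_m\lambda_\ell\}_{m\leq\ell}$ gives $c_{mm}=0$, $c_{m\ell}=-c_{\ell m}$, which is exactly what makes the pairwise regrouping collapse onto edge terms $\lambda_i\lambda_j(\texttt{v}_j-\texttt{v}_i)$ — including the edges through the distinguished vertex $\texttt{v}_0$ via the pairs $\{0,j\}$. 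The antisymmetry mechanism is the elegant core of the argument. Your alternative finish by dimension count is also sound ($\dim\mathbb B_2^{\div}(T)=d\binom{d+2}{2}-(d+1)\binom{d+1}{2}=\binom{d+1}{2}$, matching the number of edges), but as you note it trades the explicit reconstruction for the surjectivity of the facewise normal-trace map (BDM unisolvence), which is a heavier imported fact; the direct barycentric computation is preferable precisely because it is self-contained.
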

As a consequence $\dim \mathbb B_2^{\div}(T) = |\Delta_1(T)| = {d + 1 \choose 2}$ for a $d$-dimensional simplex $T$. 

We can easily show that $b_e(x) \boldsymbol{t}_e$ is an element of $\mathbb{B}_2^{\text{div}}(T)$. In order to establish~\eqref{eq:B2}, it is necessary to demonstrate that all quadratic divergence-free bubbles can be expressed in this form. See \cite{ChenHuang2021Geometric} for details.

\begin{lemma}\label{lm:bubbledof}
Let $\boldsymbol{v} \in \mathbb B_2^{\div}(T)$ satisfy $\operatorname{div} \boldsymbol{v}=0$ and for one $F\in \partial T$
\begin{equation}\label{eq:Fbubble}
(\Pi_{F} \boldsymbol{v}, \bs q)_F=0, \quad \bs q\in \mathbb B_2^{\div}(F). 
\end{equation}
Then $\bs v=0$.
\end{lemma}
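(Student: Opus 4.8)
The plan is to use the explicit basis for the divergence bubble space recorded in~\eqref{eq:B2} and to split the statement into two independent arguments: a tangential one on the face $F$ using the orthogonality~\eqref{eq:Fbubble}, and an interior one using $\div\boldsymbol v=0$. First I would write, by~\eqref{eq:B2},
$$
\boldsymbol v=\sum_{e\in\Delta_1(T)}c_e\,b_e\,\bs t_e,\qquad b_e=\lambda_i\lambda_j \text{ for } e=[\texttt{v}_i,\texttt{v}_j],
$$
so that proving $\boldsymbol v=0$ reduces to showing every coefficient $c_e$ vanishes. I would fix notation by taking $F$ opposite the vertex $\texttt{v}_0$, splitting $\Delta_1(T)$ into the ${d\choose 2}$ edges contained in $F$ and the $d$ edges $[\texttt{v}_0,\texttt{v}_j]$, $j=1,\ldots,d$, meeting $\texttt{v}_0$.

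Next I would restrict to $F$. Since $b_e=\lambda_i\lambda_j$ vanishes on $F$ whenever one endpoint of $e$ is $\texttt{v}_0$, only edges $e\subset F$ contribute to $\boldsymbol v|_F$; and for such $e$ the tangent $\bs t_e$ lies in the plane of $F$, so $\Pi_F\boldsymbol v|_F=\sum_{e\subset F}c_e\,(b_e|_F)\,\bs t_e$. Because restricting the barycentric coordinates of $T$ to $F$ yields those of the $(d-1)$-simplex $F$, each $b_e|_F$ is exactly the edge bubble of $F$ for $e$, so~\eqref{eq:B2} applied to $F$ gives $\Pi_F\boldsymbol v|_F\in\mathbb B_2^{\div}(F)$. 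Testing~\eqref{eq:Fbubble} against $\bs q=\Pi_F\boldsymbol v|_F$ then yields $\|\Pi_F\boldsymbol v\|_{0,F}^2=0$, hence $\Pi_F\boldsymbol v|_F=0$; since $\{b_e|_F\,\bs t_e:e\subset F\}$ is a basis of $\mathbb B_2^{\div}(F)$, it is linearly independent, and I conclude $c_e=0$ for every $e\subset F$.

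It then remains to treat the $d$ edges through $\texttt{v}_0$, where I would invoke $\div\boldsymbol v=0$. From $\nabla\lambda_k\cdot(\texttt{v}_j-\texttt{v}_i)=\lambda_k(\texttt{v}_j)-\lambda_k(\texttt{v}_i)$ one gets $\div(b_e\bs t_e)=\lambda_i-\lambda_j$ for $e=[\texttt{v}_i,\texttt{v}_j]$ with $\bs t_e\propto\texttt{v}_j-\texttt{v}_i$. As the face-edge coefficients already vanish,
$$
0=\div\boldsymbol v=\sum_{j=1}^{d}c_{0j}\,(\lambda_0-\lambda_j)=\Big(\textstyle\sum_{j}c_{0j}\Big)\lambda_0-\sum_{j=1}^{d}c_{0j}\lambda_j,
$$
and grouping by the linearly independent barycentric coordinates $\lambda_0,\ldots,\lambda_d$ forces $c_{0j}=0$ for each $j$. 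Hence all coefficients vanish and $\boldsymbol v=0$.

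I expect the main obstacle to be the bookkeeping in the middle step, namely confirming that the tangential trace $\Pi_F\boldsymbol v|_F$ genuinely reproduces a face div-bubble, so that~\eqref{eq:Fbubble} may legitimately be tested against $\Pi_F\boldsymbol v$ itself: the orientation and normalization of $\bs t_e$ and the identification of $b_e|_F$ with the edge bubble of $F$ must be handled with care. The divergence computation is routine once the identity $\div(b_e\bs t_e)=\lambda_i-\lambda_j$ is in hand.
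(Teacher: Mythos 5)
Your proof is correct, and it follows the same two-phase structure as the paper's: expand $\bs v$ in the edge-bubble basis~\eqref{eq:B2}, use the face orthogonality~\eqref{eq:Fbubble} to kill the coefficients of edges lying in $F$, then use $\div\bs v=0$ to kill the coefficients of the $d$ edges through the opposite vertex. Two differences are worth recording. First, you verify in detail the step the paper merely asserts (``\eqref{eq:Fbubble} implies that $\bs v$ does not contain edge bubbles on $F_d$''): namely that $\Pi_F\bs v|_F$ is itself an element of $\mathbb B_2^{\div}(F)$ --- because $b_e|_F=0$ for every edge $e$ through the opposite vertex, the barycentric coordinates of $T$ restrict to those of $F$, and $\bs t_e$ is tangent to $F$ for $e\subset F$ --- so that it may legitimately serve as the test function $\bs q$ in~\eqref{eq:Fbubble}; this is exactly the bookkeeping you flagged as the main obstacle, and your handling of it is sound. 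Second, for the remaining coefficients the paper argues weakly: it pairs $\bs v$ with $\nabla\lambda_i$ and integrates by parts, $(\bs v,\nabla\lambda_i)_T=-(\div\bs v,\lambda_i)_T=0$ (the boundary term vanishes since $\bs v\cdot\bs n|_{\partial T}=0$), then uses the duality $\nabla\lambda_i\cdot\bs t_{d,j}=\delta_{ij}$ to extract $c_i\,|T|/\big((d+1)(d+2)\big)=0$. You instead compute $\div\bs v$ pointwise via $\div(b_e\bs t_e)=\lambda_i-\lambda_j$ and invoke the linear independence of $\lambda_0,\ldots,\lambda_d$ as a basis of $\mathbb P_1(T)$. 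Both mechanisms are elementary and equally short; yours avoids integration by parts altogether, while the paper's avoids the explicit divergence identity. The only caveat in your write-up is the proportionality $\bs t_e\propto \texttt{v}_j-\texttt{v}_i$: if $\bs t_e$ is normalized to unit length, your identity picks up a factor $1/|e|$, which is harmless since it can be absorbed into the coefficients $c_e$, but it should be stated.
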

\begin{proof}
Without loss of generality, take $F = F_d$. Then~\eqref{eq:Fbubble} implies that $\bs v$ does not contain edge bubbles on $F_d$, i.e., $\bs v = \sum_{i=0}^{d-1}c_i b_{e_{d,i}}\bs t_{d,i}$ with $c_i\in\mathbb R$ and $\bs t_{d, i} = \texttt{v}_i - \texttt{v}_d$. 
By direct computation and the fact $\nabla \lambda_i\cdot \bs t_{d,j} = \delta_{ij}$, we have
$$
\frac{|T|}{(d+2)(d+1)}c_i = (\bs v, \nabla \lambda_i)_T = - (\div \bs v, \lambda_i)_T = 0
$$
for each $i=0,1\ldots, d-1$. 
So $\bs v= 0$. 
\end{proof}

To facilitate the proof of unisolvence, we can select an intrinsic coordinate system. Let $\boldsymbol t_i:=\texttt{v}_i-\texttt{v}_0$ for $i=1,\ldots, d$. The set of tangential vectors $\{\boldsymbol t_1, \ldots, \boldsymbol t_d\}$ forms a basis of $\mathbb{R}^d$, and its dual basis is given by $\{ \nabla \lambda_1, \ldots, \nabla \lambda_d \}$. We have the property that $\nabla \lambda_i \cdot \boldsymbol{t}_j = \delta_{ij}$ for $i,j=1,\ldots, d$, where $\delta_{ij}$ is the Kronecker delta.
%
We can then express the symmetric tensor $\boldsymbol{\tau}$ as $\bs\tau=\sum_{i,j=1}^d\tau_{ij}\boldsymbol t_i \otimes \boldsymbol t_j$ using coefficients $\tau_{ij}$, which are computed as $\tau_{ij}=(\nabla\lambda_i)^{\intercal}\boldsymbol{\tau}(\nabla\lambda_j)$.

Since $\boldsymbol{\tau}$ is symmetric, we have that $\tau_{ij} = \tau_{ji}$ for $1\leq i,j\leq d$. Therefore, we can represent $\boldsymbol{\tau}$ as a symmetric matrix function $(\tau_{ij}(x))$ in this coordinate system.

\begin{theorem}\label{th:k=2}
The DoFs~\eqref{eq:newdivdivSk2RT} are unisolvent for the space $\Sigma_{2^+}(T;\mathbb S)$.
\end{theorem}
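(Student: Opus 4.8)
The plan is the usual two-step unisolvence argument: match the number of degrees of freedom with $\dim \Sigma_{2^+}(T;\mathbb S) = \binom{d+2}{2}\binom{d+1}{2} + d$, and then show any $\bs\tau\in\Sigma_{2^+}(T;\mathbb S)$ annihilated by all of~\eqref{eq:newdivdivSk2RT} vanishes. For the count I would add the contributions of~\eqref{eq:newdivdivdofk2RT1}--\eqref{eq:newdivdivdofk2RT5}, using $\dim\mathbb B_2^{\div}(f)=\binom{r-1}{2}$ for an $(r-2)$-simplex $f$ (from~\eqref{eq:B2}) in~\eqref{eq:newdivdivdofk2RT4}, and $\dim(\ker(\boldsymbol x^{\intercal}\cdot\boldsymbol x)\cap\mathbb P_1(T;\mathbb S))=2\binom{d+1}{3}$ (from~\eqref{eq:decP1}) in~\eqref{eq:newdivdivdofk2RT5}; the redistributed face and edge terms~\eqref{eq:newdivdivdofk2RT1}--\eqref{eq:newdivdivdofk2RT2} are counted exactly as in the merge step~\eqref{eq:merge} of the general proof.

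The first substantive step is to reduce to a divergence-free tensor, following Remark~\ref{rm:k=2}. Testing the Green's identity~\eqref{eq:greenidentityP1} against an arbitrary $v\in\mathbb P_1(T)$, every boundary contribution drops out: the face terms $(\tr_2(\bs\tau),v)_F$ and $(\bs n^{\intercal}\bs\tau\bs n,\partial_nv)_F$ vanish since $v|_F\in\mathbb P_1(F)$ and $\partial_nv|_F\in\mathbb P_0(F)$ are covered by~\eqref{eq:newdivdivdofk2RT3} and~\eqref{eq:newdivdivdofk2RT2}, while the edge sum reorganizes into $\sum_e(\tr_e(\bs\tau),v)_e$ with $v|_e\in\mathbb P_1(e)$, killed by~\eqref{eq:newdivdivdofk2RT1}. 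Since $\div\div\bs\tau\in\mathbb P_1(T)$, this yields $\div\div\bs\tau=0$. Splitting $\bs\tau=\bs\tau_0+\boldsymbol x\boldsymbol x^{\intercal}p$ with $\bs\tau_0\in\mathbb P_2(T;\mathbb S)$ and $p\in\mathbb H_1(T)$, the constant part of $\div\div\bs\tau$ comes only from $\bs\tau_0$ and the homogeneous degree-one part only from the enrichment, because $\div\div(\boldsymbol x\boldsymbol x^{\intercal}\mathbb H_1(T))=\mathbb H_1(T)$; hence $\div\div(\boldsymbol x\boldsymbol x^{\intercal}p)=0$, and injectivity of $\div\div$ on the enrichment forces $p=0$. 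We are left with $\bs\tau\in\mathbb P_2(T;\mathbb S)$ satisfying $\div\div\bs\tau=0$ and having vanishing $\bs n^{\intercal}\bs\tau\bs n$, $\tr_2(\bs\tau)$, and $\tr_e(\bs\tau)$ on the respective boundary pieces.

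To finish, I would recover the un-redistributed degrees of freedom of the element of~\cite{Chen;Huang:2020Finite} from which~\eqref{eq:newdivdivSk2RT} is built. By Lemma~\ref{lem:Snnbasis}, the vanishing edge traces $\tr_e(\bs\tau)$ together with the vanishing normal-normal components $\bs n_{F_i}^{\intercal}\bs\tau\bs n_{F_i}$ force the full projection $Q_{\mathcal N_e}(\bs\tau)$ onto each edge-normal plane to vanish, namely the normal-plane data that the redistribution encodes. With $\bs n^{\intercal}\bs\tau\bs n$ and $\tr_2(\bs\tau)$ already zero, the tangential-normal moments~\eqref{eq:newdivdivdofk2RT4}, read through the div-bubble characterization~\eqref{eq:B2} and Lemma~\ref{lm:bubbledof} along the chain of subsimplices $f=f_{0:r-2}\in\Delta_{r-2}(F_r)$, determine the remaining tangential-normal components and reduce $\bs\tau$ to the interior bubble space; the interior moments~\eqref{eq:newdivdivdofk2RT5}, whose test space $\ker(\boldsymbol x^{\intercal}\cdot\boldsymbol x)\cap\mathbb P_1(T;\mathbb S)$ is the complement of $\nabla^2\mathbb P_3(T)$ in~\eqref{eq:decP1} and is annihilated by the enrichment, then combine with $\div\div\bs\tau=0$ to give $\bs\tau=0$, computing in the intrinsic barycentric coordinates introduced after Lemma~\ref{lm:bubbledof}.

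The main obstacle is this last step: verifying that the merged edge and face DoFs reproduce, via Lemma~\ref{lem:Snnbasis}, precisely the normal-plane and tangential-normal information of the Chen--Huang $k=2$ element, and that the bubble DoFs~\eqref{eq:newdivdivdofk2RT4}--\eqref{eq:newdivdivdofk2RT5} exactly exhaust the $\div\div$-bubble space of $\mathbb P_2(T;\mathbb S)$. The bookkeeping over the subsimplex chain $f=f_{0:r-2}\in\Delta_{r-2}(F_r)$ in~\eqref{eq:newdivdivdofk2RT4} is the delicate point, and is where the constructions and bubble lemmas of~\cite{Chen;Huang:2020Finite} must be quoted in detail.
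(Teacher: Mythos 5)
Your dimension count and your reduction step are sound and coincide with Steps 1--2 of the paper's own proof: testing the Green's identity~\eqref{eq:greenidentityP1} against $\mathbb P_1(T)$ gives $\div\div\bs\tau=0$, the degree splitting kills the enrichment $\boldsymbol x\boldsymbol x^{\intercal}\mathbb H_1(T)$, and the vanishing traces together with DoF~\eqref{eq:newdivdivdofk2RT5} and the decomposition~\eqref{eq:decP1} yield $(\bs\tau,\bs q)_T=0$ for all $\bs q\in\mathbb P_1(T;\mathbb S)$.

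The gap is everything after that. Orthogonality of the quadratic tensor $\bs\tau$ to $\mathbb P_1(T;\mathbb S)$ together with vanishing traces does \emph{not} by itself give $\bs\tau=0$ --- the paper warns of exactly this --- and your plan to close the argument by recovering the un-redistributed DoFs of the element of~\cite{Chen;Huang:2020Finite} cannot work as stated: that reference is three-dimensional, while Theorem~\ref{th:k=2} is for arbitrary $d\geq 2$, and the paper's proof never reduces to a previously known element (there is no unisolvent $\mathbb P_2$-type element to fall back on; cf.\ Remark~\ref{rm:k=2}). What the paper actually does, and what your proposal omits, is an entry-by-entry elimination in the intrinsic frame $\{\bs t_i\}$ with dual basis $\{\nabla\lambda_i\}$: (i) each diagonal entry $\tau_{ii}$ vanishes on $F_i$ by $\tr_1(\bs\tau)=0$, hence $\tau_{ii}=\lambda_i p_i$, and testing with $\bs q=p_i\bs n_i\bs n_i^{\intercal}$ in the $\mathbb P_1(T;\mathbb S)$-orthogonality kills it; (ii) only after the diagonal vanishes identically in $T$ (not merely on $\partial T$) does $\tr_2(\bs\tau)=0$ reduce to $\div_{F_r}(\Pi_{F_r}(\bs\tau\bs n_{F_r}))|_{F_r}=0$, which is what makes Lemma~\ref{lm:bubbledof} applicable at all; (iii) the off-diagonal columns are then removed by a downward induction $r=d,\ldots,3$, where the vanishing DoF~\eqref{eq:newdivdivdofk2RT4} on $f_{0:r-2}$ rules out exactly the edge bubbles not already excluded, and the factor-$\lambda$ trick with $\bs q=\sym(\bs p\otimes\bs n_{F_r})$ extends the vanishing from the face into $T$; (iv) the final entry $\tau_{12}$ requires the face $F_0$, unused until then. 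Your sketch concedes this bookkeeping is "the delicate point" but does not supply it, and the ordering issue in (ii) --- diagonal first, tangential-normal second --- is invisible in your outline; without it the surface-divergence reduction of $\tr_2$ on which your appeal to Lemma~\ref{lm:bubbledof} rests is unjustified. So what you have is a correct setup followed by an unproven kernel argument, routed through a reference that does not cover the general-$d$ statement.
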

\begin{proof}
\step{1} {\it Dimension count.}
The number of DoF~\eqref{eq:newdivdivdofk2RT4} is
$$
{2\choose2}+{3\choose2}+\ldots+{d-1\choose2}={d\choose3},
$$
and the number of DoF~\eqref{eq:newdivdivdofk2RT5} is $\dim \ker(\boldsymbol x^{\intercal}\cdot\boldsymbol x)\cap \mathbb P_{1}(T;\mathbb S) = 2{d+1\choose3}$. 
Hence the total number of DoFs~\eqref{eq:newdivdivSk2RT} is
\begin{align*}    
&\quad {d+1\choose2}{d\choose2}+(d+1){d+1\choose2}+(d+1)d+{d\choose3} +2{d+1\choose3} \\
&= {d+1\choose2}{d+2\choose2}+d,
\end{align*}
which is exactly the dimension of $\Sigma_{2^+}(T;\mathbb S)$.

\smallskip

\step{2} {\it Consequence of vanishing DoFs.} Assume $\boldsymbol{\tau}\in \Sigma_{2^+}(T;\mathbb S) = \mathbb P_2(T;\mathbb S) \oplus \boldsymbol x\boldsymbol x^{\intercal}\mathbb H_{1}(T)$, and all the DoFs~\eqref{eq:newdivdivSk2RT} vanish. The vanishing DoFs~\eqref{eq:newdivdivdofk2RT1}-\eqref{eq:newdivdivdofk2RT3} imply the traces of $\bs \tau$ vanish
\begin{equation}\label{eq:newdivdivdofk2RT123}
\tr_1(\bs \tau)=0,\quad \tr_2(\bs \tau)=0,
\end{equation}
and
\begin{equation*}
\quad Q_{\mathcal N_f}(\bs \tau)=0 \textrm{ for } f\in\Delta_r(T), r = 0,\ldots, d-2.
\end{equation*}

Then apply the integration by parts~\eqref{eq:greenidentitydivdiv} and the fact $\div\div\bs \tau \in \mathbb P_1(T)$ to conclude $\div\div\bs \tau=0$ and consequently
$$
\bs \tau \in \mathbb P_2(T; \mathbb S), \quad (\bs \tau, \nabla^2v)_T = 0 \quad \forall~v\in H^2(T).
$$ 
Then the vanishing DoF~\eqref{eq:newdivdivdofk2RT5} and the decomposition~\eqref{eq:decP1} imply
\begin{equation}\label{eq:newdivdivdofk2RT51}
(\boldsymbol \tau, \boldsymbol q)_T =0 \quad\forall~\boldsymbol q\in \mathbb P_{1}(T;\mathbb S). 
\end{equation}

Recall that $\bs \tau$ is represented as a symmetric matrix function $(\tau_{ij}(x))$ in the coordinate $\{\boldsymbol t_1, \ldots, \boldsymbol t_d\}$. We are going to show $\tau_{ij} = 0$ for all $1\leq i\leq j\leq d$. As $\bs \tau$ is quadratic, being orthogonal to $\mathbb P_1(T;\mathbb S)$ is not enough to conclude $\bs \tau = 0$. More conditions will be derived from vanishing DoFs. 

\smallskip

\step{3} {\it Diagonal is zero.} By $\tr_1(\bs \tau)=0$, it follows
\[
\tau_{ii}|_{F_i}=|\nabla\lambda_i|^2\boldsymbol n_i^{\intercal}\bs\tau\boldsymbol n_i|_{F_i}=0, \quad i = 1,\ldots,d.
\]
For each $i=1,\ldots,d$, there exists $p_i \in \mathbb P_{1}(K)$ satisfying $\tau_{ii}=\lambda_ip_i$.
Taking $\boldsymbol q = p_i\boldsymbol n_i\boldsymbol n_i^{\intercal}$ in~\eqref{eq:newdivdivdofk2RT51} will produce 
\begin{equation*}
\tau_{ii}=0, \quad i = 1,\ldots,d.
\end{equation*}
Namely the diagonal of $\boldsymbol \tau$ is zero. Notice that the index $i=1,\ldots d$ not including $i=0$. Will use vanishing $\boldsymbol{n}_{F_0}^{\intercal}\boldsymbol{\tau}\boldsymbol{n}_{F_0}|_{F_0}$ in the last step. 

\smallskip

\step{4} {\it Off-diagonal: the last row/column}. 
By $Q_{\mathcal N_e}(\bs \tau)=0$ in~\eqref{eq:newdivdivdofk2RT123}, we have 
$$
\Pi_{F}(\bs\tau\boldsymbol n_F)\in \mathbb B_2^{\div}(F) \quad \text{ for each } F\in\partial T.
$$
As $\boldsymbol n_{F_i}^{\intercal}\boldsymbol \tau \boldsymbol n_{F_i} = 0$ in $T$ for $i = 1,\ldots,d$, it follows $\partial_{n_{F_i}}(\boldsymbol n_{F_i}^{\intercal}\boldsymbol \tau \boldsymbol n_{F_i})=0$, and $\tr_2(\bs \tau)=0$ becomes
\begin{equation}\label{eq:newdivdivdofk2RT31}
\div_{F_r}(\Pi_{F_r}(\boldsymbol\tau \boldsymbol n_{F_r}))|_{F_r}=0,\quad r= d, \ldots, 1.
\end{equation}
Again $r=0$ is not included in~\eqref{eq:newdivdivdofk2RT31}.

Consider $r=d$ in~\eqref{eq:newdivdivdofk2RT4}. As $\Pi_{F_d}(\bs\tau\boldsymbol n_{F_d})\in \mathbb B_2^{\div}(F_d)$ and 
$$(\Pi_{f_{0:d-2}}\boldsymbol\tau \boldsymbol n_{F_d}, \bs q)_{f_{0:d-2}}=0, \quad \bs q\in \mathbb B_2^{\div}(f_{0:d-2}),$$ applying Lemma \ref{lm:bubbledof} to ($d-1$)-dimensional simplex $F_d$, we conclude $(\Pi_{F_d}\boldsymbol\tau \boldsymbol n_{F_d})|_{F_d}=0$. Together with the vanishing normal-normal component, we have $\boldsymbol\tau \boldsymbol n_{F_d}|_{F_d}=0$.

Then there exists $\boldsymbol p\in \mathbb P_{1}(T;\mathbb R^d)$ such that $\boldsymbol\tau \boldsymbol n_{F_d}=\lambda_d\, \boldsymbol p$.
Take $\bs q=\sym(\boldsymbol p\otimes \boldsymbol n_{F_d})$ in~\eqref{eq:newdivdivdofk2RT51} to conclude $\boldsymbol\tau \boldsymbol n_{F_d}=0$ in $T$. That is the last column of the symmetric matrix representation of $\boldsymbol \tau$ is zero. 



\smallskip
\step{5} {\it Off-diagonal: the $r$-th row/column}. Assume we have proved the $\ell$-th columns are zero for $\ell > r$. By symmetry and vanishing normal-normal component $\bs n^{\intercal}_{F_{\ell}}\bs \tau \bs n_{F_r} = 0$ for $\ell\geq r$. Expand in the edge coordinate $\bs \tau \bs n_{F_{r}} = \sum_{i=1}^{r-1} p_i(x) \bs t_{i}$ with $p_i(x)\in\mathbb P_2(T)$. So
$$
\Pi_{F_{r}}\bs \tau \bs n_{F_{r}}|_{F_r} = \sum_{e\in \Delta_1 ( f_{0:r-1})} c_e b_e(x) \bs t_e\in \mathbb B_2^{\div}(F_{r})  \textrm{ with } c_e\in\mathbb R,
$$
which contains only the edge bubble corresponding to edges of simplex $f_{0:r-1}$. 
Notice that $\Pi_{F_{r}}\bs \tau \bs n_{F_{r}}|_{f_{0:r-2}}\in \mathbb B_2^{\div}(f_{0:r-2})$.
The vanishing~\eqref{eq:newdivdivdofk2RT4} on $f_{0: r-2}$ will further rule out the edge bubbles on $f_{0:r-2}$ and simplify to 
$$
\Pi_{F_{r}}\bs \tau \bs n_{F_{r}}|_{F_r} =  \sum_{i=0}^{r-2} c_i b_{e_{r-1,i}}(x) \bs t_{i,r-1}.
$$ 
Use $- (\div_{F_{r}} \Pi_{F_{r}}\bs \tau \bs n_{F_{r}}, \lambda_i)_{F_{r}} = \frac{|F_{r}|}{(d+1)d}c_i = 0$ to conclude $\Pi_{F_{r}}\bs \tau \bs n_{F_{r}}  |_{F_{r}}= 0$. Together with the vanishing normal-normal component, we have $\boldsymbol\tau \boldsymbol n_{F_{r}}|_{F_{r}}=0$. The rest to prove $\boldsymbol\tau \boldsymbol n_{F_{r}} = 0$ in $T$ is like Step 4. 

%


\smallskip

\step{6} {\it Entry $\tau_{12}$.}
Only one entry $\tau_{12}$ is left, i.e., 
$\bs\tau=2\tau_{12}\sym(\boldsymbol t_1\boldsymbol t_2^{\intercal})$. Multiplying $\bs\tau$ by $\nabla\lambda_0$ from both sides and restricting to $F_0$, we have
\[
\tau_{12}|_{F_0}=\frac{1}{2}|\nabla\lambda_0|^2(\boldsymbol{n}_{F_0}^{\intercal}\boldsymbol{\tau}\boldsymbol{n}_{F_0})|_{F_0}=0.
\]
Again there exists $p\in \mathbb P_{1}(K)$ satisfying $\tau_{12}=\lambda_0p$.
Taking $\boldsymbol q=\sym(\boldsymbol t_1\boldsymbol t_2^{\intercal})p$ in~\eqref{eq:newdivdivdofk2RT51} gives $\tau_{12}=0$. We thus have $\boldsymbol \tau =0$ and consequently prove the uni-solvence.
\end{proof}

\begin{corollary}\label{cor:ncfmdivdivSk2}
The DoFs
\begin{subequations}\label{eq:ncfmdivdivSk2}
\begin{align}
(\tr_e(\bs \tau), q)_e, &\quad q\in \mathbb P_2(e), e\in \Delta_{d-2}(T),\label{eq:ncfmdivdivdofk21}\\
(\bs n^{\intercal}\bs \tau \bs n, q )_F, &\quad q\in \mathbb P_2(F), F\in \partial T,\label{eq:ncfmdivdivdofk22}\\
( \tr_2(\bs \tau), q)_F, &\quad q\in\mathbb P_{1}(F)/\mathbb R, F\in \partial T,\label{eq:ncfmdivdivdofk23}\\
(\Pi_f\boldsymbol \tau\boldsymbol n_{F_r}, \boldsymbol q)_{f}, & \quad \boldsymbol q\in \mathbb B_2^{\div}(f), f= f_{0:r-2}\in \Delta_{r-2}(F_r), r= d,\ldots, 3,
\label{eq:ncfmdivdivdofk24}\\
(\boldsymbol \tau, \boldsymbol q)_T, &\quad \boldsymbol q\in  \ker(\boldsymbol x^{\intercal}\cdot\boldsymbol x)\cap \mathbb P_{1}(T;\mathbb S),\label{eq:ncfmdivdivdofk25}
\\
(\div\div\boldsymbol \tau, q)_T, &\quad  q\in  \mathbb P_{0}(T),\label{eq:ncfmdivdivdofk26}
\end{align}
\end{subequations}
are unisolvent for $\mathbb P_2(T;\mathbb S)$.
\end{corollary}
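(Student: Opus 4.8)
The plan is to mirror the proof of Theorem~\ref{th:k=2}, exploiting the fact that the DoFs~\eqref{eq:ncfmdivdivSk2} differ from~\eqref{eq:newdivdivSk2RT} in only two places: the face DoF~\eqref{eq:newdivdivdofk2RT3} on $\mathbb P_1(F)$ is weakened to $\mathbb P_1(F)/\mathbb R$ in~\eqref{eq:ncfmdivdivdofk23}, and the single interior constraint~\eqref{eq:ncfmdivdivdofk26} on $\div\div\bs\tau$ is added; meanwhile the shape space has shrunk from $\Sigma_{2^+}(T;\mathbb S)=\mathbb P_2(T;\mathbb S)\oplus\boldsymbol x\boldsymbol x^{\intercal}\mathbb H_{1}(T)$ to $\mathbb P_2(T;\mathbb S)$, losing exactly $d$ dimensions. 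First I would do the dimension count: deleting the constant part of $\tr_2(\bs\tau)$ on each of the $d+1$ faces removes $d+1$ functionals while~\eqref{eq:ncfmdivdivdofk26} adds one, a net change of $-d$, so the number of DoFs~\eqref{eq:ncfmdivdivSk2} equals $\dim\Sigma_{2^+}(T;\mathbb S)-d=\dim\mathbb P_2(T;\mathbb S)$ by Theorem~\ref{th:k=2}.

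For unisolvence I would assume $\bs\tau\in\mathbb P_2(T;\mathbb S)$ annihilates all of~\eqref{eq:ncfmdivdivSk2} and aim to land on the hypotheses of Step~2 of Theorem~\ref{th:k=2}. The vanishing of~\eqref{eq:ncfmdivdivdofk21} and~\eqref{eq:ncfmdivdivdofk22} gives $\tr_e(\bs\tau)|_e=0$, $\tr_1(\bs\tau)=0$, and $Q_{\mathcal N_f}(\bs\tau)=0$ exactly as before. The crucial simplification is that for $\bs\tau\in\mathbb P_2(T;\mathbb S)$ one has $\div\div\bs\tau\in\mathbb P_0(T)$, so the added DoF~\eqref{eq:ncfmdivdivdofk26} forces $\div\div\bs\tau=0$ at once, with no appeal to Green's identity. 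What remains is to upgrade the weakened DoF~\eqref{eq:ncfmdivdivdofk23}, which yields only $\tr_2(\bs\tau)|_F=c_F$ for some constants $c_F$, to the full statement $\tr_2(\bs\tau)=0$.

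This recovery is the main obstacle, and I would resolve it by testing the Green's identity~\eqref{eq:greenidentitydivdiv} against $v\in\mathbb P_1(T)$. Using $\div\div\bs\tau=0$, $\nabla^2 v=0$, $\tr_1(\bs\tau)=0$ and $\tr_e(\bs\tau)|_e=0$, every term collapses except the face term, leaving $\sum_{F\in\partial T}c_F\int_F v=0$ for all $v\in\mathbb P_1(T)$. Choosing $v=\lambda_i$ and using $\int_{F_j}\lambda_i=\tfrac{|F_j|}{d}(1-\delta_{ij})$ produces, with $a_j:=c_{F_j}|F_j|/d$, the system $\sum_{j\neq i}a_j=0$ for every $i$, whose only solution is $a_j=0$; hence all $c_F=0$ and $\tr_2(\bs\tau)=0$.

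With $\div\div\bs\tau=0$ and all three traces vanishing, Green's identity also gives $(\bs\tau,\nabla^2 v)_T=0$ for every $v\in H^2(T)$; combining this with the decomposition~\eqref{eq:decP1} and the interior DoF~\eqref{eq:ncfmdivdivdofk25} reproduces $(\bs\tau,\bs q)_T=0$ for all $\bs q\in\mathbb P_1(T;\mathbb S)$, i.e.~\eqref{eq:newdivdivdofk2RT51}. Since the remaining DoFs~\eqref{eq:ncfmdivdivdofk24} and~\eqref{eq:ncfmdivdivdofk25} coincide verbatim with~\eqref{eq:newdivdivdofk2RT4} and~\eqref{eq:newdivdivdofk2RT5}, Steps~3--6 of the proof of Theorem~\ref{th:k=2} then apply without change---diagonal entries, then columns from the last down to the first, then the single entry $\tau_{12}$---to conclude $\bs\tau=0$. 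I expect no further difficulty there, the only genuinely new ingredient being the face-integral argument recovering $\tr_2(\bs\tau)=0$.
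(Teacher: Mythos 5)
Your proposal is correct and follows essentially the same route as the paper's proof: the dimension count against Theorem~\ref{th:k=2}, the observation that the added interior DoF~\eqref{eq:ncfmdivdivdofk26} alone forces $\div\div\bs\tau=0$ since $\div\div\,\mathbb P_2(T;\mathbb S)\subseteq\mathbb P_0(T)$, the recovery of $\tr_2(\bs\tau)=0$ by testing the Green's identity~\eqref{eq:greenidentitydivdiv} (equivalently~\eqref{eq:greenidentityP1}) with $v\in\mathbb P_1(T)$, and the final appeal to Theorem~\ref{th:k=2}. The only differences are presentational: you spell out the barycentric-coordinate computation showing the face constants $c_F$ vanish, and you re-run Steps 3--6 of Theorem~\ref{th:k=2} where the paper simply invokes its unisolvence conclusion as a black box.
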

\begin{proof}
Compared with DoFs~\eqref{eq:newdivdivSk2RT} for $\Sigma_{2^+}(T;\mathbb S)$, the number of DoFs~\eqref{eq:ncfmdivdivSk2} equals $\dim\mathbb P_2(T;\mathbb S)$. Assume $\boldsymbol{\tau}\in\mathbb P_2(T;\mathbb S)$ and all the DoFs~\eqref{eq:ncfmdivdivSk2} vanish. By the vanishing DoFs~\eqref{eq:ncfmdivdivdofk21}-\eqref{eq:ncfmdivdivdofk23} and~\eqref{eq:ncfmdivdivdofk26}, we have $\tr_e(\bs \tau)=0$ for $e\in \Delta_{d-2}(T)$, $(\bs n^{\intercal}\bs \tau \bs n)|_F=0$ and $\tr_2(\bs \tau)|_F\in\mathbb P_0(F)$ for $F\in\partial T$, and $\div\div\boldsymbol \tau=0$. Apply~\eqref{eq:greenidentityP1} to get
\begin{equation*}
\sum_{F\in\partial T}(\tr_2(\boldsymbol{\tau}), v)_F=0,\quad v\in\mathbb P_1(T),
\end{equation*}
which implies $\tr_2(\boldsymbol{\tau})=0$. Finally, $\boldsymbol{\tau}=0$ follows from Theorem~\ref{th:k=2}.
\end{proof}

The finite element space defined by (A.7) is not $H(\div\div)$-conforming as $\tr_2(\tau)$ is not continuous. It will be used in the proof of norm equivalence in Appendix~\ref{apdx:normequiv}. 

\section{Norm Equivalence}\label{apdx:normequiv}
For $u\in \mathring{M}^{-1}_{r,k-1, k, k}$ with $k\geq0$, define a discrete $H^2$-norm: 
\begin{align*}
|u|_{2,h}^2 = &\sum_{T\in \mathcal T_h} \left ( h_T^{-4}\|Q_{r,T}u^{\rm CR} - u_0\|_{0,T}^2+\sum_{F\in \partial T}h_T^{-3}\| Q_{k-1,F}u^{\rm CR} - u_b\|_{0,F}^2\right ) \\
&+\sum_{T\in \mathcal T_h} \left (\sum_{F\in \partial T}h_T^{-1}\| \partial_{n_F}u^{\rm CR} - u_n\|_{0,F}^2 + \sum_{e\in \Delta_{d-2}(T)}h_T^{-2}\|Q_{k,e}u^{\rm CR} - u_e\|_{0,e}^2 \right ),
\end{align*}
where $u^{\rm CR} = I^{\rm CR}(Q_M^{-1}u)$ with $I^{\rm CR}$ being the interpolation operator to the nonconforming linear element space and $Q_M^{-1}$ is the bijection from $\mathring{M}^{-1}_{r,k-1, k, k}$ to $\mathring{V}_{k+2}^{\rm VEM}$.
When $k=0,1, r<0$, it is simplified to
\begin{equation*}
|u|_{2,h}^2 = \sum_{T\in \mathcal T_h} \left (\sum_{F\in \partial T}h_T^{-1}\| \partial_{n_F}u^{\rm CR} - u_n\|_{0,F}^2 + \sum_{e\in \Delta_{d-2}(T)}h_T^{-2}\|Q_{k,e}u^{\rm CR} - u_e\|_{0,e}^2 \right ).
\end{equation*}

\begin{lemma}\label{lm:normequivalence}
On the space $\mathring{M}^{-1}_{r,k-1, k, k}$, we have the norm equivalence
\begin{equation}
\label{eq:normequivH2h1}    
\|\nabla_w^2 u \|_0 \eqsim |u|_{2,h}, \quad u\in \mathring{M}^{-1}_{r,k-1, k, k}\quad \text{ for } k\geq0. 
\end{equation}
\end{lemma}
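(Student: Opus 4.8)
The plan is to reduce the global equivalence to a scale-invariant statement on a single reference simplex, and there to identify it with an equality of null spaces of two seminorms.

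\textbf{Localization and scaling.} First I would observe that both quantities are sums of element contributions, $\|\nabla_w^2 u\|_0^2=\sum_{T}\|\nabla_w^2 u\|_{0,T}^2$ and $|u|_{2,h}^2=\sum_T|u|_{2,T}^2$, and that each local contribution depends only on the data of $u$ attached to $T$ and its faces and edges. For $\|\nabla_w^2 u\|_{0,T}^2$ this is immediate from \eqref{eq:weakhess}, since $\nabla_w^2 u|_T$ is determined by $u_0|_T$, $u_b$, $u_n$ on $\partial T$ and $u_e$ on the edges of $T$. For $|u|_{2,T}^2$ I would note that $u^{\rm CR}|_T=I^{\rm CR}(Q_M^{-1}u)|_T$ is fixed by the face averages $\int_F (Q_M^{-1}u)=\int_F u_b$ (and by the locally computable $\int_F$ in the exceptional case $k=0$), hence is locally determined as well. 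It therefore suffices to prove $\|\nabla_w^2 u\|_{0,T}\eqsim |u|_{2,T}$ on each $T$ with constants independent of $h_T$. Passing to the reference simplex $\hat T$ by an affine map, a term-by-term computation shows that every summand of $|u|_{2,T}^2$ and $\|\nabla_w^2 u\|_{0,T}^2$ scales like $h_T^{d-4}$; this is precisely why the weights $h_T^{-4},h_T^{-3},h_T^{-1},h_T^{-2}$ were chosen, so that value-type data on $T$, $F$, $e$ and normal-derivative-type data on $F$ all acquire the common scaling of a second derivative. Shape regularity then removes the dependence on the Jacobian, and the claim reduces to the equivalence on the fixed simplex $\hat T$.

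\textbf{Reduction to equal kernels.} On $\hat T$ the local data $\hat u=(\hat u_0,\hat u_b,\hat u_n,\hat u_e)$ ranges over a finite-dimensional space, and both $\hat u\mapsto\|\nabla_w^2\hat u\|_{0,\hat T}$ and $\hat u\mapsto|\hat u|_{2,\hat T}$ are seminorms on it. By the equivalence of seminorms on a finite-dimensional space it is enough to show that they share the same null space.

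\textbf{Computation of the kernel.} The easy inclusion is that $|\hat u|_{2,\hat T}=0$ forces $\hat u=Q_M w$ with $w:=\hat u^{\rm CR}$ affine; then \eqref{weakhessVEMProjcd} (equivalently \eqref{weakhessProjcd}) gives $\nabla_w^2\hat u=Q_{\Sigma}\nabla^2 w=0$. For the converse I would assume $\nabla_w^2\hat u=0$ and exploit the unisolvence of the DoFs \eqref{eq:newdivdivS} for $\Sigma_{k,r}(\hat T;\mathbb S)$. Testing \eqref{eq:weakhess} against tensors $\bs\sigma$ whose face and edge traces $\tr_2(\bs\sigma),\ \bs n^{\intercal}\bs\sigma\bs n,\ \tr_e(\bs\sigma)$ all vanish, the Green identity \eqref{eq:greenidentitydivdiv} forces $\div\div\bs\sigma\perp\mathbb P_1(\hat T)$ and lets such $\div\div\bs\sigma$ realize all of $\mathbb P_r(\hat T)\cap\mathbb P_1(\hat T)^{\perp}$; hence $(\hat u_0,q)_{\hat T}=0$ for all such $q$, i.e. $\hat u_0\in\mathbb P_1(\hat T)$. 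With $\hat u_0$ affine I would switch to the integration-by-parts form \eqref{eq:weakhess2}, where the volume term $(\nabla_h^2\hat u_0,\bs\sigma)_{\hat T}$ drops out, and use that $\tr_2(\bs\sigma)\in\mathbb P_{k-1}(F)$, $\bs n^{\intercal}\bs\sigma\bs n\in\mathbb P_k(F)$ and $\tr_e(\bs\sigma)\in\mathbb P_k(e)$ may be prescribed independently (DoFs \eqref{eq:newdivdivdof3}, \eqref{eq:newdivdivdof2}, \eqref{eq:newdivdivdof1}) to read off $\hat u_b=Q_{k-1,F}\hat u_0$, $\hat u_n=Q_{k,F}\partial_{n_F}\hat u_0$ and $\hat u_e=Q_{k,e}(\hat u_0|_e)$. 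Thus $\hat u=Q_M\hat u_0$ with $\hat u_0$ affine, so $Q_M^{-1}\hat u=\hat u_0$ and $\hat u^{\rm CR}=I^{\rm CR}\hat u_0=\hat u_0$; substituting back shows every summand of $|\hat u|_{2,\hat T}^2$ vanishes. The two kernels therefore coincide, completing the reference-element equivalence and hence \eqref{eq:normequivH2h1}.

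\textbf{Remarks on the obstacle and low-order cases.} The delicate point is the converse kernel inclusion: one must combine the unisolvence of the dual element with the constraint that, for tensors with vanishing boundary traces, $\div\div\bs\sigma$ is automatically $L^2$-orthogonal to $\mathbb P_1$, which is exactly what prevents the misleading conclusion $\hat u_0=0$ and instead yields $\hat u_0$ affine. The low-order regimes $k=0,1$, where $r<0$ and only the face-normal and edge terms of $|u|_{2,h}$ survive, follow the same template, with the absent interior component simply omitted and $Q_M^{-1}$, $I^{\rm CR}$ interpreted through the Morley–Wang–Xu identification used earlier.
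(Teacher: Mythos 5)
Your overall strategy---localize to one element, rescale, and prove equivalence of two seminorms on a finite-dimensional space by showing their kernels coincide---is sound and genuinely different from the paper's proof, which instead constructs, via unisolvent DoF sets, an explicit test tensor $\bs\tau$ whose traces equal the weighted differences appearing in $|u|_{2,h}$ and concludes by Cauchy--Schwarz from \eqref{weakhessVCR}. For $k\geq 3$ (and the Raviart--Thomas type spaces with $k\geq2$) your kernel computation can indeed be carried out with the DoFs \eqref{eq:newdivdivS}, although two steps are asserted rather than proved: (i) that $\div\div$ maps the trace-free subspace onto $\mathbb P_r(\hat T)\cap\mathbb P_1(\hat T)^{\perp}$ (this does follow from DoFs \eqref{eq:newdivdivdof5}--\eqref{eq:newdivdivdof6} together with the decomposition of $\mathbb P_{k-2}(T;\mathbb S)$ underlying them, but it needs an argument); and (ii) the reduction to a fixed reference simplex by an affine map: a general affine map does not preserve normal vectors, so $\nabla_w^2$ and the traces $\tr_2$, $\bs n^{\intercal}\cdot\bs n$, $\tr_e$ do not pull back in the form you use; one should rescale by a dilation and invoke compactness over shape-regular unit simplices instead.

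The genuine gap is in the low-order cases $k\leq 2$ with $r=k-2$, which the lemma includes since it asserts the equivalence for all $k\geq0$. Your key step is that $\tr_2(\bs\sigma)\in\mathbb P_{k-1}(F)$, $\bs n^{\intercal}\bs\sigma\bs n\in\mathbb P_k(F)$ and $\tr_e(\bs\sigma)\in\mathbb P_k(e)$ ``may be prescribed independently.'' By Remark \ref{rm:k=2} this is false for $k\leq 2$: identity \eqref{eq:greenidentityP1} shows that for $\bs\sigma\in\mathbb P_k(T;\mathbb S)$ with $k\leq 2$ these traces satisfy a nontrivial linear compatibility condition for every $v\in\mathbb P_1(T)\cap L_0^2(T)$ (for every $v\in\mathbb P_1(T)$ when $k\leq1$), i.e.\ the DoFs \eqref{eq:newdivdivdof1}--\eqref{eq:newdivdivdof3} are linearly dependent and no unisolvent system of the kind you invoke exists. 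Consequently, from $\nabla_w^2\hat u=0$ you may only conclude that the data differences annihilate this constrained trace space, which determines them only up to the traces of an affine function; to close the argument one must identify the annihilator with $\{(Q_{k-1,F}v,\,Q_{k,F}\partial_{n_F}v,\,Q_{k,e}v):v\in\mathbb P_1(\hat T)\}$ by a codimension count and absorb that affine function into $\hat u_0$. This is fixable, but it is a different argument from your template, and it is precisely why the paper treats $k=0$ through the Morley--Wang--Xu identification ($\nabla_w^2u=\nabla_h^2Q_M^{-1}u$) and $k=1,2$ through the modified DoF system \eqref{eq:ncfmdivdivSk2}, in which $\tr_2$ is tested only against $\mathbb P_1(F)/\mathbb R$ and a separate divdiv DoF \eqref{eq:ncfmdivdivdofk26} is added. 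Your closing claim that the low-order regimes ``follow the same template'' therefore does not hold as stated.
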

\begin{proof}
By~\eqref{eq:weakhess} and the Green's identity~\eqref{eq:greenidentitydivdiv}, for $\bs \tau\in\Sigma_{k,r}(T;\mathbb S)$ we have
\begin{align}
\notag
&(\nabla_w^2 u, \bs \tau )_T=  (u_0-Q_{r,T}u^{\rm CR}, (\div\div)_T \bs \tau)_T- (u_b-Q_{k-1,F}u^{\rm CR}, \tr_2(\bs \tau))_{\partial T} \\
\label{weakhessVCR}
&\; + (u_n\bs n_F\cdot\bs n-\partial_nu^{\rm CR},\bs n^{\intercal}\bs \tau\bs n)_{\partial T} + \sum_{e\in \Delta_{d-2}(T)}(u_e-Q_{k,e}u^{\rm CR}, [\boldsymbol n_{F,e}^{\intercal}\boldsymbol \tau \boldsymbol n_{\partial T}]|_e)_e. 
\end{align}
Then $\|\nabla_w^2 u \|_0 \lesssim |u|_{2,h}$ follows from the Cauchy-Schwarz inequality, and the inverse trace inequality. 

Next we prove $|u|_{2,h}\lesssim \|\nabla_w^2 u \|_0$.

\step{1} First consider $k=0$. By~\eqref{weakhessVEMProjcd} and the fact $Q_M^{-1}u\in\mathring{V}^{\rm MWX}_2$, $\nabla_w^2u = \nabla_h^2Q_M^{-1}u$. It follows from the norm equivalence and the error estimate of $I^{\rm CR}$ that
\begin{equation*}
|u|_{2,h}^2\lesssim \sum_{T\in\mathcal T_h}h_T^{-4}\|Q_M^{-1}u-u^{\rm CR}\|_{0,T}^2\lesssim \|\nabla_h^2Q_M^{-1}u\|_0^2=\|\nabla_w^2u\|_0^2.
\end{equation*}

\step{2} Next consider $k=1,2$, and $r=k-2$.
By the DoFs~\eqref{eq:ncfmdivdivSk2}, 
we can construct $\boldsymbol{\tau}\in \Sigma_{k,r}(T;\mathbb S)$ such that 
\begin{equation*}
 \begin{aligned}
[\boldsymbol n_{F,e}^{\intercal}\boldsymbol \tau \boldsymbol n_{\partial T}]|_e &= h_T^{-2}(u_e - u^{\rm CR})|_e, \qquad\qquad\quad\; e\in\Delta_{d-2}(T),\\
(\bs n^{\intercal}\bs \tau\bs n)|_F &=  h_T^{-1}(u_n\bs n_F\cdot\bs n - \partial_n u^{\rm CR} )|_F, \quad F\in\Delta_{d-1}(T), \\
(I-Q_{0,F})\tr_2(\boldsymbol{\tau})|_F &=  h_T^{-3}(u^{\rm CR}-u_b)|_F, \qquad\qquad\;\;\;\, F\in\Delta_{d-1}(T)\textrm{ if } k =  2,\\
\div\div_T\boldsymbol{\tau} &= h_T^{-4}(u_0-Q_{r,T}u^{\rm CR}), \qquad\quad\;\;\textrm{ if } k = 2,
\end{aligned}
\end{equation*}
and all the other DoFs in~\eqref{eq:ncfmdivdivSk2} vanish. 
By the norm equivalence and the scaling argument, we have
\begin{align*}
\|\boldsymbol{\tau}\|_{0,T}^2\lesssim {}&h_T^{-4}\|Q_{r,T}u^{\rm CR} - u_0\|_{0,T}^2 + h_T^{-3}\| Q_{k-1,F}u^{\rm CR} - u_b\|_{0,\partial T}^2  \\
& + h_T^{-1}\|u_n - \partial_{n_F} u^{\rm CR}\|_{0,\partial T}^2 + \sum_{e\in \Delta_{d-2}(T)}h_T^{-2}\|u^{\rm CR} - u_e\|_{0,e}^2.
\end{align*}
Substitude into~\eqref{weakhessVCR}, we get
\begin{align*}    
&h_T^{-4}\|Q_{r,T}u^{\rm CR} - u_0\|_{0,T}^2+h_T^{-3}\| Q_{k-1,F}u^{\rm CR} - u_b\|_{0,\partial T}^2 + h_T^{-1}\| \partial_{n_F}u^{\rm CR} - u_n\|_{0,\partial T}^2  \\
&\qquad\qquad+ \sum_{e\in \Delta_{d-2}(T)}h_T^{-2}\|u^{\rm CR} - u_e\|_{0,e}^2   = (\nabla_w^2 u, \bs\tau)_T \leq \|\nabla_w^2 u \|_{0,T}\|\bs\tau\|_{0,T}.
\end{align*}
We conclude $|u|_{2,h}\lesssim \|\nabla_w^2 u \|_0$ by combining the last two inequalities.

\step{3} Consider $k\geq2$ and $r\geq1$.
By the DoFs~\eqref{eq:newdivdivS} or~\eqref{eq:newdivdivSk2RT},
we can construct $\boldsymbol{\tau}\in \mathbb P_k(T; \mathbb S)$ such that 
\begin{equation}\label{eq:normequivalencetau}
 \begin{aligned}
[\boldsymbol n_{F,e}^{\intercal}\boldsymbol \tau \boldsymbol n_{\partial T}]|_e &= h_T^{-2}(u_e - u_0)|_e, \qquad\qquad\quad\; e\in\Delta_{d-2}(T),\\
(\bs n^{\intercal}\bs \tau\bs n)|_F &=  h_T^{-1}(u_n\bs n_F\cdot\bs n - \partial_n u_0 )|_F, \quad F\in\Delta_{d-1}(T), \\
\tr_2(\boldsymbol{\tau})|_F &=  h_T^{-3}(u_0-u_b)|_F, \qquad\qquad\quad F\in\Delta_{d-1}(T), \\
(\boldsymbol{\tau}, \boldsymbol{q})_T &= (\nabla_h^2 u_0, \boldsymbol{q})_T, \qquad\qquad\qquad\;\;\; \boldsymbol{q}\in \nabla^2\mathbb P_{r}(T),
\end{aligned}
\end{equation}
and all the other DoFs in~\eqref{eq:newdivdivS} and~\eqref{eq:newdivdivSk2RT} vanish. 
By the norm equivalence and the scaling argument, we have
\begin{align*}
\|\boldsymbol{\tau}\|_{0,T}^2\lesssim {}&\|\nabla_h^2 u_0\|_{0,T}^2 + h_T^{-3}\| Q_{k-1,F}u_0 - u_b\|_{0,\partial T}^2 + h_T^{-1}\|u_n - \partial_{n_F} u_0\|_{0,\partial T}^2 \\
& + \sum_{e\in \Delta_{d-2}(T)}h_T^{-2}\|u_0 - u_e\|_{0,e}^2.
\end{align*}
By~\eqref{eq:weakhess2} we get
\begin{align*}    
\|\nabla_h^2 u_0\|_{0,T}^2 &+ h_T^{-3}\|u_0 - u_b\|_{0,\partial T}^2 + h_T^{-1}\| \partial_{n_F}u_0 - u_n\|_{0,\partial T}^2  \\
&+ \sum_{e\in \Delta_{d-2}(T)}h_T^{-2}\|u_0 - u_e\|_{0,e}^2   = (\nabla_w^2 u, \bs\tau)_T \leq \|\nabla_w^2 u \|_{0,T}\|\bs\tau\|_{0,T}.
\end{align*}
Finally, we obtain $|u|_{2,h}  \lesssim \|\nabla_w^2 u \|_0 $ by combining the last two inequalities. 
\end{proof}

\begin{lemma}\label{lm:normequivalencevem}
We have the norm equivalence
\begin{equation}
\label{app:VEMnormequivH2}
\|\nabla_w^2 Q_Mv \|_0= \|Q_{\Sigma} \nabla_h^2 v\|_0 \eqsim \|\nabla_h^2 v\|_0, \quad v\in \mathring{V}_{k+2}^{\rm VEM}, k\geq 0.
\end{equation}
\end{lemma}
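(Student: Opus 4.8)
The plan is to read off the asserted chain of equivalences by combining the projection identity \eqref{weakhessVEMProjcd} with the norm equivalence already proved in Lemma~\ref{lm:normequivalence}, and then to isolate the one genuinely new ingredient, a norm equivalence internal to the virtual element space. The leftmost equality $\|\nabla_w^2 Q_M v\|_0=\|Q_\Sigma\nabla_h^2 v\|_0$ is nothing but \eqref{weakhessVEMProjcd}, so no work is needed there. Setting $u=Q_M v\in\mathring{M}^{-1}_{r,k-1,k,k}$ (legitimate since $Q_M$ is a bijection) and invoking \eqref{eq:normequivH2h1} gives $\|\nabla_w^2 Q_M v\|_0\eqsim |Q_M v|_{2,h}$. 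Thus the entire statement reduces to establishing the single equivalence
\[
|Q_M v|_{2,h}\eqsim \|\nabla_h^2 v\|_0,\qquad v\in\mathring{V}_{k+2}^{\rm VEM}.
\]

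For the upper bound $|Q_M v|_{2,h}\lesssim\|\nabla_h^2 v\|_0$ I would unwind the definition of $|\cdot|_{2,h}$ at $u=Q_M v$, where $u^{\rm CR}=I^{\rm CR}v$ and the components of $u$ are the $L^2$-projections $Q_{r,T}v$, $Q_{k-1,F}v$, $Q_{k,F}(\partial_{n_F}v)$, $Q_{k,e}v$. Each term is then a scaled projection of $(I-I^{\rm CR})v$ or of its normal derivative (using that $\partial_{n_F}I^{\rm CR}v$ is already constant, hence equal to its own face projection), so $L^2$-stability of the projectors reduces every term to $h_T^{-s}\|(I-I^{\rm CR})v\|$ in the appropriate interior, face, or edge norm. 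Since $I^{\rm CR}$ reproduces affine functions, a Bramble--Hilbert estimate together with the scaled trace inequalities bounds each term by $|v|_{2,T}^2$; summing over $T$ gives the claim. This direction is also consistent with the trivial bound $\|Q_\Sigma\nabla_h^2 v\|_0\le\|\nabla_h^2 v\|_0$ coming from $Q_\Sigma$ being an orthogonal projection.

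The reverse bound $\|\nabla_h^2 v\|_0\lesssim|Q_M v|_{2,h}$ is the crux and I would prove it element by element. The point is that the local discrete seminorm on the right vanishes exactly when all degrees of freedom of $v|_T$ coincide with those of $I^{\rm CR}v|_T$; because $I^{\rm CR}v|_T$ is affine and hence belongs to $V_{k+2}^{\rm VEM}(T)$, the unisolvence of the virtual element degrees of freedom from \cite{ChenHuang2020a} then forces $v|_T=I^{\rm CR}v|_T$, so $\nabla^2 v|_T=0$. Consequently the local discrete seminorm is a seminorm on $V_{k+2}^{\rm VEM}(T)$ controlling $|v|_{2,T}$ modulo affine functions, and a scaling argument transfers the reference-element estimate to $T$. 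The main obstacle is precisely this local estimate: since $V_{k+2}^{\rm VEM}(T)$ is infinite-dimensional, the scaling/compactness reasoning is not routine and must exploit the structural constraints of the virtual space, namely $\Delta^2 v\in\mathbb P_r(T)$ and the polynomial traces of $\nabla^2 v$, to bound the non-polynomial part of $v$ by its boundary data; this is exactly the stability analysis carried out in \cite{ChenHuang2020a}, which I would cite. Combining the two bounds with the two preliminary equalities then yields $\|Q_\Sigma\nabla_h^2 v\|_0\eqsim\|\nabla_h^2 v\|_0$ and completes the proof.
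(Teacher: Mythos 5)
Your proposal is correct and follows the same skeleton as the paper's proof: both reduce the statement, via \eqref{weakhessVEMProjcd} and the norm equivalence \eqref{eq:normequivH2h1} of Lemma~\ref{lm:normequivalence}, to the single equivalence $|Q_Mv|_{2,h}\eqsim\|\nabla_h^2 v\|_0$ on $\mathring{V}_{k+2}^{\rm VEM}$, and both ultimately lean on the local stability theory of $V_{k+2}^{\rm VEM}(T)$ from \cite{ChenHuang2020a} for the step that cannot be done by hand. The mechanics differ in how the two bounds are organized. The paper applies the DoF-versus-$L^2$ norm equivalence on the local VEM space to $v-v^{\rm CR}\in V_{k+2}^{\rm VEM}(T)$ (note $v^{\rm CR}$ is affine, hence lies in the local space) to collapse the whole discrete seminorm to $\sum_{T}h_T^{-4}\|v-v^{\rm CR}\|_{0,T}^2$, after which both directions fall out at once: the Crouzeix--Raviart interpolation estimate gives $|Q_Mv|_{2,h}\lesssim\|\nabla_h^2v\|_0$, and the inverse inequality on the VEM space, $\|\nabla^2(v-v^{\rm CR})\|_{0,T}\lesssim h_T^{-2}\|v-v^{\rm CR}\|_{0,T}$, gives the reverse. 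Your upper bound is more elementary (projection stability, scaled trace inequalities, Bramble--Hilbert; no VEM-specific input), which is fine; your lower bound, by contrast, stays qualitative---unisolvence of the VEM DoFs only shows the discrete seminorm is definite modulo affines---and you then defer the quantitative, $h$-uniform estimate to \cite{ChenHuang2020a}, which is precisely the inverse-inequality/norm-equivalence content the paper invokes, so no gap results. One factual slip should be corrected: $V_{k+2}^{\rm VEM}(T)$ is \emph{finite}-dimensional (its well-posedness in \cite{ChenHuang2020a} means its dimension equals the number of DoFs); the genuine obstruction to a routine scaling argument is not infinite dimension but that the space is non-polynomial and does not pull back to a fixed space on a reference element under affine maps, so the stability constants must be established directly, as in the cited work. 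Finally, the paper dispatches $k=0$ separately since there $\nabla_w^2Q_Mv=Q_\Sigma\nabla_h^2v=\nabla_h^2v$ exactly; your uniform treatment also covers it, so this is only a cosmetic difference.
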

\begin{proof}
First~\eqref{app:VEMnormequivH2} is obviously true for $k=0$, since $\nabla_w^2 Q_M v = Q_{\Sigma} \nabla_h^2 v=\nabla_h^2 v$.
Then we focus on $k\geq1$.

By the norm equivalence~\eqref{eq:normequivH2h1}, it suffices to prove
\begin{equation}\label{eq:VEMnormequivH3}
|Q_{M}v|_{2,h} \eqsim \|\nabla_h^2 v\|_0, \quad v\in \mathring{V}_{k+2}^{\rm VEM}.
\end{equation}
By the definition of $|Q_{M}v|_{2,h}$ and $v^{\rm CR}$, and the norm equivalence on $V_{k+2}^{\rm VEM}(T)$, 
\begin{align*}
|Q_{M}v|_{2,h}^2 &= \sum_{T\in \mathcal T_h} h_T^{-4}\|Q_{r,T}(v^{\rm CR} - v)\|_{0,T}^2+\sum_{T\in \mathcal T_h}\sum_{e\in \Delta_{d-2}(T)}h_T^{-2}\|Q_{k,e}(v^{\rm CR} - v)\|_{0,e}^2 \\
&\;+\sum_{T\in \mathcal T_h}\sum_{F\in \partial T}(h_T^{-1}\| Q_{k,F}\partial_{n_F}(v^{\rm CR} - v)\|_{0,F}^2 + h_T^{-3}\| Q_{k-1,F}(v^{\rm CR} - v)\|_{0,F}^2) \\
&\eqsim\sum_{T\in \mathcal T_h} h_T^{-4}\|v^{\rm CR} - v\|_{0,T}^2.
\end{align*}
Therefore,~\eqref{eq:VEMnormequivH3} follows from the inverse inequality and the interpolation estimate of the nonconforming linear element.
\end{proof}


\bibliographystyle{abbrv}
\bibliography{paper,FEMcomplex,FEEC,DGlocal,WGlocal,VEM}

\begin{thebibliography}{10}

\bibitem{ArnoldBrezzi1985}
D.~N. Arnold and F.~Brezzi.
\newblock Mixed and nonconforming finite element methods: implementation,
  postprocessing and error estimates.
\newblock {\em RAIRO Mod\'el. Math. Anal. Num\'er.}, 19(1):7--32, 1985.

\bibitem{ArnoldFalkWinther2006}
D.~N. Arnold, R.~S. Falk, and R.~Winther.
\newblock Finite element exterior calculus, homological techniques, and
  applications.
\newblock {\em Acta Numer.}, 15:1--155, 2006.

\bibitem{ArnoldFalkWinther2009}
D.~N. Arnold, R.~S. Falk, and R.~Winther.
\newblock Geometric decompositions and local bases for spaces of finite element
  differential forms.
\newblock {\em Computer Methods in Applied Mechanics and Engineering},
  198(21-26):1660--1672, May 2009.

\bibitem{Arnold;Hu:2020Complexes}
D.~N. Arnold and K.~Hu.
\newblock Complexes from complexes.
\newblock {\em Found. Comput. Math.}, 2021.

\bibitem{BassiRebay1997}
F.~Bassi and S.~Rebay.
\newblock A high-order accurate discontinuous finite element method for the
  numerical solution of the compressible {N}avier-{S}tokes equations.
\newblock {\em J. Comput. Phys.}, 131(2):267--279, 1997.

\bibitem{BlumRannacher1980}
H.~Blum and R.~Rannacher.
\newblock On the boundary value problem of the biharmonic operator on domains
  with angular corners.
\newblock {\em Math. Methods Appl. Sci.}, 2(4):556--581, 1980.

\bibitem{BrennerSung2005}
S.~C. Brenner and L.-Y. Sung.
\newblock {$C\sp 0$} interior penalty methods for fourth order elliptic
  boundary value problems on polygonal domains.
\newblock {\em J. Sci. Comput.}, 22/23:83--118, 2005.

\bibitem{BrezziManziniMariniPietraEtAl2000}
F.~Brezzi, G.~Manzini, D.~Marini, P.~Pietra, and A.~Russo.
\newblock Discontinuous {G}alerkin approximations for elliptic problems.
\newblock {\em Numer. Methods Partial Differential Equations}, 16(4):365--378,
  2000.

\bibitem{ChenHuHuang2018}
L.~Chen, J.~Hu, and X.~Huang.
\newblock Multigrid methods for {H}ellan-{H}errmann-{J}ohnson mixed method of
  {K}irchhoff plate bending problems.
\newblock {\em J. Sci. Comput.}, 76(2):673--696, 2018.

\bibitem{ChenHuang2020}
L.~Chen and X.~Huang.
\newblock Finite elements for divdiv-conforming symmetric tensors.
\newblock {\em arXiv preprint arXiv:2005.01271}, 2020.

\bibitem{ChenHuang2020a}
L.~Chen and X.~Huang.
\newblock Nonconforming virtual element method for {$2m$}th order partial
  differential equations in {$\Bbb{R}^n$}.
\newblock {\em Math. Comp.}, 89(324):1711--1744, 2020.

\bibitem{ChenHuang2021Geometric}
L.~Chen and X.~Huang.
\newblock Geometric decompositions of div-conforming finite element tensors.
\newblock {\em arXiv preprint arXiv:2112.14351}, 2021.

\bibitem{ChenHuang2022}
L.~Chen and X.~Huang.
\newblock Complexes from complexes: Finite element complexes in three
  dimensions.
\newblock {\em arXiv preprint arXiv:2211.08656}, 2022.

\bibitem{ChenHuangDivRn2022}
L.~Chen and X.~Huang.
\newblock Finite elements for div- and divdiv-conforming symmetric tensors in
  arbitrary dimension.
\newblock {\em SIAM J. Numer. Anal.}, 60(4):1932--1961, 2022.

\bibitem{Chen;Huang:2020Finite}
L.~Chen and X.~Huang.
\newblock Finite elements for {${\rm div\,div}$} conforming symmetric tensors
  in three dimensions.
\newblock {\em Math. Comp.}, 91(335):1107--1142, 2022.

\bibitem{christiansenFiniteElementSystems2023}
S.~H. Christiansen and K.~Hu.
\newblock Finite element systems for vector bundles: elasticity and curvature.
\newblock {\em Found. Comput. Math.}, 23(2):545--596, 2023.

\bibitem{Ciarlet1978}
P.~G. Ciarlet.
\newblock {\em The finite element method for elliptic problems}.
\newblock North-Holland Publishing Co., Amsterdam, 1978.

\bibitem{CockbuGopala2005Incompressible}
B.~Cockburn and J.~Gopalakrishnan.
\newblock Incompressible finite elements via hybridization. {II}. {T}he
  {S}tokes system in three space dimensions.
\newblock {\em SIAM J. Numer. Anal.}, 43(4):1651--1672, 2005.

\bibitem{Comodi1989}
M.~I. Comodi.
\newblock The {H}ellan-{H}errmann-{J}ohnson method: some new error estimates
  and postprocessing.
\newblock {\em Math. Comp.}, 52(185):17--29, 1989.

\bibitem{EngelGarikipatiHughesLarsonEtAl2002}
G.~Engel, K.~Garikipati, T.~J.~R. Hughes, M.~G. Larson, L.~Mazzei, and R.~L.
  Taylor.
\newblock Continuous/discontinuous finite element approximations of
  fourth-order elliptic problems in structural and continuum mechanics with
  applications to thin beams and plates, and strain gradient elasticity.
\newblock {\em Comput. Methods Appl. Mech. Engrg.}, 191(34):3669--3750, 2002.

\bibitem{fraeijs1965displacement}
B.~Fraeijs~de Veubeke.
\newblock Displacement and equilibrium models in the finite element method.
\newblock In O.~Zienkiewicz and G.~Holister, editors, {\em Stress analysis}.
  John Wiley \& Sons, 1965.

\bibitem{FuehrerHeuer2023}
T.~F\"{u}hrer and N.~Heuer.
\newblock Mixed finite elements for {K}irchhoff-{L}ove plate bending.
\newblock {\em arXiv preprint arXiv:2305.08693}, 2023.

\bibitem{Fuhrer;Heuer;Niemi:2019ultraweak}
T.~F\"{u}hrer, N.~Heuer, and A.~H. Niemi.
\newblock An ultraweak formulation of the {K}irchhoff-{L}ove plate bending
  model and {DPG} approximation.
\newblock {\em Math. Comp.}, 88(318):1587--1619, 2019.

\bibitem{Hellan1967}
K.~Hellan.
\newblock {\em Analysis of elastic plates in flexure by a simplified finite
  element method}, volume~46 of {\em Acta polytechnica Scandinavica. Civil
  engineering and building construction series}.
\newblock Norges tekniske vitenskapsakademi, Trondheim, 1967.

\bibitem{Herrmann1967}
L.~R. Herrmann.
\newblock Finite element bending analysis for plates.
\newblock {\em Journal of the Engineering Mechanics Division}, 93(EM5):49--83,
  1967.

\bibitem{Hu;Liang;Ma:2021Finite}
J.~Hu, Y.~Liang, and R.~Ma.
\newblock Conforming finite element divdiv complexes and the application for
  the linearized {E}instein--{B}ianchi system.
\newblock {\em SIAM J. Numer. Anal.}, 60(3):1307--1330, 2022.

\bibitem{Hu;Liang;Ma;Zhang:2022conforming}
J.~Hu, Y.~Liang, R.~Ma, and M.~Zhang.
\newblock New conforming finite element divdiv complexes in three dimensions.
\newblock {\em arXiv preprint arXiv:2204.07895}, 2022.

\bibitem{Hu;Ma;Zhang:2020family}
J.~Hu, R.~Ma, and M.~Zhang.
\newblock A family of mixed finite elements for the biharmonic equations on
  triangular and tetrahedral grids.
\newblock {\em Sci. China Math.}, 64(12):2793--2816, 2021.

\bibitem{HuangHuang2014}
X.~Huang and J.~Huang.
\newblock A reduced local {$C^0$} discontinuous {G}alerkin method for
  {K}irchhoff plates.
\newblock {\em Numer. Methods Partial Differential Equations},
  30(6):1902--1930, 2014.

\bibitem{Johnson1973}
C.~Johnson.
\newblock On the convergence of a mixed finite-element method for plate bending
  problems.
\newblock {\em Numer. Math.}, 21:43--62, 1973.

\bibitem{Nedelec:1986family}
J.-C. N{\'e}d{\'e}lec.
\newblock A new family of mixed finite elements in {${\bf R}^3$}.
\newblock {\em Numer. Math.}, 50(1):57--81, 1986.

\bibitem{PaulyZulehner2020}
D.~Pauly and W.~Zulehner.
\newblock The div{D}iv-complex and applications to biharmonic equations.
\newblock {\em Appl. Anal.}, 99(9):1579--1630, 2020.

\bibitem{PechsteinSchoeberl2011}
A.~S. Pechstein and J.~Sch{\"o}berl.
\newblock Tangential-displacement and normal-normal-stress continuous mixed
  finite elements for elasticity.
\newblock {\em Math. Models Methods Appl. Sci.}, 21(8):1761--1782, 2011.

\bibitem{Pechstein;Schoberl:2018analysis}
A.~S. Pechstein and J.~Sch{\"o}berl.
\newblock An analysis of the {TDNNS} method using natural norms.
\newblock {\em Numerische Mathematik}, 139(1):93--120, 2018.

\bibitem{Raviart.P;Thomas.J1977}
P.-A. Raviart and J.~M. Thomas.
\newblock A mixed finite element method for 2nd order elliptic problems.
\newblock In {\em Mathematical aspects of finite element methods ({P}roc.
  {C}onf., {C}onsiglio {N}az. delle {R}icerche ({C}.{N}.{R}.), {R}ome, 1975)},
  pages 292--315. Lecture Notes in Math., Vol. 606. Springer, Berlin, 1977.

\bibitem{Stenberg1991}
R.~Stenberg.
\newblock Postprocessing schemes for some mixed finite elements.
\newblock {\em RAIRO Mod\'el. Math. Anal. Num\'er.}, 25(1):151--167, 1991.

\bibitem{WangXu2006}
M.~Wang and J.~Xu.
\newblock The {M}orley element for fourth order elliptic equations in any
  dimensions.
\newblock {\em Numer. Math.}, 103(1):155--169, 2006.

\bibitem{WellsDung2007}
G.~N. Wells and N.~T. Dung.
\newblock A {$C\sp 0$} discontinuous {G}alerkin formulation for {K}irchhoff
  plates.
\newblock {\em Comput. Methods Appl. Mech. Engrg.}, 196(35-36):3370--3380,
  2007.

\bibitem{YeZhang2020}
X.~Ye and S.~Zhang.
\newblock A stabilizer free weak {G}alerkin method for the biharmonic equation
  on polytopal meshes.
\newblock {\em SIAM J. Numer. Anal.}, 58(5):2572--2588, 2020.

\bibitem{ZhuXieWang2023}
P.~Zhu, S.~Xie, and X.~Wang.
\newblock A stabilizer-free {$C^0$} weak {G}alerkin method for the biharmonic
  equations.
\newblock {\em Sci. China Math.}, 66(3):627--646, 2023.

\end{thebibliography}
\end{document}